\numberwithin{equation}{section}
\newtheorem{thm}{Theorem}[section]
\newtheorem{lem}[thm]{Lemma}
\newtheorem{prop}[thm]{Proposition}
\newtheorem{cor}[thm]{Corollary}
\newtheorem{defn}[thm]{Definition}
\numberwithin{equation}{section}
\newtheorem*{convention}{Convention}
\newtheorem*{namedtheorem}{\theoremname}
\newcommand{\theoremname}{testing}
\theoremstyle{remark}
\newtheorem{rem}[thm]{Remark}
\DeclareMathOperator{\diam}{diam}
\DeclareMathOperator{\dist}{dist}
\newcommand{\N}{\mathbb{N}}
\renewcommand{\S}{\mathbb{S}}
\newcommand{\D}{\mathbb{D}}
\newcommand{\R}{\mathbb{R}}
\newcommand{\Z}{\mathbb{Z}}
\def\az{\alpha}
\def\dist{{\mathop\mathrm{\,dist\,}}}
\def\ez{\epsilon}
\def\ls{\lesssim}
\def\gs{\gtrsim}
\def\bint{{\ifinner\rlap{\bf\kern.35em--}
\int\else\rlap{\bf\kern.45em--}\int\fi}\ignorespaces}
\def\bbint{{\ifinner\rlap{\bf\kern.35em--}
\hspace{0.078cm}\int\else\rlap{\bf\kern.45em--}\int\fi}\ignorespaces}
\def\diam{{\mathop\mathrm{\,diam\,}}}
\def\bint{{\ifinner\rlap{\bf\kern.35em--}
\int\else\rlap{\bf\kern.45em--}\int\fi}\ignorespaces}
\newcommand{\C}{\mathbb{C}}
\newcommand{\bS}{\mathbb{S}}
\newcommand{\bH}{\mathbb{H}}
\newcommand{\capa}{\mathrm{Cap}}
\newcommand{\id}{\mathrm{id}}
\newcommand{\dw}{\;\mathrm{d}w}
\newcommand{\ds}{\;\mathrm{d}s}
\newcommand{\sR}{\mathscr{R}}
\newcommand{\sI}{\mathscr{I}}
\newcommand{\sP}{\mathscr P}
\newcommand{\sQ}{\mathscr Q}
\newcommand{\sC}{\mathscr C}
\newcommand{\refi}{\mathrm{ref}}
\newcommand{\horz}{\mathrm{HE}}
\newcommand{\geom}{\mathrm{geom}}
\newcommand{\comb}{\mathrm{comb}}
\newcommand{\Diam}{\mathrm{Diam}}
\newcommand{\subharm}{\mathscr{H}}
\newcommand{\Jordan}{\mathscr{J}}
\newcommand{%
	
	\import{./}{.pdf_tex}
}[1]{%
	
	\import{./}{#1.pdf_tex}
}
\begin{document}

\title[Ahlfors reflection theorem]{Ahlfors reflection theorem for $p$-morphisms}

\author{Pekka Koskela}
\author{Pekka Pankka}
\author{Yi Ru-Ya Zhang}

\address{Department of Mathematics and Statistics, P.O. Box 35 (MaD) \\
         FI-40014 University of Jyv\"as\-kyl\"a, Finland}
\email{pekka.j.koskela@jyu.fi} 

\address{Department of Mathematics and Statistics, P.O. Box 64 (Pietari Kalmin katu 5), FI-00014 University of Helsinki, Finland}
\email{pekka.pankka@helsinki.fi}

\address{ETH Zurich, Department of Mathematics, Ramistrasse 101, 8092 Zurich, Switzerland}
\email{yizhang3@ethz.ch}

\thanks{All authors were partially supported by the Academy of Finland. 
P.P. was partially supported by Academy of Finland grant \#297258 and the Simons semester \emph{Geometry and analysis in function and mapping theory on Euclidean and metric measure spaces} at IMPAN, Warsaw.
Y. R.-Y. Zhang was partially supported by European Research Council under the Grant Agreement No.
721675 ``Regularity and Stability in Partial Differential Equations (RSPDE)".
}
\subjclass[2010]{30C62}
\keywords{quasiconformal reflection, p-morphism, p-reflection, subhyperbolic disctance, subhyperbolic domain, Jordan domain.}
\date{\today}


\begin{abstract}
We prove an Ahlfors reflection theorem for $p$-reflections over Jordan curves bounding subhyperbolic domains in $\widehat \C$. 
\end{abstract}


\maketitle

\setcounter{tocdepth}{1}
\tableofcontents

\section{Introduction}

Given a Jordan curve $\Gamma$ on the Riemann sphere $\widehat \C$, i.e.~the complex plane plus a point $\infty$, there exists a homeomorphic reflection $f\colon \widehat \C \to \widehat \C$ with respect to $\Gamma$: \emph{$f$ maps the two Jordan domains $\Omega$ and $\tilde{\Omega}$ associated to $\Gamma$ homeomorphically onto each other and keeps $\Gamma$ pointwise fixed}. By a classical theorem of Ahlfors \cite{A1963}, \cite[Lemma 3, Page 48]{A2006}, this homeomorphism can be chosen to be bilipschitz precisely when $\Gamma$ satisfies a so-called three point condition: \emph{There exists a constant $C\ge 1$ having the property that, given any pair of points $z_1,\,z_2\in \Gamma$ and a point $z_3$ on the arc of smaller diameter between $z_1$ and $z_2$,  
\[
|z_{1}-z_{3}|+|z_{2}-z_{3}|\leq C|z_{1}-z_{2}|.
\]
}
This motivates the following natural problem: 

\begin{quote}
\emph{Determine the classes} $\mathcal F$ \emph{of homeomorphisms and the corresponding classes} $\mathbf \Gamma$ \emph{of Jordan curves so that a given  Jordan curve} $\Gamma$ \emph{admits a reflection} $f\in \mathcal F$ \emph{precisely when} $\Gamma \in \mathbf {\Gamma}$.
\end{quote}

Another result by Ahlfors, also from \cite{A1963}, states that a Jordan curve $\Gamma$ admits a quasiconformal reflection precisely when $\Gamma$ is a quasicircle. Here quasiconformality of our homeomorphism $f$ requires that $f\in W^{1,1}(\widehat \C,\, \widehat \C)$ satisfies almost everywhere the distortion inequality
\begin{equation}
\label{peruskvasi}
|Df(z)|^2\le K |J_f(z)|
\end{equation}
for some constant $K\ge 1$, where $|Df(z)|$ is the operator norm of the differential matrix $Df(z)$ and $J_f(z)$ is the determinant of $Df(z)$. A quasicircle is, by definition, the image of the circle $S^1$ under a quasiconformal mapping from $\widehat \C$ onto $\widehat \C.$  As shown by Ahlfors, a Jordan curve $\Gamma$ is a quasicircle exactly when $\Gamma$ satisfies a three point condition.
Hence both the class of bilipschitz homeomorphisms and the class of quasiconformal homeomorphisms give a class $\mathcal F$ corresponding precisely to the class $\mathbf \Gamma$ of Jordan curves that satisfy a three point condition. 
This seems to be all that is known up to now regarding the above problem.


Towards our results regarding this classification problem, let us recall an equivalent characterization for the three point condition (see \cite{GO1980}): \emph{a Jordan curve $\Gamma$ satifies the Ahlfors three point condition if and only if there is a constant 
$C>0$ so that for any pair of points $z_1,z_2\in G$ there is a curve 
$\gamma\subset G$ 
joining $z_1$ and $z_2$ so that
\begin{equation}\label{GO}
\int_{\gamma}\frac{\ds(z)}{\dist(z,\,\partial G)} 
\le C \log\left(1+\frac{|z_1-z_2|}{\min\{\dist(z_1,\,\partial G),\,\dist(z_2,\,\partial G)\}}\right);
\end{equation}
where $G$ is any of the two complementary domains $\Omega$ and $\tilde\Omega$ of the Jordan curve $\Gamma$}. 

Recall that the quasihyperbolic distance $d_{qh}(z_1,\,z_2)$ of a pair of points $z_1,z_2$ in $G$ is defined by
\[
d_{qh}(z_1,\,z_2)
=\inf_{\gamma} \int_{\az}\frac{\ds(z)}{\dist(z,\,\partial G)},
\]
where the infimum is taken over all (rectifiable) curves $\az\subset G$ joining $z_1$ and $z_2$. 
Hence \eqref{GO} can be reformulated as
\begin{equation}\label{gehring osgood}
d_{qh}(z_1,\,z_2)\le C \log\left(1+\frac{|z_1-z_2|}{\min\{\dist(z_1,\,\partial G),\,\dist(z_2,\,\partial G)\}}\right). 
\end{equation}
In conclusion, a Jordan curve $\Gamma$ admits a bilipschitz or a quasiconformal  reflection exactly when $\Omega$ (or equivalently $\tilde \Omega $) satisfies \eqref{gehring osgood}. 

The quasihyperbolic distance was introduced by Gehring and Osgood in \cite{GO1980} and it has turned out to be a very useful concept; see e.g.~\cite{BHK2001,GHM1989,GP1976}. Notice that the upper bound in \eqref{gehring osgood} is, modulo the constant $C,$  the optimal one even when $G$ is a disk.  Thus, \eqref{gehring osgood} can be viewed to be a reverse inequality.

In \cite{GM1985}, Gehring and Martio introduced a related distance $d_\alpha$ by replacing the density $z\mapsto d(z,\partial G)^{-1}$
with $z\mapsto d(z,\partial G)^{\alpha-1}$ for a fixed $0<\alpha<1$, that is, 
\[
d_{\az}(z_1,\,z_2)=\inf_{\gamma} \int_{\az}\dist(z,\,\partial \Omega)^{ \az-1 }\, ds(z)
\]
for $z_1,z_2\in G$.
We call $d_{\alpha}$ the \emph{$\alpha$-subhyperbolic distance}. In spirit of the Gehring--Osgood condition  \eqref{gehring osgood}, we say that a domain $G$ is \emph{$\alpha$-subhyperbolic} if there is a constant $C\ge 1$ for which
\begin{equation}
\label{eq:p-subhyperbolic}
d_{\az}(z_1,\,z_2)\ \le  C  |z_1-z_2|^{\az},
\end{equation}
for all $z_1,\,z_2\in G$.


\bigskip
In this article, we characterize the class of Jordan curves bounding $(2-p)$-subhyperbolic domains as the class of Jordan curves admitting a $p$-reflection for $1<p<2$ -- by the aforemententioned Gehring--Osgood theorem, the characterization in the limiting case $p=2$ is given by Ahlfors' quasiconformal reflection theorem for quasicircles. For the statements, we first introduce the class of $p$-reflections.
\bigskip


Given $1<p<\infty$ and domains $G,G'\subset \widehat \C$, we say that a homeomorphism $f:G\to G'$ is a \emph{$p$-morphism} if $f\in
W^{1,1}(G,G')$ and there is a constant $K\ge 1$ so that the distortion inequality
\begin{equation}\label{pkvasi}
|Df(z)|^p\le K|J_f(z)|
\end{equation}
holds for almost every $z\in G.$ Thus $f$ is a 2-morphism if and only if $f$ is
quasiconformal. The class of mappings satisfying \eqref{pkvasi} was introduced independently by Gehring \cite{G1971} and Maz'ya \cite{M1969}. 

For a Jordan curve $\Gamma\subset \widehat \C$ with complementary Jordan domains $\Omega$ and $\widetilde\Omega$ in $\widehat \C$, we say that a homeomorphism $f:\widehat \C\to \widehat \C$ is a \emph{$p$-reflection from $\Omega$ to $\widetilde\Omega$} if $f$ restricted to $\Gamma$ is the identity mapping and $f:\Omega\to \widetilde\Omega$ is a $p$-morphism. We then say that \emph{$\Gamma$ admits a $p$-reflection from $\Omega$ to $\widetilde\Omega$}.

Our main result is the following existence theorem for $p$-morphisms.

\begin{thm} \label{thm:main1} 
Let $1<p< 2$, $\Gamma\subset \widehat \C$ be a Jordan curve and, let 
$\Omega$ and $\widetilde\Omega$ be the complementary Jordan domains in $\widehat \C$ having $\Gamma$ as their common boundary. Suppose that $\widetilde \Omega$ is $(2-p)$-subhyperbolic. Then there exists a $p$-reflection $\widetilde \Omega \to \Omega$, quantitatively, which is locally bilipschitz.
\end{thm}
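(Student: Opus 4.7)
The plan is to adapt Ahlfors' Whitney-cube construction from the quasiconformal ($p=2$) case to the $p$-morphism setting.

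First I would fix Whitney decompositions $\mathcal{W}$ of $\widetilde{\Omega}$ and $\mathcal{W}'$ of $\Omega$ into dyadic squares of sidelength comparable to the distance to $\Gamma$. Since both domains are Jordan in $\widehat{\mathbb{C}}$, Carath\'eodory's extension of the Riemann map provides a common boundary parametrization of $\Gamma$ from the two sides. I would assign to each $Q \in \mathcal{W}$ a ``shadow'' arc $\gamma_Q \subset \Gamma$, and then pair $Q$ with a cube $Q^{\ast}\in \mathcal{W}'$ whose shadow matches $\gamma_Q$, refining when needed so that the pairing is combinatorially a bijection at each scale.

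Next I would define $f$ cube-by-cube: on each $Q$, $f|_Q$ is affine, sending $Q$ to $Q^{\ast}$ with the correct boundary correspondence, and these pieces are glued continuously via piecewise-linear interpolation on a common triangulation refinement. Extended by the identity on $\Gamma$, this yields a homeomorphism $\widetilde{\Omega}\to \Omega$ that is locally bilipschitz on compact subsets of $\widetilde{\Omega}$, with Lipschitz constants degenerating as one approaches $\Gamma$.

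The heart of the argument is verifying the distortion inequality $|Df|^p\le K|J_f|$ a.e. On each $Q$, the two singular values $\sigma_1(Q)\ge \sigma_2(Q)$ of $Df|_Q$ are comparable to the direction-wise ratios of sidelengths between $Q$ and $Q^{\ast}$, and the inequality reduces to the comparability $\sigma_1(Q)^{p-1}\le K\sigma_2(Q)$. I expect this to follow from the $(2-p)$-subhyperbolic hypothesis $d_{2-p}(z_1,z_2)\le C|z_1-z_2|^{2-p}$: the weight $\dist(\cdot,\Gamma)^{1-p}$ is the natural dual of the exponent $p$ in the distortion inequality, and summing it across a Whitney chain in $\widetilde{\Omega}$ linking $Q$ to $\Gamma$ should yield the required singular-value comparability once combined with the cube pairing.

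The main obstacle, I anticipate, is that cubes near sharp features of $\Omega$ (e.g.~cusps of $\Omega$) are not directly constrained by the hypothesis on $\widetilde{\Omega}$. There $Q^{\ast}$ can be much smaller than $Q$, forcing $f|_Q$ to be highly anisotropic, and one must show that the subhyperbolic bound on $\widetilde{\Omega}$, propagated through the shared boundary $\Gamma$, still enforces the needed comparability $\sigma_1(Q)^{p-1}\le K\sigma_2(Q)$. Converting the weighted length inequality on $\widetilde{\Omega}$ into a combinatorial condition on the Whitney pairing with $\Omega$ will be, in my view, the technical crux of the proof.
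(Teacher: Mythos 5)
Your proposal correctly identifies the target pointwise inequality (for an affine piece with singular values $\sigma_1\ge\sigma_2$, the condition $|Df|^p\le K|J_f|$ is exactly $\sigma_1^{p-1}\le K\sigma_2$) and correctly locates the difficulty at the cusps of $\Omega$. But there are two genuine gaps, and they are precisely where the paper spends almost all of its effort. First, an affine map sending a square to a square with the prescribed corner correspondence is a similarity, so your cube-by-cube definition as stated produces conformal pieces; a reflection fixing $\Gamma$ pointwise can be assembled from such pieces only when $\Gamma$ is a quasicircle. To get the necessary anisotropy the image cells cannot be Whitney squares of $\Omega$: they must be genuinely rectangular with a \emph{specific} aspect ratio, and the whole point of the $(2-p)$-subhyperbolic hypothesis is that it forces the power law $a\times a^{p-1}$ for these aspect ratios. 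The paper extracts this by defining the reflection along hyperbolic rays so that the weighted length $\int \dist(\cdot,\Gamma)^{1-p}$ in $\widetilde\Omega$ equals $\ell(\cdot)^{2-p}$ in $\Omega$ (Lemma \ref{lemma:bijection}), and then Taylor-expanding to get $\ell(\Gamma^\perp(z)\cap Q)\sim \dist(\cdot,\Gamma)^{p-1}\dist(\tilde Q,\Gamma)^{2-p}$ (Lemma \ref{lemma:Q1}); the model map is then $[0,1]^2\to[0,a]\times[0,a^{p-1}]$, which is a $p$-morphism by an explicit Tukia-type computation (Lemma \ref{Tukia}). Your ``sum the weight over a Whitney chain'' heuristic points in this direction but does not produce the exponent.

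Second, the combinatorial pairing you assume (``a bijection at each scale, refining when needed'') does not exist between the two Whitney decompositions, and constructing a usable substitute is the technical crux rather than a footnote. The image under any reasonable reflection of a single Whitney square of $\widetilde\Omega$ meets an \emph{unbounded} number of Whitney squares of $\Omega$ near inward cusps of $\Omega$; the paper must (i) subdivide each image cell into ``balanced'' rectangles whose shadows are comparable to their distance to $\Gamma$ (Proposition \ref{prop:existence_of_balanced_shadow_refinement}), (ii) prove a counting bound on the number of children of each relative size (Proposition \ref{prop:back-to-life}), (iii) reflect this refinement back into $\widetilde\Omega$ by interpolating two different edge partitions, which requires dropping from exponent $p$ to $r<p$ and invoking the self-improvement of the subhyperbolic condition (Lemma \ref{self improve}) to make a geometric series converge, and (iv) straighten the resulting cells to uniformly bilipschitz rectangles before gluing. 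Finally, gluing by ``piecewise-linear interpolation on a common triangulation'' does not obviously preserve the distortion bound across cell boundaries; the paper instead matches boundary parametrizations in advance and fills in with the bilipschitz extension scheme of Tukia--V\"ais\"al\"a. Without items (i)--(iv) your outline cannot be completed as written.
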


The existence of a $p$-reflection is a characterization of $(2-p)$-subhyperbolic domains and we obtain the following solution to the reflection problem for $1<p<2$.

\begin{thm}
\label{main}
Let $1<p< 2$, $\Gamma\subset \widehat \C$ be a Jordan curve and, let 
$\Omega$ and $\widetilde\Omega$ be the complementary Jordan domains in $\widehat \C$ having $\Gamma$ as their common boundary.
Then the following conditions are equivalent:
\begin{enumerate}
\item  $\Gamma$ admits a $p$-reflection from $\widetilde \Omega$ to $\Omega$.
\item $\widetilde \Omega$ is $\alpha$-subhyperbolic with $\alpha=2-p.$
 \item $\Gamma$ admits a $q$-reflection from $\Omega$ to $\widetilde\Omega$ with $q=p/(p-1)$.\label{item:main3}
 \end{enumerate}
\end{thm}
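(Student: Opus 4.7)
I prove the equivalence via the cycle $(2) \Rightarrow (1) \Leftrightarrow (3) \Rightarrow (2)$. The implication $(2) \Rightarrow (1)$ is exactly Theorem~\ref{thm:main1}. The biconditional $(1) \Leftrightarrow (3)$ comes from the standard planar inversion duality for $p$-morphisms. The substantive step is $(1) \Rightarrow (2)$, which is the converse assertion to Theorem~\ref{thm:main1} and requires a genuine geometric argument.

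For $(1) \Leftrightarrow (3)$, the point is that in dimension two the inverse of a $p$-morphism is a $q$-morphism with $q = p/(p-1)$. Indeed, for any invertible $A \in \R^{2\times 2}$ one has $|A^{-1}| = |A|/|\det A|$ via singular values. Setting $w = f(z)$, this gives a.e.\ $|Df^{-1}(w)| = |Df(z)|/J_f(z)$ and $J_{f^{-1}}(w) = 1/J_f(z)$, so the distortion bound $|Df|^p \le K J_f$ upgrades to
\[
|Df^{-1}(w)|^q \le K^{q/p}\, J_f(z)^{q/p - q} = K^{q/p}\, J_{f^{-1}}(w),
\]
using $q/p - q = -1$, i.e.\ $q(p-1) = p$. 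Since $f$ and $f^{-1}$ fix $\Gamma$ simultaneously, and the $W^{1,1}$ regularity of $f^{-1}$ is standard in the planar $p$-morphism theory, this matches $p$-reflections $\widetilde\Omega \to \Omega$ with $q$-reflections $\Omega \to \widetilde\Omega$.

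For $(1) \Rightarrow (2)$, I would use the $q$-morphism $g = f^{-1}\colon \Omega \to \widetilde\Omega$ supplied by the previous step. Fix $z_1, z_2 \in \widetilde\Omega$ and set $t = |z_1-z_2|$. If $\min_i \dist(z_i, \Gamma) \ge 2t$, then the Euclidean segment $[z_1, z_2] \subset \widetilde\Omega$ stays at distance $\ge t$ from $\Gamma$ and contributes at most $t \cdot t^{1-p} = t^{2-p}$ to the subhyperbolic integral. In the remaining boundary case, I pick a curve $\beta$ in $\Omega$ between $g^{-1}(z_1)$ and $g^{-1}(z_2)$ (for instance a near-quasihyperbolic geodesic), pull it back to $\gamma = g(\beta) \subset \widetilde\Omega$, and change variables to get
\[
\int_\gamma \dist(z, \Gamma)^{1-p}\,ds \;=\; \int_\beta \dist(g(w), \Gamma)^{1-p}\, |Dg(w)|\,ds(w).
\]
H\"older's inequality with conjugate exponents $q$ and $p$, combined with $|Dg|^q \le K J_g$, reduces the right-hand side to a sum of Whitney-shell contributions around $\Gamma$ in $\Omega$, which should telescope to a geometric series of order $t^{2-p}$.

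The main obstacle is making the boundary case rigorous: for $p \ne 2$ a $p$-morphism is not quasisymmetric, so the naive comparison $\dist(g(w), \Gamma) \sim \dist(w, \Gamma)$ is unavailable. The crux will be a quantitative modulus-of-continuity or $p$-capacity estimate for $g$ near the fixed boundary $\Gamma$ (a Gehring--Hayman type bound for $q$-morphisms), controlling how Whitney chains in $\Omega$ distort into $\widetilde\Omega$. Calibrating this displacement against the exponents $q$ (from the distortion), $p$ (from H\"older), and $1-p$ (from the subhyperbolic weight) so as to recover the precise target exponent $2-p$ is the technical heart of the argument.
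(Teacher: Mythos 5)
Your handling of $(2)\Rightarrow(1)$ (quoting Theorem~\ref{thm:main1}) and of $(1)\Leftrightarrow(3)$ (the singular-value computation showing the inverse of a $p$-morphism is a $q$-morphism, $q=p/(p-1)$) matches the paper; the only caveat in the latter is that the Sobolev regularity of $f^{-1}$ and the a.e.\ validity of the chain rule are genuinely nontrivial, and the paper disposes of them by citing Vodop'yanov rather than proving them, as you implicitly do too.

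The genuine gap is the implication back to $(2)$. Your change-of-variables/H\"older scheme stalls exactly where you say it does: to bound $\int_\gamma \dist(\cdot,\Gamma)^{1-p}\,ds$ you need an \emph{upper} bound on $\dist(g(w),\Gamma)^{1-p}$, i.e.\ a lower bound on $\dist(g(w),\Gamma)$ in terms of $\dist(w,\Gamma)$, and neither the distortion inequality nor any Lipschitz estimate supplies this (a $q$-morphism can a priori push points much closer to $\Gamma$). You label this "the technical heart" and leave it unproven, so the proof is incomplete. The paper avoids the issue entirely by routing $(3)\Rightarrow(2)$ through Sobolev extension theory: since $q=p/(p-1)>2$, the $q$-morphism $f\colon\Omega\to\widetilde\Omega$ is locally uniformly Lipschitz by Gehring's theorem, and because it is the identity on $\Gamma$ a segment argument upgrades this to a global Lipschitz bound; hence $u\mapsto u\circ f$ is a bounded composition operator $L^{1,q}(\widetilde\Omega)\to L^{1,q}(\Omega)$ which, together with density of $C^\infty(\overline{\widetilde\Omega})$ in $W^{1,q}(\widetilde\Omega)$ for Jordan domains, exhibits $\widetilde\Omega$ as a $W^{1,q}$-extension domain. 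Shvartsman's characterization of planar $W^{1,q}$-extension domains for $q>2$ then yields precisely the $(2-p)$-subhyperbolic condition (note $1/(1-q)=1-p$ and $(q-2)/(q-1)=2-p$). If you want to keep your direct geometric route, you would have to prove the missing displacement estimate yourself; otherwise you should replace that step by the extension-domain argument.
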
 

As a corollary we also obtain an analog of the result of Martio and Sarvas \cite{MS1979} on quasidisks.


\begin{cor}\label{p quasidisk}
Let $\Omega\subset \widehat\C$ be a Jordan domain whose complementary domain $\tilde \Omega = \widehat\C \setminus \overline{\Omega}$ satisfies the $(2-p)$-subhyperbolic condition for a fixed  $1<p<2$. Then there is a $p$-morphism $f\colon\hat{\mathbb C}\to \hat{\mathbb C}$ such that $f(\mathbb D)=\Omega.$ 
\end{cor}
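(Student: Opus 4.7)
My plan is to combine the $p$-reflection provided by Theorem~\ref{thm:main1} with a conformal parametrization of $\tilde\Omega$, in analogy with the Martio--Sarvas construction: one extends a Riemann map by reflecting across the boundary, where here the $p$-reflection plays the role of the classical quasiconformal reflection.

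First, apply Theorem~\ref{thm:main1} to obtain a locally bilipschitz $p$-reflection $g\colon \widehat\C\to\widehat\C$ with $g|_\Gamma=\id$, $g(\tilde\Omega)=\Omega$, and $g|_{\tilde\Omega}\colon \tilde\Omega\to\Omega$ a $p$-morphism. By the Riemann mapping theorem and Carath\'{e}odory's extension (valid since $\tilde\Omega$ is a Jordan domain), pick a conformal homeomorphism $\psi\colon \widehat\C\setminus\overline{\D}\to\tilde\Omega$ extending to a homeomorphism of closures. Define $f\colon\widehat\C\to\widehat\C$ by
\[
f(z)=\begin{cases}\psi(z),& z\in\widehat\C\setminus\overline{\D},\\ g(\psi(1/\bar z)),& z\in\overline{\D}.\end{cases}
\]
The two pieces agree on the unit circle because $1/\bar z = z$ there and $g$ fixes $\Gamma$ pointwise. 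Since $g$ swaps $\tilde\Omega$ and $\Omega$, the map $f$ is a homeomorphism of $\widehat\C$ with $f(\D)=g(\tilde\Omega)=\Omega$.

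It then remains to verify $f\in W^{1,1}(\widehat\C,\widehat\C)$ together with $|Df|^p\le K|J_f|$ almost everywhere. On $\widehat\C\setminus\overline{\D}$, $f=\psi$ is conformal, so $|Df|^p=|\psi'|^p$ and $|J_f|=|\psi'|^2$, and the $p$-morphism inequality reduces to a pointwise lower bound on $|\psi'|^{2-p}$. On $\D$, a chain-rule computation combining the bilipschitz inversion $z\mapsto 1/\bar z$ with the $p$-morphism inequality for $g$ reduces the required bound, at a point $z$, to the same type of lower bound on $|\psi'(1/\bar z)|$. Consequently, the main obstacle is establishing a uniform positive lower bound for $|\psi'|$ over $\widehat\C\setminus\overline{\D}$.

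This is precisely where the $(2-p)$-subhyperbolic hypothesis on $\tilde\Omega$ enters. By the Koebe distortion theorem, $|\psi'(z)|$ is comparable to $\dist(\psi(z),\Gamma)/(|z|-1)$ for $z$ near $\S^1$, and the subhyperbolic geometry forces this ratio to stay bounded away from zero by excluding the sort of degeneracies in $\tilde\Omega$ that would drive $|\psi'|$ to zero. If the Riemann map $\psi$ fails such a direct lower bound in edge cases, one may substitute for $\psi$ a locally bilipschitz parametrization of $\tilde\Omega$ by $\widehat\C\setminus\overline{\D}$ extracted from the construction behind Theorem~\ref{thm:main1}, thereby bypassing the conformal degeneracy entirely. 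The resulting constants depend quantitatively on $p$ and on the subhyperbolic constant of $\tilde\Omega$.
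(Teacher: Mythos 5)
Your overall architecture --- parametrize $\tilde \Omega$ from the disk side and push the parametrization forward with the $p$-reflection $g$ of Theorem~\ref{thm:main1} --- is the same as the paper's, which writes the map as $\phi_1\circ\phi_2^{-1}\circ\phi_3$ with $\phi_1$ the reflection, $\phi_2^{-1}$ a parametrization of $\tilde\Omega$ by $\D$, and $\phi_3$ a bilipschitz reflection across $\partial\D$. The gap lies in the parametrization. Your key claim --- that the $(2-p)$-subhyperbolic hypothesis forces $|\psi'|$ to be bounded away from zero --- is false. Every uniform (in particular every Lipschitz) Jordan domain is $\alpha$-subhyperbolic for every $\alpha\in(0,1)$ (Gehring--Martio \cite{GM1985}; alternatively, run a cone path from each endpoint up to scale $|z_1-z_2|$ and integrate $t^{\alpha-1}$), so a non-convex polygon is an admissible $\tilde\Omega$. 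At a vertex of interior angle $\beta>\pi$ the Riemann map behaves like $z\mapsto (z-z_0)^{\beta/\pi}$ near the prevertex $z_0$, whence $|\psi'(z)|\sim |z-z_0|^{\beta/\pi-1}\to 0$. Since for a conformal map the inequality $|D\psi|^p\le K|J_\psi|$ is equivalent to $|\psi'|\ge K^{-1/(2-p)}$, your $f$ fails to be a $p$-morphism on every neighbourhood of such a prevertex, and the same factor $|\psi'|^{p-2}$ ruins the chain-rule estimate on the $\D$-side as well. The Koebe comparison $|\psi'(z)|\sim \dist(\psi(z),\Gamma)/(|z|-1)$ is correct, but subhyperbolicity gives no lower bound for that ratio.

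Your fallback --- a ``locally bilipschitz parametrization of $\tilde\Omega$ extracted from the construction behind Theorem~\ref{thm:main1}'' --- is not available: that construction produces a reflection $\tilde\Omega\to\Omega$, not a parametrization of $\tilde\Omega$ by a disk, and no such parametrization with uniform constants can be read off from it. The missing ingredient, which is exactly what the paper imports, is Bishop's theorem \cite{B2002}: there exists a locally uniformly Lipschitz quasiconformal homeomorphism $\phi_2\colon\tilde\Omega\to\D$. Since $q=p/(p-1)>2$, one has $|D\phi_2|^q\le |D\phi_2|^{q-2}|D\phi_2|^2\le M^{q-2}K\,J_{\phi_2}$, so $\phi_2$ is a $q$-morphism and $\phi_2^{-1}\colon\D\to\tilde\Omega$ is the $p$-morphism that must replace your conformal $\psi$; composing with a bilipschitz reflection across $\partial\D$ and with $g$ then completes the proof. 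Without this (or an equivalent nontrivial) input, the argument does not close.
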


Regarding the statement of Theorem \ref{main}, recall that the inverse of a bilipschitz homeomorphism is  bilipschitz and that the inverse of a quasiconformal homeomorphism is quasiconformal.  This kind of symmetry does not hold for $p$-morphisms: \emph{the inverse of a $p$-morphism is a $q$-morphism with $q=p/(p-1)$}. This explains why we need to give our geometric criteria on one of the complementary domains instead of giving a condition on the Jordan curve in question. The key explanation behind this duality in the reflection is the duality of capacities in the plane: \emph{given a Jordan domain $\Omega\subset \hat{\mathbb C}$  whose boundary is partitioned into four arcs $\gamma_1$, $\gamma_2$, $\gamma_3$ and $\gamma_4$ counterclockwise,  we have 
\begin{equation}\label{duality}
\left({\rm Cap}_p(\gamma_1,\,\gamma_3;\,\Omega)\right)^{\frac 1 p}\left({\rm Cap}_q(\gamma_2,\,\gamma_4;\,\Omega)\right)^{\frac 1 q}= 1
\end{equation}
for $1<p<\infty$ and $q=\frac p {p-1}$}; see \cite{Z2020} for more details.

\subsection*{Comparison to quasiconformal reflections}
In Ahlfors' reflection theorems, 
a quasiconformal reflection promotes to a bilipschitz reflection.
Obviously, it cannot be the case that a $p$-reflection could promote, say, to a quasiconformal reflection. However, it follows from Lemma \ref{self improve} and Theorem \ref{main} that the existence of a $p$-reflection is an open ended condition, that is, \emph{the existence of a $p$-reflection for some $1<p<2$ implies the existence of an $s$-reflection for any $1<s<p+\epsilon<2,$ where
$\epsilon>0$ depends only on $p$ and the distortion coefficient $K$ in \eqref{pkvasi}}. For $p>2,$ the respective range is $2<p-\epsilon<s<\infty.$ In particular, bilipschitz reflections are $p$-reflections for all $p$ and a quasiconformal reflection promotes to an $s$-reflection for any $s$.

The proof of the Theorem \ref{thm:main1} reveals that the $p$-reflection in Theorem \ref{thm:main1} can be chosen to have additional infinitesimal properies similar to quasiconformal mappings. Recall that, by Hadamard's inequality, a $K$-quasiconformal mapping $g \colon G\to G'$ between domains $G$ and $G'$ in $\C$ satisfies the double inequality
\[
|J_g(z)| \le |Dg(z)|^2 \le K |J_g(z)|
\]
for almost every $z\in G$. 

We show that we can choose the reflection $f \colon \widetilde \Omega \to \Omega$ to satisty 
the additional constraint
\begin{equation}\label{tuplak}\frac 1 K |J_f(z)|\le |Df(z)|^p \le K |J_f(z)|
\end{equation}
for almost every $z\in \widetilde \Omega$; see Remarks~\ref{rem tukia} and ~\ref{rem final} for details.
Note that, since $p<2,$ the Jacobian determinant of $f$ is not a priori controlled by $|Df|^p.$
An analogous estimate can then be also required for $f^{-1},$ but with the exponent $q=p/(p-1).$ 


A specific interest to this double inequality comes from the observation that, in the case of quasiconformal reflections, the Hausdorff dimension of the Jordan curve $\Gamma$ is controlled by the distortion $K$ of the reflection; see e.g.~\cite[Chapter 13.3]{AIM2009} for the sharp estimate.
In the setting of $p$-reflections, the Hausdorff dimension of $\Gamma$ is necessarily strictly less than $2$. Therefore it would be natural to ask for estimates in terms of $p$ and the distortion coefficient $K$ in \eqref{tuplak}.

\subsection*{Connection to generalized Cauchly--Riemann equations}
Let us describe a technique for producing homeomorphisms that satisfy \eqref{tuplak} with $K=1$ and its dual estimate with exponent $q=p/(p-1)$ in regions separated by a smooth interface.

Consider the generalized Cauchy-Riemann equation
\begin{equation}\label{general CR}
\left\{ \begin{array}{l}
u_x=|Dv|^{p-2}v_y\\
u_y=-|Dv|^{p-2}v_x
\end{array} \right.
\end{equation}
which has been applied in the theory of partial differential equation and functional analysis; see \cite{BI1987, IM1989, L1947, L1948, lewis1987}, together with the monographs \cite{AIM2009, L1962} and the reference therein. This equation has a $C^2$-solution precisely when $v$ is $p$-harmonic. 

Let now $G\subset \C$ be a domain and fix $1<p<2,$ a $p$-harmonic function $v\colon G\to \R$ a solution $u \colon G\to \R$ to the system \eqref{general CR} and define 
$$w=u+iv \colon G\to \C. $$ Suppose also $|\nabla v(z_0)|=1$ at a point $z_0\in G$. By results due to J. Lewis \cite{lewis1987}, the mapping $w$ is real analytic and homeomorphic in a neighborhood of $z_0.$ 
Additionally, by \eqref{general CR} we have that $\nabla u,\nabla v$ are perpendicular and that $|\nabla u|=|\nabla v|^{p-1}$ and $|J_w|=|\nabla v|^p. $
Thus
$$|Dw|^q = |J_w| \text{ when } |Dw|\le 1,$$
and 
$$|Dw|^p=|J_w| \text{ when } |Dw|> 1,$$
where $q=\frac p {p-1}$. In particular, in a neighborhood of $z_0$, the smooth set $\{|Dw|=1\}$ separates these two regions.

The above discussion motivates one to consider the  class of homeomorphisms $f\in W^{1,\,1}(G,G')$ between domains $G$ and $G'$ in $\C$ for which there exists a constant $K\ge 1$ satisfying
\begin{equation}\label{new}
\frac 1 K\le \min \left\{\frac{|Df|^{p} }{|J_f|},  \frac{|Df|^{q}} {|J_f|}\right\}\le K, 
\end{equation}
where $1<p<\infty$ and $q=\frac p {p-1}$. 
The reflection that we construct for Theorem~\ref{main} falls into this new category (see Remarks~\ref{rem tukia} and ~\ref{rem final}). We would like to know if one could establish Hausdorff measure estimates in terms of the distortion constant $K$ for the interface in \eqref{new}. 

\subsection*{Idea of the proof}
Let us briefly explain the main difficulty in proving Theorem \ref{thm:main1}.
The counterpart of Theorem \ref{thm:main1} for $p=2$ is essentially the Ahlfors
reflection theorem. The desired reflection in this case is obtained as follows. 
Applying a rotation, we may assume that 
$\infty \in \Gamma$. 
Pick a conformal map $\varphi: \bH^+\to \Omega$ and a conformal
map $\psi: \bH^+\to  \widetilde\Omega$, where $\bH^+$ is the upper half 
plane. By the Caratheodory--Osgood theorem both these maps extend homeomorphically to the boundary. Next, condition \eqref{GO} allows (with work) one to deduce 
that $h=\psi^{-1}\circ \varphi:\R\to \R$ is quasisymmetric. 
Then the Beurling-Ahlfors \cite{BA1956}
extension gives us a quasiconformal mapping $\widehat h: \mathbb H^+\to 
\mathbb H^+$ with the boundary value $h.$ The desired quasiconformal map $f$ 
is given by the composition $z\mapsto \psi(\widehat h(\overline \varphi^{-1}(z)))$. For $p\neq 2$, this approach is doomed: \emph{quasiconformal or conformal maps are not in general 
$p$-morphisms and an inverse of a $p$-morphism may fail to be a 
$p$-morphism}. Hence we cannot reduce the question to a half plane and need to
construct the reflection by hand. 

The idea behind our construction is to first give a reflection in a 
neighborhood of $\Gamma$ using the hyperbolic rays in the complementary domains $\widetilde \Omega$ and $\Omega$. Since this local reflection near $\Gamma$ is merely topological, it maps Whitney squares and Whitney-type sets to sets that can be far from being squarelike. 
We next decompose them to better topological rectangles
and decompose our Whitney-type sets in a combinatorially matching manner. 
After this, we modify the boundaries of the topological rectangles so as to 
become Lipschitz and construct homeomorphisms between the respective boundaries
and eventually fill in using ideas from Tukia--V\"ais\"al\"a \cite{TV1982}. As one could 
expect, our approach also gives a new proof of the quasiconformal reflection
result but the details will not be recorded here.

\subsection*{Application: Sobolev extension via composition}
One of the direct applications of Theorem \ref{thm:main1} is that it induces an extension operator for Sobolev extension domains.



Let $u\in C^1(G')$ and let $f:G\to G'$ be a $p$-morphism. If $f$ is differentiable at a point $z\in G$, then
$$
|\nabla (u\circ f)(z)|^p\le K|\nabla u(f(z))|^p
|Df(z)|^p\le K|\nabla u(f(z)|^ p|J_f(z)|.
$$
This suggests that composition under $f$ should preserve $L^p$-energy and that $f$ should generate  a morphism for the homogeneous
Sobolev spaces $L^{1,p}$ via composition. 
This is indeed the case; see e.g.\  \cite{VG1975,GG1994,VU1998}. 
Hence Theorem \ref{main} together with results from \cite{KRZ2015, S2010}  
shows that the extension
operator $W^{1,p}(\Omega) \to W^{1,p}(\C)$ for a Jordan Sobolev extension domain $\Omega\subset \C$
can be required to be generated by a composition operator. 
This explains the duality results in \cite{KRZ2015}.
The following corollary was previously only known for the case $p=2,$ 
see \cite{GV1980, GV1981, VGL1979}.

\begin{cor}\label{dual extension}
Let $1<p<\infty$ and let $\Gamma\subset \widehat \C$ be a Jordan curve
with complementary domains $\Omega$ and $\widetilde\Omega$ in $\widehat \C$. 
If $\Omega$ is a $W^{1,p}$-extension 
domain, then $\Omega$ admits the corresponding extension via an operator 
generated by a composition whose inverse generates the analogous extension from 
$W^{1,q}(\widetilde \Omega )$ when $q=p/(p-1).$
\end{cor}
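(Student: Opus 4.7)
My plan is to construct the extension operators explicitly as composition operators with the $p$-reflection produced by Theorem \ref{main}. I focus on the range $1<p<2$; the case $p=2$ is the classical Ahlfors quasidisk theorem, and the case $p>2$ is handled analogously, using the $p>2$ characterization of Jordan $W^{1,p}$-extension domains in \cite{KRZ2015, S2010} (which places the subhyperbolic condition on $\Omega$ itself) and then invoking item \ref{item:main3} of Theorem \ref{main} to produce the required reflection. For $1<p<2$, the characterization from \cite{KRZ2015, S2010} asserts that $\Omega$ is a Jordan $W^{1,p}$-extension domain if and only if $\widetilde\Omega$ is $(2-p)$-subhyperbolic. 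Theorem \ref{main} then yields a $p$-reflection $f\colon\widetilde\Omega\to\Omega$, whose inverse $f^{-1}\colon\Omega\to\widetilde\Omega$ is automatically a $q$-morphism with $q=p/(p-1)$ by the capacity duality \eqref{duality}.

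Given such an $f$, I would set
\[
Eu(z)=\begin{cases} u(z), & z\in\Omega,\\ u(f(z)), & z\in\widetilde\Omega,\end{cases}\qquad u\in W^{1,p}(\Omega),
\]
and define $\widetilde E\colon W^{1,q}(\widetilde\Omega)\to W^{1,q}(\widehat\C)$ symmetrically via composition with $f^{-1}$ on $\Omega$. The pointwise chain rule gives $|\nabla(u\circ f)(z)|\le|Df(z)||\nabla u(f(z))|$, so the $p$-morphism inequality \eqref{pkvasi} together with the area formula yields
\[
\int_{\widetilde\Omega}|\nabla(u\circ f)|^p\,\d A\le K\int_{\widetilde\Omega}|\nabla u(f(z))|^p|J_f(z)|\,\d A(z)= K\int_{\Omega}|\nabla u|^p\,\d A,
\]
which is the composition-operator bound for $p$-morphisms on homogeneous Sobolev spaces recorded in \cite{VG1975,GG1994,VU1998}. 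The same computation with $f$ replaced by $f^{-1}$ and $p$ replaced by $q$ controls $\widetilde E v$ in $W^{1,q}$. By construction the composition operators defining $E$ and $\widetilde E$ are mutual inverses, which is the asserted duality.

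The principal obstacle, as I see it, is to verify that $Eu$ genuinely lies in $W^{1,p}(\widehat\C)$ rather than only in a piecewise Sobolev space with a possibly singular distributional gradient along $\Gamma$. Because $\Gamma$ may have Hausdorff dimension strictly between $1$ and $2$ and need not be rectifiable, standard trace theorems are not directly available. My strategy is to use the ACL characterization of Sobolev functions: since $f|_\Gamma=\id$, the two pieces of $Eu$ take the same value at every point of $\Gamma$, and for almost every horizontal or vertical line $\ell$ the set $\ell\cap\Gamma$ has $1$-dimensional Lebesgue measure zero. Absolute continuity of $u$ on $\ell\cap\Omega$ and of $u\circ f$ on $\ell\cap\widetilde\Omega$, together with matching values at the points of $\ell\cap\Gamma$, will give absolute continuity of $Eu$ along $\ell$; combined with the gradient estimate above this shows $Eu\in W^{1,p}(\widehat\C)$ and closes the argument.
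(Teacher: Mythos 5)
Your overall architecture is the same as the paper's: obtain the reflection from Theorem \ref{main} via the extension-domain characterization of \cite{KRZ2015,S2010}, define the extension by composition, and control the energy by the $p$-morphism inequality. The gap is exactly at the point you flag as the principal obstacle, and your proposed fix does not work. Piecewise absolute continuity on the components of $\ell\setminus\Gamma$, plus continuity (matching values) at the points of $\ell\cap\Gamma$, plus the fact that $\ell\cap\Gamma$ has length zero, does \emph{not} imply absolute continuity of $Eu$ on $\ell$: a continuous function can be absolutely continuous on every complementary interval of a closed null set and still carry singular variation concentrated on that set (the Cantor function is the model). This is not a technicality — measure zero alone is known to be insufficient for Sobolev removability of Jordan curves, which is precisely why the paper invokes the Jones--Smirnov removability theorem \cite{JS2000}: since $\Omega$ is a John domain (a consequence of the characterization from \cite{S2010,KRZ2015}), its boundary $\Gamma$ is removable for $W^{1,p}$, and it is this theorem, not the ACL bookkeeping, that upgrades the piecewise-defined function to a global Sobolev function.

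A second, related omission: you apply the gluing argument directly to an arbitrary $u\in W^{1,p}(\Omega)$, but such a $u$ need not extend continuously to $\Gamma$, so "the two pieces take the same value at every point of $\Gamma$" is not meaningful, and the ACL property of $u$ only gives absolute continuity on compact subintervals of $\ell\cap\Omega$, not up to the endpoints on $\Gamma$. The paper circumvents both issues by first defining the operator on Lipschitz (or $C^\infty(\overline\Omega)$) functions, where continuity across $\Gamma$ is automatic and \cite{JS2000} applies, and then passing to general $u$ by Lewis's density theorem \cite{lewis1987} ($C^\infty(\overline\Omega)$ is dense in $W^{1,p}(\Omega)$ for planar Jordan domains). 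Your proof would be complete if you replaced the naive ACL step by the removability theorem for boundaries of John domains and added this approximation step.
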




\bigskip

Let us close this introduction by returning to the problem of determining the classes of homeomorphisms $\mathcal F$ and the corresponding classes $\mathbf \Gamma$ so that a given Jordan curve admits a reflection $f\in \mathcal F$ precisely when $\Gamma\in \mathbf \Gamma.$
Previously only the case of Jordan curves satisfying the three point condition was understood. We now understand also the case where one of the two associated Jordan domains is $p$-hyperbolic. The methods of this paper suggest that a solution is also feasible when one of the domains is quasiconvex. Moreover, it seems to us that, if one were to start from a class $\mathcal F,$ one would need 
an invariant, for example a class of function spaces whose seminorms behave nicely under composition with elements of $\mathcal F$. In the case of $p$-morphisms this is the $L^p$-energy, which for $p=2$ is the usual Dirichlet energy. Many interesting questions remain to be addressed.

\section*{Notation and terminology}
\label{sec:notation}

\subsection{Metrics}

In the complex plane $\C$ we use the standard Euclidean metric $|\cdot - \cdot|$ and in the Riemann sphere $\widehat \C$ the chordal metric, that is, metric of $\bS^2$ in $\R^3$. We denote the chordal metric also $|\cdot-\cdot|$. 

Given a domain $\Omega$ in $\C$, we denote $d_\Omega$ the \emph{inner metric in $\Omega$}, that is, 
\[
d_\Omega(x,y) = \inf_\gamma \ell(\gamma),
\]
where $\gamma \colon [0,1]\to \Omega$ is a curve from $\gamma(0)=x$ to $\gamma(1)=y$, and $\ell(\gamma)$ is the length of $\gamma$ in the Euclidean metric. In what follows, we use the (inner) diameter $\diam_\Omega$ and (inner) distance $\dist_\Omega$ given by the inner metric $d_\Omega$ of $\Omega$. Note that, these notions extend for sets in the Euclidean closure $\overline{\Omega}$ of $\Omega$.

\subsection{Jordan curves and domains}

Recall that the image $\Gamma$ of an embedding $\S^1 \to \widehat \C$ is called a \emph{Jordan curve} and, by the Jordan curve theorem, the set $\widehat \C\setminus \Gamma$ has exactly two components, both homeomorphic to the open unit disk $\D$ and having $\Gamma$ as boundary. These components of $\widehat \C\setminus \Gamma$ are called \emph{Jordan domains}. 

By the Riemann mapping theorem, for each Jordan domain $\Omega$ in $\widehat \C$, there exists a conformal map $\D \to \Omega$. Moreover, for a Jordan domain $\Omega$, every conformal homeomorphism $\varphi\colon \D \to \Omega$ has a homeomorphic extension $\overline{\D} \to \overline{\Omega}$ by the Caratheodory--Osgood theorem, see e.g.\;\cite{P1991}. 

We fix a family $\{\gamma_z\colon z\in \partial \Omega\}$ of hyperbolic rays in $\Omega$ with respect to a base point $z_0\in \Omega$ as follows. Let $\varphi \colon \D\to \Omega$ be a conformal map for which $\varphi(0)=z_0$. Then, for each $z\in \partial \Omega$, we let $\gamma_z \colon [0,1]\to \overline{\Omega}$ to be the ray defined by $t \mapsto \varphi(\gamma_{\varphi^{-1}(z)}(t))$, where $\gamma_{\varphi^{-1}(z)}$ is the hyperbolic ray in $\D$ from the origin to $\gamma_{\varphi^{-1}(z)}\in \partial \mathbb D$. Note that the family $\{\gamma_z\colon z\in \partial \Omega\}$ is independent of the choice of the map $\varphi$. Here and in what follows, we suppress the base point $z_0$ in the notation.

Let $\Omega$ be a Jordan domain in $\C$, $\Gamma=\partial \Omega$, and $z_\Omega$ its base point. The \emph{shadow projection $S_{\Gamma}\colon \Omega \setminus \{z_\Omega\} \to \Gamma$ with respect to $\Gamma$} is the projection along hyperbolic rays onto $\Gamma$, that is, for $w\in \Omega$, we have that $z = S_\Gamma(w)$ is the unique point in $\Gamma$ for which $w = \gamma_z(t)$ for some $t\in (0,1)$. Note that, since hyperbolic geodesics are independent of the choice of the parametrization $\varphi$, also so is the shadow projection. Since $\varphi$ extends to the boundary as a homeomorphism, the shadow projection $S_{\Gamma}\colon \Omega\setminus \{z_\Omega\} \to \Gamma$ is continuous.

For each $w\in \Omega$, we denote $\Gamma^\perp_\Omega(w)$ the hyperbolic geodesic joining $w$ and its shadow $S_{\Gamma}(w)$; note that $\Gamma^\perp_\Omega(w)$ is also a subarc of the unique hyperbolic ray from $z_0$ to $S_\Gamma(w)$. Given a connected set $Q \subset \Omega$ and $x\in S_\Gamma(Q)$, we denote $\Gamma^\perp_\Omega(x,\,Q)$ the segment in $\Gamma^\perp_\Omega(x)$ between $x$ and $Q$. Finally, for $z\in \Gamma$, we denote by $\Gamma^\perp_\Omega(z)$ the hyperbolic ray in $\Omega$ ending at $z$.  

Given a Jordan curve $\Gamma \subset \C$, we denote $\tilde \Omega_\Gamma$ the component of $\widehat\C\setminus \Gamma$, which is the bounded component in $\C$, and $\Omega_\Gamma$ the component of $\widehat \C \setminus \Gamma$ containing $\infty$. For both domains $\tilde \Omega_\Gamma$ and $\Omega_\Gamma$, we use the notation $S_\Gamma$ for the shadow projections $\tilde \Omega_\Gamma\setminus \{z_{\tilde \Omega_\Gamma}\} \to \Gamma$ and $\Omega_\Gamma\setminus \{z_{\Omega_\Gamma}\} \to \Gamma$; here $z_{\Omega_\Gamma} = \infty$. However, to simplify notation, we denote $\tilde \Gamma^\perp(\tilde w) = \Gamma^\perp_{\tilde \Omega_\Gamma}(\tilde w)$ and $\Gamma^\perp(w) = \Gamma^\perp_{\Omega_\Gamma}(w)$ for $\tilde w\in \tilde\Omega_\Gamma$ and $w\in \Omega_\Gamma$, respectively. We extend the related notations similarly.

\section{Preliminaries}
\label{sec:preliminaries}

For $p\in (1,2)$ and $C\ge 1$, we define $\subharm(p,C)$ to the collection of all $p$-subhyperbolic domains with constant $C>0$ bounded by Jordan curves in $\C$, that is, $G\in \subharm(p,C)$ if $\partial G \subset \C$ is a Jordan curve and $G$ satisfies \eqref{eq:p-subhyperbolic} with constant $C$. We also denote $\Jordan(p,C)$ the collection of all Jordan curves $\Gamma$ in $\C$ for which the bounded complementary domain $\tilde \Omega_\Gamma$ belongs to $\subharm(p,C)$

\subsection{Conformal geometry}

Recall that for points $z_1$ and $z_2$ in $\D$, their hyperbolic distance is
\[
d_h(z_1,\,z_2)=\inf_{\az}\int_{\az} \frac {2}{1-|z|^2}\,|dz|,
\]
where the infimum is over all rectifiable curves $\az$ joining $z_1$ to $z_2$ in $\mathbb D$. The hyperbolic geodesics in $\D$ are arcs of (generalized) circles that intersect the unit circle orthogonally. 

Recall that the hyperbolic metric is preserved under conformal maps, that is, for a conformal map $\varphi \colon \D \to \D$ and a pair of points $x$ and $y$ in $\D$ we have
\[
d_h(\varphi(x),\varphi(y)) = d_h(x,y).
\]
The hyperbolic geodesics in $\mathbb D$ are arcs of (generalized) circles that intersect the unit circle orthogonally. In particular, the segment $\gamma_z \colon [0,1]\to \overline{D}$, $t\mapsto tz$, is a hyperbolic ray (in $\D$) from the origin to $z\in \partial \D$.

We begin this section by recording the classical Gehring--Hayman inequality.

\begin{lem}[\cite{GH1962}]\label{GH thm}
	Let $\varphi \colon \mathbb D \to \Omega$ be a conformal map, $x,\,y\in \mathbb D$ be two distinct points, $\gamma_{x,\,y}$ be the hyperbolic geodesic in $\mathbb D$ connecting $x$ and $y$, and  $\az_{x,\,y}$ be any curve connecting $x$ and $y$ in $\mathbb D.$ Then we have
	$$\ell(\varphi(\gamma_{x,\,y}))\le C \ell(\varphi(\az_{x,\,y})),$$
	where $C$ is an absolute constant. 
\end{lem}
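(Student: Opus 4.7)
My plan is the classical normalization-plus-distortion approach to the Gehring--Hayman inequality. \emph{First}, using the Möbius invariance of the hyperbolic metric on $\mathbb{D}$, I would precompose $\varphi$ with a Möbius self-map of $\mathbb{D}$ to arrange that $x=-r$ and $y=r$ lie symmetrically on the real axis, in which case the hyperbolic geodesic $\gamma_{x,y}$ becomes the Euclidean segment $[-r,r]$. The desired inequality then becomes
\[
\int_{-r}^{r} |\varphi'(t)|\,dt \;\le\; C\int_{\az_{x,y}} |\varphi'(z)|\,|dz|.
\]

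\emph{Second}, I would invoke Koebe's distortion theorem, which yields
\[
(1-|z|^2)|\varphi'(z)| \;\asymp\; \dist(\varphi(z),\partial \Omega)
\]
and the fact that $|\varphi'|$ varies by at most a bounded multiplicative factor on any hyperbolic ball of radius one. Partitioning $[-r,r]$ into a chain of intervals $J_k$ of unit hyperbolic length, with reference points $t_k\in J_k$ and image scales $\rho_k := \dist(\varphi(t_k),\partial \Omega)$, these two estimates together give $\ell(\varphi([-r,r])) \asymp \sum_k \rho_k$ and, simultaneously, that $\varphi$ restricted to the hyperbolic unit ball $B_h(t_k,1)$ is a bi-Lipschitz map of scale $\rho_k$ (after rescaling by $\rho_k^{-1}$).

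\emph{Third}, the heart of the argument is the matching lower bound $\sum_k \rho_k \lesssim \ell(\varphi(\az_{x,y}))$. Here I would use that $\az_{x,y}$ must cross, for each $k$, the vertical cross-cut $\{\mathrm{Re}(z)=t_k\}\cap\mathbb{D}$, and split into two cases. If some crossing occurs inside $B_h(t_k,1)$, then the bi-Lipschitz property forces an arc of $\varphi(\az_{x,y})$ to traverse the Euclidean ball $B(\varphi(t_k),c\rho_k)$ and then exit it in order to reach both endpoints, contributing Euclidean length $\gtrsim \rho_k$. If instead every crossing of this cross-cut occurs outside $B_h(t_k,1)$, then $\az_{x,y}$ is forced into a detour whose relevant scale is even larger, and a Koebe-type estimate at that scale again yields a contribution of order $\rho_k$.

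\emph{The main obstacle} I anticipate is organizing these scale-by-scale lower bounds without double-counting: a single long arc of $\varphi(\az_{x,y})$ could in principle be charged to many levels $k$. To resolve this I would invoke the bounded overlap of the hyperbolic unit balls $B_h(t_k,1)$ along $[-r,r]$, which after pullback through $\varphi$ translates into bounded overlap of the relevant subarcs of $\az_{x,y}$, allowing a greedy chaining argument that assigns each piece of $\varphi(\az_{x,y})$ to at most a universally bounded number of scales. Summing the contributions over $k$ then gives $\sum_k \rho_k \lesssim \ell(\varphi(\az_{x,y}))$ with a universal constant, which combined with the upper estimate of Step~2 yields the inequality with an absolute constant $C$.
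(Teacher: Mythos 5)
The paper quotes this lemma from Gehring--Hayman \cite{GH1962} without proof, so your sketch has to be measured against the known arguments. Your normalization, the Koebe-based identity $\ell(\varphi(\gamma_{x,y}))\asymp\sum_k\rho_k$, and the treatment of crossings of the cross-cut that occur within bounded hyperbolic distance of $t_k$ are all correct, and this is indeed how every proof of the theorem begins.

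The gap is in your second case, which is the actual content of the theorem. If $\az_{x,y}$ meets $\{\mathrm{Re}\,z=t_k\}\cap\mathbb D$ only at points $w$ with $d_h(w,t_k)>1$, then such $w$ satisfy $1-|w|\le 1-|t_k|$ and typically $1-|w|\ll 1-|t_k|$, so the local conformal scale $(1-|w|^2)|\varphi'(w)|\asymp\dist(\varphi(w),\partial\Omega)$ is generically \emph{smaller} than $\rho_k$, not ``even larger'' as you assert (take $\varphi=\id$). Hence no pointwise Koebe estimate ``at that scale'' yields a contribution of order $\rho_k$; and a long hyperbolic detour near $\partial\mathbb D$ can have arbitrarily small Euclidean image, so length of the detour in $\mathbb D$ does not help either. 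What closes this case in the literature is a genuinely global ingredient: the subarc $\beta$ of $\az_{x,y}$ between its last crossing of the geodesic perpendicular to $[-r,r]$ at the left endpoint of $J_k$ and its first crossing of the one at the right endpoint lies in the region $V_k$ between these two cross-cuts and, together with pieces of them, separates $t_k$ from a boundary arc of $\partial\mathbb D$ whose harmonic measure at $t_k$ is bounded below by an absolute constant; conformal invariance of harmonic measure (equivalently, a modulus/extremal-length estimate) combined with a Beurling-type upper bound in $\Omega$ then forces $\ell(\varphi(\beta))\gtrsim\rho_k$ even when $\beta$ lives entirely where $|\varphi'(z)|(1-|z|)$ is tiny. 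Relatedly, your bounded-overlap device only prevents double counting for crossings inside the balls $B_h(t_k,1)$; in the hard case the charged subarcs sit in Carleson-box-like regions, and disjointness there comes from the fact that the regions $V_k$ between consecutive perpendicular geodesics are pairwise disjoint, not from overlap of hyperbolic unit balls. Without the harmonic-measure (or modulus) ingredient the argument does not go through.
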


Let us recall the definition of a Whitney-type set. 
\begin{defn}
\label{whitney-type set}
Let $\Omega$ be a Jordan domain in $\C$ and $\lambda \ge 1$. A bounded connected set $A \subset \Omega$ is a \emph{$\lambda$-Whitney-type set in $\Omega$} if 
\begin{enumerate}
\item $\frac {1}{\lambda} \diam(A)\le \dist(A,\,\partial\Omega)\le {\lambda } \diam(A)$, and
\item there exists a disk of radius $\frac {1}{\lambda}\diam(A)$ contained in $A$.
\end{enumerate}
\end{defn}

Whitney-type sets are preserved by conformal maps in the following sense; we refer to Gehring \cite[Theorem 11]{G1962} for a proof.

\begin{lem}\label{whitney preserving}
Suppose $\varphi\colon \Omega \to \Omega'$ is conformal, where $\Omega,\Omega'\subset \C$ are domains conformally equivalent to the unit disk or its complementary domain and $A\subset \Omega$ is a $\lambda$-Whitney-type set. Then $\varphi(A)\subset \Omega'$ is a $\lambda'$-Whitney-type set with $ \lambda'=\lambda'(\lambda)$.  
\end{lem}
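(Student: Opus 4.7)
The plan is to deduce this statement directly from Koebe's $1/4$ theorem and Koebe's distortion theorem, after an initial reduction. If $\Omega$ or $\Omega'$ contains a neighborhood of $\infty$ (i.e., is conformally equivalent to $\widehat{\C}\setminus\overline{\D}$), a preliminary M\"obius inversion would reduce matters to the case in which both domains are bounded simply connected subdomains of $\C$. Since M\"obius maps preserve the $\lambda$-Whitney-type property up to a $\lambda$-dependent change of constants (once restricted to a bounded subset of an unbounded domain), such a reduction is routine, and I will describe the argument only in the planar bounded setting.

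The two consequences of Koebe that I will use are the following: first, for any conformal map $\varphi$ on a simply connected planar domain $\Omega$ and any $z\in\Omega$,
\[
\tfrac{1}{4}|\varphi'(z)|\dist(z,\partial\Omega)\le\dist(\varphi(z),\partial\Omega')\le |\varphi'(z)|\dist(z,\partial\Omega);
\]
and second, if $|z-w|\le \tfrac{1}{2}\dist(z,\partial\Omega)$, then $|\varphi'(z)|/|\varphi'(w)|$ lies between two absolute constants. With these in hand, the first substantial step is to show that $|\varphi'|$ has oscillation on $A$ bounded by a constant $C_1(\lambda)$. For this, I would observe that the open Euclidean $(\diam(A)/(2\lambda))$-neighborhood $U$ of $A$ is contained in $\Omega$, is connected, and has Euclidean diameter at most $(1+1/\lambda)\diam(A)$; hence $U$ can be covered by a chain of $N=N(\lambda)$ Euclidean balls centered in $U$ on each of which the Koebe distortion theorem applies with a universal factor, and concatenating yields $|\varphi'(z_0)|/|\varphi'(x)|\le C_1(\lambda)$ for all $z_0,x\in A$.

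Setting $M:=|\varphi'(z_0)|\diam(A)$ for a fixed $z_0\in A$, the three clauses of Definition \ref{whitney-type set} for $\varphi(A)$ would then follow in turn. The upper bound $\diam\varphi(A)\le C_2(\lambda)M$ comes from integrating $|\varphi'|$ along paths in $U$ joining points of $A$; the two-sided estimate $\dist(\varphi(A),\partial\Omega')\approx M$, with constants depending on $\lambda$, follows from the displayed Koebe $1/4$ pair above combined with the comparability $\dist(x,\partial\Omega)\approx\diam(A)$ for $x\in A$ (again with $\lambda$-dependent constants coming from the definition of the Whitney-type property). Finally, an interior disk in $\varphi(A)$ is produced by picking $B(w_0,r)\subset A$ with $r=\diam(A)/\lambda$ (guaranteed by the hypothesis), normalizing via $g(\zeta):=\varphi(w_0+r\zeta)$ on $\D$, and applying Koebe $1/4$ to the conformal map $g$ to conclude that $\varphi(B(w_0,r))\supset B(\varphi(w_0),r|\varphi'(w_0)|/4)$, a disk whose radius is comparable to $M$ and hence to $\diam\varphi(A)$.

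The main obstacle I anticipate is the chain-cover argument in the second paragraph: one has to verify carefully that a connected set $A$ satisfying the two-sided Whitney bound is always contained in an open connected $\Omega$-neighborhood of itself coverable by $N(\lambda)$ balls appropriate for Koebe's distortion theorem, and that both the number of balls and the per-ball oscillation factor combine into a single $\lambda$-dependent constant. The underlying packing estimate is routine, but requires care because $A$ itself need not be a ball or even convex; I would expect to organize it by first noting that the quasihyperbolic diameter of $A$ in $\Omega$ is bounded by $C(\lambda)$, which is equivalent information by the standard comparison between the quasihyperbolic and hyperbolic distances on simply connected planar domains, and which is precisely the invariant feature that passes through the conformal map $\varphi$.
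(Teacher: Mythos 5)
The paper does not actually prove this lemma; it only cites Gehring \cite[Theorem 11]{G1962}. So your Koebe-based argument is necessarily a different, self-contained route, and for the case of bounded simply connected domains it is correct and standard: the chain/packing argument bounding the oscillation of $|\varphi'|$ on the $\bigl(\diam(A)/(2\lambda)\bigr)$-neighborhood $U$ of $A$, the two-sided comparison $\dist(\varphi(A),\partial\Omega')\sim|\varphi'(z_0)|\diam(A)\sim\diam(\varphi(A))$, and the interior disk via Koebe $1/4$ applied on $B(w_0,\diam(A)/\lambda)$ all go through as you describe. (One cosmetic slip: the upper bound in your displayed Koebe pair should carry a factor $4$, i.e.\ $\dist(\varphi(z),\partial\Omega')\le 4|\varphi'(z)|\dist(z,\partial\Omega)$; with constant $1$ the inequality is false, as the map from a strip onto $\D$ shows. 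This does not affect anything downstream.)

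The genuine gap is the first paragraph: the claim that "M\"obius maps preserve the $\lambda$-Whitney-type property up to a $\lambda$-dependent change of constants" is false in exactly the situation you need it. Take $\Omega=\C\setminus\overline{\D}$, $A=B(2d,d/2)$ for large $d$, and $\iota(z)=1/z$. Then $A$ is $2$-Whitney-type in $\Omega$ (here $\partial\Omega=\partial\D$), but $\iota(A)$ is a disk of diameter $\sim 1/d$ sitting at distance $\sim 1$ from $\partial\D$, so its Whitney constant in $\D$ is $\sim d$, unbounded. The point is that inversion converts the end at infinity into an \emph{interior} point of the new domain, and $\dist(\cdot,\partial\Omega)$ — unlike the hyperbolic metric, which in any case is \emph{not} comparable to $\dist(\cdot,\partial\Omega)^{-1}$ near an isolated puncture or near $\infty$ in an exterior domain — does not transform M\"obius-equivariantly. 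Consequently the reduction "invert, then apply the bounded simply connected case" breaks down precisely for Whitney-type sets deep in the unbounded end, and your closing remark about quasihyperbolic diameter does not rescue it for the same reason. To repair this you must either (i) treat the regime $\diam(A)\gtrsim_\lambda\diam(\partial\Omega)$ separately, using the Laurent expansion $\varphi(z)=b+c/z+O(z^{-2})$ at $\infty$ (equivalently, the distortion theory for the class $\Sigma$) together with the observation that the second boundary component — the puncture $b$, or $\infty$'s image — is always a genuine boundary point of $\Omega'\subset\C$ and is the one realizing $\dist(\varphi(A),\partial\Omega')$ there; or (ii) add the standing hypothesis, satisfied in every application in this paper, that $A$ lies in the collar where $\diam(A)\lesssim_\lambda\diam(\partial\Omega)$, in which case the inversion is an approximate similarity on $U$ (since every point of $U$ is at distance $\gtrsim_\lambda\diam(A)$ from the pole) and your reduction is indeed routine.
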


As a corollary, we have the following bilipschitz property.

\begin{lem}
	\label{lemma:biLip}
	Let $\varphi\colon \Omega \to \Omega'$ be a conformal map, where $\Omega,\Omega'\subset \mathbb R^2$ are domains conformally equivalent to $\D$, and let $A\subset \Omega$ be a $\lambda$-Whitney-type set. Then the restriction of the conformal map $\varphi|_{A} \colon A\to \varphi(A)$ is an $L$-bilipschitz map up to a dilation factor with $L=L(\lambda)$. 
\end{lem}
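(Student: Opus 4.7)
The plan is to combine Lemma~\ref{whitney preserving} with the classical Koebe distortion estimates to show that $|\varphi'|$ is essentially constant on $A$, with comparison constant depending only on $\lambda$, and then read off the bilipschitz inequality by integrating $|\varphi'|$ along straight segments. The proposed dilation factor is $M:=\diam(\varphi(A))/\diam(A)$.

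First I would apply Lemma~\ref{whitney preserving} to conclude that $\varphi(A)$ is a $\lambda'$-Whitney-type set in $\Omega'$ with $\lambda'=\lambda'(\lambda)$. In particular, for every $z\in A$,
\[
\dist(z,\partial\Omega)\approx_{\lambda} \diam(A)\qquad\text{and}\qquad \dist(\varphi(z),\partial\Omega')\approx_{\lambda} \diam(\varphi(A)).
\]
The Koebe one-quarter theorem, applied to $\varphi$ and to $\varphi^{-1}$ on the largest inscribed disks, gives the standard distortion estimate
\[
\tfrac{1}{4}\,\frac{\dist(\varphi(z),\partial\Omega')}{\dist(z,\partial\Omega)}\;\le\;|\varphi'(z)|\;\le\;4\,\frac{\dist(\varphi(z),\partial\Omega')}{\dist(z,\partial\Omega)},
\]
from which I conclude that $|\varphi'(z)|\approx_{\lambda} M$ for every $z\in A$.

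To upgrade this pointwise control to a genuine bilipschitz estimate, I would slightly thicken $A$ to $A^{\ast}:=\{u\in\Omega:\dist(u,A)<\diam(A)/(4\lambda)\}$, which still satisfies $\dist(A^{\ast},\partial\Omega)\gtrsim_{\lambda}\diam(A)$. Koebe's growth theorem applied to $\varphi$ on the disk of radius $\tfrac{1}{2}\dist(u,\partial\Omega)$ around each $u\in A^{\ast}$ then yields $|\varphi'(u)|\approx_{\lambda} M$ uniformly on $A^{\ast}$. For $z,w\in A$ with $|z-w|\le \diam(A)/(4\lambda)$ the segment $[z,w]$ lies in $A^{\ast}$, and integrating gives $|\varphi(z)-\varphi(w)|\le C(\lambda) M |z-w|$; for the complementary range $|z-w|>\diam(A)/(4\lambda)$ the trivial bound $|\varphi(z)-\varphi(w)|\le \diam(\varphi(A))\approx_{\lambda} M\diam(A)\lesssim_{\lambda} M|z-w|$ closes the upper bound. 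The matching lower bound follows verbatim by applying the same argument to $\varphi^{-1}$ restricted to the Whitney-type set $\varphi(A)$. The only step requiring attention is the passage from pointwise control on $A$ to uniform control on $A^{\ast}$, but the Whitney-type hypothesis supplies exactly the Euclidean neighborhood needed, so this too is routine.
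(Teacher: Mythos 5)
Your proof is correct, and since the paper states this lemma without proof (as an immediate corollary of Lemma~\ref{whitney preserving}), your Koebe-distortion argument --- comparability of $|\varphi'|$ to $\diam(\varphi(A))/\diam(A)$ on a thickened set $A^{\ast}$, integration along short segments, the trivial diameter bound for distant pairs, and the symmetric argument for $\varphi^{-1}$ --- is exactly the standard way to fill in the details. The only cosmetic slip is calling the estimate on $A^{\ast}$ the ``growth theorem''; what you are using is the Koebe distortion theorem, which gives $|\varphi'(u)|\approx|\varphi'(z)|$ with absolute constants whenever $|u-z|\le \dist(z,\partial\Omega)/4$, and that is precisely what the argument needs.
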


\subsection{Subhyperbolic domains}

 Domains satisfying \eqref{eq:p-subhyperbolic} are quasiconvex, quantitatively. Recall that a (path-connected) set $A\subset \widehat\C$ is said to be {\it $C$-quasiconvex for $C\ge 1$} if for every two points $z_1,\,z_2\in A$ there exists a curve $\gamma\subset A$ joining $z_1$ and $z_2$ such that 
 \begin{equation}
 \label{eq:qconvex}
 \ell(\gamma)\le C|z_1-z_2|.
 \end{equation}
 We refer to \cite{la1985} and to the proof of \cite[Theorem 2.15]{GM1985}; see also Lemma~\ref{GH thm}.

 \begin{lem}
 	\label{lemma:qconvex}
 	Let $\Gamma\in \Jordan(p,C_0)$. Then $\tilde \Omega_{\Gamma}$ is $C_{\mathrm{qc}}$-quasiconvex with $C_{\mathrm{qc}}=C_{\mathrm{qc}}(p,C_0)$. More precisely, there exists $C_{\mathrm{qch}}=C_{\mathrm{qch}}(p,C_0)$ having the property that, for each $z_1$ and $z_2$ in $\tilde \Omega_\Gamma$, there exist a hyperbolic geodesic $\gamma$ joining $z_1$ and $z_2$ in $\tilde \Omega_{\Gamma}$ for which
 	\begin{equation}\label{eq:h-qconvex}
 	\ell(\gamma) \le C_{\mathrm{qch}} |z_1-z_2|.
 	\end{equation}
 	
 \end{lem}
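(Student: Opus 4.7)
The plan is to first establish plain quasiconvexity of $\tilde\Omega_\Gamma$ via some curve, and then upgrade to the statement about the hyperbolic geodesic using the Gehring--Hayman inequality recorded as Lemma~\ref{GH thm}.

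For the quasiconvexity step, set $\alpha = 2-p \in (0,1)$, fix $z_1, z_2 \in \tilde\Omega_\Gamma$, and write $L = |z_1 - z_2|$. The $(2-p)$-subhyperbolic hypothesis \eqref{eq:p-subhyperbolic} produces a curve $\beta$ from $z_1$ to $z_2$ with
\[
\int_\beta \dist(z, \partial \tilde\Omega_\Gamma)^{\alpha-1}\, ds \le 2 C_0 L^\alpha.
\]
I would split $\beta$ into its \emph{shallow part} $\beta_{\rm sh} = \{z \in \beta : \dist(z, \partial \tilde\Omega_\Gamma) \le 2L\}$ and the \emph{deep excursions} into $\{\dist > 2L\}$. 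Since $\alpha - 1 < 0$, on $\beta_{\rm sh}$ we have $\dist^{\alpha-1} \ge (2L)^{\alpha-1}$, which immediately yields $\ell(\beta_{\rm sh}) \le 2^{2-\alpha} C_0 L$. Each deep excursion joins two points on the level set $\{\dist = 2L\}$; when its two endpoints are within $2L$ of each other, the straight segment joining them lies in $\tilde\Omega_\Gamma$ (since $\dist \ge 2L$ at the entry point) and provides an admissible short replacement. In the general case one iterates the subhyperbolic bound together with the shallow/deep decomposition on the excursion endpoints, in the spirit of Gehring--Martio \cite[Theorem 2.15]{GM1985} and Lappalainen \cite{la1985}. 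The output is a curve $\tilde\beta$ from $z_1$ to $z_2$ in $\tilde\Omega_\Gamma$ with $\ell(\tilde\beta) \le C_{\mathrm{qc}} L$, where $C_{\mathrm{qc}} = C_{\mathrm{qc}}(p, C_0)$; this already gives the first assertion of the lemma.

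For the refinement to the hyperbolic geodesic, choose a conformal map $\varphi \colon \D \to \tilde\Omega_\Gamma$ and set $x_i = \varphi^{-1}(z_i)$. The image of the hyperbolic geodesic $\gamma_{x_1, x_2}$ in $\D$ under $\varphi$ is the hyperbolic geodesic $\gamma$ of $\tilde\Omega_\Gamma$ joining $z_1$ and $z_2$. Taking $\varphi^{-1}(\tilde\beta)$ as the competing curve in $\D$ and applying Lemma~\ref{GH thm} yields $\ell(\gamma) \le C\, \ell(\tilde\beta) \le C\, C_{\mathrm{qc}}\, L$, which is \eqref{eq:h-qconvex} with $C_{\mathrm{qch}} = C \cdot C_{\mathrm{qc}}$.

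The main obstacle in this plan is the deep-excursion pruning in the quasiconvexity step: the shallow contribution is immediate from the subhyperbolic bound, but controlling long deep excursions requires a recursive shortcut argument whose bookkeeping I would defer to the treatments in \cite{GM1985, la1985}. Once quasiconvexity of $\tilde\Omega_\Gamma$ has been secured, the passage to the hyperbolic geodesic via Gehring--Hayman is essentially automatic.
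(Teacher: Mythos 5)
Your outline matches the paper's, which offers no written proof at all for this lemma and simply refers to Lappalainen \cite{la1985}, the proof of \cite[Theorem 2.15]{GM1985}, and Lemma~\ref{GH thm}: quasiconvexity from the subhyperbolic condition, then Gehring--Hayman to pass to the hyperbolic geodesic. The one step you defer --- pruning the deep excursions --- is not actually needed, because the subhyperbolic bound already forbids the curve from going deep. Indeed, if $\dist(z_1,\partial\tilde\Omega_\Gamma)\ge L$ the straight segment $[z_1,z_2]$ lies in $\tilde\Omega_\Gamma$ and there is nothing to prove; otherwise, if some $z\in\beta$ has $\dist(z,\partial\tilde\Omega_\Gamma)=t>2L$, the curve must cross from depth $t/2$ to depth $t$, and on the (last) subarc where the depth lies in $[t/2,t]$ one has length at least $t/2$ and density at least $t^{\alpha-1}$, so the subhyperbolic integral is at least $t^{\alpha}/2$; comparing with the upper bound $2C_0L^{\alpha}$ gives $t\le(4C_0)^{1/\alpha}L$. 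Hence all of $\beta$ lies at depth at most $ML$ with $M=M(p,C_0)$, and
\[
\ell(\beta)\le (ML)^{1-\alpha}\int_\beta \dist(z,\partial\tilde\Omega_\Gamma)^{\alpha-1}\,ds \le 2C_0 M^{1-\alpha}L,
\]
so the original curve is already short and no shortcut construction is required. With that, your Gehring--Hayman step finishes the proof exactly as intended.
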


The subhyperbolic domains have the following self-improvement property.

\begin{lem}[{\cite[Theorem 2.6]{S2010}}]
\label{self improve}
Let $1<p<2$ and $C\ge 0$. Then there exists $\ez=\ez(p,C)>0$ having the property that, for every $G\in \subharm(p,C)$ and $1<s<p+\ez$, we have $G\in \subharm(s,C')$, where $C'=C'(p,C)>0$.
\end{lem}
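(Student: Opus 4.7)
The strategy is to show that the boundary distance along a near-optimal curve for $d_{2-p}$ satisfies a reverse Hölder-type inequality as a function of arclength, and then to upgrade this via the classical Gehring lemma to obtain the desired self-improvement.

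Set $\alpha = 2-p$. Given $G \in \subharm(p,C)$ and $z_1,z_2\in G$, fix a curve $\gamma\colon [0,L]\to G$ from $z_1$ to $z_2$, parameterized by arclength, that nearly minimizes $d_\alpha$, and let $h(t) = \dist(\gamma(t),\partial G)$. The first step is to verify that for some $r\in(0,1)$ and $M=M(p,C)$,
\[
\frac{1}{b-a}\int_a^b h(t)^{\alpha-1}\,dt \le M\left(\frac{1}{b-a}\int_a^b h(t)^{r(\alpha-1)}\,dt\right)^{1/r}
\]
on every sub-interval $[a,b]\subset[0,L]$. The upper bound on the left-hand side comes from applying the subhyperbolic hypothesis to the pair $(\gamma(a),\gamma(b))$: since $\gamma$ is globally near-optimal, its sub-arcs are near-optimal between their endpoints up to universal constants (otherwise concatenating with a better sub-arc would contradict the global near-minimization). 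The lower bound on the right-hand side requires pointwise control of $h$ via the Whitney-type structure of $G$ near $\gamma$ combined with the quasiconvexity from Lemma~\ref{lemma:qconvex}.

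With the reverse Hölder inequality in hand, the classical Gehring lemma on intervals yields $\delta=\delta(p,C)>0$ such that $h^{\alpha-1}\in L^{1+\delta}_{\loc}$ along $\gamma$, with the uniform bound
\[
\int_0^L h^{(1+\delta)(\alpha-1)}\,dt \le M'L^{-\delta}\left(\int_0^L h^{\alpha-1}\,dt\right)^{1+\delta} \ls L^{-\delta}|z_1-z_2|^{(1+\delta)\alpha}.
\]
For $\alpha'=2-s$ with $s\in(p,p+\delta)$, choose $\theta=(\alpha-\alpha')/(\alpha(1+\delta)-\alpha')\in(0,1)$; then by Hölder's inequality and the length bound $L\ls|z_1-z_2|$ from quasiconvexity,
\[
\int_\gamma h^{\alpha'-1}\,ds \le \left(\int_\gamma h^{(1+\delta)(\alpha-1)}\,ds\right)^\theta L^{1-\theta} \ls |z_1-z_2|^{\alpha'},
\]
which gives $d_{\alpha'}(z_1,z_2)\ls|z_1-z_2|^{\alpha'}$ and hence $G\in\subharm(s,C')$. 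The complementary range $s\in(1,p)$ follows from a simpler direct Hölder estimate after splitting $\gamma$ into pieces according to the size of $h$ and controlling $h$ by a diameter scale determined by quasiconvexity.

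The main obstacle is Step 1: establishing the reverse Hölder inequality requires converting the global subhyperbolic bound, which is a statement about pairs of points, into a scale-invariant local estimate along the fixed curve $\gamma$. The essential geometric input is the quasiconvexity provided by Lemma~\ref{lemma:qconvex}, which lets one reroute sub-arcs of $\gamma$ through their own optimal paths while keeping uniform constants. Once this reverse Hölder inequality is set up correctly, the rest of the argument follows standard Gehring-type self-improvement machinery.
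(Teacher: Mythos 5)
First, note that the paper does not prove this lemma at all --- it is quoted verbatim from Shvartsman \cite[Theorem 2.6]{S2010} --- so your proposal is measured against the cited source rather than an in-paper argument. Your overall architecture (a reverse H\"older inequality for $t\mapsto \dist(\gamma(t),\partial G)^{\alpha-1}$ along a single good curve, Gehring's lemma, then interpolation) is a legitimate route to this kind of self-improvement, and your endgame is sound: with $\alpha=2-p$, the Gehring gain $\delta$ translates through the H\"older interpolation into the admissible range $s<p+\delta(p-1)$, and the range $1<s\le p$ is indeed a direct H\"older estimate using $L\lesssim|z_1-z_2|$.

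The genuine gap is in Step 1, precisely where you locate ``the main obstacle'', and your proposed fix does not work. To run Gehring you need, for \emph{every} subinterval $[a,b]$, the upper bound $\int_a^b h^{\alpha-1}\,dt\le M(b-a)^{\alpha}$, i.e.\ that the subarc $\gamma|_{[a,b]}$ quasi-minimizes $d_\alpha$ between its endpoints with a constant independent of $[a,b]$. Your justification --- ``otherwise concatenating with a better sub-arc would contradict the global near-minimization'' --- is false: if $\gamma$ is a $(1+\eta)$-near-minimizer, replacing a subarc by an optimal competitor only saves the \emph{additive} amount by which that subarc exceeds its own optimum, and this excess need only satisfy the bound $\le \eta\, d_\alpha(z_1,z_2)$; a subarc whose contribution is a tiny fraction of the total can therefore be worse than optimal by an arbitrarily large \emph{multiplicative} factor. (Exact $d_\alpha$-geodesics, whose subarcs would again be geodesics, are not known to exist here, since $(G,d_\alpha)$ is neither complete nor proper.) Lemma~\ref{lemma:qconvex} does not repair this either: it controls the Euclidean length of hyperbolic geodesics, not their $d_\alpha$-length. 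What is actually needed is a curve all of whose subarcs are uniformly good competitors --- for instance a hyperbolic geodesic combined with a Gehring--Hayman-type theorem for the density $\dist(\cdot,\partial G)^{1-p}$, or a chain of Whitney-type sets along which the two-point hypothesis is applied at every scale. Establishing that uniform subarc estimate is the real content of the lemma and is exactly what the citation to \cite{S2010} supplies; as written, your argument assumes it. (By contrast, the lower bound in your reverse H\"older inequality is easier than you suggest: it follows from the $1$-Lipschitz continuity of $h$ alone, by separating the cases $\min_{[a,b]}h\le b-a$ and $\min_{[a,b]}h>b-a$, without any Whitney-type structure.)
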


For the forthcoming discussion, we record also the fact that the subhyperbolic
length of a hyperbolic geodesic is controlled by a snowflaked Euclidean metric of the domain. 

\begin{lem}[{\cite[Lemma 4.2]{KRZ2015}}]\label{GM2}
Let $\Gamma \in \Jordan(p,C)$ and let $z\in \Gamma$. Then, for any pair of points $w,w'\in \tilde \Gamma^\perp(z)$, we have
\begin{align}\label{curve conditionGamma}
\int_{\gamma[w,w']}\, \dist(z,\,\Gamma)^{ 1-p}\,dz \le C(p,C)|w-w'|^{2-p},
\end{align}
where $\gamma[w,w']$ denotes the subarc of $\tilde \Gamma^\perp(z)$ joining $w$ and $w'$. 
\end{lem}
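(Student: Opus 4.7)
The plan is to combine two geometric facts about the hyperbolic ray $\tilde\Gamma^\perp(z)$ in $\tilde\Omega_\Gamma$ with the subhyperbolicity hypothesis:
\begin{enumerate}
\item[(i)] \emph{(Koebe + Gehring--Hayman.)} Every $\zeta \in \tilde\Gamma^\perp(z)$ satisfies
$$\dist(\zeta,\Gamma) \asymp |\zeta-z|.$$
The lower bound is trivial since $z\in\Gamma$; the upper bound says that the Euclidean diameter of the tail of $\tilde\Gamma^\perp(z)$ past $\zeta$ is controlled by $\dist(\zeta,\Gamma)$. It follows from applying the Gehring--Hayman inequality (Lemma~\ref{GH thm}) to subsegments of the radial preimage in $\mathbb D$ under a conformal $\varphi:\mathbb D \to \tilde\Omega_\Gamma$ with $\varphi(\zeta_0)=z$, combined with Koebe's distortion theorem.
\item[(ii)] \emph{(Quasiconvexity along the ray.)} For any $a,b \in \tilde\Gamma^\perp(z)$, the subarc $\gamma[a,b]$ satisfies $\ell(\gamma[a,b]) \le C'|a-b|$ with $C'=C'(p,C)$. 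Indeed, by uniqueness of hyperbolic geodesics in a simply connected domain, $\gamma[a,b]$ is itself the hyperbolic geodesic joining $a$ and $b$, so Lemma~\ref{lemma:qconvex} applies.
\end{enumerate}

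Orient the ray so that $|w'-z|\le |w-z|$ and consider two cases.

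\emph{Case A:} $|w'-z|\ge |w-z|/2$. By (i) together with the log-Lipschitz behavior of $\dist(\cdot,\Gamma)$ along hyperbolic geodesics (again via Koebe), $\dist(\zeta,\Gamma) \asymp |w-z|$ uniformly in $\zeta \in \gamma[w,w']$. Combining with (ii),
$$\int_{\gamma[w,w']}\dist(\zeta,\Gamma)^{1-p}\,ds \;\lesssim\; |w-z|^{1-p}\,\ell(\gamma[w,w']) \;\lesssim\; |w-z|^{1-p}\,|w-w'|.$$
Since $|w-z|\ge |w-w'|/2$ by the triangle inequality and $1-p<0$, the map $x\mapsto x^{1-p}$ is decreasing, so the right-hand side is $\lesssim |w-w'|^{2-p}$.

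\emph{Case B:} $|w'-z|<|w-z|/2$. Decompose $\gamma[w,w']$ dyadically by distance to $z$:
$$\gamma_k := \bigl\{\zeta\in\gamma[w,w']:\,|\zeta-z|\in\bigl(2^{-k-1}|w-z|,\,2^{-k}|w-z|\bigr]\bigr\},\quad k=0,1,\ldots,K,$$
where $K$ is the largest index with $\gamma_K\neq\emptyset$. On each $\gamma_k$, $\dist(\zeta,\Gamma)^{1-p}\asymp (2^{-k}|w-z|)^{1-p}$ by (i); the triangle inequality bounds the Euclidean distance between the endpoints of $\gamma_k$ by $3\cdot 2^{-k}|w-z|$, so (ii) yields $\ell(\gamma_k)\lesssim 2^{-k}|w-z|$. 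Hence
$$\int_{\gamma_k}\dist(\zeta,\Gamma)^{1-p}\,ds \;\lesssim\; (2^{-k}|w-z|)^{2-p}.$$
Summing the geometric series in $k$ (convergent since $2-p>0$),
$$\int_{\gamma[w,w']}\dist(\zeta,\Gamma)^{1-p}\,ds \;\lesssim\; |w-z|^{2-p}.$$
The case assumption gives $|w-w'|\ge |w-z|-|w'-z|> |w-z|/2$, so $|w-z|^{2-p}\lesssim |w-w'|^{2-p}$.

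\textbf{Main obstacle.} The most delicate point is (i): the two-sided comparison of $\dist(\zeta,\Gamma)$ with $|\zeta-z|$ uniformly on the ray, together with the log-Lipschitz property invoked in Case A. The upper bound requires a Koebe--Gehring--Hayman computation on the ``tail'' of the ray, which in the disk corresponds to the radial segment from $r\zeta_0$ to $\zeta_0$, with estimates uniform as $r\to 1$. Once (i) is established, the rest is a clean case analysis; the $(2-p)$-subhyperbolic hypothesis enters the argument only through the quasiconvexity in (ii).
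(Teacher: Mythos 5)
The paper does not prove this lemma; it is quoted directly from \cite[Lemma 4.2]{KRZ2015}. So I can only assess your argument on its own terms, and it has a fatal gap at step (i).

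Your claim (i), that $\dist(\zeta,\Gamma)\asymp|\zeta-z|$ for every $\zeta$ on the hyperbolic ray $\tilde\Gamma^\perp(z)$, is false for general $(2-p)$-subhyperbolic domains. The nontrivial direction $|\zeta-z|\lesssim\dist(\zeta,\Gamma)$ is precisely the John/cone-arc property of hyperbolic rays in $\tilde\Omega_\Gamma$, and the paper explicitly warns that this fails: the remark surrounding \eqref{eq:tilde_Q_blow-up} states that $\ell(\tilde\Gamma^\perp(z,\tilde Q))/\dist(\tilde Q,\Gamma)$ need not even be bounded over Whitney squares $\tilde Q$ and $z\in S_\Gamma(\tilde Q)$. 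Taking $\zeta$ to be the endpoint of $\tilde\Gamma^\perp(z,\tilde Q)$ in $\tilde Q$, quasiconvexity gives $|\zeta-z|\asymp\ell(\tilde\Gamma^\perp(z,\tilde Q))$ while $\dist(\zeta,\Gamma)\asymp\dist(\tilde Q,\Gamma)$, so the ratio $|\zeta-z|/\dist(\zeta,\Gamma)$ is unbounded. (It is the \emph{complementary} domain $\Omega_\Gamma$ that is John, Lemma \ref{comp GM}, not $\tilde\Omega_\Gamma$; only the existence of \emph{some} $z_{\tilde Q}\in S_\Gamma(\tilde Q)$ with a short ray is available, Corollary \ref{cor:tilde_Q_distance}.) Koebe and Gehring--Hayman give the trivial inequality $\dist(\zeta,\Gamma)\le|\zeta-z|$ and the length bound $\ell(\gamma[\zeta,z])\lesssim|\zeta-z|$, never the reverse comparison. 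With (i) gone, both cases collapse: in Case B the estimate $\dist(\zeta,\Gamma)^{1-p}\asymp(2^{-k}|w-z|)^{1-p}$ on $\gamma_k$ fails in the dangerous direction (since $1-p<0$, a much smaller $\dist(\zeta,\Gamma)$ makes the weight much larger), and the geometric series argument breaks down. (A lesser issue: your $\gamma_k$ need not be connected, though that is repairable.)

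Your closing remark that the subhyperbolic hypothesis ``enters the argument only through the quasiconvexity in (ii)'' is itself a warning sign. Quasiconvexity plus simple connectivity cannot imply \eqref{curve conditionGamma}: a quasiconvex Jordan domain may have an inward cusp, along whose axial ray $\int\dist(\cdot,\Gamma)^{1-p}\,ds$ diverges. The hypothesis must be used at full strength -- for instance by decomposing $\gamma[w,w']$ into Whitney-type pieces, on each of which the integral is $\asymp\dist_k^{2-p}$, and then controlling $\sum_k\dist_k^{2-p}$ by combining the subhyperbolic bound on the infimum over \emph{all} curves with a weighted Gehring--Hayman-type statement that the hyperbolic geodesic quasi-minimizes $\int\dist(\cdot,\Gamma)^{1-p}\,ds$. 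That is the substance of the cited \cite[Lemma 4.2]{KRZ2015}, and it is absent from your argument.
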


\begin{cor}
\label{cor:Pankka3}
Let $\Gamma \in \Jordan(p,C_0)$ be a Jordan curve in $\C$. Then, for any $\tilde x \in \tilde \Omega_{\Gamma}$, we have
\[
\int_{\tilde \Gamma^\perp(\tilde x)}\dist(w,\Gamma)^{1-p} \ds(w) \sim \ell(\tilde \Gamma^\perp(\tilde x))^{2-p},
\]
where the constants depend only on $p$ and $C_0$ 
\end{cor}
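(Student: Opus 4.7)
The plan is to split the proof into the two directions of the equivalence, where the upper bound is essentially Lemma~\ref{GM2} applied at the endpoints of the ray and the lower bound is an elementary observation that does not even require the subhyperbolicity hypothesis. Throughout write $\ell := \ell(\tilde \Gamma^\perp(\tilde x))$ and $z_0 := S_\Gamma(\tilde x) \in \Gamma$, so that $\tilde \Gamma^\perp(\tilde x)$ is the subarc of $\tilde \Gamma^\perp(z_0)$ connecting $\tilde x$ to $z_0$.

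For the upper bound, I would apply Lemma~\ref{GM2} to the point $z_0 \in \Gamma$ with the choices $w = \tilde x$ and $w' \in \tilde \Gamma^\perp(z_0)$ approaching $z_0$ along the ray. Monotone convergence (the integrand is nonnegative) then gives
\[
\int_{\tilde \Gamma^\perp(\tilde x)} \dist(w, \Gamma)^{1-p} \ds(w) \le C(p,C_0)\, |\tilde x - z_0|^{2-p}.
\]
Since the segment $[\tilde x, z_0]$ is shorter than any curve in $\tilde \Omega_\Gamma$ joining $\tilde x$ to $z_0$, we have $|\tilde x - z_0| \le \ell$, and because $2 - p > 0$ this yields the desired upper bound $\ls \ell^{2-p}$.

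For the lower bound, observe that every $w \in \tilde \Gamma^\perp(\tilde x)$ is joined to the boundary point $z_0 \in \Gamma$ by a subarc of $\tilde \Gamma^\perp(\tilde x)$ of length at most $\ell$, so $\dist(w,\Gamma) \le |w - z_0| \le \ell$. As $1-p < 0$, this gives the pointwise estimate $\dist(w,\Gamma)^{1-p} \ge \ell^{1-p}$, and integrating over $\tilde \Gamma^\perp(\tilde x)$ yields
\[
\int_{\tilde \Gamma^\perp(\tilde x)} \dist(w,\Gamma)^{1-p}\ds(w) \ge \ell^{1-p}\cdot \ell = \ell^{2-p}.
\]

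There is no real obstacle: the only mildly subtle point is checking that Lemma~\ref{GM2} can be invoked with one endpoint at the boundary, which is handled by the limiting argument above. Combining the two estimates produces the required comparison with constants depending only on $p$ and $C_0$.
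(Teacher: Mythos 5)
Your proof is correct and follows essentially the same route as the paper: the upper bound via Lemma~\ref{GM2} combined with $|\tilde x - z_0|\le \ell(\tilde \Gamma^\perp(\tilde x))$, and the lower bound from the pointwise estimate $\dist(w,\Gamma)\le \ell(\tilde \Gamma^\perp(\tilde x))$ together with $1-p<0$. The limiting argument you add to justify applying Lemma~\ref{GM2} with one endpoint on $\Gamma$ is a small point the paper leaves implicit, but it is handled correctly.
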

\begin{proof}
By Lemma~\ref{GM2} we have 
$$\int_{\tilde \Gamma^\perp(\tilde x)}\, \dist(z,\,\Gamma)^{ 1-p}\,dz \le C(p,C)|w-w'|^{2-p}\le C(p,C) \ell(\tilde \Gamma^\perp(\tilde x))^{2-p}.$$
This implies one direction. For the other direction, notice that for any $w\in \tilde \Gamma^\perp(\tilde x)$ we have
$$\ell(\tilde \Gamma^\perp(\tilde x))\ge \dist(w,\Gamma).$$
Since $p>1$, we obtain the other direction easily. 
\end{proof}

\subsection{John domains}

Let us recall the definition of John domains. 

\begin{defn}[John domain]\label{def:John}
	An open  subset $\Omega\subset \hat{\mathbb C}$ is called a John domain provided it satisfies 
	the 
	following condition:
	There exist a distinguished point $x_0 \in \Omega$ and a constant $J>0$ such that, for every 
	$x\in\Omega$, 
	there is a curve $\gamma: [0,\,l(\gamma)] \to \Omega$ parameterized by  arc
	length, such that 
	$\gamma(0)= x$, $\gamma(l(\gamma))= x_0$
	and
	\[
	\dist(\gamma(t),\, \R^2\setminus\Omega)\ge Jt. 
	\]
	Such a  curve $\gamma$ is called a $J$-John curve, $J$ is called a John 
	constant, and we refer to a John domain with a John constant $J$ by a $J$-John domain and to $x_0$ by a John center of $\Omega.$
\end{defn}

By definition the bounded complementary component $\tilde \Omega_\Gamma$ of a Jordan curve $\Gamma \in \Jordan(p,C)$ is subhyperbolic. The unbounded component $\Omega_\Gamma$ is a John domain. 

\begin{lem}[{\cite[Theorem 4.5]{NV1991}, \cite[Theorem 4,1]{GHM1989}}]
	\label{comp GM}
	Let $\Gamma \in \Jordan(p,C)$ be a Jordan curve in $\C$. Then the component $\Omega_\Gamma$ of $\widehat \C \setminus \Gamma$ is a $J$-John domain with $J=J(p,C)$, and the hyperbolic rays in the complementary domain are John curves. Especially, $\Gamma$ is of area zero.
\end{lem}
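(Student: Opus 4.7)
The plan is to show that hyperbolic rays in $\Omega_\Gamma$ emanating from $\infty$ serve as John curves, with the John constant quantified by $p$ and $C_0$. First I would fix a conformal map $\psi \colon \widehat\C\setminus\overline{\D}\to \Omega_\Gamma$ with $\psi(\infty)=\infty$, which extends homeomorphically to $\partial \D$ by the Caratheodory--Osgood theorem; for $z\in \Gamma$, the ray $\Gamma^\perp(z)$ is then the image $\psi(\{t\zeta : t\ge 1\})$, where $\zeta\in\partial\D$ is the unique preimage of $z$. With this setup, verifying that these rays are John curves already takes care of the ``hyperbolic rays are John curves'' assertion; what remains is the distance estimate.

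Next, I would reduce the John condition along $\Gamma^\perp(z)$ to a Euclidean distance comparison. By Lemma~\ref{whitney preserving}, $\psi$ maps standard Whitney-type sets of the annulus $\{t\le |w|\le 2t\}$ to $\lambda$-Whitney-type sets in $\Omega_\Gamma$ with universal $\lambda$. Parametrizing $\Gamma^\perp(z)$ by arc length from $z$ as $s\mapsto \gamma(s)$, the Gehring--Hayman inequality (Lemma~\ref{GH thm}) combined with the Whitney-type property gives that $s$ is comparable to $|\gamma(s)-z|$. Consequently, the pointwise John condition $\dist(\gamma(s),\Gamma)\ge Js$ reduces to a uniform estimate $\dist(\gamma(s),\Gamma)\ge J'|\gamma(s)-z|$, i.e., the shadow $z$ is, up to a fixed constant, the nearest point of $\Gamma$ to $\gamma(s)$.

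The crucial input for this last comparison is the $(2-p)$-subhyperbolicity of $\tilde \Omega_\Gamma$. Suppose, toward a contradiction, that there exist $w=\gamma(s)\in \Gamma^\perp(z)$ and $z'\in \Gamma$ with $|w-z'|$ much smaller than $|w-z|$. Approaching $z$ and $z'$ by points $\tilde z_1\in \tilde \Gamma^\perp(z)$ and $\tilde z_2\in\tilde \Gamma^\perp(z')$ from inside $\tilde\Omega_\Gamma$ and applying Corollary~\ref{cor:Pankka3} along the hyperbolic rays that connect them to the base point of $\tilde \Omega_\Gamma$ forces the subhyperbolic distance $d_{2-p}(\tilde z_1,\tilde z_2)$ to be much larger than $|\tilde z_1 - \tilde z_2|^{2-p}$, contradicting \eqref{eq:p-subhyperbolic}. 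This furnishes the required quantitative bound with $J=J(p,C_0)$. I expect that the main obstacle is precisely this conversion of the global integral subhyperbolic bound on $\tilde\Omega_\Gamma$ into the local pointwise John inequality on $\Omega_\Gamma$; this is the essence of \cite[Theorem 4.5]{NV1991} and \cite[Theorem 4,1]{GHM1989}, to which I would defer for the finer quantitative bookkeeping.

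Finally, the area-zero conclusion for $\Gamma=\partial\Omega_\Gamma$ is a classical corollary of the John property: any conformal parametrization of a John domain has derivative satisfying subpower growth at the boundary, forcing $\dim_{\mathcal H}(\Gamma)<2$ and in particular $|\Gamma|=0$. This step is standard and does not require new ingredients beyond what is already in place.
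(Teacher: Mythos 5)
The paper offers no proof of this lemma at all: it is quoted verbatim from \cite[Theorem 4.5]{NV1991} and \cite[Theorem 4.1]{GHM1989}, so your proposal can only be measured against those references. Your outline does point at the correct mechanism (the subhyperbolic bound \eqref{eq:p-subhyperbolic} on the \emph{bounded} component $\tilde\Omega_\Gamma$ is what forbids the boundary behaviour --- cusps pointing into $\Omega_\Gamma$ --- that would destroy the John property of the \emph{unbounded} component), and the final paragraph is fine: boundaries of John domains are porous, hence of Hausdorff dimension strictly less than $2$, so the area-zero claim is indeed routine once the John property is in hand.

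However, as a standalone argument the sketch has two genuine gaps. First, the reduction in your second paragraph is not justified: Lemma~\ref{GH thm} bounds the length of the hyperbolic geodesic by the length of an arbitrary competitor curve \emph{in $\Omega_\Gamma$}, i.e.\ by the inner distance $\dist_{\Omega_\Gamma}(z,\gamma(s))$, not by $|z-\gamma(s)|$. Passing to the Euclidean distance requires quasiconvexity of $\Omega_\Gamma$, which is not available a priori --- the paper's Lemma~\ref{lemma:qconvex} gives quasiconvexity only of $\tilde\Omega_\Gamma$, and John domains in general need not be quasiconvex (the exterior of a segment already fails). So the claimed comparability $s\sim|\gamma(s)-z|$ is essentially as hard as what you are trying to prove. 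Second, the contradiction argument is only a heuristic: if $|w-z'|\ll|w-z|$ then $|z-z'|\sim|w-z|$, so $z$ and $z'$ are \emph{far apart} in the Euclidean metric, and it is never identified which pair of points of $\tilde\Omega_\Gamma$ is Euclidean-close yet far in $d_{2-p}$; producing such a pair from the assumed failure of the John condition is precisely the content of the cited theorems, to which you explicitly defer. In short, your proposal is an honest road map that ultimately leans on the same citations the paper itself uses, but it is not a proof; if a self-contained argument is wanted, the two steps above are the ones that must actually be carried out.
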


We state the diameter estimates for shadows in \cite[Lemma 4.3]{KRZ2015}. 
\begin{lem}
	\label{lemma:shadow1}
	Let $\Omega\subset \mathbb R^2$ be a Jordan domain with boundary $\Gamma$ and $\lambda \ge 1$. Then, for any $\lambda$-Whitney-type set $A\subset \Omega$ with some $\lambda \ge 1$, we have 
	\[
	\diam(A)\le C(\lambda) \diam(S_{\Gamma}(A)). 
	\]
\end{lem}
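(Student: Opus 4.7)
The plan is to reduce the assertion to the unit disk via a Riemann map and then to transport the estimate back using Koebe distortion together with Lemma~\ref{lemma:biLip}.

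Let $\varphi\colon\mathbb{D}\to\Omega$ be a Riemann map with $\varphi(0)=z_\Omega$, and set $A':=\varphi^{-1}(A)$. By Lemma~\ref{whitney preserving}, $A'$ is a $\lambda'$-Whitney-type subset of $\mathbb{D}$ with $\lambda'=\lambda'(\lambda)$. Since hyperbolic rays in $\mathbb{D}$ with base point $0$ are Euclidean radii, and since $\varphi$ extends homeomorphically to $\overline{\mathbb{D}}\to\overline{\Omega}$ by the Carath\'eodory--Osgood theorem, the shadow projection is conformally natural: $S_\Gamma(A)=\varphi(S_{\partial\mathbb{D}}(A'))$.

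First I would verify the disk counterpart. Let $B(w_0,r')\subset A'$ with $r'\ge \diam(A')/\lambda'$; the Whitney-type condition forces $1-|w_0|\asymp\diam(A')$ with constants depending only on $\lambda'$. An elementary plane computation shows that the radial shadow $S_{\partial\mathbb{D}}(B(w_0,r'))$ contains a subarc of $\partial\mathbb{D}$ of Euclidean diameter $\gtrsim \diam(A')$: when $|w_0|$ is bounded away from $1$ the set $A'$ is macroscopic, while when $|w_0|$ is close to $1$ the angular width of the shadow is $\asymp r'/|w_0|\asymp \diam(A')$. I would then pick two points $w_1,w_2\in B(w_0,r'/2)$ on distinct radii from the origin with $|w_1-w_2|\asymp\diam(A')$ and let $z_i:=w_i/|w_i|$, so that $|z_1-z_2|\asymp\diam(A')$.

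The interior transfer is then immediate from Lemma~\ref{lemma:biLip}: $\varphi$ is bilipschitz up to a dilation $\asymp|\varphi'(w_0)|$ on $B(w_0,r')$, and hence
\[
|\varphi(w_1)-\varphi(w_2)|\asymp|\varphi'(w_0)|\,|w_1-w_2|\asymp\diam(A).
\]
The main step is the boundary transfer: $|\varphi(z_1)-\varphi(z_2)|\gtrsim\diam(A)$. Here I would apply Koebe's $1/4$-theorem to the univalent normalization of $\varphi$ on the internally tangent disk $D_i:=w_i+(1-|w_i|)\mathbb{D}\subset\mathbb{D}$, which meets $\partial\mathbb{D}$ precisely at $z_i$. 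This yields
\[
B\bigl(\varphi(w_i),\,c(1-|w_i|)|\varphi'(w_i)|\bigr)\subset \varphi(D_i)\subset \Omega
\]
with an absolute constant $c>0$; since $(1-|w_i|)|\varphi'(w_i)|\asymp\diam(A)$, the point $\varphi(z_i)\in\Gamma$ stays at Euclidean distance $\gtrsim\diam(A)$ from $\varphi(w_i)$. A topological argument based on the injectivity of $\varphi$ on $\overline{\mathbb{D}}$ for a Jordan domain, together with the fact that the two Koebe balls above can be arranged to be disjoint whenever $|w_1-w_2|$ is a sufficiently large multiple of $\diam(A')$, then propagates the interior separation at the Whitney scale to the required separation of the boundary points $\varphi(z_1)$ and $\varphi(z_2)$.

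The principal obstacle is this last topological step. The radial segment images $\varphi([w_i,z_i])$ need not be rectifiable and the chord $|\varphi(w_i)-\varphi(z_i)|$ is not a priori bounded above by a constant multiple of $\diam(A)$, so a direct triangle-inequality argument is ruled out. The remedy is to exploit Koebe's $1/4$-theorem at the Whitney scale as described above, which produces the required boundary separation purely through the tangent-disk inclusions rather than through arc-length estimates along the radii.
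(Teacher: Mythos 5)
Your reduction to the disk, the conformal naturality of the shadow projection, the choice of $w_1,w_2\in B(w_0,r'/2)$ with $|z_1-z_2|\asymp\diam(A')$, and the interior estimate $|\varphi(w_1)-\varphi(w_2)|\asymp\diam(A)$ are all fine. The gap is precisely at the step you yourself flag as ``the principal obstacle'', and the proposed remedy does not close it. Koebe's theorem on the tangent disks $D_i$ only yields $B(\varphi(w_i),c\,\dist(\varphi(w_i),\Gamma))\subset\Omega$, i.e.\ that each $\varphi(z_i)\in\Gamma$ lies outside both Koebe balls --- which is automatic for \emph{every} point of $\Gamma$ and holds no matter where the two radial limits land. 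The disjointness of the two balls is therefore irrelevant: it separates $\varphi(w_1)$ from $\varphi(w_2)$, which you already knew, but imposes no constraint whatsoever on $|\varphi(z_1)-\varphi(z_2)|$. The curves $\varphi([w_i,z_i])$ may be long and may terminate at nearly coincident boundary points; injectivity of $\varphi$ on $\overline{\mathbb D}$ forbids equality but not proximity. So no lower bound on $|\varphi(z_1)-\varphi(z_2)|$ is obtained, and this lower bound is the entire content of the lemma. Worse, the two-point statement you aim for is a strengthening of what is needed and is dubious in general: the boundary arc $\varphi(J)$ (with $J\subset\partial\mathbb D$ the short subarc between $z_1$ and $z_2$) can have large diameter while its endpoints are close.

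What suffices, since $S_\Gamma(A)=\varphi(S_{\partial\mathbb D}(A'))\supset\varphi(J)$, is the bound $\diam(\varphi(J))\gtrsim\diam(A)$, and this requires a conformally invariant quantity rather than pointwise distortion. A standard route: $\omega(w_0,J,\mathbb D)\ge c(\lambda')>0$ because $J$ has angular length $\gtrsim 1-|w_0|$ and sits over $w_0$; by conformal invariance $\omega(\varphi(w_0),\varphi(J),\Omega)\ge c(\lambda')$; and by Beurling's projection theorem a boundary set carrying harmonic measure $\ge c$ at a point $x$ satisfies $\diam(\varphi(J))\gtrsim\dist(x,\partial\Omega)\asymp\diam(A)$. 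Equivalently one can argue with the modulus of the curve family joining $A$ to $\Gamma\setminus S_\Gamma(A)$, in the spirit of the capacity argument the paper runs in Lemma~\ref{lemma:tilde_sQ_Whitney1}. Note that the paper itself does not reprove this statement but quotes it from \cite{KRZ2015}; in any case the missing separation step is exactly where the real work lies, so as written the proposal does not constitute a proof.
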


\begin{lem}
	\label{lemma:shadow2}
	Let $\Omega\subset \mathbb R^2$ be a Jordan $J$-John domain with boundary $\Gamma$ and $\lambda \ge 1$. Then, for any $\lambda$-Whitney-type set $A\subset \Omega$ with some $\lambda \ge 1$, we have 
	\[
	\diam_{\Omega}(S_{\Gamma}(A))\sim \diam(S_{\Gamma}(A))\sim \diam(A)
	\]
	with the constant depends only on $J$ and $\lambda$. 
\end{lem}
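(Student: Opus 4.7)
The plan is to reduce the asserted three-way comparability to the single bound
\[
\diam_\Omega(S_\Gamma(A))\le C(\lambda,J)\diam(A),
\]
since Lemma~\ref{lemma:shadow1} supplies $\diam(A)\le C(\lambda)\diam(S_\Gamma(A))$ and the inequality $\diam(S_\Gamma(A))\le\diam_\Omega(S_\Gamma(A))$ is automatic (the inner distance majorises the Euclidean one). To prove the displayed inequality, fix $z_1,z_2\in S_\Gamma(A)$ and pick $w_i\in A$ with $S_\Gamma(w_i)=z_i$, so that $w_i\in\Gamma^\perp_\Omega(z_i)$. It suffices to exhibit a curve in $\overline\Omega$ joining $z_1$ to $z_2$ of length at most $C(\lambda,J)\diam(A)$.

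The candidate is the concatenation of three arcs: the hyperbolic ray segment $\Gamma^\perp_\Omega(z_1,\{w_1\})$ from $z_1$ to $w_1$, a hyperbolic geodesic in $\Omega$ from $w_1$ to $w_2$, and the hyperbolic ray segment from $w_2$ to $z_2$. For the middle arc I would transport to $\D$ through the Riemann map $\varphi\colon\D\to\Omega$: by Lemma~\ref{whitney preserving} the preimage $A'=\varphi^{-1}(A)$ is a $\lambda'$-Whitney-type set in $\D$, so the hyperbolic distance between $\varphi^{-1}(w_1)$ and $\varphi^{-1}(w_2)$ is bounded in terms of $\lambda$, and the hyperbolic geodesic between these preimages stays in a fixed Whitney-type enlargement $\tilde A'\supset A'$. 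Its image $\tilde A=\varphi(\tilde A')$ is then a Whitney-type set in $\Omega$ with $\diam(\tilde A)$ comparable to $\diam(A)$, and Lemma~\ref{lemma:biLip} yields the required length bound on the middle arc. For the two end arcs I would use that in a Jordan John domain the hyperbolic rays from the conformal base point are (quantitatively in $J$) John-type curves, so that the portion of such a ray from any interior point $w$ to its boundary endpoint has Euclidean length at most $C(J)\dist(w,\partial\Omega)$. Since $\dist(w_i,\partial\Omega)\le\lambda\diam(A)$ by the Whitney-type property of $A$, each end arc has length at most $C(\lambda,J)\diam(A)$, and adding the three estimates yields the desired bound on $d_\Omega(z_1,z_2)$.

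The main obstacle is precisely the hyperbolic-ray-vs-John-curve comparison used for the end arcs: in a general Jordan John domain, the hyperbolic rays are not the John curves supplied by the definition. For the paper's primary use case, $\Omega=\Omega_\Gamma$ with $\Gamma\in\Jordan(p,C_0)$, this input is delivered directly by Lemma~\ref{comp GM}. More generally one appeals to the N\"akki--V\"ais\"al\"a analysis of John disks in \cite{NV1991}, where hyperbolic rays from the conformal center are shown to be quasigeodesics with constants depending only on $J$; alternatively one decomposes the radial segment $t\mapsto\varphi(tz_i')$ dyadically into Whitney-type pieces and sums their Euclidean lengths, using boundary H\"older continuity of $\varphi$ on John disks to produce the resulting geometric decay.
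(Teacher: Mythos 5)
Your argument is correct, but note that the paper itself gives no proof of this lemma: it is imported (together with Lemma \ref{lemma:shadow1}) from \cite{KRZ2015}, so there is no in-paper argument to compare against. Your reduction is the right one: Lemma \ref{lemma:shadow1} gives $\diam(A)\ls\diam(S_\Gamma(A))$, the inequality $\diam(S_\Gamma(A))\le\diam_\Omega(S_\Gamma(A))$ is trivial, and the whole content is $\diam_\Omega(S_\Gamma(A))\ls\diam(A)$. Your three-arc curve works. For the middle arc, the detour through $\D$ is genuinely needed (a $\lambda$-Whitney-type set is only assumed connected, not quasiconvex, so you cannot join $w_1$ to $w_2$ inside $A$ directly), and the chain of facts you use -- bounded hyperbolic diameter of $\varphi^{-1}(A)$, the geodesic staying in a Whitney-type enlargement, and Lemma \ref{lemma:biLip} -- is sound. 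For the end arcs, the input you correctly isolate, namely that the hyperbolic ray segment $\Gamma^\perp_\Omega(w_i)$ has length $\ls\dist(w_i,\partial\Omega)$, is exactly the cone-arc (John curve) property of hyperbolic rays that the paper itself invokes in Lemma \ref{comp GM} via \cite{NV1991} and \cite{GHM1989}; this is a general fact about John disks with constants depending only on $J$, \emph{provided} the base point of the shadow projection is the John center (or a point with $\dist(x_0,\partial\Omega)\sim\diam(\Omega)$), which is the paper's standing convention. That is the only implicit hypothesis in the lemma's statement, and you should make it explicit, since for a badly placed base point the rays need not be cone arcs with a uniform constant. With that caveat acknowledged, the proof is complete.
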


\begin{cor}
	\label{cor:shadow2}
	Let $\Gamma\in \Jordan(p,C_0)$. Then, for any $\lambda$-Whitney-type set $A\subset \Omega_\Gamma$ for $\lambda \ge 1$, we have 
	\[
	\diam(S_{\Gamma}(A))\sim \diam(A)
	\]
	with the constant depends only on $C_0$ and $\lambda$. 
\end{cor}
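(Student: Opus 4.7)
The plan is to view this as a direct consequence of the two preceding shadow-diameter lemmas once we know that $\Omega_\Gamma$ is a John domain. Concretely, I would proceed in two short steps.

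First, I would invoke Lemma~\ref{comp GM}: since $\Gamma\in \Jordan(p,C_0)$, the unbounded complementary component $\Omega_\Gamma$ is a $J$-John domain with John constant $J=J(p,C_0)$ depending only on $p$ and $C_0$. (One may need to remark that although $\Omega_\Gamma$ contains $\infty$, the statement of Lemma~\ref{lemma:shadow2} applies to the Jordan John domain structure; since the Whitney-type set $A$ is bounded and stays at a definite Euclidean distance from $\infty$ in terms of $\diam(A)$, the conclusion is unaffected by passing between the chordal and Euclidean viewpoints near $\Gamma$.)

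Second, with $J=J(p,C_0)$ in hand, I would apply Lemma~\ref{lemma:shadow2} to the Jordan $J$-John domain $\Omega_\Gamma$ and the $\lambda$-Whitney-type set $A\subset \Omega_\Gamma$. That lemma yields
\[
\diam_{\Omega_\Gamma}(S_\Gamma(A))\sim \diam(S_\Gamma(A))\sim \diam(A),
\]
with comparison constants depending only on $J$ and $\lambda$. Discarding the inner-diameter term and substituting $J=J(p,C_0)$ gives precisely $\diam(S_\Gamma(A))\sim \diam(A)$ with constants depending only on $p$, $C_0$, and $\lambda$, as claimed.

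The argument is essentially bookkeeping: there is no real obstacle beyond checking that the John constant supplied by Lemma~\ref{comp GM} can legitimately be fed into Lemma~\ref{lemma:shadow2}. The only mildly delicate point is the presence of $\infty$ in $\Omega_\Gamma$, but since Whitney-type sets are bounded with Euclidean-diameter comparable to their distance from $\Gamma$, the shadow estimates are purely local near $\Gamma$ and the passage through Lemma~\ref{lemma:shadow2} goes through verbatim.
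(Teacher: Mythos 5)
Your proposal is correct and is exactly the intended derivation: the corollary is stated immediately after Lemma~\ref{lemma:shadow2} precisely so that it follows by combining Lemma~\ref{comp GM} (which gives $\Omega_\Gamma$ the John property with $J=J(p,C_0)$) with Lemma~\ref{lemma:shadow2}. Your side remark about $\infty$ and the harmless discrepancy that the constants in fact depend on $p$ as well as $C_0$ and $\lambda$ (consistent with the paper's convention) are both fine.
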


We also record the following useful lemma. Recall that  a homeomorphism $\varphi\colon \mathbb D\to \Omega $ is 
{\it{quasisymmetric with respect to the
		inner distance}} if there is a homeomorphism $\eta:[0,\infty)\to [0,\infty)$ so that
$$|z-x|\le t|y-x|\mbox{ implies }\dist_{\Omega}(\varphi(z),\varphi(x))\le 
\eta(t)\dist_{\Omega}(\varphi(y),\varphi(x))$$
for each triple $z,x,y$ of points in $\mathbb D.$
It is clear from the definition that the inverse of a quasisymmetric map is also quasisymmetric. 
Roughly speaking the homeomorphism $\varphi$ maps round objects to round objects (with respect to the inner distance). 

\begin{lem}[\cite{H1989}, Theorem 3.1]\label{quasisymmetry}
	Let $\Omega\subset \mathbb R^2$ be a simply connected domain, and 
	$\varphi\colon \mathbb D\to \Omega $ be a 
	conformal map. Then $\Omega$ is John  if and only if $\varphi$ 
	is quasisymmetric with 
	respect to the inner distance. This statement is quantitative in the sense
	that the John constant and the function $\eta$ in quasisymmetry depend
	only on each other and $\diam(\Omega)/\dist(\varphi(0),\partial\Omega).$
	Especially, if $\Omega$ is John with constant $J$ and
	$\varphi(0)=x_0,$ where $x_0$ is the distinguished point, then,
	for any disk $B\subset \mathbb D$, $f(B)$ is a John domain with 
	the John constant only depending on $J$.   
\end{lem}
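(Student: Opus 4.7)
The plan is to prove the bi-implication in two directions using the Koebe distortion theorem together with the tools already established in the paper: the Gehring--Hayman inequality (Lemma \ref{GH thm}) and the approximate-similarity behavior of a conformal map on Whitney-type subsets of $\mathbb D$ (Lemma \ref{lemma:biLip}). The quantitative dependence on $\diam(\Omega)/\dist(\varphi(0),\partial\Omega)$ enters as a normalization: after precomposing $\varphi$ with a disk automorphism one may assume that $\varphi(0)$ plays the role of the John center in one direction, and of the distinguished basepoint from which QS is measured in the other.

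For $\Omega$ John $\Rightarrow$ $\varphi$ QS(inner), fix $x,y,z\in \mathbb D$ with $|z-x|\le t|y-x|$ and split into regimes by comparing $|y-x|$, $|z-x|$ and $1-|x|$. When $|y-x|$ and $|z-x|$ are both $\lesssim 1-|x|$, the three points sit in a bounded number of adjacent Whitney boxes of $\mathbb D$, and Lemma \ref{lemma:biLip} gives
\[
\dist_\Omega(\varphi(z),\varphi(x))\le C(t)\,\dist_\Omega(\varphi(y),\varphi(x))
\]
directly. Otherwise one connects $x$ to $y$ and $x$ to $z$ by hyperbolic geodesics in $\mathbb D$: Gehring--Hayman guarantees that the length of $\varphi(\gamma_{x,y})$ is comparable to $\dist_\Omega(\varphi(x),\varphi(y))$, and analogously for $z$; travelling out of $x$ along either geodesic for one Whitney step produces a Euclidean displacement of order $\dist(\varphi(x),\partial\Omega)$, so that $\dist_\Omega(\varphi(x),\varphi(y))\gtrsim \dist(\varphi(x),\partial\Omega)$. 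An iteration of the Whitney-scale estimate along $\gamma_{x,z}$, whose hyperbolic length is $\lesssim \log(1+t)$, then gives $\dist_\Omega(\varphi(z),\varphi(x))\le \eta(t)\,\dist(\varphi(x),\partial\Omega)$, and the John property of $\Omega$ closes the chain.

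For $\varphi$ QS(inner) $\Rightarrow$ $\Omega$ John, take $x_0=\varphi(0)$ as the proposed John center. For $w\in \Omega$ let $\gamma$ be the $\varphi$-image of the Euclidean segment from $\varphi^{-1}(w)$ to the origin, parameterized by arclength in $\Omega$ from $w$ to $x_0$. At any parameter $s$, let $\eta\in \mathbb D$ be the preimage of $\gamma(s)$; applying the QS condition to the triple $(\varphi^{-1}(w),\eta,\eta')$ with $|\eta-\eta'|\sim 1-|\eta|$, together with Lemma \ref{lemma:biLip} on the Whitney box containing $\eta$, shows that $\dist(\gamma(s),\partial\Omega)$ is comparable to the inner $\Omega$-diameter of the tail $\gamma([0,s])$, which by Gehring--Hayman is in turn comparable to the arclength $s$. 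This yields the linear lower bound required by the John condition. The ``especially'' clause then follows by precomposing $\varphi$ with a disk automorphism sending $0$ to the centre of $B$, since QS(inner) is preserved under such precomposition and the preceding argument applies to $\varphi(B)$.

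The main obstacle will be the careful bookkeeping between inner and local Euclidean distances in $\Omega$: Koebe controls $\varphi$ nicely with respect to the Euclidean metric on each Whitney-scale piece, but QS(inner) is phrased in terms of the inner distance, which is the appropriate notion when $\partial\Omega$ is wild (as happens for a general John domain). Lemmas \ref{whitney preserving}, \ref{lemma:biLip} and the Gehring--Hayman inequality together form the bridge between these two scales, and the technical work is essentially to compose these estimates uniformly and to dispose of the degenerate relative configurations of $x,y,z$ in the first direction.
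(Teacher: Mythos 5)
First, a point of comparison: the paper does not prove this lemma at all --- it is quoted from Heinonen \cite{H1989}, Theorem 3.1, and used as a black box, so there is no internal proof to measure your argument against. Your sketch is an attempt to reprove Heinonen's theorem, and the toolkit you choose (Koebe distortion, the Gehring--Hayman inequality, Whitney-scale bilipschitz behaviour of $\varphi$) is the right one. The converse direction (quasisymmetric with respect to the inner distance implies John) is essentially correct: with $\eta=\varphi^{-1}(\gamma(s))$ and an auxiliary point $\eta'$ at Whitney distance $1-|\eta|$ from $\eta$, the chain $s\lesssim \dist_\Omega(w,\gamma(s))\lesssim \dist_\Omega(\varphi(\eta'),\gamma(s))\lesssim \dist(\gamma(s),\partial\Omega)$ closes, the first step by Gehring--Hayman applied to the radial (hyperbolic geodesic) segment and the second by quasisymmetry with $t=1$.

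The forward direction, however, contains a step that fails as written. You assert the chain $\dist_\Omega(\varphi(z),\varphi(x))\le\eta(t)\,\dist(\varphi(x),\partial\Omega)\lesssim\eta(t)\,\dist_\Omega(\varphi(y),\varphi(x))$. The intermediate inequality is false even for $\Omega=\mathbb D$ and $\varphi=\mathrm{id}$: take $x$ close to $\partial\mathbb D$ and $y,z$ at Euclidean distance $1$ from $x$; then $\dist_\Omega(\varphi(z),\varphi(x))=1$ while $\dist(\varphi(x),\partial\Omega)=1-|x|$ is arbitrarily small. Relatedly, the hyperbolic length of $\gamma_{x,z}$ is not $\lesssim\log(1+t)$; it is of order $\log\frac{|x-z|}{1-|x|}+\log\frac{|x-z|}{1-|z|}$, which is unbounded. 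The correct comparison quantity is $\dist(\varphi(w),\partial\Omega)$, where $w$ is the ``top'' of $\gamma_{x,z}$, i.e.\ the point with $1-|w|\sim|x-z|$, and the place where the John hypothesis genuinely enters is in showing that the image of the descending half of the geodesic, from $w$ down to $z$, has length $\lesssim\dist(\varphi(w),\partial\Omega)$ --- equivalently, that $\dist(\varphi(\cdot),\partial\Omega)$ decays geometrically along hyperbolic rays, which is the standard derivative-decay reformulation of the John condition. Your sketch relegates exactly this to ``the John property closes the chain,'' but it is the crux of the direction. Separately, the ``especially'' clause does not follow by precomposing with a disk automorphism: such an automorphism is itself only quasisymmetric with a control function that degenerates as it moves $0$ towards $\partial\mathbb D$ (by the very quantitative dependence on $\diam(\Omega)/\dist(\varphi(0),\partial\Omega)$ stated in the lemma), so quasisymmetry with respect to the inner distance is not preserved with uniform constants under this precomposition; that clause needs its own argument, again via the decay of $\dist(\varphi(\cdot),\partial\Omega)$ along hyperbolic geodesics of $B$.
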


A immediate corollary is the following.
\begin{cor}\label{points}
Let $\Gamma\in \Jordan(p,C_0)$ and $\varphi_{\Gamma}\colon \mathbb C\setminus D \to \mathbb C\setminus {\tilde \Omega}_{\Gamma}$ be an extended conformal map. 
Then for any subarc $\Gamma'\subset \Gamma$ with $z\in \Gamma'$, letting $z'\in \Gamma^\perp(z)$ so that for some $c\ge 1$, 
$$ \ell(\Gamma^\perp(z,\,z'))= c \diam(\Gamma'),$$
we have 
$$\frac 1 {\beta}\diam(\varphi_{\Gamma}^{-1}(\Gamma')) \le  \ell(\varphi_{\Gamma}^{-1}(\Gamma^\perp(z,\,z')))\le \beta \diam(\varphi_{\Gamma}^{-1}(\Gamma')),$$
where $\beta=\beta(C_0,\,c)$ is an increasing function of $c$.  
\end{cor}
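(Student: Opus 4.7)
The plan is to transfer the estimate to the parameter exterior $\mathbb{C}\setminus\overline{D}$ via $\varphi_\Gamma$ and then compare two arcs of $\partial D$ sharing the point $a:=\varphi_\Gamma^{-1}(z)$. Set also $a':=\varphi_\Gamma^{-1}(z')$. Since $\varphi_\Gamma$ is conformal and $\Omega_\Gamma$ has base point $\infty$, $\varphi_\Gamma$ carries the hyperbolic rays of $\mathbb{C}\setminus\overline{D}$, i.e., the Euclidean radial segments from $\infty$ to points of $\partial D$, onto the hyperbolic rays $\Gamma^\perp$ of $\Omega_\Gamma$. In particular $\varphi_\Gamma^{-1}(\Gamma^\perp(z,z'))$ is the radial segment $[a,a']$, of Euclidean length $|a'|-1=|a-a'|$. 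The claim thus reduces to showing
\[
\tfrac{1}{\beta}\diam A \;\le\; |a-a'| \;\le\; \beta \diam A, \qquad A:=\varphi_\Gamma^{-1}(\Gamma'),
\]
for some $\beta=\beta(C_0,c)$ increasing in $c$.

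Next I locate $a'$ using Whitney preservation and the John structure of $\Omega_\Gamma$. By Lemma~\ref{comp GM}, the hyperbolic ray $\Gamma^\perp(z)$ is a $J$-John curve with $J=J(p,C_0)$, so $\dist(z',\Gamma)\sim c\,\diam\Gamma'$. Fix a $\lambda$-Whitney-type set $W\ni z'$ in $\Omega_\Gamma$ with $\diam W\sim c\,\diam\Gamma'$; by Lemma~\ref{whitney preserving}, $\varphi_\Gamma^{-1}(W)\subset\mathbb{C}\setminus\overline{D}$ is a $\lambda'$-Whitney-type set containing $a'$, so $|a-a'|\sim \diam\varphi_\Gamma^{-1}(W)$. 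Let $B\subset\partial D$ be the radial shadow of $\varphi_\Gamma^{-1}(W)$; it is an arc containing $a$ with $\diam B\sim |a-a'|$. Because $\varphi_\Gamma$ sends radial rays onto $\Gamma^\perp$-rays, the shadow $S_\Gamma(W)=\varphi_\Gamma(B)$, and Corollary~\ref{cor:shadow2} yields
\[
\diam\varphi_\Gamma(B) \;=\; \diam S_\Gamma(W) \;\sim\; \diam W \;\sim\; c\,\diam\Gamma'.
\]

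It remains to compare the two arcs $A,B\subset\partial D$ sharing $a$, whose $\varphi_\Gamma$-images have diameters $\diam\Gamma'$ and $\sim c\,\diam\Gamma'$ respectively. By Lemma~\ref{quasisymmetry}, $\varphi_\Gamma$ is quasisymmetric in the inner metric of $\Omega_\Gamma$, with distortion function $\eta=\eta(p,C_0)$. Chaining this quasisymmetry over a dyadic sequence of Whitney-type sets along the radial ray through $a$, and using Lemma~\ref{whitney preserving} and Corollary~\ref{cor:shadow2} to pass between inner and Euclidean diameters, one upgrades it to a power-quasisymmetric comparison of the form
\[
\tfrac{1}{K}\left(\frac{\diam A}{\diam B}\right)^{1/\alpha} \;\le\; \frac{\diam\varphi_\Gamma(A)}{\diam\varphi_\Gamma(B)} \;\le\; K\left(\frac{\diam A}{\diam B}\right)^{\alpha}
\]
with $K\ge 1$ and $\alpha\in(0,1]$ depending only on $p,C_0$. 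Substituting the known image-diameters gives $|a-a'|\sim c^{1/\alpha}\diam A$, so $\beta=K\,c^{1/\alpha(p,C_0)}$ works and is increasing in $c$, as required.

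The main anticipated obstacle is precisely this last upgrade, since Lemma~\ref{quasisymmetry} only controls $\varphi_\Gamma$ with respect to the inner metric in $\Omega_\Gamma$, whereas the statement involves Euclidean diameters on $\Gamma$. The hypothesis $\Gamma\in\Jordan(p,C_0)$ enters here indirectly: by Lemma~\ref{lemma:qconvex} the complementary domain $\tilde\Omega_\Gamma$ is quasiconvex, so boundary points of $\Gamma$ are joined by short curves in $\overline{\tilde\Omega_\Gamma}$ which can be perturbed through the Whitney structure of $\Omega_\Gamma$ to bound the inner distance in $\Omega_\Gamma$ by the Euclidean distance on the relevant scales.
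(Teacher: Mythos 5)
Your proposal is essentially sound and rests on the same two pillars as the paper's argument: the preimage of $\Gamma^\perp(z,z')$ is a radial segment of length $|a-a'|$, and the quasisymmetry of $\varphi_\Gamma$ with respect to the inner metric (Lemma~\ref{quasisymmetry}) transfers ratios of distances between the two sides. The routes diverge in how that quasisymmetry is exploited. The paper is more economical: it picks a single third point $z''\in\Gamma'$ with $\dist_{\Omega_\Gamma}(z,z'')\gtrsim\diam(\Gamma')$, applies the three-point quasisymmetry condition once to the triple $z,z',z''$ (whose inner-distance ratio is comparable to $c$), and reads off $\beta$ directly from $\eta$; monotonicity of $\eta$ gives monotonicity of $\beta$ in $c$. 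You instead interpolate a Whitney-type set $W$ at $z'$, pass to its shadow arc $B$, and then compare the two boundary arcs $A$ and $B$ via a power-quasisymmetric upgrade. That upgrade is a standard Tukia--V\"ais\"al\"a fact for quasisymmetric maps on connected spaces, but it is not among the paper's lemmas and is more machinery than needed: applying plain $\eta$-quasisymmetry to the triple $a$, $a'$ (or a point of $B$ realizing $\diam B$), and a point of $A$ realizing $\diam A$ already yields $\diam B\le \eta(\cdot)\,\diam A$ with the required monotone dependence on $c$.

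The one place where your argument leans on the wrong tool is the final paragraph. To convert the quasisymmetry estimate (which lives in $\dist_{\Omega_\Gamma}$) into a statement about Euclidean diameters of $\Gamma'$, you appeal to quasiconvexity of $\tilde\Omega_\Gamma$ and a ``perturbation through the Whitney structure of $\Omega_\Gamma$''. A short curve joining two points of $\Gamma$ inside the \emph{other} complementary domain $\tilde\Omega_\Gamma$ gives no control on their inner distance in $\Omega_\Gamma$, and the proposed perturbation is not something the cited lemmas provide. The correct mechanism — and the one the paper uses — is the John property of $\Omega_\Gamma$ (Lemma~\ref{comp GM}) together with the shadow comparison of Lemma~\ref{lemma:shadow2}: an arc $\Gamma'\subset\Gamma$ is the shadow of a suitable Whitney-type set of $\Omega_\Gamma$, whence $\diam_{\Omega_\Gamma}(\Gamma')\sim\diam(\Gamma')$. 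You already invoke exactly this for the arc $B$ via Corollary~\ref{cor:shadow2}; the same device applied to $\Gamma'$ closes the gap, and with that substitution your proof goes through.
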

\begin{proof}
Let $z''\in \Gamma'$ so that	
$$\dist_{\Omega_\Gamma}(z,\,z'')>\frac 1 2 \diam_{\Omega_\Gamma}(\Gamma')$$
given by the triangle inequality. Then we have
$$\dist_{\Omega_\Gamma}(z,\,z'')\sim \diam_{\Omega_\Gamma}(\Gamma')\sim \diam(\Gamma')$$
by Lemma~\ref{lemma:shadow2}.
	
	Recall that $\Omega_{\Gamma}$ is John. Since $\varphi_\Gamma$ is conformal and maps the exterior of the disk onto the exterior of a Jordan domain, 
$\varphi$ extends conformally to the point at infinity, mapping it to the point
at infinity. Hence, modulo two rotations of the Riemann sphere, we may identify
$\varphi_\Gamma$ with a conformal map from the unit disk onto a bounded John 
domain in $\mathbb C.$  
	Then by Lemma~\ref{quasisymmetry},  $\varphi$ is $\eta$-quasisymmetric with respect to the internal metric of $\Omega$, and we have
	$$     \frac{|\varphi_{\Gamma}^{-1}(z)-\varphi_{\Gamma}^{-1}(z')|}{|\varphi_{\Gamma}^{-1}(z)-\varphi_{\Gamma}^{-1}(z'')|}\le \eta^{-1}\left( \frac{\dist_{\Omega_\Gamma}(z,\,z')}{\dist_{\Omega_\Gamma}(z,\,z'')}\right),$$
which gives one direction of the corollary as we are in the exterior of the unit disk. The other direction follows similarly. 
\end{proof}

This corollary also implies the following lemma. 
\begin{lem}\label{lemma:Pankka2}
	Let $\Omega$ be a Jordan domain which is $J$-John, $\Gamma=\partial \Omega$, and let $R\subset \Omega$ be a compact connected set. Let $c\ge 1$ be a constant having the property that, for each $z\in S_{\Gamma}(R)$, we have
	\begin{equation}\label{assum 3}
	\ell(\Gamma^\perp(z,\,R))\le c \diam(S_{\Gamma}(R)) 
	\end{equation}
	for some constant $c > 0$. Then 
	\[
	\dist(R,\,\Gamma)\ls\diam(R),
	\]
	where the constant depends only on $J$ and $c$. 
\end{lem}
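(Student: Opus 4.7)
The plan is to split the desired estimate into two independent inequalities. First, the hypothesis gives $\dist(R,\Gamma)\le c\,\diam(S_\Gamma(R))$ for free: for any $z\in S_\Gamma(R)$, the curve $\Gamma^\perp(z,R)$ joins $z\in\Gamma$ to a point of $R$ and has Euclidean length at most $c\,\diam(S_\Gamma(R))$. It therefore suffices to prove the reverse control
\[
\diam(S_\Gamma(R))\;\ls\;\diam(R),
\]
with constant depending only on $J$ and $c$; combining the two estimates then yields the lemma.

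For this reverse control I would pull back to the disk via a Riemann map $\varphi\colon\D\to\Omega$ with $\varphi(0)=z_\Omega$, extended by Carath\'eodory--Osgood to a homeomorphism $\overline\D\to\overline\Omega$, and set $\tilde R=\varphi^{-1}(R)$ and $\tilde A=\varphi^{-1}(S_\Gamma(R))\subset\partial\D$; the latter is a subarc, since $S_\Gamma$ is continuous on $R$. Because $\Omega$ is $J$-John, Lemma~\ref{quasisymmetry} guarantees that $\varphi$ is quasisymmetric in the inner metric of $\Omega$ with control depending only on $J$, and the argument used in the proof of Corollary~\ref{points} then converts our hypothesis into
\[
\ell(\varphi^{-1}(\Gamma^\perp(z,R)))\;\le\;\beta\,\diam(\tilde A)\qquad\text{for every }z\in S_\Gamma(R),
\]
with $\beta=\beta(J,c)$. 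Geometrically, the continuum $\tilde R$ is trapped in the radial sector over $\tilde A$ and meets every radial segment of that sector within Euclidean depth $\beta\,\diam(\tilde A)$ from $\partial\D$. Connectedness of $\tilde R$ then forces $\diam(\tilde R)\gs\diam(\tilde A)$, with constant depending only on $\beta$: for small $\beta$ one applies the triangle inequality to the shallow points above the endpoints of $\tilde A$, and in general one positions a Whitney-type ball $\tilde B\subset\D$ of radius $\sim\diam(\tilde A)$ near the midpoint radial at depth $\sim\beta\,\diam(\tilde A)$ and uses that $\tilde R$ must sweep the angular range of $\tilde A$ and hence intersect $\tilde B$ in a piece of diameter $\sim\diam(\tilde B)$.

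It remains to push the estimate back to $\Omega$. By Lemma~\ref{whitney preserving}, $\varphi(\tilde B)$ is a Whitney-type set in $\Omega$, and by Lemma~\ref{lemma:biLip}, the restriction $\varphi|_{\tilde B}$ is bilipschitz up to dilation. Feeding this into Lemma~\ref{lemma:shadow2} applied to $\varphi(\tilde B)$ yields
\[
\diam(S_\Gamma(R))\;\sim\;\diam(\tilde A)\;\sim\;\diam(\tilde B)\;\sim\;\diam(\varphi(\tilde B))\;\ls\;\diam(R),
\]
as desired. The main obstacle is this final transfer: the Riemann map $\varphi$ is globally only quasisymmetric in the inner metric, not bilipschitz, so one must locate a Whitney-type ball $\tilde B$ whose Euclidean size matches $\diam(\tilde A)$ and whose image still meets $R$ substantially, and this is precisely where the depth control from Corollary~\ref{points} is crucial and where the $\beta(J,c)$ dependence enters the final constant.
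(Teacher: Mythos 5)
Your route is the same as the paper's: pull back by the Riemann map, use quasisymmetry (Lemma~\ref{quasisymmetry}, i.e.\ the argument of Corollary~\ref{points}) to turn hypothesis \eqref{assum 3} into a bound $\ell(\varphi^{-1}(\Gamma^\perp(z,R)))\ls\diam(\varphi^{-1}(S_\Gamma(R)))$ on radial segments, extract the relative estimate from planar geometry, and transfer back by quasisymmetry. The preliminary reduction ($\dist(R,\Gamma)\le c\,\diam(S_\Gamma(R))$ is free, so it suffices to prove $\diam(S_\Gamma(R))\ls\diam(R)$) is correct and is a clean way to organize the proof.

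The gap is in the middle step, and it comes from your choice of normalization. The paper works with the exterior map $\varphi_\Gamma\colon\C\setminus\overline{\D}\to\overline{\Omega_\Gamma}$, where the radial rays over distinct boundary points \emph{diverge}: $\inf_{s,t\ge1}|sv_1-tv_2|\ge|v_1-v_2|$, so any connected set whose shadow is the arc $\tilde A$ automatically satisfies $\diam(\tilde R)\ge\tfrac12\diam(\tilde A)$, and the depth bound finishes the argument. In your interior normalization the radii converge at the origin, and the claim ``connectedness of $\tilde R$ forces $\diam(\tilde R)\gs\diam(\tilde A)$'' is not true in general: your triangle-inequality argument needs $\beta<1/2$, and the Whitney-ball patch does not close the remaining case, because the shallow points of $\tilde R$ are only known to lie at depths anywhere in $[0,\beta\diam(\tilde A)]$, so a ball of radius $\sim\diam(\tilde A)$ placed at a prescribed depth $\sim\beta\diam(\tilde A)$ need not meet $\tilde R$ at all when $\beta$ is large. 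Worse, for $R$ near the base point the inequality $\diam(S_\Gamma(R))\ls\diam(R)$ genuinely fails: $R=\overline{B(0,\epsilon)}$ in $\Omega=\D$ shadows all of $\partial\D$ and satisfies \eqref{assum 3} with $c=1$, yet $\dist(R,\Gamma)\to1$ while $\diam(R)\to0$. So no interior-disk argument can succeed without an additional hypothesis keeping $R$ away from $z_\Omega$ (e.g.\ $c\,\diam(S_\Gamma(R))\ll\dist(z_\Omega,\Gamma)$); the paper sidesteps this because in its application the base point of $\Omega_\Gamma$ is $\infty$. Separately, the final chain $\diam(S_\Gamma(R))\sim\diam(\tilde A)\sim\diam(\tilde B)\sim\diam(\varphi(\tilde B))$ contains two literal scale errors (a conformal map does not preserve absolute diameters); the intended comparison should be routed entirely through relative quantities, e.g.\ $\diam(S_\Gamma(R))\sim\diam(S_\Gamma(\varphi(\tilde B)))\sim\diam(\varphi(\tilde B))\ls\diam(R)$ via quasisymmetry and Lemma~\ref{lemma:shadow2}.
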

\begin{proof}
Let $\varphi_{\Gamma}\colon \mathbb C\setminus D \to \overline {\Omega}_{\Gamma}$ be the extended conformal map. 
By Corollary~\ref{points}, we have 
$$\ell(\varphi_{\Gamma}^{-1}(\Gamma^\perp(z,\,R)))\ls \diam(\varphi_{\Gamma}^{-1}(S_{\Gamma}(R)))$$
for any $z\in \in S_{\Gamma}(R)$. Then by the fact that $\varphi_{\Gamma}^{-1}(\Gamma^\perp(z,\,R))$ radial segments, we conclude that 
$$\dist(\varphi_{\Gamma}^{-1}(R),\,\varphi_{\Gamma}^{-1}(\Gamma))\ls\diam(\varphi_{\Gamma}^{-1}(R)). $$
Now by the fact that $\varphi_{\Gamma}$ is quasisymmetric again we obtain the desired result.
\end{proof}

\begin{cor}
	\label{cor:JJ-corollary}
	Let $\Gamma \in \Jordan(p,C_0)$ and $R \subset \Omega_\Gamma$ a compact connected set and 
	\[
	c = \sup_{x\in S_\Gamma(R)} \frac{\ell(\Gamma^\perp(x,R))}{\diam(S_\Gamma(R))}.
	\]
	Then 
	\[
	\dist(R,\,\Gamma)\ls\diam(R),
	\]
	where the constant depends only on $p$, $C_0$, and $c$. 
\end{cor}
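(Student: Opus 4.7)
The plan is to recognize this statement as an immediate consequence of combining Lemma~\ref{comp GM} with Lemma~\ref{lemma:Pankka2}, which has already been established for general $J$-John Jordan domains. The role of the hypothesis $\Gamma\in \Jordan(p,C_0)$ is solely to supply the John property for the unbounded complementary component $\Omega_\Gamma$; once that is in hand, the conclusion follows without any further geometric work.

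More concretely, my first step is to invoke Lemma~\ref{comp GM}, which tells us that $\Omega_\Gamma$ is a $J$-John Jordan domain with John constant $J=J(p,C_0)$. This converts the subhyperbolic hypothesis on $\tilde\Omega_\Gamma$ into the John condition on the domain $\Omega_\Gamma$ where $R$ actually lives.

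Next, I observe that the constant $c$ defined in the statement is exactly the supremum of the ratios $\ell(\Gamma^\perp(x,R))/\diam(S_\Gamma(R))$ over $x\in S_\Gamma(R)$, so by construction
\[
\ell(\Gamma^\perp(x,R)) \le c\,\diam(S_\Gamma(R)) \quad \text{for every } x\in S_\Gamma(R),
\]
which is exactly the hypothesis \eqref{assum 3} of Lemma~\ref{lemma:Pankka2} with this constant $c$. Applying that lemma to the Jordan $J$-John domain $\Omega_\Gamma$ and the compact connected set $R$ yields $\dist(R,\Gamma)\ls \diam(R)$ with a constant depending only on $J$ and $c$, and hence only on $p$, $C_0$ and $c$ as claimed.

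There is no real obstacle here; the only thing to watch is the quantitative chain of dependencies, namely that the John constant $J$ produced by Lemma~\ref{comp GM} depends only on $p$ and $C_0$, so that the final constant produced by Lemma~\ref{lemma:Pankka2} indeed depends only on $p$, $C_0$ and $c$, as required.
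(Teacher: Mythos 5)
Your proof is correct and is exactly the intended derivation: the paper states this as an immediate corollary of Lemma~\ref{lemma:Pankka2} applied to the John domain $\Omega_\Gamma$ furnished by Lemma~\ref{comp GM}, with the definition of $c$ supplying hypothesis \eqref{assum 3} verbatim. The chain of constant dependencies you trace ($J=J(p,C_0)$, then the final constant depending on $J$ and $c$) is also as the paper intends.
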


The following lemma introduces a doubling property on the boundary of a Jordan John domain, i.e.~a domain which is both Jordan and John. We use it in the form of the ensuing corollary.

\begin{lem}[{\cite[Lemma 4.6]{KRZ2015}}]
\label{lemma:finite division}
Let $\Omega\subset \widehat{\C}$ be a Jordan John domain. Then for each $C>0$ there are at most $N=N(C,J)$ pairwise disjoint subarcs $\{\gamma_k\}_{k=1}^N$ of a curve $\Gamma\subset \partial \Omega$ satisfying
\[
\diam(\Gamma) \le C\diam(\gamma_k).
\]
\end{lem}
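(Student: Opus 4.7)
My plan is to transport the problem to the unit disk via a Riemann map and then close the argument by an elementary arclength count on $S^{1}$. Let $\varphi\colon\D\to\Omega$ be a Riemann map, extended to a homeomorphism $\overline{\D}\to\overline{\Omega}$ by the Carath\'eodory--Osgood theorem. After a preliminary M\"obius normalization sending a John center of $\Omega$ to $\varphi(0)$, Lemma~\ref{quasisymmetry} yields that $\varphi$ is $\eta$-quasisymmetric with respect to the inner metric on $\Omega$, with $\eta$ depending only on $J$. Set $\tilde\Gamma=\varphi^{-1}(\Gamma)$ and $\tilde\gamma_k=\varphi^{-1}(\gamma_k)\subset\tilde\Gamma\subset S^{1}$; these are pairwise disjoint subarcs.

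\medskip

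The crux is the diameter lower bound
\[
\diam(\tilde\gamma_k)\ \geq\ c(C,J)\,\diam(\tilde\Gamma).
\]
For each $\gamma_k$ I would associate a $\lambda$-Whitney-type set $W_k\subset\Omega$ of Euclidean diameter comparable to $\diam(\gamma_k)$ whose shadow along hyperbolic rays in $\Omega$ lies in $\gamma_k$ and has diameter comparable to $\diam(\gamma_k)$. Such a $W_k$ is obtained naturally by taking a Euclidean disk around the ``tip'' of the Stolz region over $\tilde\gamma_k$ in $\D$, pushing it forward by $\varphi$, and applying Lemma~\ref{whitney preserving} (conformal invariance of Whitney-type sets) together with Lemma~\ref{lemma:shadow2} (comparability of diameter and shadow diameter in a John domain). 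Its disk-side preimage is a disk of radius $r_k\sim\diam(\tilde\gamma_k)$. Repeating the construction for $\Gamma$ itself produces a set $W_\Gamma\subset\Omega$ with disk preimage of radius $r_\Gamma\sim\diam(\tilde\Gamma)$. The hypothesis $\diam(\gamma_k)\ge\diam(\Gamma)/C$ then translates, through Lemma~\ref{lemma:shadow2} and the quasisymmetry of $\varphi$ applied to the centers of the Whitney-type sets, into $r_k\ge c(C,J)\,r_\Gamma$, which is the desired bound on $\diam(\tilde\gamma_k)$.

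\medskip

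Given the diameter lower bound, the conclusion is immediate: the subarcs $\tilde\gamma_k$ are pairwise disjoint subarcs of $\tilde\Gamma\subset S^{1}$, each of arclength at least $\diam(\tilde\gamma_k)\ge c(C,J)\,\diam(\tilde\Gamma)$, while the total arclength of $\tilde\Gamma$ is at most $\pi\,\diam(\tilde\Gamma)$. Summing over $k$ therefore yields $N\le \pi/c(C,J)$, a bound depending only on $C$ and $J$.

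\medskip

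I expect the principal obstacle to be the simultaneous verification, with constants uniform in $k$, that the Whitney-type sets $W_k$ satisfy all three requirements: Whitney-type constant $\lambda=\lambda(J)$, diameter comparable to $\diam(\gamma_k)$, and shadow diameter comparable to $\diam(\gamma_k)$. The conformal preservation of Whitney-type sets (Lemma~\ref{whitney preserving}) and the shadow comparability in John domains (Lemma~\ref{lemma:shadow2}) supply the necessary tools, but matching the hyperbolic scale on the disk side to the correct Euclidean scale on the $\Omega$ side, uniformly in $k$, requires a careful parameter chase through the quasisymmetric transfer.
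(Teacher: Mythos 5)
Your overall architecture --- transport to $S^1$ via the Carath\'eodory extension of a Riemann map, prove the lower bound $\diam(\tilde\gamma_k)\ge c(C,J)\,\diam(\tilde\Gamma)$, and finish with an arclength count (arclength of a circular arc lies between its diameter and $\pi$ times its diameter) --- is viable, and it is genuinely different from the argument in \cite{KRZ2015}, which stays in $\Omega$ and counts area of pairwise disjoint Whitney-type sets sitting over the arcs $\gamma_k$. The endgame is fine. The gap sits in the middle step, and it is more than the ``parameter chase'' you anticipate: Lemma~\ref{quasisymmetry} gives quasisymmetry of $\varphi$ only \emph{with respect to the inner distance of $\Omega$}, whereas your hypothesis $\diam(\Gamma)\le C\diam(\gamma_k)$ and the diameters of your sets $W_k$, $W_\Gamma$ are Euclidean. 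To extract a lower bound on $\diam(\tilde\gamma_k)/\diam(\tilde\Gamma)$ from the quasisymmetry of $\varphi^{-1}$ you need an \emph{upper} bound on a ratio of inner distances, and tracing through your construction (in particular the comparison of the centers of $W_k$ and $W_\Gamma$, which may sit over opposite ends of $\Gamma$) this requires $\diam_\Omega(\Gamma)\le C'(J)\,\diam(\Gamma)$ --- that any two points of the boundary arc $\Gamma$ can be joined inside $\Omega$ by a curve of length $\lesssim_J \diam(\Gamma)$. This is false for general Jordan domains (boundaries that spiral inward give Euclid-close boundary points joined by a small-diameter boundary arc but with huge inner distance); it is true for John domains, but it is exactly the nontrivial input, and it is nowhere stated or proved in your sketch. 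Lemma~\ref{lemma:shadow2} supplies it only for shadows of Whitney-type sets; the paper silently extends it to arbitrary subarcs in the proof of Corollary~\ref{points}.

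Once that fact is in hand, your Whitney-set detour is unnecessary: apply the standard quasi-preservation of relative diameters by the quasisymmetric map $\varphi^{-1}\colon(\overline\Omega,\dist_\Omega)\to\overline{\D}$ directly to the nested pair $\gamma_k\subset\Gamma$, which gives $\diam(\tilde\gamma_k)/\diam(\tilde\Gamma)\ge \bigl(2\eta\bigl(\diam_\Omega(\Gamma)/\diam_\Omega(\gamma_k)\bigr)\bigr)^{-1}$, and then use $\diam_\Omega(\gamma_k)\ge\diam(\gamma_k)\ge\diam(\Gamma)/C\gtrsim_J \diam_\Omega(\Gamma)/C$. Alternatively, you can bypass the circle entirely: your sets $W_k$ have shadows in the pairwise disjoint arcs $\gamma_k$, hence are pairwise disjoint; each contains a disk of radius $\gtrsim\diam(\Gamma)/(C\lambda)$; and all of them lie in a ball of radius $\lesssim\diam(\Gamma)$ about $\Gamma$, so comparing areas bounds $N$ by a constant depending only on $C$ and $J$. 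Either repair closes the argument; as written, the proof is incomplete at the quasisymmetric transfer.
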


\begin{cor}
\label{cor:finite division}
Let $\Gamma \in \Jordan(p,C_0)$ be a Jordan curve in $\C$. Then, for each $C>0$, there are at most $N=N(C,p,C_0)$ pairwise disjoint subarcs $\{\gamma_k\}_{k=1}^N$ of an arc $\gamma \subset \Gamma$ satisfying
\[
\diam(\gamma) \le C\diam(\gamma_k).
\]
\end{cor}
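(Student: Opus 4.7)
The plan is to reduce this statement directly to Lemma~\ref{lemma:finite division} by viewing the Jordan curve $\Gamma$ as the boundary of a Jordan John domain. More precisely, by Lemma~\ref{comp GM}, the unbounded complementary domain $\Omega_\Gamma \subset \widehat{\C}$ of $\Gamma$ is a $J$-John Jordan domain with John constant $J = J(p,C_0)$ depending only on $p$ and $C_0$. In particular, $\partial \Omega_\Gamma = \Gamma$, so any subarc $\gamma \subset \Gamma$ is automatically a subarc contained in $\partial \Omega_\Gamma$.

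Given the constant $C > 0$, I would then apply Lemma~\ref{lemma:finite division} to the Jordan John domain $\Omega_\Gamma$, with the curve in that lemma chosen to be $\gamma$. The conclusion gives a bound $N = N(C, J)$ on the cardinality of any family of pairwise disjoint subarcs $\{\gamma_k\}$ of $\gamma$ satisfying $\diam(\gamma) \le C\diam(\gamma_k)$. Substituting the dependence $J = J(p,C_0)$ from Lemma~\ref{comp GM} yields $N = N(C,p,C_0)$, as desired.

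There is really no obstacle here: the statement of Lemma~\ref{lemma:finite division} is formulated for an arbitrary curve contained in the boundary of a Jordan John domain, not merely for the whole boundary, so no additional work is needed to pass to a subarc $\gamma$. The only place where the hypothesis $\Gamma \in \Jordan(p,C_0)$ enters is through Lemma~\ref{comp GM}, which converts the $(2-p)$-subhyperbolicity of $\tilde\Omega_\Gamma$ into a John condition for $\Omega_\Gamma$ with quantitative constants. Thus the corollary amounts to combining these two previously established facts.
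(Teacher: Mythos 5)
Your proposal is correct and is exactly the argument the paper intends: the corollary is stated without proof precisely because it follows immediately from Lemma~\ref{lemma:finite division} applied to the Jordan John domain $\Omega_\Gamma$ (John by Lemma~\ref{comp GM} with $J=J(p,C_0)$), taking the curve in that lemma to be the arc $\gamma\subset\Gamma=\partial\Omega_\Gamma$. No gaps.
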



\section{Construction of a $p$-reflection: the idea}

By the self-improving property of subhyperbolic domains (Lemma \ref{self improve}), 
we may reformulate Theorem \ref{thm:main1} as follows.

\begin{thm}\label{weak main thm}
Let $1 < r < p<2$, $\Gamma\subset \hat\C$ be a Jordan curve, and let $\Omega$ and $\widetilde\Omega$ be complementary components of $\Gamma$ in $\hat\C$. 
If $\widetilde \Omega$ is $p$-subhyperbolic then there exists an $r$-reflection from $\widetilde \Omega$ to $\Omega$, quantitatively.
\end{thm}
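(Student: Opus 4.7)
My plan, following the outline sketched in the introduction, is to construct $f \colon \widetilde\Omega \to \Omega$ piece by piece, combining a Whitney-type decomposition of both sides of $\Gamma$ with Tukia--V\"ais\"al\"a bilipschitz extensions \cite{TV1982} on each matched pair of pieces, and extending by the identity on $\Gamma$. The resulting $f$ will be locally bilipschitz, and the $r$-distortion inequality \eqref{pkvasi} will follow from uniform local bilipschitz bounds on the matched pieces.

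First I would fix a Whitney decomposition $\{\widetilde Q_i\}$ of $\widetilde\Omega$. For each cube $\widetilde Q_i$ with shadow arc $A_i = S_\Gamma(\widetilde Q_i)$, Lemma \ref{lemma:shadow2} and Corollary \ref{cor:shadow2} give $\diam(A_i) \sim \diam(\widetilde Q_i)$. On the $\Omega$ side, the hyperbolic rays $\Gamma^\perp(z)$ for $z \in A_i$, restricted to heights corresponding to $\widetilde Q_i$, sweep out a ``fan'' $\widehat F_i \subset \Omega$, and these fans tile a collar neighborhood of $\Gamma$ in $\Omega$. Since $\Omega$ is John (Lemma \ref{comp GM}) but not quasiconvex in general, $\widehat F_i$ need not be a Whitney-type set. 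Using Corollaries \ref{cor:JJ-corollary} and \ref{cor:finite division}, I would subdivide each $\widehat F_i$ into a bounded number $N_i \le N(p, C_0)$ of Whitney-type pieces $R_{i,j} \subset \Omega$, and simultaneously subdivide $\widetilde Q_i$ into matching sub-rectangles $\widetilde R_{i,j}$ with the same shadow $S_\Gamma(\widetilde R_{i,j}) = S_\Gamma(R_{i,j})$. The $p$-subhyperbolic condition on $\widetilde\Omega$, invoked through Corollary \ref{cor:Pankka3}, ensures quantitatively that $\diam(R_{i,j}) \sim \diam(\widetilde R_{i,j})$.

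With matched Whitney-type pieces in hand, I would replace the curved hyperbolic-ray boundaries of $R_{i,j}$ and $\widetilde R_{i,j}$ by Lipschitz curves, preserving compatibility across adjacent pieces, and then construct bilipschitz boundary homeomorphisms $\partial \widetilde R_{i,j} \to \partial R_{i,j}$. These extend to bilipschitz maps on the interiors via the Tukia--V\"ais\"al\"a extension \cite{TV1982}, and gluing them produces $f \colon \widetilde\Omega \to \Omega$, which extends by the identity on $\Gamma$ (continuity holding because matched pieces $\widetilde R_{i,j}$ and $R_{i,j}$ shrink to $\Gamma$ together). On each piece, $f$ is $L$-bilipschitz up to a dilation $\lambda_{i,j} \sim 1$, so $|Df|^r \lesssim L^r \lambda_{i,j}^r \le K \lambda_{i,j}^2 \sim K |J_f|$, which gives \eqref{pkvasi}. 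The strict inequality $r < p$ enters via the self-improvement Lemma \ref{self improve}: it provides a little extra subhyperbolic strength with which to absorb the constants introduced by the subdivision and boundary straightening.

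The hard part will be the combinatorial matching in the middle paragraph: constructing compatible subdivisions of the fans $\widehat F_i$ in $\Omega$ and of the cubes $\widetilde Q_i$ in $\widetilde\Omega$ in a bounded-combinatorial manner that remains consistent across neighbouring Whitney cubes, so that the bilipschitz constants produced by Tukia--V\"ais\"al\"a are uniform. The John property of $\Omega$ controls the number of sub-pieces per fan, while the subhyperbolic property of $\widetilde\Omega$ enforces size comparability; orchestrating these two ingredients so as to yield a globally locally-bilipschitz $f$ is the technical heart of the argument.
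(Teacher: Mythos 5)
There is a genuine gap, and it sits exactly where you locate ``the technical heart'': the combinatorial matching. You assume that each fan $\widehat F_i$ splits into a \emph{bounded} number $N_i\le N(p,C_0)$ of Whitney-type pieces and that the matched pieces satisfy $\diam(R_{i,j})\sim\diam(\widetilde R_{i,j})$ with uniform constants. Both claims fail for a general $p$-subhyperbolic $\widetilde\Omega$. The fan in $\Omega$ over the shadow of a Whitney square $\widetilde Q$ can have shadow and diameter arbitrarily large compared with $\dist(\widetilde Q,\Gamma)$ --- this is the unboundedness of the ratio \eqref{eq:tilde_Q_blow-up}, which is the crucial feature of the problem --- so the number of Whitney-type children of a single fan is unbounded; only the number of children of a fixed \emph{relative} size $2^k$ admits a uniform bound (Proposition \ref{prop:back-to-life}). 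Moreover, if the matched pieces had comparable diameters and the filling maps were uniformly bilipschitz up to dilations $\lambda_{i,j}\sim 1$, the glued $f$ would be bilipschitz near $\Gamma$, forcing $\Gamma$ to satisfy the three-point condition; but subhyperbolic Jordan domains need not be bounded by quasicircles. The point of an $r$-reflection with $r<2$ is precisely to permit unbounded stretching along $\Gamma$ relative to the transverse direction, and your estimate $|Df|^r\lesssim L^r\lambda_{i,j}^r\le K\lambda_{i,j}^2\sim K|J_f|$ erases this anisotropy.

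The construction that works reflects along hyperbolic rays via the subhyperbolic length functional, cuts the (non-Whitney) reflected pieces into balanced rectangles whose number per parent is unbounded but controlled scale by scale, redistributes a matching partition on the $\widetilde\Omega$ side with widths comparable to $\dist(\widetilde Q,\Gamma)^{(p-1)/(r-1)}\dist(R,\Gamma)^{(r-p)/(r-1)}$ --- here the strict inequality $r<p$ is what makes the relevant series converge (Lemma \ref{chop interval}), not a constant-absorption via self-improvement, which only serves to reduce Theorem \ref{thm:main1} to Theorem \ref{weak main thm} --- and finally fills in with an \emph{anisotropic} variant of Tukia's extension (Lemma \ref{Tukia}) between a unit square and a rectangle $[0,a]\times[0,a^{r-1}]$ with $a$ unbounded, yielding $|DF|^r\sim J_F$ rather than a bilipschitz bound. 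Your last step (straightening boundaries, then Tukia--V\"ais\"al\"a) is in the right spirit, but without the unbounded, scale-graded subdivision and the anisotropic filling the argument can only produce a bilipschitz reflection, which does not exist in this generality.
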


The statement is quantitative in the following sense: \emph{if $\tilde \Omega$ belongs to $\subharm(p,C)$, then the reflection $f\colon \widehat \C \to \widehat \C$ from $\widetilde \Omega$ to $\Omega$ satisfies \eqref{pkvasi} with constant $K=K(r,p,C)\ge 1$.}

The proof of Theorem~\ref{weak main thm} spans over the remaining sections and consists of two main stages. The first stage, which we divide into four steps, consists of finding natural partitions $\sQ_\Gamma^\refi$ and $\tilde \sQ_\Gamma^\refi$ of domains $\Omega_\Gamma$ and $\tilde \Omega_\Gamma$, respectively. These partitions consist of topological rectangles which are in a natural one-to-one correspondence in terms of their adjacency. In the second stage of the proof we use the geometric properties of the obtained rectangles to find $p$-morphisms between corresponding rectangles and then glue the individual maps together to a $p$-morphic reflection.

Since the existence of good partitions $\sQ_\Gamma$ and $\tilde \sQ_\Gamma$ takes the bulk of the proof, we describe here the steps we take to develop them and postpone the description of the strategy to create $p$-morphisms to Section \ref{sec:proof-end-game}.

In what follows, we formulate most of the statements in terms of the Jordan curve $\Gamma$. When working in the complex plane $\C$, the domains $\widetilde \Omega$ and $\Omega$ are understood so that $\widetilde \Omega$ is the bounded complementary component $\tilde \Omega_\Gamma$ of $\Gamma$ in $\C$.

\emph{Step 1:} We begin by fixing a neighborhood $\tilde A$ of $\Gamma$ in $\widetilde \Omega$ and a partition $\tilde \sQ_\Gamma$ of $\tilde A$ based on Whitney-type sets. 

\emph{Step 2a:}
In this step, we fix a topological reflection $h_\Gamma \colon \tilde A \to \Omega$ over $\Gamma$ to obtain a topologically equivalent partition $\sQ_\Gamma$ of a neighborhood $A=h_\Gamma(\tilde A)$ of $\Gamma$ in $\Omega$. The role of the embedding $h_\Gamma$ is  merely to induce a partition 
$\sQ_\Gamma=h_\Gamma(\tilde \sQ_\Gamma)$ of $A$. 

\emph{Step 2b:} In this step, we introduce the partition $\sQ_\Gamma$ of $A$ and also compare the metric properties of the elements of $\sQ_\Gamma$ with the corresponding properties of elements of $\tilde \sQ_\Gamma$. Whereas the initial partition $\tilde\sQ_\Gamma$ of the neighborhood of $\Gamma$ in $\tilde \Omega$ consists of Whitney-type set, its reflection $\sQ_\Gamma$ does not a priori have this property.

\emph{Step 3:} We use the metric information from the previous step to subdivide elements of $\sQ_\Gamma$ into topological rectangles whose diameters are comparable to their distances to $\Gamma$, quantitatively. This produces a new partition $\sQ^\refi_\Gamma$ of a neighborhood of $\Gamma$ in $\Omega$. 

\emph{Step 4:} We refine the partition $\tilde \sQ_\Gamma$ according to the adjacency structure of the partition $\sQ^\refi_\Gamma.$ 
Technically this is done by reflecting partitions induced by $\sQ_\Gamma^\refi$ on horizontal edges of rectangles of $\tilde \sQ_\Gamma$ to edges of $\tilde \sQ_\Gamma$. Then, we consider two reparametrizations of these edge partitions and use their interpolation to build the refinement $\tilde \sQ_\Gamma^\refi$. The partition $\tilde \sQ^\refi$ consists of rectangles, which are in natural $1$-to-$1$ correspondence to rectangles in $\sQ^\refi_\Gamma$.

\newcommand{\sW}{\mathscr W}

\emph{Step 5:} Since the rectangles in $\sQ_\Gamma^\refi$ are not a priori bilipschitz equivalent to Euclidean rectangles, we pass to a further partition 
$\sW_\Gamma^\refi$ whose elements have this property and are close to corresponding elements of $\sQ_\Gamma^\refi$.

After these preliminary steps, we prove Theorem \ref{weak main thm} in Section \ref{sec:proof-end-game}.

\begin{rem}
A small philosophical remark regarding the strategy of the construction of these partitions is in order. In what follows, after introducing a new partition, we consider metric properties of its elements. The recurring theme in these considerations is that we would like to have, for an element $Q$ of the partition, that the distance $\dist(Q,\Gamma)$ controls the Euclidean diameter $\diam(Q)$ of $Q$ and the diameter $\diam(S_\Gamma(Q))$ of the shadow on $\Gamma$. Since the constructions of the partitions stem from initial data given by hyperbolic geodesics, we simultaneously aim for similar control of $\ell(\Gamma^\perp(z)\cap Q)$ and $\ell(\Gamma^\perp(z),Q)$ for $z\in S_\Gamma(Q)$ in terms of $\dist(Q,\Gamma)$. 

The step 3 above can now be understood from the point of view that, for $\sQ_\Gamma$, we do not have uniform estimates for this Whitney-type data, which leads us to pass to the refinement $\sQ_\Gamma^\refi$. 

Note also that, that the elements of $\tilde \sQ_\Gamma$ are topological rectangles in an obvious manner and this same property is carried over to subsequent partitions $\sQ_\Gamma$, $\sQ_\Gamma^\refi$, and $\tilde \sQ_\Gamma^\refi$. Therefore we refer to the elements in these partitions as rectangles. 
\end{rem}

\begin{convention}
In the forthcoming sections, all implicit constants are either absolute or depend only on parameters $p\in (1,2)$ and $C_0\ge 1$. From now on, we consider $p$ and $C_0$ as fixed, but arbitrary, parameters. Although, also the Jordan curve $\Gamma$ could be considered as fixed for once and for all, we have decided to emphasize the role of $p$ and $C_0$ in the statements, and state the results for Jordan curves in $\Jordan(p,C_0)$. 

\end{convention}

\section{Step 1: Partition {$\tilde {\sQ}_\Gamma$}}

We begin by fixing a conformal map
$\tilde \varphi_\Gamma: \mathbb D\to \tilde \Omega_{\Gamma}$ 
so that $\tilde \varphi(0)$ is a point furthest away from the $\Gamma.$ Then
$\tilde \varphi_\Gamma$ extends homeomorphically up to boundary by the Carath\'edory-Osgood theorem and we refer also to this extension by $\tilde \varphi_\Gamma.$ 


Let $\Gamma \in \Jordan(p,C_0)$. We denote 
\[
\tilde A_\Gamma = \tilde \Omega_\Gamma\setminus \tilde \varphi_\Gamma(B(0,1/2)).
\]
Then $\Gamma$ is a boundary component of $\tilde A_{\Gamma}$.

Let $\sP$ be the standard dyadic partition of the annulus $\D\setminus B(0,1/2)$, that is, 
\[
\sP=\{ P_{k,j} \subset \D \colon k\in \N,\ 0\le j\le 2^{k+1}-1\}, 
\]
where
\[
P_{k,j} = \{ re^{i\theta} \in \D \colon r\in [1-2^{-k},1-2^{-(k+1)}],\ \theta\in [j 2^{-k} \pi, (j+1)2^{-k}\pi]\}
\]
for each $k\in \N$ and $0 \le j \le 2^{k+1}-1$. We set $\tilde \sQ_\Gamma$ to be the image of $\sP$ under $\tilde \varphi_\Gamma$, that is,
\[
\tilde\sQ_\Gamma = \{ \tilde\varphi_\Gamma(P) \colon P \in \sP\}.
\]
See Figure~\ref{map}.

\begin{figure}[ht]
\begin{overpic}[scale=0.4, unit=1mm]{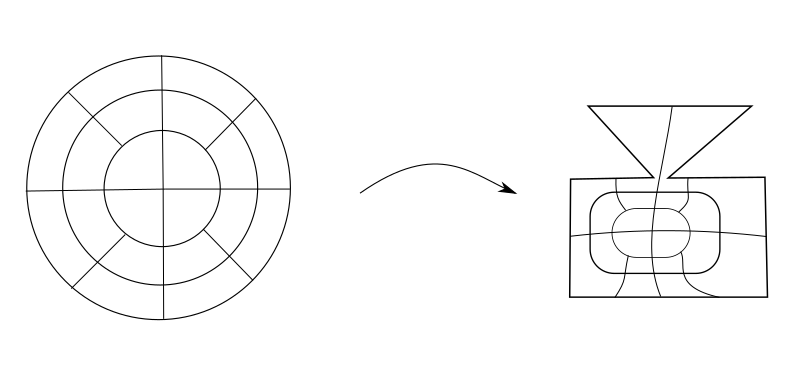}
\put(60,35){$\tilde \varphi_{\Gamma}$} \put(22,3){$\mathbb D$} \put(90,3){$\tilde \Omega_{\Gamma}$}  \put(31,29){$P_{ij}$}
\end{overpic}
\caption{A conformal map $\tilde \varphi_{\Gamma}$ transfers a Whitney-type decomposition $\{P_{k,\,j}\}$ of the unit disk to a Whitney-type one in the $(2-p)$-subhyperbolic domain $\tilde \Omega$.}
\label{map}
\end{figure}

\subsection{Main features of ${\tilde{\sQ}_\Gamma}$}

It is fairly easy to see that rectangles in $\tilde \sQ_\Gamma$ are Whitney-type sets with a constant depending only on data (Lemma \ref{lemma:tilde_sQ_Whitney1} below). Therefore, since the hyperbolic rays in $\tilde \Omega_\Gamma$ are quasigeodesics, we further gather from the Whitney data that there exists, for each $\tilde Q\in \sQ_\Gamma,$ a short hyperbolic geodesic connecting $S_\Gamma(\tilde Q)$ and $\tilde Q$ (Corollary \ref{cor:tilde_Q_distance}).

\begin{rem}
 It is, however, crucial for the forthcoming discussion to observe that, for $z\in S_\Gamma(\tilde Q)$, the length $\ell(\tilde \Gamma^\perp(z,\tilde Q))$ is not controlled from above by $\dist(\tilde Q, \Gamma)$. More precisely, the function $\tilde \sQ_\Gamma \to (0,\infty)$, 
\begin{equation}
\label{eq:tilde_Q_blow-up}
\tag{{$\tilde{\sQ}_\Gamma$}}
\tilde Q \mapsto \max_{z\in S_\Gamma(\tilde Q)} \frac{\ell(\tilde \Gamma^\perp(z,\tilde Q))}{\dist(\tilde Q, \Gamma)}
\end{equation}
need not be (even) bounded. This is the crucial feature of $\tilde \sQ_\Gamma$, which we tackle repeatedly in forthcoming sections. Note that a large ratio in \eqref{eq:tilde_Q_blow-up} also leads to a large ratio for $\diam(S_\Gamma(\tilde Q))$ and $\dist(\tilde Q,\Gamma)$. 
\end{rem}

\begin{lem}
\label{lemma:tilde_sQ_Whitney1}
Let $\Gamma \in \Jordan(p,C_0)$. Then there exists an absolute constant $\lambda\ge 1$ for which each $\tilde Q\in \tilde \sQ_\Gamma$ is a $\lambda$-Whitney-type set and there exists an absolute constant $C>0$ for which
\begin{equation}
\label{eq:dist_diam_tilde_Omega}
\dist(\tilde Q, S_\Gamma(\tilde Q) ) \le C  \diam(\tilde Q).
\end{equation}
\end{lem}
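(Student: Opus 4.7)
The plan is to handle the two assertions separately. For the Whitney-type claim, the dyadic sector $P_{k,j}$ is a $\lambda_0$-Whitney-type subset of $\mathbb D$ with an absolute constant $\lambda_0$, since its diameter, distance to $\partial \mathbb D$, and inradius are all comparable to $2^{-k}$. Applying Lemma \ref{whitney preserving} to the conformal map $\tilde \varphi_\Gamma \colon \mathbb D \to \tilde \Omega_\Gamma$ immediately yields that each $\tilde Q = \tilde \varphi_\Gamma(P_{k,j})$ is $\lambda$-Whitney-type in $\tilde \Omega_\Gamma$ with $\lambda = \lambda(\lambda_0)$ absolute.

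For the distance estimate it suffices to exhibit one pair $(\tilde x, z) \in \tilde Q \times S_\Gamma(\tilde Q)$ with $|\tilde x - z| \ls \diam(\tilde Q)$. I would choose $\tilde x_\ast := \tilde \varphi_\Gamma(c)$ where $c$ is the center of $P_{k,j}$, and $z_\ast := S_\Gamma(\tilde x_\ast) \in S_\Gamma(\tilde Q)$. The hyperbolic ray $\tilde \Gamma^\perp(\tilde x_\ast)$ is the image under $\tilde \varphi_\Gamma$ of the radial segment from $c$ to $\tilde \varphi_\Gamma^{-1}(z_\ast) \in \partial \mathbb D$, which traverses the dyadic sectors $P_{k+m, j_m}$ for $m \ge 0$, yielding a chain of Whitney-type images $\tilde Q_m := \tilde \varphi_\Gamma(P_{k+m, j_m})$ along the ray. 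Writing $d_m := \diam(\tilde Q_m)$, the first part of the lemma together with Lemma \ref{lemma:biLip} yield $\dist(\tilde Q_m, \Gamma) \sim d_m$ and that the arclength of the ray inside $\tilde Q_m$ is $\sim d_m$, so
\[
\ell := \ell(\tilde \Gamma^\perp(\tilde x_\ast)) \sim \sum_m d_m, \qquad \int_{\tilde \Gamma^\perp(\tilde x_\ast)} \dist(\cdot, \Gamma)^{1-p} \ds \sim \sum_m d_m^{2-p}.
\]
Corollary \ref{cor:Pankka3} now gives the key comparison $\sum_m d_m^{2-p} \sim \ell^{2-p} = \bigl(\sum_m d_m\bigr)^{2-p}$. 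For $s := 2-p \in (0,1)$, the subadditivity $(a+b)^s \le a^s + b^s$ gives the trivial bound $(\sum d_m)^s \le \sum d_m^s$ automatically, while the matching upper bound coming from the corollary forces concentration of the sequence $(d_m)$. Grouping indices into the dyadic levels $\{m : d_m \in (d_0 2^{-j-1}, d_0 2^{-j}]\}$ for $j \ge 0$, and combining this with Koebe's distortion theorem to bound the ratios of $d_m$ for adjacent $m$, one concludes $\sum_m d_m \ls d_0$, whence $|\tilde x_\ast - z_\ast| \le \ell \ls d_0 \sim \diam(\tilde Q)$, giving \eqref{eq:dist_diam_tilde_Omega}.

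The hard part will be this final concentration step: turning the tight comparison $\sum d_m^s \sim (\sum d_m)^s$ with $s\in(0,1)$ into the quantitative bound $\sum d_m \ls d_0$. The subtle point is ruling out chains whose maximum $d_{m^*}$ occurs for some $m^* \gg 0$, which requires a Koebe-type bound on $|\tilde \varphi'_\Gamma|$ at adjacent centers. A cleaner alternative I would try first is to invoke that $(2-p)$-subhyperbolic Jordan domains are $J$-John with $J=J(p,C_0)$, so that Lemma \ref{quasisymmetry} makes $\tilde \varphi_\Gamma$ quasisymmetric with respect to the inner distance on $\tilde \Omega_\Gamma$; then, comparing $c$, $\tilde\varphi_\Gamma^{-1}(z_\ast)$, and a reference point in $P_{k,j}$ directly yields $d_{\tilde \Omega_\Gamma}(\tilde x_\ast, z_\ast) \ls \diam_{\tilde \Omega_\Gamma}(\tilde Q) \sim \diam(\tilde Q)$ in one step, bypassing the delicate concentration argument.
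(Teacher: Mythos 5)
The first half of your argument (each $\tilde Q$ is Whitney-type via Lemma \ref{whitney preserving}) is exactly the paper's proof and is fine. Both of your routes to \eqref{eq:dist_diam_tilde_Omega}, however, have genuine gaps. In the main route, the step you yourself flag as hard is in fact false as a combinatorial statement: from $\sum_m d_m^{s}\sim(\sum_m d_m)^{s}$ with $s=2-p\in(0,1)$ one only gets $\sum_m d_m\sim\max_m d_m$ (write $\sum d_m^{s}\ge(\max d_m)^{s-1}\sum d_m$), and adjacent comparability $d_{m+1}\sim d_m$ from Koebe does not force the maximum to occur at $m=0$. The sequence $d_m=d_0\min(2^m,2^{2N-m})$, $0\le m\le 2N$, satisfies both the $\ell^s$--$\ell^1$ comparison and bounded adjacent ratios, yet $\sum_m d_m=d_0 2^N\gg d_0$. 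Geometrically this is exactly the phenomenon the paper warns about: the ratio \eqref{eq:tilde_Q_blow-up} is unbounded, so the ray from the image of the center of $P_{k,j}$ need not have length $\ls\diam(\tilde Q)$, and no purely combinatorial concentration argument can rule this out. The alternative route fails at its premise: a $(2-p)$-subhyperbolic Jordan domain is \emph{not} in general a John domain (it may have outward cusps; only the unbounded complementary component $\Omega_\Gamma$ is John, Lemma \ref{comp GM}), so Lemma \ref{quasisymmetry} does not give quasisymmetry of $\tilde\varphi_\Gamma$ with respect to the inner distance. Indeed, if $\tilde\varphi_\Gamma$ were inner-quasisymmetric, \eqref{eq:tilde_Q_blow-up} would be bounded, contradicting the paper's remark.

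The paper's proof takes a different and shorter path that you may want to internalize: by conformal invariance, $\capa(\tilde Q, S_\Gamma(\tilde Q);\tilde\Omega_\Gamma)=\capa(P,S_{\bS^1}(P);\D)\gs 1$, and a uniform lower capacity bound between two continua forces $\dist_{\tilde\Omega_\Gamma}(\tilde Q,S_\Gamma(\tilde Q))\ls\diam_{\tilde\Omega_\Gamma}(\tilde Q)$ (this is \cite[Lemma 2.10]{KRZ2015}); one then passes from inner to Euclidean distance using only the quasiconvexity of $\tilde\Omega_\Gamma$ (Lemma \ref{lemma:qconvex}), which, unlike the John property, does hold for subhyperbolic domains. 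Note that this produces \emph{some} close pair of points in $\tilde Q\times S_\Gamma(\tilde Q)$ without ever claiming that the center's ray is short, which is the correct level of generality here.
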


\begin{proof}
Let $P\in \sP$ be such that $\tilde \varphi_\Gamma(P) = \tilde Q$. The first claim follows immediately from Lemma~\ref{whitney preserving} and the fact that each $P\in  \sP$ is of $4$-Whitney-type. 

For the second claim, we observe that, by the conformal invariance of the conformal capacity, we have that
\[
1\ls \capa(P,S_{\bS^1}(P); \mathbb D)= \capa( \tilde Q,S_{\Gamma}(\tilde Q);\Omega), 
\]
where the constant of comparability is absolute. Here $\capa(E,F;G)$ is the conformal capacity of continua $E$ and $F$ in the closure of the domain $G$; see \cite{Vaisala-book} for details.
Thus, by e.g.~\cite[Lemma 2.10]{KRZ2015}, we have that
\[
\dist_{\tilde \Omega_\Gamma}(\tilde Q,S_{\Gamma}(\tilde Q))\ls \diam_{\tilde \Omega_\Gamma}(\tilde Q),
\] 
where the constants depend only on $\lambda$. Here $\diam_{\tilde \Omega_\Gamma}(\tilde Q)$ is the inner diameter of $\tilde Q$ in $\tilde \Omega_\Gamma$. Since $\tilde Q$ is a Whitney-type set, we further have, by a simple covering argument, that $\diam_{\tilde \Omega_\Gamma}(\tilde Q) \sim \diam(\tilde Q)$, where the constant depends only on $\lambda$; see e.g.\ the proof of \cite[Lemma 2.5]{KRZ2015}.

Since $\tilde \Omega_\Gamma$ is quasiconvex by Lemma \ref{lemma:qconvex}, we have that 
$$\dist(\tilde Q,S_{\Gamma}(\tilde Q))\sim \dist_{\tilde \Omega_\Gamma}(\tilde Q,S_{\Gamma}(\tilde Q)).$$
The claim follows.
\end{proof}

\begin{cor}
\label{cor:tilde_Q_distance}
Let $\Gamma\in \Jordan(p,C_0)$ and $\tilde Q\in \tilde \sQ_\Gamma$. Then there exists a point $z_{\tilde Q}\in S_\Gamma(\tilde Q)$ for which 
\[
\ell( \tilde\Gamma^\perp  (z_{\tilde Q},\tilde Q)) \le C \dist(\tilde Q, \Gamma)
\]
where $C=C(p,C_0)>0$.
\end{cor}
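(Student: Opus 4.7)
The plan is to combine the Whitney-type data of Lemma~\ref{lemma:tilde_sQ_Whitney1} with the direct quasiconvexity estimate in Lemma~\ref{lemma:qconvex}. Since $\tilde Q\in\tilde\sQ_\Gamma$ is a $\lambda$-Whitney-type set with absolute $\lambda$, we have $\dist(\tilde Q,\Gamma)\sim\diam(\tilde Q)$, and by \eqref{eq:dist_diam_tilde_Omega} also $\dist(\tilde Q,S_\Gamma(\tilde Q))\lesssim\diam(\tilde Q)$.

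First I would select the point $z_{\tilde Q}$ as follows. Using the preceding distance bound, pick $\tilde w\in\tilde Q$ and $z\in S_\Gamma(\tilde Q)$ with $|\tilde w-z|\le 2\dist(\tilde Q,S_\Gamma(\tilde Q))\lesssim\diam(\tilde Q)$. The chosen $z$ need not coincide with $S_\Gamma(\tilde w)$, but by the very definition of $S_\Gamma(\tilde Q)$ there exists some $\tilde w'\in\tilde Q$ with $S_\Gamma(\tilde w')=z$. The triangle inequality then gives
\[
|\tilde w'-z|\le\diam(\tilde Q)+|\tilde w-z|\lesssim\diam(\tilde Q)\sim\dist(\tilde Q,\Gamma),
\]
and I would set $z_{\tilde Q}:=z$.

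The final step is to bound the length of the hyperbolic ray segment. Since $\tilde w'$ lies on $\tilde\Gamma^\perp(z_{\tilde Q})$, any subsegment of this ray between $\tilde w'$ and an interior point $w$ further along is itself the unique hyperbolic geodesic joining $\tilde w'$ and $w$ in $\tilde\Omega_\Gamma$. Applying Lemma~\ref{lemma:qconvex} to the pair $\tilde w',w$ and letting $w\to z_{\tilde Q}$ along the ray — using that the ray is a rectifiable curve in the bounded domain $\tilde\Omega_\Gamma$, so the length of initial segments depends continuously on the endpoint — yields
\[
\ell(\tilde\Gamma^\perp(\tilde w',z_{\tilde Q}))\le C_{\mathrm{qch}}\,|\tilde w'-z_{\tilde Q}|\lesssim\dist(\tilde Q,\Gamma).
\]
Since $\tilde\Gamma^\perp(z_{\tilde Q},\tilde Q)$ is by definition the initial subsegment of $\tilde\Gamma^\perp(z_{\tilde Q})$ running up to its first entrance into $\tilde Q$, and $\tilde w'\in\tilde Q$ lies on this ray, that subsegment has length at most $\ell(\tilde\Gamma^\perp(\tilde w',z_{\tilde Q}))$, producing the stated inequality.

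I foresee no serious obstacle. The one subtle point is ensuring that the extremal point $z\in S_\Gamma(\tilde Q)$ we picked has a corresponding preimage $\tilde w'\in\tilde Q$ close to it on the associated hyperbolic ray, and this is precisely what the triangle-inequality trick handles. Note that the Gehring--Hayman inequality is not explicitly required here, since quasiconvexity directly delivers the Euclidean-controlled bound on the relevant hyperbolic geodesic.
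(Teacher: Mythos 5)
Your argument is correct and is essentially the paper's own proof: both use the capacity/Whitney estimate \eqref{eq:dist_diam_tilde_Omega} to find a near point $z\in S_\Gamma(\tilde Q)$, transfer the bound to a point of $\tilde Q$ on the ray $\tilde\Gamma^\perp(z)$ via the triangle inequality, and then invoke the quasigeodesic property from Lemma~\ref{lemma:qconvex} to convert the Euclidean distance bound into a length bound. Your extra care with the limit $w\to z_{\tilde Q}$ (since the endpoint lies on $\Gamma$) and with taking the first-entrance subsegment only tightens details the paper treats implicitly.
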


\begin{proof}
By \eqref{eq:dist_diam_tilde_Omega}, there exist points $\tilde y\in \tilde Q$ and $z\in S_\Gamma(\tilde Q)$ for which
\begin{equation*}
|\tilde y-z|\ls \diam(\tilde Q),
\end{equation*}
where the constant depends only on $p$ and $C_0$. Let $\tilde x\in \tilde \Gamma^\perp(z)\cap \tilde Q$ be the other end point of $\tilde \Gamma^\perp(z,\tilde Q)$. Then, by the triangle inequality,
\[
|\tilde x-z|\ls |\tilde y-z| + \diam(\tilde Q)  \ls \diam(\tilde Q), 
\]
Since $\tilde \Gamma^\perp(z,\tilde Q)$ is the hyperbolic geodesic joining $\tilde x$ to $z$, it is a quasigeodesic by Lemma \ref{lemma:qconvex}, that is, $\ell(\tilde \Gamma^\perp(z,\tilde Q)) \sim |\tilde x-z|$. Thus
\[
\ell(\tilde \Gamma^\perp(z,\tilde Q)) \ls \diam(\tilde Q),
\]
where the constants depend only on $p$ and $C_0$.
\end{proof}

\subsection{Intersection length estimates for ${\tilde{\sQ}_\Gamma}$}

By Lemma \ref{lemma:biLip}, any two topological rectangles in $\mathscr P$ of $\tilde \sQ_\Gamma$ are uniformly bilipschitz to each other. Thus the width of $\tilde Q\in \sQ_\Gamma$ in terms of intersection length with hyperbolic geodesics is comparable to the Euclidean diameter of $\tilde Q$. We record this fact as follows.

\begin{cor}
\label{cor:intersection_distance1}
Let $\Gamma\in \Jordan(p,C_0)$, $\tilde Q\in \tilde \sQ_\Gamma$, and $z\in S_\Gamma(\tilde Q)$. Then there exists an absolute constant $C>0$ for which 
\begin{equation}
\label{eq:ID1}
\frac{1}{C} \diam(\tilde Q) \le \ell(\tilde\Gamma^\perp(z) \cap \tilde Q)\le C \diam(\tilde Q). 
\end{equation}
\end{cor}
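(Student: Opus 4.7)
The plan is to pull the problem back to the model partition $\sP$ of the annulus $\D \setminus B(0,1/2)$ via the conformal map $\tilde \varphi_\Gamma$, where both the diameter of a dyadic block and the length of a radial chord through that block can be computed explicitly, and then to transfer these comparisons back to $\tilde Q$ using Lemma~\ref{lemma:biLip}.

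More precisely, write $\tilde Q = \tilde \varphi_\Gamma(P)$ for a unique $P = P_{k,j}\in \sP$. The set $P$ is an annular sector with radial extent $2^{-(k+1)}$ and angular extent $2^{-k}\pi$ sitting at radii comparable to $1$, so a direct computation gives $\diam(P) \sim 2^{-k}$ with an absolute constant. Next, since $z\in S_\Gamma(\tilde Q)$, there is a point $\tilde y\in \tilde Q\cap \tilde \Gamma^\perp(z)$, and its preimage lies on the radial segment $\tilde \varphi_\Gamma^{-1}(\tilde \Gamma^\perp(z)) = [0,\tilde \varphi_\Gamma^{-1}(z)]$, which is the hyperbolic ray in $\D$ ending at $\tilde \varphi_\Gamma^{-1}(z)\in \partial \D$. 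Since the angular coordinate is constant along this segment and membership in $P$ restricts the radius to the interval $[1-2^{-k},1-2^{-(k+1)}]$, the preimage $\tilde \varphi_\Gamma^{-1}(\tilde \Gamma^\perp(z)\cap \tilde Q)$ is precisely the radial subsegment of length $2^{-(k+1)}$. In particular, on the model side,
\[
\ell\bigl(\tilde \varphi_\Gamma^{-1}(\tilde \Gamma^\perp(z)\cap \tilde Q)\bigr) \sim \diam(P) \sim 2^{-k},
\]
with absolute constants.

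Now the transfer. By Lemma~\ref{lemma:tilde_sQ_Whitney1}, $P$ is a $\lambda$-Whitney-type set in $\D$ with $\lambda$ absolute. Applying Lemma~\ref{lemma:biLip} to the conformal map $\tilde \varphi_\Gamma \colon \D \to \tilde \Omega_\Gamma$ restricted to $P$, there is an absolute constant $L \ge 1$ and a dilation factor $c=c(P)>0$ such that
\[
\frac{c}{L}|x-y| \le |\tilde \varphi_\Gamma(x)-\tilde \varphi_\Gamma(y)| \le cL|x-y|, \qquad x,y\in P.
\]
This bilipschitz estimate preserves, up to $L$, both Euclidean diameters of subsets and Euclidean lengths of rectifiable arcs. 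Applying it to $P$ itself gives $\diam(\tilde Q) \sim c\, \diam(P)$, and applying it to the radial arc $\tilde \varphi_\Gamma^{-1}(\tilde \Gamma^\perp(z)\cap \tilde Q) \subset P$ gives $\ell(\tilde \Gamma^\perp(z)\cap \tilde Q) \sim c \cdot 2^{-(k+1)}$. Combining with the model-side comparison from the previous paragraph yields $\ell(\tilde \Gamma^\perp(z)\cap \tilde Q) \sim \diam(\tilde Q)$ with an absolute constant, which is \eqref{eq:ID1}.

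There is no real obstacle here: once one notices that on the disk the hyperbolic rays are radial and that $P_{k,j}$ has radial width exactly $2^{-(k+1)}$ while its overall diameter is comparable to $2^{-k}$, the corollary is a one-line consequence of the (already established) bilipschitz-up-to-scale behaviour of $\tilde \varphi_\Gamma$ on Whitney-type sets. The only thing to keep track of is the unknown conformal dilation factor $c$, which cancels when one takes the ratio of diameter to intersection length.
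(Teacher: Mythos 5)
Your proposal is correct and follows essentially the same route as the paper: pull back to the dyadic block $P\in\sP$, note that the radial chord through $P$ has length comparable to $\diam(P)$, and transfer via the bilipschitz-up-to-dilation property of $\tilde\varphi_\Gamma|_P$ from Lemma~\ref{lemma:biLip}. The only cosmetic slip is attributing the Whitney-type property of $P$ to Lemma~\ref{lemma:tilde_sQ_Whitney1} (which concerns $\tilde Q$); that $P$ is of $4$-Whitney type follows directly from the definition of $\sP$.
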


\begin{proof}
Let $P \in \sP$ and $v\in \bS^1$ for which $\tilde \varphi_\Gamma(P) =\tilde Q$ and $\tilde\varphi(v)=z$. Then $\tilde \varphi_\Gamma([0,v]) = \tilde \Gamma^\perp(z)$ and $\tilde \varphi_\Gamma(P \cap [0,v]) = \tilde Q\cap \Gamma^\perp(z)$. Since
\[
\frac{1}{4} \diam(P) \le \ell(P \cap [0,v]) \le 4 \diam(P),
\]
we have, by Lemma~\ref{lemma:biLip}, that  
\[
\ell(\tilde Q\cap \Gamma^\perp(z))\sim \diam(\tilde Q),
\]
where the constant is absolute.
\end{proof}

In terms of the subhyperbolic metric, the intersection length estimate takes the following form.

\begin{cor}
\label{cor:intersection_distance2}
Let $\Gamma\in \Jordan(p,C_0)$, $\tilde Q\in \tilde \sQ_\Gamma$, and $z\in S_\Gamma(\tilde Q)$. Then
\begin{equation}
\label{eq:ID2}
\int_{ \Gamma^\perp(z) \cap  \tilde  Q} \dist(w, \Gamma)^{1-p} \dw \sim \dist(\tilde Q, \Gamma)^{2-p},
\end{equation}
where the constants are absolute. 
\end{cor}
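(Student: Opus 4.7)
The plan is to deduce the estimate directly from the Whitney-type structure of $\tilde Q$, without invoking the more refined Lemma~\ref{GM2}. The key observation is that on a Whitney-type set, the density $\dist(\cdot,\Gamma)^{1-p}$ is essentially constant, so the integral decouples into the value of the density times the length of the hyperbolic segment inside $\tilde Q$; the first factor has been identified by Lemma~\ref{lemma:tilde_sQ_Whitney1} and the second by Corollary~\ref{cor:intersection_distance1}.

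First I would invoke Lemma~\ref{lemma:tilde_sQ_Whitney1} to obtain $\dist(\tilde Q,\Gamma)\sim \diam(\tilde Q)$ with an absolute constant. Since $\dist(\tilde Q,\Gamma)=\inf_{w\in \tilde Q}\dist(w,\Gamma)$ and, conversely, $\dist(w,\Gamma)\le \dist(\tilde Q,\Gamma)+\diam(\tilde Q)$ for every $w\in \tilde Q$, I get the pointwise comparability
\[
\dist(w,\Gamma)\sim \dist(\tilde Q,\Gamma) \qquad \text{for every } w\in \tilde Q
\]
with an absolute constant. Raising to the fixed exponent $1-p$ (and noting $1-p<0$ reverses inequalities but preserves two-sided comparability up to a constant depending only on $p$) gives $\dist(w,\Gamma)^{1-p}\sim \dist(\tilde Q,\Gamma)^{1-p}$ uniformly on $\tilde \Gamma^\perp(z)\cap \tilde Q$.

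Next, Corollary~\ref{cor:intersection_distance1} supplies $\ell(\tilde\Gamma^\perp(z)\cap \tilde Q)\sim \diam(\tilde Q)$, which in view of the Whitney-type property is further comparable to $\dist(\tilde Q,\Gamma)$. Pulling the (two-sided) comparable density out of the integral and multiplying then yields
\[
\int_{\tilde\Gamma^\perp(z)\cap \tilde Q}\dist(w,\Gamma)^{1-p}\dw \;\sim\; \dist(\tilde Q,\Gamma)^{1-p}\cdot \ell(\tilde\Gamma^\perp(z)\cap \tilde Q) \;\sim\; \dist(\tilde Q,\Gamma)^{2-p},
\]
which is exactly \eqref{eq:ID2}.

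I do not expect any real obstacle: both the upper and lower bound come out of the same pointwise comparability, and the only inputs required are the Whitney-type property (Lemma~\ref{lemma:tilde_sQ_Whitney1}) together with the intersection length estimate (Corollary~\ref{cor:intersection_distance1}), both of which are available. The mild point to be careful about is the sign of the exponent $1-p$, but since $p\in(1,2)$ is fixed, this only affects constants.
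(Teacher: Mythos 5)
Your proposal is correct and is essentially the paper's own argument: the paper likewise combines Lemma \ref{lemma:tilde_sQ_Whitney1} (Whitney-type property, so the density $\dist(\cdot,\Gamma)^{1-p}$ is comparable to $\dist(\tilde Q,\Gamma)^{1-p}$ on $\tilde Q$) with Corollary \ref{cor:intersection_distance1} (intersection length $\sim \dist(\tilde Q,\Gamma)$). You merely spell out the pointwise comparability step that the paper leaves implicit, and your remark about the exponent $1-p$ is harmless since $|1-p|<1$ keeps the constants absolute.
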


\begin{proof}
By Lemma~\ref{lemma:tilde_sQ_Whitney1}, $\diam \tilde Q \sim \dist(\tilde Q, \Gamma)$ with an absolute constant. By Corollary \ref{cor:intersection_distance1}, $\ell(\tilde \Gamma^\perp(z)\cap \tilde Q) \sim \dist(\tilde Q,\Gamma)$. Thus
\begin{align*}
\int_{\Gamma^\perp(z) \cap  \tilde  Q} \dist(w, \Gamma)^{1-p} \dw \sim \dist(\tilde Q, \Gamma)^{1-p} \dist(\tilde Q, \Gamma) = \dist(\tilde Q,\Gamma)^{2-p}.
\end{align*}
\end{proof}

\subsection{Estimates for shadows}
\label{sec:tilde_sQ_shadows}

We record two diameter estimates for shadows of rectangles in $\tilde \sQ_\Gamma$. These estimates are used in Section \ref{sec:sQ-Gamma-refined} to obtain the refinement $\sQ^\refi_\Gamma$ of the partition $\sQ_\Gamma$ in $\Omega_\Gamma$.

The first observation is that, for a square $\tilde Q$ in $\tilde \sQ_\Gamma$ which has a small shadow, the diameter of the shadow $S_\Gamma(\tilde Q)$ and the square itself $\tilde Q$ have comparable diameter. 

\begin{lem}\label{lem:small shadow}
\label{lem:small_shadow}
Let $\Gamma\in \Jordan(p,C_0)$. Then there exist $C=C(p,C_0)>0$ with the following property. Suppose that, for each $\tilde Q \in \tilde \sQ_\Gamma$ and any $z\in S_\Gamma(\tilde Q)$, we have 
\begin{equation}\label{assum 1}
\diam (S_\Gamma(\tilde Q)) \le \ell(\tilde \Gamma^\perp(z,\tilde Q)).
\end{equation} 
Then 
\begin{equation}
\label{eq:shadow-dist}
 \frac 1 C \dist(\tilde Q, \Gamma)\le \diam (S_\Gamma(\tilde Q))\le C \dist(\tilde Q, \Gamma).
\end{equation}
\end{lem}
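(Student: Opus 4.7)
The plan is to establish the upper and lower estimates of \eqref{eq:shadow-dist} separately, with each being a short combination of facts already recorded.

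For the upper bound $\diam(S_\Gamma(\tilde Q)) \le C \dist(\tilde Q, \Gamma)$, I would apply Corollary \ref{cor:tilde_Q_distance}, which supplies a distinguished shadow point $z_{\tilde Q} \in S_\Gamma(\tilde Q)$ with the property that $\ell(\tilde \Gamma^\perp(z_{\tilde Q}, \tilde Q)) \le C(p,C_0) \dist(\tilde Q, \Gamma)$. Since the standing assumption \eqref{assum 1} applies to every $z \in S_\Gamma(\tilde Q)$, I may take $z = z_{\tilde Q}$ to obtain
\[
\diam(S_\Gamma(\tilde Q)) \le \ell(\tilde \Gamma^\perp(z_{\tilde Q}, \tilde Q)) \le C(p,C_0)\dist(\tilde Q, \Gamma),
\]
which is exactly the desired upper estimate. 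Note that this is the only place where the hypothesis \eqref{assum 1} is used.

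For the lower bound $\dist(\tilde Q, \Gamma) \le C \diam(S_\Gamma(\tilde Q))$, I would rely entirely on the fact that elements of $\tilde \sQ_\Gamma$ are Whitney-type. By Lemma \ref{lemma:tilde_sQ_Whitney1}, $\tilde Q$ is a $\lambda$-Whitney-type set with $\lambda$ absolute, so $\dist(\tilde Q, \Gamma) \sim \diam(\tilde Q)$ with an absolute constant. The general shadow estimate of Lemma \ref{lemma:shadow1} then yields $\diam(\tilde Q) \le C(\lambda) \diam(S_\Gamma(\tilde Q))$, and chaining these gives the desired inequality.

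I do not expect any serious obstacle here: the statement is essentially a bookkeeping consequence of the Whitney-type structure of $\tilde \sQ_\Gamma$ (Lemma \ref{lemma:tilde_sQ_Whitney1}), the existence of a short hyperbolic ray segment to the shadow (Corollary \ref{cor:tilde_Q_distance}), and the general shadow-diameter comparison (Lemma \ref{lemma:shadow1}). The only asymmetry worth flagging is that the lower bound is unconditional on \eqref{assum 1}, whereas the upper bound genuinely requires the hypothesis to convert the ray-length estimate at the specific point $z_{\tilde Q}$ into a uniform shadow-diameter bound.
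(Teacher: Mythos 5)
Your proof is correct and follows essentially the same route as the paper: the lower bound from the Whitney-type property (Lemma \ref{lemma:tilde_sQ_Whitney1}) combined with Lemma \ref{lemma:shadow1}, and the upper bound by feeding the distinguished point of Corollary \ref{cor:tilde_Q_distance} into hypothesis \eqref{assum 1}. Your remark that only the upper bound uses \eqref{assum 1} matches the paper's own observation.
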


\begin{proof}
The lower bound in \eqref{eq:shadow-dist} is independent of the assumption \eqref{assum 1}. Indeed, since $\tilde Q$ is a Whitney-type set by Lemma~\ref{lemma:tilde_sQ_Whitney1}, we have, by Lemma~\ref{lemma:shadow1} and  Lemma~\ref{lemma:tilde_sQ_Whitney1} again, that 
\[
\diam(S_\Gamma(\tilde Q))\gs \diam(\tilde Q)\sim \dist(\tilde Q, \Gamma),
\]
where the constants depend only on $p$ and $C_0$. 

The upper bound follows immediately from Corollary \ref{cor:tilde_Q_distance}. Indeed, by assumption, we have
\[
\diam(S_\Gamma(\tilde Q)) \le \ell(\tilde \Gamma^\perp(z,\tilde Q)) \ls \dist(\tilde Q,\Gamma),
\]
where the constants depend only on $p$ and $C_0$.
\end{proof}

For subarcs in the shadow, we have the following comparability estimate.

\begin{lem}
\label{lemma:ax1}
Let $\Gamma\in \Jordan(p,C_0)$. Then there exists $c_0=c_0(p, C_0)\ge 1$ for the following. Let $\tilde Q\in \tilde \sQ_\Gamma$. Let also $I \subset S_\Gamma(\tilde Q) \subset \Gamma$ be an arc and suppose there exists a point $z_0\in I$ for which 
\begin{equation}
\label{eq:choice1}
\diam(I)=\ell(\tilde \Gamma^\perp(z_0, \tilde Q)).
\end{equation}
Then, for each $z\in I$, 
\begin{equation}
\label{eq:shadow_and_length1}
\frac{1}{c_0}\le \frac {\diam(I)}{\ell(\tilde \Gamma^\perp(z, \tilde Q))} \le  c_0.
\end{equation}
\end{lem}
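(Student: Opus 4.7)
Write $\ell_z := \ell(\tilde\Gamma^\perp(z, \tilde Q))$ and, for $z \in I$, let $\tilde x_z \in \partial\tilde Q$ denote the endpoint of $\tilde\Gamma^\perp(z, \tilde Q)$ lying on $\tilde Q$. The goal is to establish $\ell_z \sim \ell_{z_0} = \diam(I)$ for every $z \in I$, with constants depending only on $p$ and $C_0$.

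For the \emph{upper bound} $\ell_z \ls \diam(I)$, I proceed by a triangle inequality. By the Whitney-type property of $\tilde Q$ (Lemma~\ref{lemma:tilde_sQ_Whitney1}) and the estimate $\dist(\tilde Q,\Gamma) \le |\tilde x_{z_0}-z_0| \le \ell_{z_0}$, one has $\diam(\tilde Q)\sim \dist(\tilde Q,\Gamma)\ls \diam(I)$. Then, for $z\in I$,
\[
|z-\tilde x_z| \le |z-z_0|+|z_0-\tilde x_{z_0}|+|\tilde x_{z_0}-\tilde x_z| \le \diam(I)+\ell_{z_0}+\diam(\tilde Q) \ls \diam(I),
\]
and the quasigeodesic property of hyperbolic rays in $\tilde\Omega_\Gamma$ (Lemma~\ref{lemma:qconvex}, invoked as in the proof of Corollary~\ref{cor:tilde_Q_distance}) yields $\ell_z\sim|z-\tilde x_z|\ls\diam(I)$.

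For the \emph{lower bound} $\ell_z\gs\diam(I)$, which is the delicate direction, my plan is to pass to the conformal model $\tilde\varphi_\Gamma\colon \D\to \tilde\Omega_\Gamma$. If $P = \tilde\varphi_\Gamma^{-1}(\tilde Q) = P_{k,j}$, then for every $z\in S_\Gamma(\tilde Q)$ the preimage of $\tilde\Gamma^\perp(z,\tilde Q)$ is a radial segment in $\D$ of fixed Euclidean length $2^{-(k+1)}$, so the image lengths $\ell_z$ arise from a family of parallel-length disk segments, and the desired comparability is a conformal distortion statement. I would combine the Gehring--Hayman inequality (Lemma~\ref{GH thm}), applied to alternative curves in $\D$ joining the boundary endpoints of two such radial segments, with the subhyperbolic length identity $\int_{\tilde\Gamma^\perp(z,\tilde Q)} \dist(w,\Gamma)^{1-p}\dw \sim \ell_z^{2-p}$ that follows from Lemma~\ref{GM2} and the argument used to prove Corollary~\ref{cor:Pankka3}. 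The calibration $\diam(I)=\ell_{z_0}$ ties the horizontal scale of $I$ to the vertical radial scale at $z_0$, and the subhyperbolic hypothesis $\Gamma\in\Jordan(p,C_0)$ converts this coupling into the required uniform lower bound. The main obstacle is that classical Koebe distortion for $\tilde\varphi_\Gamma$ degenerates on approach to $\bS^1$ within the shadow arc $\tilde\varphi_\Gamma^{-1}(I)$, so a naive pointwise derivative comparison is insufficient; the calibration is needed precisely to exclude a collapse of $\ell_z$, which otherwise would produce a curve along which the weighted integral $\int \dist(\cdot,\Gamma)^{1-p}$ violates the subhyperbolic hypothesis.
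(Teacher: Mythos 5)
Your upper bound $\ell(\tilde\Gamma^\perp(z,\tilde Q))\ls \diam(I)$ is correct and is essentially verbatim the paper's argument: the triangle inequality through $z_0$ and $\tilde x_{z_0}$, the Whitney property of $\tilde Q$ from Lemma \ref{lemma:tilde_sQ_Whitney1}, the trivial bound $\dist(\tilde Q,\Gamma)\le \ell_{z_0}=\diam(I)$, and the quasigeodesic property from Lemma \ref{lemma:qconvex}.

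The lower bound, however, is a genuine gap: what you have written there is a plan, not a proof. You list the tools you intend to use (the conformal model, Gehring--Hayman, the identity $\int_{\tilde\Gamma^\perp(z,\tilde Q)}\dist(\cdot,\Gamma)^{1-p}\,\mathrm{d}s\sim \ell_z^{2-p}$) and you explicitly flag an unresolved obstacle (``classical Koebe distortion \dots degenerates''), but no chain of inequalities ending in $\diam(I)\ls \ell_z$ is ever produced; the sentence asserting that the calibration $\diam(I)=\ell_{z_0}$ ``converts this coupling into the required uniform lower bound'' is precisely the missing argument. For comparison, the paper disposes of this direction in one line: $\diam(I)=\ell(\tilde\Gamma^\perp(z_0,\tilde Q))\ls \dist(\tilde Q,\Gamma)\le \dist(\tilde x_z,\Gamma)\le \ell(\tilde\Gamma^\perp(z,\tilde Q))$, citing Corollary \ref{cor:tilde_Q_distance} for the first inequality. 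Note, though, that this citation is itself delicate: Corollary \ref{cor:tilde_Q_distance} only supplies \emph{one} point $z_{\tilde Q}\in S_\Gamma(\tilde Q)$ with $\ell(\tilde\Gamma^\perp(z_{\tilde Q},\tilde Q))\ls\dist(\tilde Q,\Gamma)$, not a bound at the arbitrary calibration point $z_0$, and the remark following \eqref{eq:tilde_Q_blow-up} emphasizes that the ratio $\ell(\tilde\Gamma^\perp(z,\tilde Q))/\dist(\tilde Q,\Gamma)$ is in general unbounded. So your instinct that the lower bound is the delicate direction is well founded --- but that makes it all the more necessary to actually write the estimate down rather than gesture at the ingredients.
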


\begin{proof}
Let $z\in I$ and let $\tilde x\in \Gamma^\perp(z)$ be the point for which $\tilde \Gamma^\perp(\tilde x') = \tilde \Gamma(z,\tilde Q)$. Let also $\tilde x_0\in \Gamma^\perp(z_0)$ be the point satisfying $\tilde \Gamma^\perp(\tilde x_0) = \tilde \Gamma^\perp(z_0,\tilde Q)$.

Since $\tilde \Gamma^\perp(z,\tilde Q)$ is a quasigeodesic in the Euclidean metric, we have that
\begin{align*}
\ell(\Gamma^\perp(z,\tilde Q)) &\ls |z-\tilde x| \le |z-z_0| + |z_0 - \tilde x_0| + |\tilde x_0 - \tilde x| \\
&\le \diam(I) + \ell(\tilde\Gamma^\perp(z_0,\tilde Q))+\diam(\tilde Q) \\
&\ls \diam(I) + \dist(\tilde Q, \Gamma) \\
&\ls \diam(I)
\end{align*}
by Lemma \ref{lemma:tilde_sQ_Whitney1} and Corollary \ref{cor:tilde_Q_distance}.

The other direction follows immediately from Corollary \ref{cor:tilde_Q_distance}. Indeed, 
\begin{align*}
\diam(I) &= \ell(\tilde \Gamma^\perp(z_0,\tilde Q)) \ls \dist(\tilde Q,\Gamma) \ls \ell(\tilde \Gamma^\perp(z,\tilde Q)).
\end{align*}
The claim follows.
\end{proof}


\section{Step 2a: Stable reflection over $\Gamma$}
\label{vitonen}

We now leave the partition $\tilde \sQ_\Gamma$ for a while and discuss, as a preparatory step for the construction of a partition $\sQ_\Gamma$ in $\Omega_\Gamma$, a reflection of $\tilde A_\Gamma$ over the Jordan curve $\Gamma$. As will be soon apparent, this initial reflection will reflect hyperbolic rays in $\tilde \Omega_\Gamma$ to (reparametrized) hyperbolic rays in $\Omega_\Gamma$ respecting the distance data \eqref{eq:tilde_Q_blow-up}. The following lemma gives the existence of the requested reflection.

\begin{lem}
\label{lemma:bijection}
Let $\Gamma \in \Jordan(p,C_0)$. Then there exists an embedding $h_\Gamma \colon \tilde A_\Gamma \to \Omega_\Gamma$ satisfying, for each $\tilde x\in \tilde A_\Gamma$,
\begin{equation}
\label{eq:h}
\int_{\tilde \Gamma^\perp(\tilde x)}\dist(w,\Gamma)^{1-p} \dw=\ell\left(\Gamma^\perp(h_\Gamma(\tilde x))\right)^{2-p}.
\end{equation}
Furthermore, $h_\Gamma$ extends to a homeomorphism $\hat h_\Gamma \colon \mathrm{cl} \tilde A_\Gamma \to \mathrm{cl}(h_\Gamma \tilde A_\Gamma)$ satisfying $\hat h_\Gamma|_\Gamma = \id_\Gamma$.
\end{lem}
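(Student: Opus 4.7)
The plan is to define $h_\Gamma$ by reparametrizing hyperbolic rays in $\tilde\Omega_\Gamma$ to hyperbolic rays in $\Omega_\Gamma$ via the shadow projection, using \eqref{eq:h} to prescribe the new arc-length scale. Concretely, for $\tilde x\in \tilde A_\Gamma$ set $z(\tilde x)=S_\Gamma(\tilde x)\in \Gamma$ and
\[
T(\tilde x)=\left(\int_{\tilde\Gamma^\perp(\tilde x)}\dist(w,\Gamma)^{1-p}\dw\right)^{\frac{1}{2-p}}.
\]
By Corollary~\ref{cor:Pankka3} we have $T(\tilde x)\sim \ell(\tilde\Gamma^\perp(\tilde x))$ with constants depending only on $p$ and $C_0$, so $T(\tilde x)$ is always finite. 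Since $\Gamma^\perp(z(\tilde x))$ is the hyperbolic ray in $\Omega_\Gamma$ from $z(\tilde x)$ to $z_{\Omega_\Gamma}=\infty$, its Euclidean arc-length parametrization covers all of $[0,\infty)$; I define $h_\Gamma(\tilde x)$ to be the unique point on $\Gamma^\perp(z(\tilde x))$ lying at arc length $T(\tilde x)$ from $z(\tilde x)$. Identity \eqref{eq:h} then holds by construction.

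The next step is to verify that $h_\Gamma$ is a continuous embedding. For continuity, the shadow projection $\tilde x\mapsto z(\tilde x)$ is continuous on $\tilde A_\Gamma$, as recorded in Section~\ref{sec:notation}; the integrand $\dist(\cdot,\Gamma)^{1-p}$ is locally bounded on $\tilde A_\Gamma$, and the subarc $\tilde\Gamma^\perp(\tilde x)$ varies continuously in the Hausdorff sense with $\tilde x$, so $T$ is continuous; finally, the evaluation map $(z,t)\mapsto$ (the point on $\Gamma^\perp(z)$ at Euclidean arc length $t$ from $z$) is continuous by the Carath\'eodory--Osgood boundary behavior of an exterior conformal parametrization of $\Omega_\Gamma$. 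For injectivity, suppose $h_\Gamma(\tilde x_1)=h_\Gamma(\tilde x_2)$. The common image point has a single shadow $z\in\Gamma$ and lies on a single ray $\Gamma^\perp(z)$, forcing $z(\tilde x_1)=z(\tilde x_2)=z$ and $T(\tilde x_1)=T(\tilde x_2)$. Hence both $\tilde x_j$ lie on $\tilde\Gamma^\perp(z)$, and the strict positivity of the integrand makes $\tilde x\mapsto T(\tilde x)$ strictly monotone along this ray, so $\tilde x_1=\tilde x_2$.

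Finally, I would extend to $\mathrm{cl}(\tilde A_\Gamma)=\tilde A_\Gamma\cup\Gamma$ by setting $\hat h_\Gamma|_\Gamma=\id_\Gamma$ and check continuity at $\Gamma$. If $\tilde x\to z_0\in\Gamma$ within $\tilde A_\Gamma$, then $z(\tilde x)=S_\Gamma(\tilde x)\to z_0$ by continuity of the shadow projection up to the boundary; moreover $|\tilde x-z(\tilde x)|\to 0$, and Lemma~\ref{lemma:qconvex} yields $\ell(\tilde\Gamma^\perp(\tilde x))\sim |\tilde x-z(\tilde x)|\to 0$, so Corollary~\ref{cor:Pankka3} forces $T(\tilde x)\to 0$, giving $h_\Gamma(\tilde x)\to z_0$. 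Since $\mathrm{cl}(\tilde A_\Gamma)$ is compact and $\hat h_\Gamma$ is continuous and injective into the Hausdorff space $\widehat\C$, it is automatically a homeomorphism onto its image, which finishes the proof.

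The only genuinely substantive point is the vanishing of $T$ on approach to $\Gamma$; this rests entirely on the two-sided estimate $T(\tilde x)\sim \ell(\tilde\Gamma^\perp(\tilde x))$ supplied by Corollary~\ref{cor:Pankka3}, while everything else is bookkeeping with the shadow projection and the arc-length reparametrization of hyperbolic rays.
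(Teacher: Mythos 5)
Your proposal is correct and takes essentially the same route as the paper: both define $h_\Gamma$ ray by ray via the subhyperbolic length functional, use Lemma~\ref{GM2} (equivalently Corollary~\ref{cor:Pankka3}) for finiteness together with the infinite Euclidean length of the exterior rays for well-definedness, and deduce the embedding and boundary extension from the continuity and strict monotonicity of the length functional along the polar-type parametrization $(z,t)\mapsto$ point on the ray. One minor imprecision: since $p>1$ the integrand $\dist(\cdot,\Gamma)^{1-p}$ blows up at $\Gamma$ and is \emph{not} locally bounded along the full arc $\tilde\Gamma^\perp(\tilde x)$, so the continuity of $T$ should instead be justified by the uniform bound of Lemma~\ref{GM2} (e.g.\ via dominated convergence), which is harmless and at the same level of detail as the paper's own argument.
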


Let us now fix a conformal map $\varphi_\Gamma:\mathbb D\setminus \{0\}\to \Omega_\Gamma,$ where we view $\Omega_\Gamma$ as a subset of $\mathbb C.$

\begin{proof}
For each $z\in \Gamma$, the length function $\ell_z \colon \tilde \Gamma^\perp(z)\cap \tilde \Omega_\Gamma \to [0,\infty)$,
\begin{equation}
\label{eq:h2}
\tilde x \mapsto \int_{\tilde \Gamma^\perp(\tilde x)} \dist(w,\Gamma)^{1-p} \dw,
\end{equation}
is continuous and strictly monotone. By Lemma~\ref{GM2} and the fact that the hyperbolic ray $\Gamma^\perp(z)\cap  \Omega_\Gamma$ has infinite (Euclidean) length in $\C$, the map $h_\Gamma$ is well-defined.

To show that $h_\Gamma$ is an embedding,  we observe first that the mapping $\Gamma\times (0,1] \to \C$, defined by $(z,t) \mapsto \varphi_\Gamma(t \varphi_\Gamma^{-1}(z))$ for $t\ge 1$ and by $(z,t)\mapsto  \tilde\varphi_\Gamma(t \tilde \varphi_\Gamma^{-1}(z))$ for $t\le 1$, is an embedding. The continuity and injectivity of $h_\Gamma$ follow then from the continuity and monotonicity of the length functional \eqref{eq:h2} on hyperbolic rays $\tilde \gamma_z$ for $z\in \Gamma$. The continuity of $h_\Gamma^{-1}$ is analogous. 
\end{proof}

\begin{rem} To simplify our notation we refer also to $\hat h_\Gamma$ by 
$h_\Gamma$ in what follows.
Since $h_\Gamma$ is uniquely determined by the data which we have associated to $\Gamma$, we call $h_\Gamma$ in Lemma \ref{lemma:bijection} the \emph{stable reflection over $\Gamma$}. 
\end{rem}

\subsection{Main features of the reflection $h_\Gamma$}

In the following lemma, we record three properties of the reflection $h_\Gamma$ which are repeatedly used in the following sections. The first two are given in terms of the lengths of hyperbolic geodesics. 

Heuristically, the first claim states that $h_\Gamma$ is bilipschitz, as a map of hyperbolic geodesics. This yields, in particular, that properties of the control function \eqref{eq:tilde_Q_blow-up} are essentially preserved under $h_\Gamma$. 

The second claim states that a similar comparability property also holds in terms of the distance to the boundary; this uses the fact that $\Omega_\Gamma$ is a John domain. The third claim states that the mapping $h_\Gamma$ is (essentially) repelling in terms of the boundary $\Gamma$. Note that all distances and lengths are with respect to Euclidean metric.

\begin{lem}
\label{lemma:zxz}
Let $\Gamma \in \Jordan(p,C_0)$ and let $h_\Gamma \colon \tilde A_\Gamma \to \Omega_\Gamma$ be a stable reflection over $\Gamma$. Then there exists $L_{\Gamma}=L_{\Gamma}(p,C_0)\ge 1$ satisfying, for each $\tilde x\in \tilde A_\Gamma$,
\begin{equation}
\label{comparable length}
\frac{1}{L_{\Gamma}} \ell\left(\tilde \Gamma^\perp(\tilde x)\right) \le \ell\left(\Gamma^\perp(h_\Gamma(\tilde x))\right) \le L_{\Gamma} \ell\left(\tilde \Gamma^\perp(\tilde x)\right),
\end{equation}
\begin{equation}
\label{eq:not_zxz} 
\frac{1}{L_\Gamma} \dist(h_\Gamma(\tilde x),\Gamma) \le \ell\left(\Gamma^\perp(h_\Gamma(\tilde x))\right) \le L_\Gamma \dist(h_\Gamma(\tilde x),\Gamma), 
\end{equation}
and
\begin{equation}
\label{zxz}
\dist(\tilde x,\Gamma)\le L_\Gamma \dist(h_\Gamma(\tilde x),\Gamma).
\end{equation}
\end{lem}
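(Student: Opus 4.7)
The three estimates all stem from the defining identity \eqref{eq:h}. My plan is to first establish \eqref{comparable length} as a direct consequence of \eqref{eq:h} together with Corollary~\ref{cor:Pankka3}, then derive \eqref{eq:not_zxz} from the John property of $\Omega_\Gamma$, and finally deduce \eqref{zxz} by chaining the two preceding estimates with the trivial bound $\dist(\tilde x,\Gamma)\le\ell(\tilde\Gamma^\perp(\tilde x))$.

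The estimate \eqref{comparable length} is essentially for free. The defining identity reads
\[
\ell\!\left(\Gamma^\perp(h_\Gamma(\tilde x))\right)^{2-p}=\int_{\tilde\Gamma^\perp(\tilde x)}\dist(w,\Gamma)^{1-p}\dw,
\]
and Corollary~\ref{cor:Pankka3} says that the right-hand side is comparable, with constants depending only on $p$ and $C_0$, to $\ell(\tilde\Gamma^\perp(\tilde x))^{2-p}$. Since $2-p\in(0,1)$, extracting the $(2-p)$-th root yields the desired two-sided bound with a constant $L_\Gamma=L_\Gamma(p,C_0)$.

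For \eqref{eq:not_zxz}, write $y=h_\Gamma(\tilde x)$. The lower bound $\dist(y,\Gamma)\le\ell(\Gamma^\perp(y))$ is immediate because $\Gamma^\perp(y)$ is a curve from $y$ to $S_\Gamma(y)\in\Gamma$. For the upper bound I invoke Lemma~\ref{comp GM}: $\Omega_\Gamma$ is a $J$-John domain with center $\infty$ and, crucially, every hyperbolic ray toward $\infty$ from an interior point of $\Omega_\Gamma$ is itself a John curve with the uniform constant $J=J(p,C_0)$. Parameterize $\Gamma^\perp(y)$ by arc length as $\beta\colon[0,\ell]\to\overline{\Omega_\Gamma}$ with $\beta(0)=S_\Gamma(y)$ and $\beta(\ell)=y$, where $\ell=\ell(\Gamma^\perp(y))$. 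For any $s\in(0,\ell)$ the tail of the hyperbolic ray starting at $\beta(s)\in\Omega_\Gamma$ and heading to $\infty$ is a $J$-John curve in arc-length parameterization, and $y$ appears on this curve at time $t=\ell-s$. The John inequality therefore gives $\dist(y,\Gamma)\ge J(\ell-s)$; letting $s\to 0^+$ produces $\ell(\Gamma^\perp(y))\le J^{-1}\dist(y,\Gamma)$, as desired.

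Combining the trivial bound with \eqref{comparable length} and \eqref{eq:not_zxz} gives \eqref{zxz}:
\[
\dist(\tilde x,\Gamma)\le\ell(\tilde\Gamma^\perp(\tilde x))\ls\ell(\Gamma^\perp(h_\Gamma(\tilde x)))\ls\dist(h_\Gamma(\tilde x),\Gamma).
\]
The only non-routine step is the John argument in \eqref{eq:not_zxz}. Its one subtlety is the chordal-versus-Euclidean metric for the unbounded domain $\Omega_\Gamma$, but near $\Gamma$ the two metrics are bilipschitz equivalent, so the comparison does not affect the quantitative constants; the rest is bookkeeping.
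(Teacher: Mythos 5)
Your proof is correct and follows essentially the same route as the paper: \eqref{comparable length} from the defining identity \eqref{eq:h} plus Corollary~\ref{cor:Pankka3}, \eqref{eq:not_zxz} from the John property of hyperbolic rays in $\Omega_\Gamma$ (Lemma~\ref{comp GM}), and \eqref{zxz} by chaining the two. Your limiting argument $s\to 0^+$ is just a more careful spelling-out of the same John-curve estimate the paper invokes directly.
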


\begin{proof}
Let $\tilde x\in \tilde A_\Gamma$. By \eqref{eq:h} and Corollary \ref{cor:Pankka3}, we have that
\begin{equation*}
\ell\left(\Gamma^\perp(h_\Gamma(\tilde x))\right)^{2-p} = \int_{\Gamma^\perp(\tilde x)} \dist(w,\Gamma)^{1-p} \ds(w)\sim \ell\left(\Gamma^\perp(\tilde x)\right)^{2-p},
\end{equation*}
where the constants depend only on $p$ and $C_0$. This implies \eqref{comparable length}.

By Corollary~\ref{comp GM}, the domain $\Omega_\Gamma$ is a John domain with a constant depending only on $C_0$ and hyperbolic rays in $\Omega_\Gamma$  are John curves. Thus, we obtain that
\begin{equation*}
\ell\left(\Gamma^\perp(h_\Gamma(\tilde x))\right)\ls \dist(h_\Gamma(\tilde x),\Gamma) \le |h_\Gamma(\tilde x)-S_\Gamma(\tilde x)|\le \ell\left(\Gamma^\perp(h_\Gamma(\tilde x))\right),
\end{equation*}
where the constants depend only on $p$ and $C_0$. This yields \eqref{eq:not_zxz}. 

For \eqref{zxz}, we observe that, by \eqref{comparable length} and \eqref{eq:not_zxz}, we have the estimate
\begin{equation*}
\dist(\tilde x,\Gamma) \le |\tilde x-S_\Gamma(\tilde x)|\le \ell(\Gamma^\perp(\tilde x)) 
\sim \ell\left(\Gamma^\perp(h_\Gamma(\tilde x))\right) \sim \dist(h_\Gamma(\tilde x),\Gamma),
\end{equation*}
where the constants depend only on $p$ and $C_0$. 
\end{proof}


\section{Step 2b: Partition $\sQ_\Gamma$}

We are now ready to use the reflection $h_\Gamma$ to construct a partition in a neighborhood of $\Gamma$ in $\Omega_\Gamma$.

We define 
\[
A_\Gamma = h_\Gamma(\tilde A_\Gamma).
\]
The partition $\sQ_\Gamma$ of $A_\Gamma$ is the image of $\tilde \sQ_\Gamma$ under $h_\Gamma$, that is, 
\[
\sQ_\Gamma = \{ h_\Gamma(\tilde Q) \colon \tilde Q \in \tilde \sQ_\Gamma\}.
\]

\subsection{Main features of $\sQ_\Gamma$}

Clearly, for each $Q\in \sQ_\Gamma$, we have 
\[
S_\Gamma(Q) = S_\Gamma(h_\Gamma^{-1}(Q)).
\]
Thus it is natural to compare the lengths of the hyperbolic geodesics $\ell(\Gamma^\perp(z,Q))$ and $\ell(\tilde \Gamma^\perp(z,\tilde Q))$ for $z\in S_\Gamma(\tilde Q)$. The characteristic property of $\sQ_\Gamma$ is that these lengths are comparable. More precisely, it is an immediate consequence of \eqref{comparable length} that, for any $Q\in \sQ_\Gamma$ and any $z\in S_\Gamma(Q)$, we have
\begin{equation}
\label{smallest length}
\ell(\Gamma^\perp(z,Q)) \sim \ell(\tilde \Gamma^\perp(z,h_\Gamma^{-1}(Q)),
\end{equation}
where the constants depend only on $p$ and $C_0$. Note that, by Corollary \ref{cor:tilde_Q_distance}, we also have that there exists $z_Q\in S_\Gamma(Q)$ for which
\begin{equation}
\label{smallest length2}
\ell(\Gamma^\perp(z_Q,Q)) \sim \dist(Q,\Gamma).
\end{equation}
Indeed, it suffices to observe that we may take $z_Q = h_\Gamma(z_{\tilde Q})$ and that $\dist(\tilde Q, \Gamma) \le \dist(Q,\Gamma)$ by the expansion property of $h_\Gamma$ (i.e.~\eqref{zxz}).

\begin{rem}
This immediately yields that the control function in \eqref{eq:tilde_Q_blow-up} is comparable to the control defined on $\sQ_\Gamma $ by setting
\begin{equation}
\label{eq:Q_blow-up}
\tag{{$\sQ_\Gamma$}}
Q\mapsto \max_{z\in S_\Gamma(Q)} \frac{\ell(\Gamma^\perp(z,Q))}{\dist(h_\Gamma^{-1}(Q), \Gamma)}.
\end{equation}
In this respect the partitions $\tilde \sQ_\Gamma$ and $\sQ_\Gamma$ are symmetric with respect to $\Gamma$. Note, however, that the rectangles in $\sQ_\Gamma$ need not resemble rectangles in $\tilde \sQ_\Gamma$. For example, the rectangles in $\sQ_\Gamma$ need not be Whitney sets with uniform constant. See Figure~\ref{rough_reflection}. 
\end{rem}

\begin{figure}[ht]
\begin{overpic}[scale=0.4,unit=1mm]{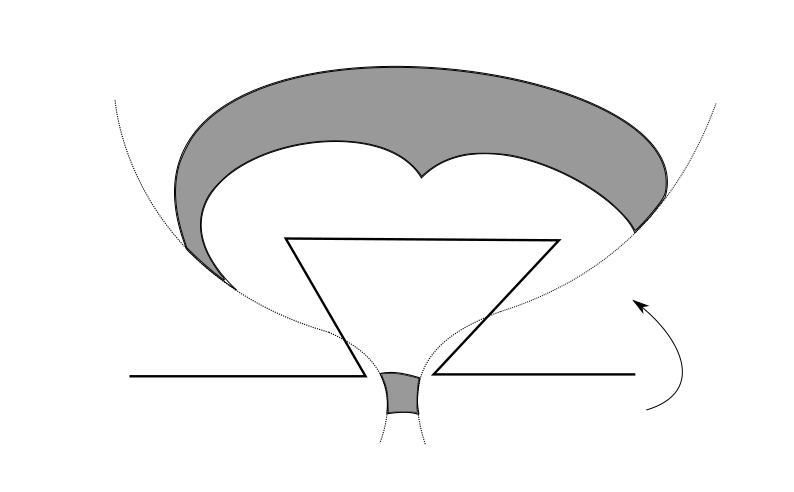}  
\put(60,10){$\tilde Q$} \put(100,20){$h_{\Gamma}$} \put(92,35){$Q$} \put(15,17){$\Gamma$}
\end{overpic}
\caption{The rough reflection $h_{\Gamma}$ maps hyperbolic rays $\tilde \Gamma^\perp\subset \tilde \Omega$ to hyperbolic rays $\Gamma^\perp$ in $\Omega$, which induces a decomposition in $\Omega$ from that of $\tilde \Omega$.}
\label{rough_reflection}
\end{figure}

\subsection{Metric properties of $\sQ_\Gamma$}

We now discuss in more detail the metric properties of the rectangles in $\sQ_\Gamma$. We have collected the main estimates to the following proposition. Note that, by construction, $S_\Gamma(h_\Gamma(\tilde Q)) = S_\Gamma(\tilde Q)$ for each $\tilde Q\in \tilde \sQ_\Gamma$. 

\begin{prop}\label{prop:width bound}
Let $\Gamma \in \Jordan(p,C_0)$. Then there exists constants of comparabilty depending only on $p$ and $C_0$ having the following properties. Let $\tilde Q\in \tilde \sQ_\Gamma$, and $Q=h_\Gamma(\tilde Q)$. Then, for each $z\in S_\Gamma(Q)$, 
\begin{equation}
\label{eq:Q-length}
\ell(\Gamma^\perp(z)\cap Q) \gs \dist(\tilde Q,\Gamma)
\end{equation}
and
\begin{equation}\label{bound on width}
\ell(\Gamma^\perp(z)\cap Q) \ls \dist(\Gamma^\perp(z)\cap Q,\Gamma).
\end{equation}
Furthermore, we have
\begin{equation}
\label{eq:width}
\min_{z\in S_{\Gamma}(\tilde Q)} \ell( \Gamma^\perp(z) \cap Q) \sim \dist(\tilde Q, \Gamma).
\end{equation}
\end{prop}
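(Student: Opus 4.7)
The plan is to reduce all three estimates to a single computation on the hyperbolic ray $\Gamma^\perp(z)$. I would fix $z \in S_\Gamma(\tilde Q) = S_\Gamma(Q)$ and let $\tilde x, \tilde x'$ be the two endpoints of $\tilde \Gamma^\perp(z) \cap \tilde Q$, labelled so that $\tilde x$ is closer to $\Gamma$ along the ray. Because $h_\Gamma$ sends $\tilde \Gamma^\perp(z) \cap \tilde A_\Gamma$ into $\Gamma^\perp(z)$ and does so monotonically in the arc length from $z$ (both sides of \eqref{eq:h} are strictly increasing along the ray), the points $x := h_\Gamma(\tilde x)$ and $x' := h_\Gamma(\tilde x')$ are the endpoints of $\Gamma^\perp(z) \cap Q$. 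Writing $a := \ell(\Gamma^\perp(x))$, $b := \ell(\Gamma^\perp(x'))$, and $d := \dist(\tilde Q, \Gamma)$, the arc length in question is $\ell(\Gamma^\perp(z) \cap Q) = b - a$.

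The first step is to subtract the two instances of \eqref{eq:h} at $\tilde x$ and $\tilde x'$ and invoke Corollary \ref{cor:intersection_distance2} to obtain
\begin{equation*}
b^{2-p} - a^{2-p} = \int_{\tilde \Gamma^\perp(z) \cap \tilde Q} \dist(w, \Gamma)^{1-p} \dw \sim d^{2-p}.
\end{equation*}
To pass from this to a two-sided bound on $b - a$, I would combine the mean value theorem for $t \mapsto t^{2-p}$ with an auxiliary lower bound $a \gs d$. The latter is where the expansion property of the reflection enters: since $\tilde x \in \tilde Q$ and $\tilde Q$ is a Whitney-type set by Lemma \ref{lemma:tilde_sQ_Whitney1}, one has $\dist(\tilde x, \Gamma) \sim d$, and then \eqref{zxz} together with \eqref{eq:not_zxz} yields $a \sim \dist(x, \Gamma) \gs \dist(\tilde x, \Gamma) \sim d$.

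With $\xi \in [a, b]$ from the MVT, the inequality $\xi \ge a \gs d$ combined with $p-1 > 0$ gives $b - a \gs d^{2-p} \cdot d^{p-1} = d$, which is \eqref{eq:Q-length}. For the upper bound \eqref{bound on width}, I would first use \eqref{comparable length}, Corollary \ref{cor:intersection_distance1}, and the decomposition $\ell(\tilde \Gamma^\perp(\tilde x')) = \ell(\tilde \Gamma^\perp(\tilde x)) + \ell(\tilde \Gamma^\perp(z) \cap \tilde Q)$ to see that $b \sim a + d$, hence $b \sim a$ since $a \gs d$. Applying the MVT in the reverse direction then yields $b - a \ls d^{2-p} a^{p-1} \ls a$, again using $d \ls a$. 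The geometric conclusion $b - a \ls \dist(\Gamma^\perp(z) \cap Q, \Gamma)$ now follows from Lemma \ref{comp GM}: since $\Omega_\Gamma$ is John and hyperbolic rays are John curves, the distance to $\Gamma$ grows linearly in arc length from $z$ along $\Gamma^\perp(z)$, so $\dist(\Gamma^\perp(z) \cap Q, \Gamma) \gs a$.

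For \eqref{eq:width}, the lower bound is immediate from \eqref{eq:Q-length}. To saturate it I would choose $z = z_{\tilde Q}$ from Corollary \ref{cor:tilde_Q_distance}, which satisfies $\ell(\tilde \Gamma^\perp(z_{\tilde Q}, \tilde Q)) \ls d$; then \eqref{comparable length} forces $a \sim d$ and $b \sim a + d \sim d$, so $b - a \ls d$. The one genuinely delicate point in the whole argument is nailing down the lower bound $a \gs d$: it is what makes the MVT bounds symmetric modulo multiplicative constants even when the control function \eqref{eq:tilde_Q_blow-up} blows up for $z$ far from $z_{\tilde Q}$, and it is the reason the John geometry of $\Omega_\Gamma$ is enough to dominate the reflected arc length by $\dist(\Gamma^\perp(z) \cap Q, \Gamma)$.
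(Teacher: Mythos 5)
Your proof is correct and follows essentially the same route as the paper: the core identity $b^{2-p}-a^{2-p}\sim \dist(\tilde Q,\Gamma)^{2-p}$ from the definition of $h_\Gamma$ together with Corollary~\ref{cor:intersection_distance2}, the lower bound $a\gs \dist(\tilde Q,\Gamma)$ from \eqref{zxz} and \eqref{eq:not_zxz}, and the John property are exactly the ingredients the paper uses. The only difference is presentational: the paper packages the mean-value-type computation and the comparison $\dist(\Gamma^\perp(z)\cap Q,\Gamma)\sim a$ into Lemmas~\ref{lemma:Q1} and~\ref{lemma:Q2} and then cites them, whereas you rederive both inline.
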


\begin{rem}
As it is easy to notice, we are more interested in comparing metric quantities in terms of $\dist(\tilde Q,\Gamma)$ rather than in terms of $\dist(Q,\Gamma)$. The reason for this is that in the following sections we consider refinements of $\sQ_\Gamma$ and their reflections in $\tilde \Omega_\Gamma$.
\end{rem}

We prove Proposition \ref{prop:width bound} in steps. As a preliminary step, we record a power law for the relative length and distance for the intersection of a hyperbolic ray with a rectangle of $\tilde Q_{\Gamma}$. We state this fact in terms of the partition $\tilde \sQ_\Gamma$ and our reflection $h_\Gamma$.

\begin{lem}\label{lemma:Q1}
Let $\Gamma \in \Jordan(p,C_0)$, $\tilde Q\in \tilde \sQ_\Gamma$, and $Q=h_\Gamma(\tilde Q)$. Then, for each $z\in S_\Gamma(Q)$, 
\begin{align}\label{width}
\frac{\ell(\Gamma^\perp(z)\cap Q)}{\dist(\tilde Q,\Gamma)} \sim \left( \frac{\dist(\Gamma^\perp(z)\cap Q,\Gamma)}{\dist(\tilde Q,\Gamma)}\right)^{p-1},
\end{align}
where the constants depend only on $p$ and $C_0$. 
\end{lem}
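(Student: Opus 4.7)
My plan is to reduce the estimate to the defining identity \eqref{eq:h} of the stable reflection and then apply the length-on-ray estimate of Corollary~\ref{cor:intersection_distance2}. Set $r=\dist(\tilde Q,\Gamma)$, and let $\tilde x_1,\tilde x_2$ be the two endpoints of the subarc $\tilde Q\cap\tilde\Gamma^\perp(z)$, with $\tilde x_1$ the one closer to $z$ along the ray. Write $x_i = h_\Gamma(\tilde x_i)$ and $L_i=\ell(\Gamma^\perp(x_i))$. By construction of $h_\Gamma$ in Lemma~\ref{lemma:bijection}, the map $h_\Gamma$ sends $\tilde\Gamma^\perp(z)\cap\tilde\Omega_\Gamma$ monotonically to $\Gamma^\perp(z)\cap\Omega_\Gamma$, so $Q\cap\Gamma^\perp(z)$ is exactly the subarc between $x_1$ and $x_2$, giving $\ell(\Gamma^\perp(z)\cap Q)=L_2-L_1$. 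Using \eqref{eq:not_zxz} to bound $\dist(x_1,\Gamma)\sim L_1$ from above and the John property of $\Omega_\Gamma$ (Lemma~\ref{comp GM}) to bound distances along the ray from below, one sees $\dist(\Gamma^\perp(z)\cap Q,\Gamma)\sim L_1$.

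The heart of the matter is to subtract the defining identities \eqref{eq:h} at $\tilde x_1$ and $\tilde x_2$, which gives
\[
L_2^{2-p}-L_1^{2-p}=\int_{\tilde Q\cap\tilde\Gamma^\perp(z)}\dist(w,\Gamma)^{1-p}\,\dw \sim r^{2-p},
\]
where the comparability is Corollary~\ref{cor:intersection_distance2}. To convert this into a bound on $L_2-L_1$, I need a lower bound on $L_1$. The Whitney-type property of $\tilde Q$ (Lemma~\ref{lemma:tilde_sQ_Whitney1}) gives $\dist(\tilde x_1,\Gamma)\sim r$, and then the expansion estimate \eqref{zxz} together with \eqref{eq:not_zxz} yields $L_1\gs r$. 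Consequently $L_2^{2-p}\le L_1^{2-p}+Cr^{2-p}\ls L_1^{2-p}$, so $L_2\sim L_1$.

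With $L_2\sim L_1$ established, the mean value theorem applied to $t\mapsto t^{2-p}$ on $[L_1,L_2]$ produces $L^*\in[L_1,L_2]$, necessarily with $L^*\sim L_1$, such that
\[
(2-p)(L^*)^{1-p}(L_2-L_1)=L_2^{2-p}-L_1^{2-p}\sim r^{2-p}.
\]
Rearranging gives $L_2-L_1\sim L_1^{p-1}r^{2-p}$, and dividing by $r$ produces the claimed two-sided estimate. I do not foresee a genuine obstacle; the only slightly delicate point is the lower bound $L_1\gs r$, which is needed to turn the difference of $(2-p)$-powers into the correct power law and which follows directly from the combination of \eqref{zxz}, \eqref{eq:not_zxz}, and the Whitney-type control on $\tilde Q$.
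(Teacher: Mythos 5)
Your proof is correct and follows essentially the same route as the paper's: both arguments subtract the defining identity \eqref{eq:h} at the two endpoints of $\tilde Q\cap\tilde\Gamma^\perp(z)$, identify the resulting difference of $(2-p)$-powers with the subhyperbolic length $\sim\dist(\tilde Q,\Gamma)^{2-p}$ via Corollary~\ref{cor:intersection_distance2}, and then linearize the power function (the paper via the Taylor asymptotic $(x+y)^\alpha-x^\alpha\sim x^{\alpha-1}y$, you via the mean value theorem — the same computation). Your explicit verification of $L_1\gtrsim\dist(\tilde Q,\Gamma)$, which justifies the linearization, and of $\dist(\Gamma^\perp(z)\cap Q,\Gamma)\sim L_1$ (the content of the paper's Lemma~\ref{lemma:Q2}) are points the paper leaves implicit or defers, so your write-up is if anything slightly more complete.
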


\begin{proof}
Let $z\in S_\Gamma(\tilde Q)$ and let $\tilde x_1\in \tilde Q$ be the other end point of $\tilde \Gamma^\perp(z,\tilde Q)$. Then $\tilde x_1$ is also an end point of $\Gamma^\perp(z)\cap \tilde Q$. Let then $\tilde x_2\in \tilde Q$ be the end point of $\tilde \Gamma^\perp(z)\cap Q$ which is not $\tilde x_1$. 

Then, by definition of $h_\Gamma$ and Taylor's formula $(x+y)^\az-x^\az\sim x^{\az-1}y$ for $0<y<x$ and $\az> 1$, we have that
\begin{align*}
&\ell(\Gamma^\perp(z)\cap Q) = \ell(\Gamma^\perp(h_\Gamma(\tilde x_2)))- \ell(\Gamma^\perp(h_\Gamma(\tilde x_1)))  \\
& = \left(\int_{\Gamma^\perp(\tilde x_2)} \dist(w,\Gamma)^{1-p} \dw \right)^{\frac{1}{2-p}} - \left(\int_{\Gamma^\perp(\tilde x_1)} \dist(w,\Gamma)^{1-p} \dw \right)^{\frac{1}{2-p}} \\
&\sim \left( \int_{\Gamma^\perp(\tilde x_1)} \dist(w, \Gamma)^{1-p} \dw \right)^{\frac {p-1}{2-p}} \int_{\Gamma^\perp(z)\cap \tilde Q} \dist(w, \Gamma)^{1-p} \dw \\
&\sim \,\ell\left( \Gamma^\perp(h_\Gamma(\tilde x_1))\right)^{p-1} \int_{\Gamma^\perp(z)\cap \tilde Q} \dist(w, \Gamma)^{1-p} \dw \\
&\sim \,\ell\left( \Gamma^\perp(h_\Gamma(\tilde x_1))\right)^{p-1} \dist(\tilde Q,\Gamma)^{2-p} 
\sim \left( \dist(h_\Gamma(\tilde x_1),\Gamma)\right)^{p-1} \dist(\tilde Q, \Gamma)^{2-p},
\end{align*}
where we used Corollary \ref{cor:intersection_distance2} and \eqref{eq:not_zxz} in Lemma \ref{lemma:zxz} in the last two steps.
\end{proof}

\begin{lem}\label{lemma:Q2}
Let $\Gamma \in \Jordan(p,C_0)$, $\tilde Q\in \tilde \sQ_\Gamma$, and $Q=h_\Gamma(\tilde Q)$. Let $z\in S_\Gamma(Q)$, and let $x\in \Gamma^\perp(z)\cap Q$ be the unique point for which $\Gamma^\perp(x) = \Gamma^\perp(z,Q)$. Then 
\begin{equation}\label{represent dist}
\dist(\Gamma^\perp(z) \cap Q, \Gamma)\sim \dist(x,\Gamma),
\end{equation}
where the constants depend only on $p$ and $C_0$.
\end{lem}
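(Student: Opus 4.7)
The plan is to combine the triangle inequality with the John property of hyperbolic rays in $\Omega_\Gamma$ given by Lemma~\ref{comp GM}. The inequality $\dist(\Gamma^\perp(z)\cap Q,\Gamma)\le \dist(x,\Gamma)$ is immediate from $x\in \Gamma^\perp(z)\cap Q$, so the task is to establish the reverse.

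First I would identify $x$ as the endpoint of $\Gamma^\perp(z)\cap Q$ closest to $z$ along the ray; this is precisely what the hypothesis $\Gamma^\perp(x)=\Gamma^\perp(z,Q)$ says, namely that the hyperbolic segment from $z$ to $x$ coincides with the segment from $z$ to the first point of $Q$. Using that $\tilde Q=\tilde\varphi_\Gamma(P_{k,j})$ is the conformal image of an annular dyadic rectangle, the radial segment $[0,\tilde\varphi_\Gamma^{-1}(z)]$ meets $P_{k,j}$ in a single subsegment, so $\tilde\Gamma^\perp(z)\cap\tilde Q$ is a single subarc. Since the stable reflection $h_\Gamma$ maps each ray $\tilde\Gamma^\perp(z)$ into $\Gamma^\perp(z)$, the set $\Gamma^\perp(z)\cap Q=h_\Gamma(\tilde\Gamma^\perp(z)\cap\tilde Q)$ is also a single subarc, whose other endpoint $x'$ lies farther from $z$ than $x$. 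Consequently, every point of $\Gamma^\perp(z)\cap Q$ lies between $x$ and $x_0$ along $\Gamma^\perp(z)$.

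Next I would pick $y'\in \Gamma^\perp(z)\cap Q$ realizing $\dist(y',\Gamma)=\dist(\Gamma^\perp(z)\cap Q,\Gamma)$ and apply the John property to the subarc of $\Gamma^\perp(z)$ from $x$ to $x_0$, which is a $J$-John curve with $J=J(p,C_0)$ by Lemma~\ref{comp GM}. Parametrizing this curve by arc length from $x$, the point $y'$ is reached at some arc length $t\ge |x-y'|$, so the John condition (Definition~\ref{def:John}) yields $\dist(y',\Gamma)\ge Jt\ge J|x-y'|$. The triangle inequality then gives
\[
\dist(x,\Gamma)\le |x-y'|+\dist(y',\Gamma)\le (1+J^{-1})\dist(\Gamma^\perp(z)\cap Q,\Gamma),
\]
with constants depending only on $p$ and $C_0$ through $J$. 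I do not foresee a substantial obstacle, since both essential ingredients---the connectedness of $\Gamma^\perp(z)\cap Q$ and the John property of the hyperbolic rays---are already in place from earlier in the paper.
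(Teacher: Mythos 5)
Your proof is correct, but it follows a genuinely different route from the paper's. The paper pulls the two points back to $\tilde \Omega_\Gamma$ via $h_\Gamma^{-1}$, shows $\ell(\tilde\Gamma^\perp(\tilde x))\sim\ell(\tilde\Gamma^\perp(\tilde x'))$ using the Whitney property of $\tilde Q$ (the two lengths differ by at most $\diam(\tilde Q)\ls\dist(\tilde Q,\Gamma)\le\ell(\tilde\Gamma^\perp(\tilde x'))$), and then transfers this back to $\Omega_\Gamma$ through the length-comparability properties \eqref{comparable length} and \eqref{eq:not_zxz} of the stable reflection. You instead work entirely on the $\Omega_\Gamma$ side: since $x$ is the first point of $Q$ along $\Gamma^\perp(z)$, every point of $\Gamma^\perp(z)\cap Q$ lies beyond $x$, and the John property of the ray (Lemma~\ref{comp GM}) forces the distance to $\Gamma$ to grow at least linearly in arc length past $x$, whence the triangle inequality closes the argument. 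Your version is more elementary and does not use the structure of $\tilde Q$ or the defining identity of $h_\Gamma$ at all; it would apply to any connected subset of a hyperbolic ray inside the collar. Two minor points worth making explicit: the subarc of a John curve starting at an interior point is again a John curve with the same constant (since $\dist(\gamma(s+t),\Gamma)\ge J(s+t)\ge Jt$), and the Euclidean form of the John estimate for $\Omega_\Gamma$ is legitimate here because all points involved lie in the bounded collar $A_\Gamma$ — this is the same convention the paper uses in the proofs of \eqref{eq:not_zxz} and Lemma~\ref{lemma:Qi1}. In essence your argument re-derives the two-sided bound $\ell(\Gamma^\perp(w))\sim\dist(w,\Gamma)$ of \eqref{eq:not_zxz} for the two points and uses monotonicity of $\ell(\Gamma^\perp(\cdot))$ along the ray, which is a clean alternative to the paper's back-and-forth through the reflection.
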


\begin{proof}
Let $x'\in \Gamma^\perp(z)\cap Q$ be a point which realizes the distance of $\Gamma^\perp(z)\cap Q$ to $\Gamma$, that is, $\dist(x',\Gamma) = \dist(\Gamma^\perp(z) \cap Q, \Gamma)$, and let $\tilde x' = h_\Gamma^{-1}(x')$. Let also $\tilde x = h_\Gamma^{-1}(x)$. It suffices to show that 
\begin{equation}
\label{eq:x-x'}
\dist(x,\Gamma) \sim \dist(x',\Gamma).
\end{equation}

We observe first that $\ell(\Gamma^\perp(\tilde x)) \sim \ell(\Gamma^\perp(\tilde x'))$. Indeed, 
\begin{align*}
\ell(\Gamma^\perp(\tilde x))\le \ell(\Gamma^\perp( \tilde x'))+\ell(\tilde \Gamma^\perp(z) \cap \tilde Q)
\le \ell(\tilde \Gamma^\perp(\tilde x'))+\diam(\tilde Q)\ls \ell(\tilde \Gamma^\perp(\tilde x')), 
\end{align*}
since $\diam(\tilde Q)\ls \dist(\tilde Q, \Gamma) \le \ell(\tilde \Gamma^\perp(\tilde x'))$. The other direction is similar.

Hence, together with \eqref{eq:not_zxz} and \eqref{comparable length}, we obtain that
\begin{align*}
\dist(h_\Gamma( \tilde x),\Gamma) &\sim \ell(\tilde\Gamma^\perp(h_\Gamma(\tilde x))) \sim \ell(\tilde\Gamma^\perp(\tilde x)) \sim \ell(\tilde\Gamma^\perp(\tilde x')) \\
&\sim \ell(\Gamma^\perp(h_\Gamma(\tilde x'))) \sim \dist(h_\Gamma(\tilde x'),\Gamma). 
\end{align*}
Thus \eqref{eq:x-x'} holds. This completes the proof.
\end{proof}

Now we are ready to prove Proposition~\ref{prop:width bound}.

\begin{proof}[Proof of Proposition~\ref{prop:width bound}]
By \eqref{zxz}, we have
\[
\dist(\tilde Q,\,\Gamma)\le \dist(\Gamma^\perp(z)\cap  \tilde Q,\Gamma)\ls \dist(\Gamma^\perp(z)\cap h(\tilde Q),\Gamma).
\]
Then by Lemma \ref{lemma:Q1} together with $1<p<2$, recalling $Q=h(\tilde Q)$ we have
\begin{multline*}
\dist(\tilde Q,\,\Gamma) \ls  {\ell(\Gamma^\perp(z)\cap Q)} \\
 \sim    {\dist(\Gamma^\perp(z)\cap Q,\Gamma)}^{p-1}{\dist(\tilde Q,\Gamma)}^{2-p}\ls \dist(\Gamma^\perp(z)\cap Q,\Gamma). 
\end{multline*} 
This proves \eqref{eq:Q-length} and \eqref{bound on width}. 

In order to prove \eqref{eq:width}, we first note that by Corollary~\ref{cor:tilde_Q_distance}, there exists $z_{\tilde{Q}}\in S_{\Gamma} (\tilde Q)$ so that
$$\ell(\tilde \Gamma^\perp(z_{\tilde{Q}},\,\tilde{Q}))\ls \dist(\tilde Q, \Gamma).$$
Then by \eqref{comparable length} and \eqref{represent dist} we conclude
\begin{multline*}
\dist(\tilde Q, \Gamma)\gs  \ell(\tilde \Gamma^\perp(z_{\tilde{Q}},\,\tilde{Q}))\\
\sim \ell( \Gamma^\perp(z_{\tilde{Q}},\,{Q}))
\gs \dist(x,\,\Gamma)\sim \dist(\Gamma^\perp(z) \cap Q, \Gamma), 
\end{multline*}
where $x$ is the point as in Lemma~\ref{lemma:Q2}. Plugging this into \eqref{bound on width}, together with \eqref{eq:Q-length} we obtain \eqref{eq:width}.
\end{proof}


\section{Step 3: Refined partition ${\sQ_\Gamma^\refi}$}
\label{sec:sQ-Gamma-refined}

In this section we define the final partition $\sQ_\Gamma^\refi$ of $A_\Gamma$. The general strategy is to subdivide the rectangles of $\sQ_\Gamma$ by subdividing the shadows of rectangles in $\sQ_\Gamma$, we call the obtained partition a shadow refinement of $\sQ_\Gamma$. More precisely, a partition $\sR$ of $A_\Gamma$ is a \emph{shadow refinement of $\sQ_\Gamma$} if for each $R\in \sR$ there exists $Q\in \sQ_\Gamma$ and a closed subarc $J \subset S_\Gamma(Q)$ for which $R = Q\cap S_\Gamma^{-1}(J)$. Given $R\in \sR$, we call the rectangle $Q\in \sQ_\Gamma$ containing $R$ the \emph{parent of $R$} and $R$ a \emph{child of $Q$}. 

We denote by $\sC(Q)$ the family of children of $Q\in \sQ_\Gamma$, that is, 
\[
\sC(Q) = \{ R\in \sR \colon R\subset Q\}.
\]

We extend this terminology, already at this point, by saying that a rectangle $\tilde Q\in \tilde \sQ_\Gamma$ is a \emph{parent of $h_\Gamma(\tilde Q) \in \sQ_\Gamma$} and a \emph{grandparent for each child of $h_\Gamma(\tilde Q)$}. Similarly, we say that a child $R\in \sR$ is a \emph{grandchild of $\tilde Q$}.

\begin{rem}
Notice that, clearly, each rectangle in a shadow refinement $\sR$ is a topological rectangle having two sides which are hyperbolic geodesics and two sides which are contained in the boundary of its parent. 
\end{rem}

The fundamental reason for the refinement is to fix the issues related to unboundedness of the functions \eqref{eq:tilde_Q_blow-up} and \eqref{eq:Q_blow-up}. We do this by passing to a partition for which the diameters of the shadows are comparable to the length of the hyperbolic rays towards $\Gamma$.  More precisely, we balance the partition $\sQ_\Gamma$ by refining it. 

\begin{defn}
A shadow refinement $\sR$ of $\sQ_\Gamma$ is \emph{$C$-balanced} if, for each $R\in \sR$ and each $z\in S_\Gamma(R)$, it holds
\[
\frac{1}{C} \diam S_\Gamma(R) \le \ell(\Gamma^\perp(z,R)) \le C \diam S_\Gamma(R).
\]
\end{defn}

The following proposition gives the existence of a quantitative balanced shadow refinement $\sQ_\Gamma^\refi$. 

\begin{prop}
\label{prop:existence_of_balanced_shadow_refinement}
Let $\Gamma \in \Jordan(p,C_0)$. Then there exists a constant $C_\refi = C_\refi(p,C_0)\ge 1$ and a $C_\refi$-balanced shadow refinement $\sQ_\Gamma^\refi$ of $\sQ_\Gamma$.
\end{prop}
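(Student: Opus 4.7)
The plan is, for each $\tilde Q\in \tilde\sQ_\Gamma$ with $Q = h_\Gamma(\tilde Q)$, to subdivide the shadow $S_\Gamma(\tilde Q)=S_\Gamma(Q)$ into finitely many closed subarcs $J_1,\dots,J_{N(\tilde Q)}$ with pairwise disjoint interiors and then set $R_k := Q\cap S_\Gamma^{-1}(J_k)$; the collection of all such $R_k$'s is $\sQ_\Gamma^\refi$. The first observation is a reduction: writing $\tilde R_k := h_\Gamma^{-1}(R_k)\subset \tilde Q$, for $z\in J_k$ the ray $\tilde\Gamma^\perp(z)$ enters $\tilde R_k$ at the very same point where it enters $\tilde Q$, so $\ell(\tilde\Gamma^\perp(z,\tilde R_k))=\ell(\tilde\Gamma^\perp(z,\tilde Q))$. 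Combining this with the ray-level comparability in Lemma~\ref{lemma:zxz} yields
\[
\ell(\Gamma^\perp(z,R_k))\sim \ell(\tilde\Gamma^\perp(z,\tilde Q)),\qquad z\in S_\Gamma(R_k),
\]
with constants depending only on $p$ and $C_0$. Hence the balance condition for $R_k$ reduces to showing $\diam(J_k)\sim \ell(\tilde\Gamma^\perp(z,\tilde Q))$ uniformly for $z\in J_k$.

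For the construction, I would run a greedy algorithm. Parameterize $S_\Gamma(\tilde Q)$ by arc length as $\gamma\colon [0,L]\to \Gamma$ and write $\ell(z):=\ell(\tilde\Gamma^\perp(z,\tilde Q))$. Starting from $t_0=0$, define $t_{k+1}\in(t_k,L]$ to be the largest parameter for which $\diam(\gamma([t_k,t]))\le \ell(\gamma(t_k))$. Since $t\mapsto \diam(\gamma([t_k,t]))$ is continuous and non-decreasing, one gets either $t_{k+1}=L$ or the equality $\diam(\gamma([t_k,t_{k+1}]))=\ell(\gamma(t_k))$. Setting $J_k:=\gamma([t_{k-1},t_k])$, the point $z_0^k:=\gamma(t_{k-1})$ lies in $J_k$ and satisfies $\diam(J_k)=\ell(z_0^k)$ for every non-terminal $k$, placing $J_k$ into the scope of Lemma~\ref{lemma:ax1}; its conclusion then gives the desired comparability $\diam(J_k)\sim \ell(z)$ uniformly on $J_k$.

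Termination is controlled by a diameter count. Any hyperbolic ray from $z\in \Gamma$ into $\tilde Q$ has Euclidean length at least $\dist(\tilde Q,\Gamma)$, so $\ell(\gamma(t_k))\ge \dist(\tilde Q,\Gamma)$ and thus $\diam(J_k)\ge \dist(\tilde Q,\Gamma)$ for every non-terminal $k$. Since the $J_k$'s are pairwise essentially disjoint subarcs of $S_\Gamma(\tilde Q)\subset \Gamma$ of diameter at least $\dist(\tilde Q,\Gamma)$, Corollary~\ref{cor:finite division} bounds their number (in terms of $\diam(S_\Gamma(\tilde Q))/\dist(\tilde Q,\Gamma)$, $p$, and $C_0$), so the process stops after finitely many steps.

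The main technical subtlety will be the terminal piece $J_{N(\tilde Q)}$, where only the inequality $\diam(J_{N(\tilde Q)})\le \ell(\gamma(t_{N(\tilde Q)-1}))$ is guaranteed. I would resolve this by a case split. If $N(\tilde Q)=1$, the hypothesis of Lemma~\ref{lem:small_shadow} is met, and combining its conclusion $\diam(S_\Gamma(\tilde Q))\sim \dist(\tilde Q,\Gamma)$ with the triangle-inequality upper bound used in the proof of Lemma~\ref{lemma:ax1} yields $\ell(z)\sim \dist(\tilde Q,\Gamma)\sim \diam(S_\Gamma(\tilde Q))$ uniformly, so $J_1=S_\Gamma(\tilde Q)$ is itself balanced. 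If $N(\tilde Q)\ge 2$ and $\diam(J_{N(\tilde Q)})\ge c\cdot \ell(\gamma(t_{N(\tilde Q)-1}))$ for a suitable small absolute constant $c$, then the comparability version of Lemma~\ref{lemma:ax1} (whose proof works unchanged with $\diam(I)\sim \ell(z_0)$ in place of equality) applies. Otherwise, merge $J_{N(\tilde Q)}$ into $J_{N(\tilde Q)-1}$: Lemma~\ref{lemma:ax1} applied to the original $J_{N(\tilde Q)-1}$ gives $\ell(\gamma(t_{N(\tilde Q)-1}))\lesssim \diam(J_{N(\tilde Q)-1})$, so the merged arc still has diameter comparable to $\ell(\gamma(t_{N(\tilde Q)-2}))$, and the same lemma supplies the balance. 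This yields the required uniform constant $C_\refi=C_\refi(p,C_0)$.
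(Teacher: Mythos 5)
Your proof is correct and follows essentially the same route as the paper's: a greedy subdivision of each shadow governed by the stopping rule $\diam(\Gamma[x_i,x_{i+1}])=\ell(\tilde \Gamma^\perp(x_i,\tilde Q))$, with Lemma~\ref{lemma:ax1} giving balance of the non-terminal arcs, Corollary~\ref{cor:finite division} giving termination, and Lemma~\ref{lem:small_shadow} (the case of a single arc) or merging the last two arcs handling the terminal piece, followed by the transfer to $\Omega_\Gamma$ via \eqref{smallest length}. The only cosmetic caveat is that $\Gamma$ need not be rectifiable, so the arc-length parametrization should be replaced by an arbitrary homeomorphic one; nothing in the argument changes.
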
 

\begin{rem}
Although we derive several metric properties for the partition $\sQ_\Gamma^\refi$ in the forthcoming section, the fact that the partition $\sQ_\Gamma^\refi$ is balanced should be considered its main feature. However, a small warning is in order. The fact that $\sQ_\Gamma^\refi$ is balanced yields neither diameter or distance bounds for grandchildren of $\tilde Q\in \sQ_\Gamma^\refi$ in terms of $\dist(\tilde Q, \Gamma)$ nor quantitative upper bounds for the number of children of $Q\in \sQ_\Gamma$. We discuss these properties in more detail later in this section.
\end{rem}

For the partition $\sQ_\Gamma^\refi$, we introduce a corresponding balanced partition of shadows. We say, for $Q\in \sQ_\Gamma$, that an arc $I \subset S_\Gamma(Q)$ is \emph{$C$-balanced with respect to $Q$ for $C\ge 1$} if 
\[
\frac{1}{C} \diam(I) \le  \ell(\Gamma^\perp(z,Q)) \le C \diam(I)
\]
for each $z\in I$. Similarly, a partition $\{I_1,\ldots, I_k\}$ of $S_\Gamma(Q)$ is \emph{$C$-balanced partition of $S_\Gamma(Q)$ with respect to $Q$ for $C\ge 1$} if each $I_i$ is $C$-balanced with respect to $Q$. 

The following lemma gives the existence of an appropriate balanced partition of shadows of rectangles of $\sQ_\Gamma$.

\begin{lem}
\label{lemma:shadow_partition}
Let $\Gamma\in \Jordan(p,C_0)$. Then, for each $Q\in \sQ_\Gamma$, there exists $k_Q\in \N$ and a $C^\refi$-balanced partition $\sI_\Gamma(Q) = \{I_1,\ldots, I_{k_Q}\}$ of $S_\Gamma(Q)$, where $C^\refi=C^\refi(p,C_0)\ge 1$.
\end{lem}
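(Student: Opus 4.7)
The plan is to construct the partition $\sI_\Gamma(Q)$ by a greedy process on the Jordan arc $S_\Gamma(Q) = S_\Gamma(\tilde Q)$, where $\tilde Q = h_\Gamma^{-1}(Q)$, and then to transfer the balanced property from $\tilde Q$ to $Q$ using \eqref{smallest length}, which gives $\ell(\Gamma^\perp(z, Q)) \sim \ell(\tilde \Gamma^\perp(z, \tilde Q))$ with constants depending only on $p$ and $C_0$. Because this comparability is quantitative, it suffices to produce a partition of $S_\Gamma(Q)$ that is balanced with respect to $\tilde Q$, meaning that on each piece $I$ and every $z \in I$ we have $\diam(I) \sim \ell(\tilde \Gamma^\perp(z, \tilde Q))$.

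Parametrize $S_\Gamma(Q)$ as an injective continuous image of $[0,1]$ running between its two endpoints and set $f(z) = \ell(\tilde \Gamma^\perp(z, \tilde Q))$. Starting with $z_0$ at the initial endpoint, greedily choose $z_k$ to be the farthest point along $S_\Gamma(Q)$ past $z_{k-1}$ for which the subarc $I_k$ from $z_{k-1}$ to $z_k$ has $\diam(I_k) = f(z_{k-1})$; if the remaining arc past $z_{k-1}$ has diameter smaller than $f(z_{k-1})$, set $z_k$ to the terminal endpoint and stop. Existence of $z_k$ in the generic case comes from the intermediate value theorem applied to the continuous diameter function along the parametrization. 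Every non-terminal piece $I_k$ then fulfils the hypothesis of Lemma~\ref{lemma:ax1} with $z_0 := z_{k-1}$, and hence is $c_0$-balanced with respect to $\tilde Q$ for a constant $c_0 = c_0(p, C_0)$.

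Finiteness is ensured because each non-terminal piece satisfies $\diam(I_k) = f(z_{k-1}) \ge \dist(\tilde Q, \Gamma) > 0$. Applying Corollary~\ref{cor:finite division} to $\gamma = S_\Gamma(Q)$ with $C = \diam(S_\Gamma(Q))/\dist(\tilde Q, \Gamma)$ bounds the number of pairwise disjoint subarcs of $S_\Gamma(Q)$ with diameter at least $\dist(\tilde Q, \Gamma)$ by a finite number $N(C, p, C_0)$, so the algorithm terminates with some $k_Q < \infty$.

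The main obstacle is the last piece. If the greedy procedure halts with $\diam(I_{k_Q}) < f(z_{k_Q - 1})$, then Lemma~\ref{lemma:ax1} does not apply to $I_{k_Q}$ as a standalone piece: the triangle inequality argument in its proof still delivers an upper bound on $f$ over $I_{k_Q}$, but the matching lower bound on $f$ is not automatic. The remedy is to merge $I_{k_Q}$ with its predecessor $I_{k_Q - 1}$ (the boundary case $k_Q = 1$ is treated separately, since the entire $S_\Gamma(Q)$ then has diameter at most $f(a)$ and the one-element partition is immediately balanced). A direct triangle inequality computation, mimicking the proof of Lemma~\ref{lemma:ax1} and using the balanced estimate already established on $I_{k_Q - 1}$, shows that $\diam(I_{k_Q-1} \cup I_{k_Q}) \sim f(z_{k_Q - 2})$ and that $f(z) \sim f(z_{k_Q - 2})$ uniformly for $z$ in the merged piece, yielding the balanced condition on the merged piece with a constant that depends only on $c_0$ and the data. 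Pushing the resulting partition through \eqref{smallest length} produces the required $C^\refi$-balanced partition $\sI_\Gamma(Q)$ with respect to $Q$, where $C^\refi = C^\refi(p, C_0)$.
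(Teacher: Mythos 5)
Your proposal is correct and follows essentially the same route as the paper: the same greedy subdivision of $S_\Gamma(Q)$ with stopping rule $\diam(I_k)=\ell(\tilde\Gamma^\perp(z_{k-1},\tilde Q))$, termination via Corollary~\ref{cor:finite division}, balancedness of the generic pieces via Lemma~\ref{lemma:ax1}, merging the final short piece into its predecessor (with the $k=1$ case handled separately via the small-shadow estimate), and transfer from $\tilde Q$- to $Q$-balancedness through \eqref{smallest length}. The only differences are cosmetic (farthest versus first admissible endpoint, and redoing the triangle-inequality estimate on the merged piece rather than citing Lemma~\ref{lemma:ax1} directly, which is if anything slightly more careful).
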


Proposition \ref{prop:existence_of_balanced_shadow_refinement} follows now immediately from this lemma.

\begin{proof}[Proof of Proposition \ref{prop:existence_of_balanced_shadow_refinement}]
For each $Q\in \sQ_\Gamma$, let $\sI_\Gamma(Q)$ be a $C^\refi$-balanced partition of $S_\Gamma(Q)$ for $C^\refi\ge 1$ as in Lemma \ref{lemma:shadow_partition}, and define 
\[
\sQ_\Gamma^\refi = \{ S_\Gamma^{-1}(I) \cap Q \colon I \in \sI_\Gamma(Q)\}.
\]
Then $\sQ_\Gamma^\refi$ is $C_\refi$-balanced partition with $C_\refi = C^\refi$.
\end{proof}

For the proof of Lemma \ref{lemma:shadow_partition}, we assume from now on that the Jordan curve $\Gamma$ is oriented and that this orientation determines, for each pair $x,y\in \Gamma$ of distinct points, a unique positively oriented segment $\Gamma[x,y]$ from $x$ to $y$ on $\Gamma$. Given points $x_0,\ldots, x_k$ on $\Gamma$, we write $x_0 < x_1 < \cdots < x_k$ if the segments $\Gamma[x_{i-1},x_i]$ have mutually disjoint interiors for $i=1,\ldots, k$.

\begin{proof}[Proof of Lemma \ref{lemma:shadow_partition}]
Let $\tilde Q\in \tilde \sQ_\Gamma$ be such that $h_\Gamma(\tilde Q) = Q$ and let $a<b$ in $\Gamma$ be the end points of $S_{\Gamma}(\tilde Q)$, that is, $\Gamma[a,b] = S_\Gamma(\tilde Q)$.

We fix points $a=x_0< \cdots < x_k = b$ inductively as follows. Let $x_0=a$ and suppose that points $x_0< \cdots < x_i < b$ have been chosen. Then let $x_{i+1}\in \Gamma[x_i,b]$ be either the first point after $x_i$ (in the order of orientation) satisfying
\begin{equation}
\label{eq:choice}
\diam(\Gamma[x_i,x_{i+1}])=\ell(\Gamma^\perp(x_i,\tilde Q))
\end{equation}
or let $x_{i+1} = b$ if no such point exists. In the latter case, we set $k = i+1$ and stop the induction process. To see that the induction stops, notice that, for each $z\in \Gamma[a,b]$, we have, by Corollary \ref{cor:tilde_Q_distance}, that $\ell(\Gamma^\perp(z,\tilde Q)) \gs \dist(\tilde Q, \Gamma)$. Since $\Gamma[a,b]$ has finite diameter, we conclude from Corollary \ref{cor:finite division} that there exists an index $i\in \N$ for which $x_{i+1}=b$. Also in this case, we set $k=i+1$.

For each $i\le k-1$, the condition \eqref{eq:choice} is satisfied and hence, by Lemma \ref{lemma:ax1}, the arcs $\Gamma[x_i,x_{i+1}]$ are $C$-balanced with $C=C(p,C_0)$. Thus it remains to consider the case $i=k_Q-1$ and its subcase that the segment $\Gamma[x_{k-1},x_{k}]$ does not satisfy \eqref{eq:choice}. We consider two cases.

Suppose first that $k=1$. Then 
\[
\diam S_\Gamma(\tilde Q) = \diam \Gamma[a,b] \le \ell(\tilde \Gamma^\perp(a,\tilde Q)).
\]
In this case, we have, by Lemma \ref{lem:small shadow} and Corollary \ref{cor:tilde_Q_distance}, that $\Gamma[a,b]$ is $C$-balanced with $C=C(p,C_0)$ and the claim follows.

Suppose now that $k\ge 2$. Then, by Lemma \ref{lemma:ax1}, the arc $\Gamma[x_{k-2},x_k]$ is $C'$-balanced with $C'=C'(p,C_0)$. Thus the partition 
\[
\{ \Gamma[x_0,x_1],\ldots, \Gamma[x_{k-3},x_{k-2}], \Gamma[x_{k-2},x_{k}]\}
\]
of $S_\Gamma(Q)$ is $C$-balanced with respect to $Q$ with $C=C(p,C_0)$. 
\end{proof}

\begin{figure}[ht]
\begin{overpic}[scale=0.4,unit=1mm]{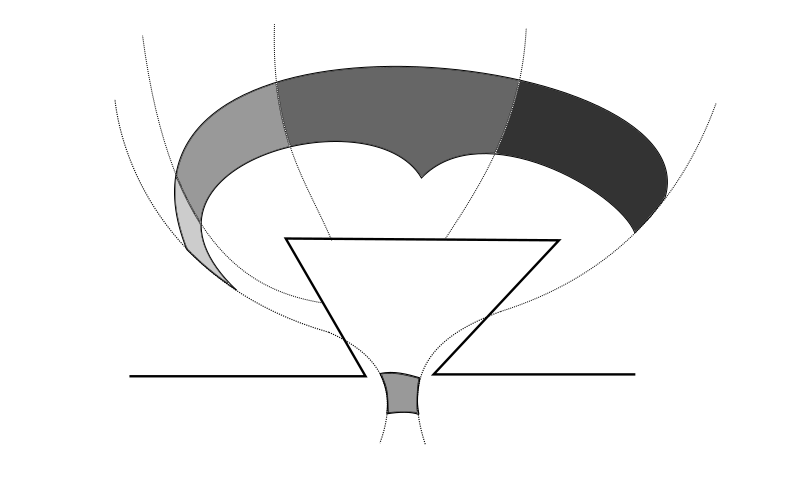} 
\put(61,10){$\tilde Q$} \put(20,45){$R_1$} \put(30,58){$R_2$} \put(55,62){$R_3$} \put(90,53){$R_4$}
\end{overpic}
\caption{An illustration of the decomposition of $Q\in \sQ_\Gamma$ into balanced topological rectangles $R_1,\ldots, R_4$.}
\label{Q_decompose}
\end{figure}

\subsection{Properties of $\sQ_\Gamma^\refi$}

Our first observation is an intersection length estimate for rectangles of $\sQ_\Gamma^\refi$. Note that, since $\sQ_\Gamma^\refi$ is a shadow refinement of $\sQ_\Gamma$, we have, for each $R\in \sQ_\Gamma^\refi$ and its parent $Q\in \sQ_\Gamma$, that 
$$\Gamma^\perp(z) \cap R = \Gamma^\perp(z)\cap Q$$
for each $z\in \Gamma$.

 We begin with the comparability of the diameter of the shadow of $R$ and the distance of $R$ to $\Gamma$.

\begin{lem}
\label{lemma:Qi1}
Let $\Gamma \in \Jordan(p,C_0)$. Then, for each $R\in \sQ_\Gamma^\refi$, we have
\begin{equation}
\label{eq:Qi1a}
\diam(S_\Gamma(R)) \sim \dist(R,\Gamma),
\end{equation}
where the constants depend only on $p$ and $C_0$.
\end{lem}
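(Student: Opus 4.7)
The plan is to identify, for every $z \in I := S_\Gamma(R)$, a specific boundary point $x_1(z) \in \overline R$ whose depth in $\Omega_\Gamma$ is comparable to $\diam(S_\Gamma(R))$; both inequalities of the claim then follow at once. Let $Q \in \sQ_\Gamma$ be the parent of $R$, and let $x_1(z)$ be the endpoint of the geodesic segment $\Gamma^\perp(z, Q)$ lying on $\partial Q$; note that $\Gamma^\perp(x_1(z)) = \Gamma^\perp(z, Q)$ and that $x_1(z) = h_\Gamma(\tilde x_1(z))$ lies in the image of the stable reflection. Lemma~\ref{lemma:zxz}\eqref{eq:not_zxz} will then give
\[
\dist(x_1(z), \Gamma) \sim \ell(\Gamma^\perp(z, Q)),
\]
and the balanced property of $\sQ_\Gamma^\refi$ (Proposition~\ref{prop:existence_of_balanced_shadow_refinement}), combined with the obvious identity $\ell(\Gamma^\perp(z, R)) = \ell(\Gamma^\perp(z, Q))$, yields $\dist(x_1(z), \Gamma) \sim \diam(S_\Gamma(R))$ uniformly in $z \in I$.

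The upper bound $\dist(R, \Gamma) \lesssim \diam(S_\Gamma(R))$ is then immediate from $x_1(z) \in \overline R$, which forces $\dist(R,\Gamma) \le \dist(x_1(z), \Gamma)$. For the lower bound, I will use that an arbitrary $y \in R$ lies on $\Gamma^\perp(S_\Gamma(y)) \cap Q$ with $S_\Gamma(y) \in I$, so Lemma~\ref{lemma:Q2} gives
\[
\dist(y, \Gamma) \ge \dist(\Gamma^\perp(S_\Gamma(y)) \cap Q, \Gamma) \sim \dist(x_1(S_\Gamma(y)), \Gamma) \sim \diam(S_\Gamma(R)),
\]
and taking the infimum over $y \in R$ finishes the argument.

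The delicate step, which I expect to be the main obstacle, is the comparability $\dist(x_1(z), \Gamma) \sim \ell(\Gamma^\perp(z, Q))$: the John property of $\Omega_\Gamma$ alone secures only one direction of such an estimate for a generic point on a hyperbolic ray. The reverse comparison hinges crucially on $x_1(z)$ being in the image of the stable reflection $h_\Gamma$, which is designed precisely to match subhyperbolic arclengths in $\tilde \Omega_\Gamma$ with Euclidean ray lengths in $\Omega_\Gamma$, so that Lemma~\ref{lemma:zxz}\eqref{eq:not_zxz} applies. Once this length-versus-depth bridge is in place, the balanced condition of $\sQ_\Gamma^\refi$ takes care of the remaining bookkeeping.
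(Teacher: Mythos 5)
Your proof is correct and follows essentially the same route as the paper's: both reduce \eqref{eq:Qi1a} to the balanced property of $\sQ_\Gamma^\refi$ together with the comparability of $\ell(\Gamma^\perp(z,Q))$ with the distance to $\Gamma$ of the near endpoint of the fiber $\Gamma^\perp(z)\cap Q$ (the paper invokes the John property directly at a distance-minimizing point of $R$, while you route the same comparison through \eqref{eq:not_zxz} and Lemma~\ref{lemma:Q2}, which is if anything slightly more careful). One small inaccuracy in your commentary only: the John property already yields both directions of $\dist(x,\Gamma)\sim\ell(\Gamma^\perp(x))$ for an arbitrary point on a hyperbolic ray, since the reverse bound $\dist(x,\Gamma)\le |x-S_\Gamma(x)|\le\ell(\Gamma^\perp(x))$ is trivial --- this is exactly how \eqref{eq:not_zxz} is proved --- so membership of $x_1(z)$ in the image of $h_\Gamma$ is not actually the crucial point, but this does not affect the validity of your argument.
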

\begin{proof}
Let $R\in \sQ_\Gamma^\refi$, let $Q$ be a parent of $R$ in $\sQ_\Gamma$ containing $R$, and let $\tilde Q \in \tilde \sQ_\Gamma$ be the rectangle for which $Q = h_\Gamma(\tilde Q)$. Let $x\in R$ be a point for which $\dist(x,\Gamma) = \dist(R,\Gamma)$, and let $z=S_\Gamma(x)$. 

Since $\Gamma^\perp(x) = \Gamma^\perp(z,R)$ and $\Gamma^\perp(z)$ is a John curve in $\Omega \subset \widehat \C$ by Lemma \ref{comp GM}, we have that $\ell(\Gamma^\perp(x)) \sim \dist(x,\Gamma)$ in the Euclidean metric, since $\Gamma^\perp(x) \subset \overline{A_\Gamma}$. Thus, by Lemma \ref{lemma:shadow_partition}, we have that
\[
\diam(S_\Gamma(R)) \sim \ell(\Gamma^\perp(z, Q)) = \ell(\Gamma^\perp(z,R)) \sim \dist(x,\Gamma) = \dist(R,\Gamma).
\]
The claim follows.
\end{proof}

The second diameter estimate is comparability of the diameter and the distance. 


\begin{lem}
\label{lemma:Qi2}
Let $\Gamma \in \Jordan(p,C_0)$. Then, for each $R\in \sQ^\refi_\Gamma$, we have
\[
\diam (R) \sim \dist(R,\Gamma),
\]
where the constants depend only on $p$ and $C_0$.
\end{lem}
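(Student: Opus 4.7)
The proof of Lemma~\ref{lemma:Qi2} breaks into two independent halves, a lower bound $\diam(R) \gtrsim \dist(R,\Gamma)$ and an upper bound $\diam(R) \lesssim \dist(R,\Gamma)$. The lower bound is essentially free: since $\sQ_\Gamma^\refi$ is $C_\refi$-balanced, the control ratio
\[
c_R = \sup_{z \in S_\Gamma(R)} \frac{\ell(\Gamma^\perp(z, R))}{\diam(S_\Gamma(R))}
\]
is at most $C_\refi$, and $R$ is a compact connected set in $\Omega_\Gamma$, so Corollary~\ref{cor:JJ-corollary} applies and gives $\dist(R,\Gamma) \lesssim \diam(R)$ with constants depending only on $p$ and $C_0$.

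For the upper bound I would fix $x, y \in R$, set $z_x = S_\Gamma(x)$ and $z_y = S_\Gamma(y)$, and estimate via the triangle inequality
\[
|x-y| \le |x - z_x| + |z_x - z_y| + |z_y - y|.
\]
The middle term is at most $\diam(S_\Gamma(R)) \sim \dist(R,\Gamma)$ by Lemma~\ref{lemma:Qi1}. For $|x - z_x|$, note that $x$ lies on the ray $\Gamma^\perp(z_x)$, so the chord $|x - z_x|$ is bounded by the arclength from $z_x$ to $x$ along the ray, which in turn is bounded by $\ell(\Gamma^\perp(z_x, R)) + \ell(\Gamma^\perp(z_x) \cap R)$. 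The first summand is at most $C_\refi \diam(S_\Gamma(R)) \sim \dist(R,\Gamma)$ by the balanced property together with Lemma~\ref{lemma:Qi1}. The second summand is handled by passing to the parent: if $Q \in \sQ_\Gamma$ is the parent of $R$, then $\Gamma^\perp(z_x) \cap R \subset \Gamma^\perp(z_x) \cap Q$, and Proposition~\ref{prop:width bound}\eqref{bound on width} yields $\ell(\Gamma^\perp(z_x) \cap Q) \lesssim \dist(\Gamma^\perp(z_x) \cap Q, \Gamma)$.

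The final step, which I expect to be the main subtlety, is to show that $\dist(\Gamma^\perp(z_x) \cap Q, \Gamma) \lesssim \dist(R,\Gamma)$ even though $Q$ is larger than $R$. For this I would apply Lemma~\ref{lemma:Q2} to identify $\dist(\Gamma^\perp(z_x) \cap Q, \Gamma)$ with $\dist(x^*, \Gamma)$ for the distinguished point $x^*$ satisfying $\Gamma^\perp(x^*) = \Gamma^\perp(z_x, Q)$; combining with the John-curve character of $\Gamma^\perp(z_x)$ (Lemma~\ref{comp GM}), one gets $\dist(x^*, \Gamma) \sim \ell(\Gamma^\perp(z_x, Q))$. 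Since $R \subset Q$, the ray meets $Q$ no later than it meets $R$, so $\ell(\Gamma^\perp(z_x, Q)) \le \ell(\Gamma^\perp(z_x, R)) \lesssim \dist(R,\Gamma)$, closing the loop. The symmetric argument yields $|y - z_y| \lesssim \dist(R,\Gamma)$, and the upper bound follows. The delicate point is exactly the one just described: the parent $Q$ a priori reaches closer to $\Gamma$ than $R$ does, so one cannot directly control $\Gamma^\perp(z_x) \cap Q$ by $\dist(R,\Gamma)$ and must route through the balanced lower segment $\Gamma^\perp(z_x, R)$.
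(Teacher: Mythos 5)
Your proof is correct and follows essentially the same route as the paper: the lower bound via the balanced property and Corollary~\ref{cor:JJ-corollary}, and the upper bound via the triangle inequality through the shadows, splitting $|x-z_x|$ into the approach segment $\Gamma^\perp(z_x,R)$ and the intersection $\Gamma^\perp(z_x)\cap R$. The one place you go beyond the paper is the bound $\ell(\Gamma^\perp(z_x)\cap R)\lesssim \dist(R,\Gamma)$, which you justify carefully via Proposition~\ref{prop:width bound}, Lemma~\ref{lemma:Q2} and the John property, whereas the paper dispatches it tersely by citing Lemma~\ref{lemma:R-intersection} (which strictly speaking controls only the approach segment); your fuller justification of exactly this step is correct and welcome.
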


We need the following auxiliary lemma. 
\begin{lem}\label{lemma:R-intersection}
	Let $\Gamma\in\Jordan(p,C_0)$. Then, for each $R\in \sQ_\Gamma^\refi$ and $z\in S_{\Gamma}(R)$, we have
	\[
	\ell(\Gamma^\perp(z,\, R)) \sim \dist(R,\Gamma),
	\]
	where the constants depend only on $p$ and $C_0$. 
\end{lem}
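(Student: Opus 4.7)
The plan is to combine the defining balanced property of $\sQ_\Gamma^\refi$ with the shadow-to-distance comparability from Lemma \ref{lemma:Qi1}, so that no further geometric analysis is needed. Let me spell out how the two estimates fit together.

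First, I recall that $\sQ_\Gamma^\refi$ is a $C_\refi$-balanced shadow refinement of $\sQ_\Gamma$, where $C_\refi = C_\refi(p,C_0)$ is the constant provided by Proposition \ref{prop:existence_of_balanced_shadow_refinement}. By the definition of a balanced shadow refinement, for every $R \in \sQ_\Gamma^\refi$ and every $z \in S_\Gamma(R)$ one has
\[
\frac{1}{C_\refi} \diam S_\Gamma(R) \;\le\; \ell(\Gamma^\perp(z,R)) \;\le\; C_\refi \diam S_\Gamma(R).
\]
So $\ell(\Gamma^\perp(z,R)) \sim \diam S_\Gamma(R)$ with constants depending only on $p$ and $C_0$.

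Next, Lemma \ref{lemma:Qi1} gives $\diam S_\Gamma(R) \sim \dist(R,\Gamma)$ for every $R \in \sQ_\Gamma^\refi$, again with constants depending only on $p$ and $C_0$. Chaining these two comparabilities yields
\[
\ell(\Gamma^\perp(z,R)) \sim \diam S_\Gamma(R) \sim \dist(R,\Gamma),
\]
which is exactly the claim. No obstacle is anticipated: the entire argument is a direct concatenation of a definition and a previously established lemma, and the constants of comparability depend only on the data $(p,C_0)$.
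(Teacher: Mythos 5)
Your proof is correct and is essentially identical to the paper's: the paper also observes that the balanced property gives $\ell(\Gamma^\perp(z,R)) \sim \diam(S_\Gamma(R))$ and then invokes Lemma \ref{lemma:Qi1}. No issues.
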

\begin{proof}
	Since each $R$ is balanced and $z\in S_{\Gamma}(R)$, 
	$$\ell(\Gamma^\perp(z,\, R)) \sim \diam( S_{\Gamma}(R))$$
	and then the claim is an immediate consequence of Lemma~\ref{lemma:Qi1}. 
\end{proof}

\begin{proof}[Proof of Lemma~\ref{lemma:Qi2}]
One direction follows from the facts that $\Omega_\Gamma$ is a John domain and that $R$ has small shadow on $\Gamma$. Indeed, by Lemma \ref{lemma:shadow_partition}, we have that 
\[
\ell(\Gamma^\perp(z,R)) \sim \diam S_\Gamma(R)
\]
for each $z\in S_\Gamma(R)$. Thus, by Lemma \ref{cor:JJ-corollary}, 
\[
\dist(R, \Gamma) \ls \diam (R).
\]

Towards the other direction, let $x_1,x_2\in R$ be points for which $|x_1-x_2| =\diam R$ and, for $i=1,2$, let $z_i = S_\Gamma(x_i)$. Then
\begin{align*}
\diam(R) &= |x_1-x_2| \le |x_1 - z_1| + |z_1 - z_2| + |z_2 - x_2| \\
&\le \ell(\Gamma^\perp(x_1)) + \diam(S_\Gamma(R)) + \ell(\Gamma^\perp(x_2)).
\end{align*}
By Lemma \ref{lemma:Qi1}, $\diam(S_\Gamma(R))  \ls \dist(R,\Gamma)$. Thus it suffices to show that $\ell(\Gamma^\perp(x_i)) \ls \dist(R,\Gamma)$.

Let $Q\in\sQ_\Gamma$ be the parent of $R$. Observe that $\Gamma^\perp(x_i,R) = \Gamma^\perp(x_i,Q)$. Since $S_\Gamma(R)$ is $R$-balanced by construction, we have, by Lemmas \ref{lemma:R-intersection} and \ref{lemma:shadow_partition}, that
\begin{align*}
\ell(\Gamma^{\perp}(x_i)) &\le \ell(\Gamma^{\perp}(S_\Gamma(x_i),R)) + \ell(\Gamma^{\perp}(x_i)\cap R) \\
&\ls \ell(\Gamma^\perp(S_{\Gamma}(x_i), Q)) + \dist(R,\Gamma) \\
&\ls \diam(S_\Gamma(R)) + \dist(R,\Gamma) \ls \dist(R,\Gamma).
\end{align*}
The claim follows.
\end{proof}

Similarly as for rectangles in $\sQ_\Gamma$, the intersection length of hyperbolic geodesics with rectangles in $\sQ_\Gamma^\refi$ is in terms of a power law. We record this in the following form.

\begin{cor}
\label{cor:size of Qi}
Let $\Gamma\in\Jordan(p,C_0)$. Then, for each $R\in \sQ_\Gamma^\refi$, $z\in S_{\Gamma}(R)$ and $\tilde Q\in \tilde \sQ_\Gamma$ satisfying $R\subset h_{\Gamma}(\tilde Q)$,  we have
\[
\ell(\Gamma^\perp(z)\cap R)\sim \left(\frac{\dist(R,\Gamma)}{\dist(\tilde Q,\Gamma)}\right)^{p-1}\dist(\tilde Q,\Gamma),
\]
where the constants depend only on $p$ and $C_0$. 
\end{cor}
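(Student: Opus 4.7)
The plan is to reduce the statement to Lemma \ref{lemma:Q1} and then use the balanced refinement together with the auxiliary distance estimates already proved for $\sQ_\Gamma^\refi$.

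The crucial starting observation is that, because $\sQ_\Gamma^\refi$ is a \emph{shadow} refinement of $\sQ_\Gamma$, for the parent $Q=h_\Gamma(\tilde Q)\in \sQ_\Gamma$ of $R$ and any $z\in S_\Gamma(R)$ one has
\[
\Gamma^\perp(z)\cap R \;=\; \Gamma^\perp(z)\cap Q .
\]
Indeed, $R=Q\cap S_\Gamma^{-1}(I)$ for the subarc $I=S_\Gamma(R)\subset S_\Gamma(Q)$, so every point of $\Gamma^\perp(z)\cap Q$ projects to $z\in I$ and therefore lies in $R$. This is the key identity that makes the proof essentially a corollary rather than a new calculation.

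With that identity in hand, Lemma \ref{lemma:Q1} applies directly to yield
\[
\ell(\Gamma^\perp(z)\cap R) \;=\; \ell(\Gamma^\perp(z)\cap Q) \;\sim\; \left(\frac{\dist(\Gamma^\perp(z)\cap Q,\Gamma)}{\dist(\tilde Q,\Gamma)}\right)^{p-1}\dist(\tilde Q,\Gamma),
\]
with constants depending only on $p$ and $C_0$. It remains to show that
\[
\dist(\Gamma^\perp(z)\cap R,\Gamma)\;\sim\;\dist(R,\Gamma).
\]
Here the balancing of $\sQ_\Gamma^\refi$ finally enters. Let $x$ denote the endpoint of $\Gamma^\perp(z)\cap R$ closest to $\Gamma$, so $\Gamma^\perp(x)=\Gamma^\perp(z,R)$. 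By Lemma \ref{lemma:Q2} (applied to the identified set $\Gamma^\perp(z)\cap Q$) we have $\dist(\Gamma^\perp(z)\cap R,\Gamma)\sim \dist(x,\Gamma)$. One direction is trivial since $x\in R$ gives $\dist(x,\Gamma)\ge \dist(R,\Gamma)$. For the other direction, because $\Gamma^\perp(x)$ is a John curve in $\Omega_\Gamma$ by Lemma \ref{comp GM}, we have $\dist(x,\Gamma)\sim \ell(\Gamma^\perp(x))=\ell(\Gamma^\perp(z,R))$, and by Lemma \ref{lemma:R-intersection} the right-hand side is comparable to $\dist(R,\Gamma)$.

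Substituting the comparability $\dist(\Gamma^\perp(z)\cap R,\Gamma)\sim\dist(R,\Gamma)$ into the displayed estimate yields the claimed power law. The bulk of the work has been done in the preceding lemmas; the only genuinely new point is the identity $\Gamma^\perp(z)\cap R=\Gamma^\perp(z)\cap Q$ for $z\in S_\Gamma(R)$, which is why the balancing of $\sQ_\Gamma^\refi$ with respect to $Q$ (and not with respect to $\tilde Q$) suffices. I do not anticipate a real obstacle here; the mild care needed is simply to make sure that the endpoint $x$ realizing the distance of $\Gamma^\perp(z)\cap R$ to $\Gamma$ is handled via the John property and Lemma \ref{lemma:R-intersection}, rather than, say, trying to locate it inside $\tilde Q$ through the reflection $h_\Gamma^{-1}$, which would be harder to control.
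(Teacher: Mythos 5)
Your proposal is correct and follows essentially the same route as the paper: the identity $\Gamma^\perp(z)\cap R=\Gamma^\perp(z)\cap Q$ plus Lemma \ref{lemma:Q1}, then the reduction $\dist(\Gamma^\perp(z)\cap R,\Gamma)\sim\dist(x,\Gamma)\sim\ell(\Gamma^\perp(z,R))\sim\dist(R,\Gamma)$ via \eqref{represent dist}, the John property (Lemma \ref{comp GM}), and Lemma \ref{lemma:R-intersection}. No gaps.
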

\begin{proof}
By Lemma \ref{lemma:Q1}, we have that 
$$\ell(\Gamma^\perp(z)\cap R) = \ell(\Gamma^\perp(z)\cap Q)\sim \dist(\Gamma^\perp(z)\cap h_\Gamma(\tilde Q), \Gamma)^{p-1} \dist(\tilde Q, \Gamma)^{2-p}.$$
Then via \eqref{represent dist} and Lemma~\ref{comp GM}, we further have
\begin{align*}
&\dist(\Gamma^\perp(z)\cap h_\Gamma(\tilde Q), \Gamma)^{p-1} \dist(\tilde Q, \Gamma)^{2-p}\\
=&\dist(\Gamma^\perp(z)\cap R, \Gamma)^{p-1} \dist(\tilde Q, \Gamma)^{2-p}
\sim \ell(\Gamma^\perp(z,\,R))^{p-1} \dist(\tilde Q, \Gamma)^{2-p}.
\end{align*} 
Now applying Lemma~\ref{lemma:R-intersection}, we eventually obtain 
\begin{align*}
\ell(\Gamma^\perp(z)\cap R) \sim \ell(\Gamma^\perp(z,\,R))^{p-1} \dist(\tilde Q, \Gamma)^{2-p}\sim \dist(R,\,\Gamma))^{p-1} \dist(\tilde Q, \Gamma)^{2-p}.
\end{align*}
The claim follows.
\end{proof}

\subsection{Number of large children}

Due to the properties of the initial reflection $h_\Gamma$ the rectangles in $\sQ_\Gamma^\refi$ are large compared to their grandparents. We formulate this, at this, point as follows.

\begin{lem}
\label{lemma:R-is-large}
Let $\Gamma \in \Jordan(p,C_0)$. Then, for each $R\in \sQ_\Gamma^\refi$, 
\[
\diam(S_\Gamma(R)) \gs \dist(\tilde Q, \Gamma),
\]
where $\tilde Q$ is the grandparent of $R$ and the constants depend only on $p$ and $C_0$.
\end{lem}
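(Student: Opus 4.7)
\textbf{Proof plan for Lemma~\ref{lemma:R-is-large}.} The plan is to chain two earlier results: the comparability $\diam(S_\Gamma(R)) \sim \dist(R,\Gamma)$ given by Lemma~\ref{lemma:Qi1}, and the expansion estimate \eqref{zxz} for the stable reflection $h_\Gamma$. Both estimates together should immediately yield the desired lower bound, so no new geometric input is needed.

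First, I would note that $R$ is, by construction, contained in its parent $Q = h_\Gamma(\tilde Q) \in \sQ_\Gamma$, which is the image of the grandparent $\tilde Q \in \tilde \sQ_\Gamma$ under the stable reflection. For an arbitrary $\tilde x \in \tilde Q$, the expansion inequality \eqref{zxz} of Lemma~\ref{lemma:zxz} gives
\[
\dist(h_\Gamma(\tilde x),\Gamma) \ge \frac{1}{L_\Gamma}\, \dist(\tilde x,\Gamma) \ge \frac{1}{L_\Gamma}\, \dist(\tilde Q,\Gamma),
\]
where the second inequality just uses that $\tilde x \in \tilde Q$. Taking the infimum over $\tilde x \in \tilde Q$, and using that every point of $R$ is of the form $h_\Gamma(\tilde x)$ for some $\tilde x \in \tilde Q$, I get
\[
\dist(R,\Gamma) \ge \dist(h_\Gamma(\tilde Q),\Gamma) \ge \frac{1}{L_\Gamma}\, \dist(\tilde Q,\Gamma).
\]

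Finally, applying Lemma~\ref{lemma:Qi1} to $R$, I obtain $\diam(S_\Gamma(R)) \sim \dist(R,\Gamma) \gs \dist(\tilde Q,\Gamma)$, with constants depending only on $p$ and $C_0$ (through $L_\Gamma$ and the constants in Lemma~\ref{lemma:Qi1}). There is no real obstacle here: the construction of $\sQ_\Gamma^\refi$ as a shadow refinement keeps $R$ contained in its parent $Q$, and the key point is that $h_\Gamma$ never shrinks distances to $\Gamma$ by more than a factor $L_\Gamma$, so grandchildren inherit a lower bound for distance to $\Gamma$ from their grandparent's distance, which then transfers to the shadow diameter via the balanced property encoded in Lemma~\ref{lemma:Qi1}.
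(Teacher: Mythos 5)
Your proof is correct, but it routes through different intermediate results than the paper does. The paper's argument stays entirely at the level of hyperbolic geodesic lengths: balancedness of $\sQ_\Gamma^\refi$ gives $\diam(S_\Gamma(R)) \sim \ell(\Gamma^\perp(z,R)) = \ell(\Gamma^\perp(z,Q))$, the length-comparability \eqref{smallest length} (i.e.\ \eqref{comparable length}) pulls this back to $\ell(\tilde\Gamma^\perp(z,\tilde Q))$ in $\tilde\Omega_\Gamma$, and the trivial observation that any curve from $\Gamma$ to $\tilde Q$ has length at least $\dist(\tilde Q,\Gamma)$ finishes. You instead work with Euclidean distances to $\Gamma$: Lemma~\ref{lemma:Qi1} converts $\diam(S_\Gamma(R))$ into $\dist(R,\Gamma)$, the inclusion $R\subset h_\Gamma(\tilde Q)$ plus the expansion property \eqref{zxz} gives $\dist(R,\Gamma)\ge \dist(Q,\Gamma)\gs \dist(\tilde Q,\Gamma)$. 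Both chains ultimately rest on the same two ingredients (the balanced property of the refinement and the metric control of $h_\Gamma$ from Lemma~\ref{lemma:zxz}), and there is no circularity since Lemma~\ref{lemma:Qi1} is established earlier and independently. The trade-off is minor: your route is arguably more transparent but leans on Lemma~\ref{lemma:Qi1}, whose proof already invokes the John property of $\Omega_\Gamma$, whereas the paper's version is slightly more self-contained, needing only balancedness and \eqref{smallest length}. Every step you write checks out, including the passage from the pointwise bound $\dist(h_\Gamma(\tilde x),\Gamma)\ge L_\Gamma^{-1}\dist(\tilde x,\Gamma)\ge L_\Gamma^{-1}\dist(\tilde Q,\Gamma)$ to the set-level bound via the infimum.
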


\begin{proof}
Let $Q=h_\Gamma(\tilde Q)$ be the parent of $R$. Let $z\in S_\Gamma(R)$. Since $\sQ_\Gamma^\refi$ is balanced, we have, by \eqref{smallest length} and Corollary \ref{cor:tilde_Q_distance}, that 
\[
\diam(S_\Gamma(R)) \sim \ell(\Gamma^\perp(z,R)) = \ell(\Gamma^\perp(z,Q)) \sim \ell(\tilde \Gamma^\perp(z,\tilde Q)) \gs \dist(\tilde Q, \Gamma).
\]
The claim follows.
\end{proof}

An immediate version of this lemma is the following estimate. 

\begin{cor}\label{R upper bound}
Let $\Gamma\in\Jordan(p,C_0)$. Then, for each $R\in \sQ_\Gamma^\refi$, $z\in S_{\Gamma}(R)$ and $\tilde Q\in \tilde \sQ_\Gamma$ satisfying $R\subset h_{\Gamma}(\tilde Q)$,  we have
\[
\ell(\Gamma^\perp(z)\cap R)\ls  \dist(R,\Gamma),
\]
where the constant depends only on $p$ and $C_0$. 
\end{cor}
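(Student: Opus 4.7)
My plan is to derive the bound directly from the power-law identity in Corollary~\ref{cor:size of Qi}, after rewriting it in a more suggestive form. Indeed, the right-hand side of that corollary equals
\[
\dist(R,\Gamma)^{p-1}\,\dist(\tilde Q,\Gamma)^{2-p},
\]
so to prove $\ell(\Gamma^\perp(z)\cap R)\ls \dist(R,\Gamma)$ it suffices to show that $\dist(\tilde Q,\Gamma)\ls \dist(R,\Gamma)$; then, using $2-p>0$, the factor $\dist(\tilde Q,\Gamma)^{2-p}$ is absorbed into $\dist(R,\Gamma)^{2-p}$ and combining exponents yields the claim.

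The comparison $\dist(\tilde Q,\Gamma)\ls \dist(R,\Gamma)$ is exactly where the two shadow estimates developed in the previous subsection come together. Lemma~\ref{lemma:R-is-large} asserts that $\diam(S_\Gamma(R))\gs \dist(\tilde Q,\Gamma)$, which is the essential consequence of having passed to the balanced shadow refinement (it reflects the fact that children of $Q=h_\Gamma(\tilde Q)$ are not too small compared to their grandparent). On the other hand, Lemma~\ref{lemma:Qi1} gives the converse comparison $\diam(S_\Gamma(R))\sim \dist(R,\Gamma)$ for the refined rectangle itself. Chaining these two estimates immediately produces
\[
\dist(\tilde Q,\Gamma)\ls \diam(S_\Gamma(R))\sim \dist(R,\Gamma),
\]
with constants depending only on $p$ and $C_0$.

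Substituting back into the power-law expression gives
\[
\ell(\Gamma^\perp(z)\cap R)\sim \dist(R,\Gamma)^{p-1}\dist(\tilde Q,\Gamma)^{2-p}\ls \dist(R,\Gamma)^{p-1}\dist(R,\Gamma)^{2-p}=\dist(R,\Gamma),
\]
which is the desired conclusion. There is no genuine obstacle here; the content of the statement is already packaged into Corollary~\ref{cor:size of Qi}, Lemma~\ref{lemma:R-is-large}, and Lemma~\ref{lemma:Qi1}, and the only minor point to be careful about is to note that the step $\dist(\tilde Q,\Gamma)^{2-p}\ls \dist(R,\Gamma)^{2-p}$ requires $2-p>0$, which holds throughout by our standing assumption $1<p<2$.
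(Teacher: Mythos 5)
Your argument is correct and is essentially identical to the paper's own proof: both chain Lemma~\ref{lemma:R-is-large} and Lemma~\ref{lemma:Qi1} to get $\dist(\tilde Q,\Gamma)\ls\diam(S_\Gamma(R))\sim\dist(R,\Gamma)$ and then feed this into the power law of Corollary~\ref{cor:size of Qi}, using $p<2$. Nothing further is needed.
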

\begin{proof}
	By Lemma~\ref{lemma:R-is-large} and Lemma~\ref{lemma:Qi1}, we have
	$$\dist(\tilde Q, \Gamma)\ls \ell(\Gamma^\perp(z)\cap R)\sim \dist(R,\,\Gamma).$$
	This together with Corollary~\ref{cor:size of Qi} and $p<2$ gives the desired estimate.  
\end{proof}

Although there is no upper bound for the number of children one rectangle in $\sQ_\Gamma$ can have, we obtain a uniform upper bound for the number of children of fixed relative size. For the statement, we give the following definitions.

\begin{defn}
A rectangle $R\in \sQ_\Gamma^\refi$ is \emph{$k$-large for $k\in \Z$} if  
\[
2^k \le \frac{\diam(S_\Gamma(R))}{\dist(\tilde Q,\Gamma)} < 2^{k+1},
\]
where $\tilde Q$ is the grandparent of $R$. For $Q\in \sQ_\Gamma$, we denote $\sC_k(Q) \subset \sQ_\Gamma^\refi$ the family of $k$-large children of $Q$.
\end{defn}

\begin{prop}
\label{prop:back-to-life}
Let $\Gamma \in \Jordan(p,C_0)$. Then there exists $C_\# = C_\#(p,C_0)\ge 1$ having the property that, for each $Q\in \sQ_\Gamma$, the number of $k$-large children of $Q$ is at most $C_\#$. 
\end{prop}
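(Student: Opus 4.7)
The plan is to prove the bound by showing that the shadows $\{I_R : R \in \sC_k(Q)\}$ form pairwise disjoint subarcs of $\Gamma$ of Euclidean diameter comparable to $\delta := 2^k\dist(\tilde Q,\Gamma)$, all contained in a single Euclidean ball of radius comparable to $\delta$, and then to invoke a packing/doubling estimate on $\Gamma$.

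First I would establish that the shadows of $k$-large children concentrate near $\tilde Q$ in the Euclidean metric. For each $R \in \sC_k(Q)$ and each $z \in I_R = S_\Gamma(R)$, the balance of $\sQ_\Gamma^\refi$ combined with \eqref{smallest length} gives
\[
\sigma(z) := \ell(\tilde\Gamma^\perp(z, \tilde Q)) \sim \ell(\Gamma^\perp(z, R)) \sim \diam(I_R) \sim \delta,
\]
uniformly in $z$ and $R$. Let $\tilde y_z \in \tilde \Gamma^\perp(z) \cap \tilde Q$ denote the endpoint of $\tilde \Gamma^\perp(z, \tilde Q)$ lying in $\tilde Q$. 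Then $|z - \tilde y_z| \le \sigma(z) \ls \delta$, and, fixing any $w_0 \in \tilde Q$, Lemma~\ref{lemma:tilde_sQ_Whitney1} together with Lemma~\ref{lemma:R-is-large} yields
\[
|z - w_0| \le |z-\tilde y_z|+\diam(\tilde Q) \ls \sigma(z) + \dist(\tilde Q, \Gamma) \ls \delta.
\]
Consequently,
\[
\bigcup_{R \in \sC_k(Q)} I_R \;\subset\; B(w_0, C\delta) \cap \Gamma
\]
for some $C = C(p, C_0)$.

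It then remains to bound the number of pairwise disjoint subarcs of $\Gamma$ of Euclidean diameter $\sim \delta$ that can fit inside a Euclidean ball of radius $\sim \delta$. This is a doubling-type assertion for $\Gamma$, which I would derive from the John property of $\Omega_\Gamma$ (Lemma~\ref{comp GM}) via the quasisymmetry of the extended exterior Riemann map onto $\Omega_\Gamma$ (Lemma~\ref{quasisymmetry}), yielding the desired bound $C_\# = C_\#(p, C_0)$.

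The main obstacle is the final packing step: Lemma~\ref{quasisymmetry} provides quasisymmetry, and hence doubling of $\Gamma$, only with respect to the inner metric of $\Omega_\Gamma$, whereas one needs Euclidean doubling. I would bridge this either by establishing the comparability of the inner and Euclidean metrics on $\Gamma$ in the John setting (in the spirit of Lemma~\ref{lemma:shadow2} and Corollary~\ref{cor:JJ-corollary}), or alternatively by applying Corollary~\ref{cor:finite division} directly after decomposing $\sC_k(Q)$ into a controlled number of maximal arc-adjacent \emph{runs}, each of whose hull is contained in a subarc of $\Gamma$ of Euclidean diameter $\ls \delta$ by the containment already established.
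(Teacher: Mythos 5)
Your containment step is correct and is essentially half of the paper's proof: using the balance of $\sQ_\Gamma^\refi$, \eqref{smallest length}, Corollary~\ref{cor:tilde_Q_distance} together with Lemma~\ref{lemma:R-is-large}, and the Whitney property of $\tilde Q$, one does get that all the shadows $S_\Gamma(R)$, $R\in\sC_k(Q)$, have diameter $\sim\delta=2^k\dist(\tilde Q,\Gamma)$ and lie in a single ball $B(w_0,C\delta)$. The genuine gap is exactly where you place it: the final packing step, and neither of your two bridges closes it. Comparability of the inner metric of $\Omega_\Gamma$ with the Euclidean metric on $\Gamma$ fails for the curves under consideration: if $\tilde\Omega_\Gamma$ is the inside of a power cusp $\{|y|<x^{1/\alpha}\}$ (the model $\alpha$-subhyperbolic domain), the boundary points $(x,\pm x^{1/\alpha})$ are Euclidean-close but can only be joined inside $\Omega_\Gamma$ by going around the tip, at cost $\sim x\gg x^{1/\alpha}$; so the quasisymmetry of the exterior map, which is only with respect to the inner metric, cannot be transferred to Euclidean balls. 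The ``runs'' alternative also does not work: Corollary~\ref{cor:finite division} applies to disjoint subarcs of a \emph{common arc of controlled diameter}, and while each maximal arc-adjacent run of $k$-large shadows is such an arc, the number of runs is not a priori bounded --- $Q$ may have arbitrarily many children, and $k$-large children can be separated by blocks of children of other sizes, so bounding the number of runs is essentially the quantity you are trying to control.

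The paper closes this step by an area-packing argument that avoids any Euclidean doubling statement for $\Gamma$: Lemma~\ref{lemma:R_and_ball} attaches to each $R\in\sC_k(Q)$ a $\lambda^\refi$-Whitney-type set $B_R\subset\Omega_\Gamma$ meeting $R$, with $S_\Gamma(B_R)\subset S_\Gamma(R)$ and $\diam(B_R)\sim\dist(R,\Gamma)\sim\diam(S_\Gamma(R))\sim\delta$. Since the shadows of distinct children overlap only in endpoints, the sets $B_R$ are pairwise disjoint; each contains a disk of radius $\gs\delta$ and all lie in a ball of radius $\ls\delta$ (by the same containment estimate you proved), so their number is bounded by comparing areas. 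If you wish to keep your shadow-based formulation, you need some such device for converting the disjoint arcs into pairwise disjoint sets of definite two-dimensional measure at scale $\delta$; as written, the proposal is missing this idea.
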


The proof is by volume counting in a given scale and follows from estimates for Whitney-type sets, although rectangles are not Whitney-type sets. We record this estimate as a lemma for separate use.

\begin{lem}
\label{lemma:R_and_ball}
Let $\Gamma \in \Jordan(p,C_0)$. Then there exists $\lambda^\refi=\lambda^\refi(p,C_0)\ge 1$ with the following property. For each $R\in \sQ_\Gamma^\refi$, there exists a  $\lambda^\refi$-Whitney-type set $B_R \subset \Omega_\Gamma$, meeting $R$ and satisfying $S_\Gamma(B_R) \subset S_\Gamma(R)$.
\end{lem}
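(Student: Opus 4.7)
Fix $R\in \sQ_\Gamma^\refi$ with shadow $J = S_\Gamma(R)$, set $\rho = \dist(R,\Gamma)$, and recall from Lemmas~\ref{lemma:Qi1} and \ref{lemma:Qi2} that $\rho \sim \diam(J)\sim \diam(R)$ with constants depending only on $p,C_0$. The plan is to construct $B_R$ by transporting a standard Whitney disk from the reference domain $\C\setminus\overline{\D}$ via the extended conformal map $\varphi_\Gamma\colon \C\setminus\overline{\D}\to \C\setminus \tilde\Omega_\Gamma$ from Corollary~\ref{points}, so that shadow control reduces to an elementary radial computation in $\C\setminus\overline{\D}$.

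First I pick a ``middle'' point $z_0\in J$ so that the two subarcs $J_1,J_2$ of $J$ meeting at $z_0$ each satisfy $\diam(J_i)\ge c_0 \rho$ for a universal $c_0>0$; this is a continuity/bisection argument on the Jordan arc $J$. I then let $x_R$ be the point where the hyperbolic ray $\Gamma^\perp(z_0)$ first enters $R$; combining Lemma~\ref{lemma:R-intersection} with the John-curve property of hyperbolic rays (Lemma~\ref{comp GM}) will give
\[
\ell(\Gamma^\perp(z_0,x_R)) \sim \dist(x_R,\Gamma) \sim \rho.
\]

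Next I pull back via $\varphi_\Gamma^{-1}$, writing $\tilde z_0 = \varphi_\Gamma^{-1}(z_0)\in \S^1$, $\tilde x_R = \varphi_\Gamma^{-1}(x_R) = (1+d)\tilde z_0$ with $d := |\tilde x_R|-1$, $\tilde J = \varphi_\Gamma^{-1}(J)$, and $\tilde J_i = \varphi_\Gamma^{-1}(J_i)$; note that $d = \ell(\varphi_\Gamma^{-1}(\Gamma^\perp(z_0,x_R)))$ since hyperbolic rays correspond to radial rays. The key input is three parallel applications of Corollary~\ref{points} with $z=z_0$, $z'=x_R$, taking $\Gamma'$ equal to $J$, $J_1$, and $J_2$ in turn: these yield $d\sim \diam(\tilde J)$ and $\diam(\tilde J_i)\sim d$ for $i=1,2$, whence $\tilde z_0$ sits at chord distance $\gtrsim d$ from both endpoints of $\tilde J$. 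This middle-position statement is the main obstacle of the whole proof: the map $\varphi_\Gamma^{-1}$ need not behave well on $\Gamma$ without quantitative input, and the fix is to use the radial coordinate $d$ as a common gauge between $\Gamma$ and $\S^1$.

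Finally, for a small constant $c = c(p,C_0) > 0$, I set $\tilde B := B(\tilde x_R, c\, d)$. For $c$ small, $\tilde B$ is a Whitney-type disk in $\C\setminus\overline{\D}$, and its radial shadow on $\S^1$ is an arc around $\tilde z_0$ of chord-diameter at most $2cd/(1+d) \le 2cd$; choosing $c$ a small fraction of $c_0$ forces this shadow to lie inside $\tilde J$ by the margin from the previous step. Setting $B_R := \varphi_\Gamma(\tilde B)$, Lemma~\ref{whitney preserving} provides a $\lambda^\refi$-Whitney-type set in $\Omega_\Gamma$ with $\lambda^\refi = \lambda^\refi(p,C_0)$; by construction $x_R = \varphi_\Gamma(\tilde x_R)\in B_R\cap R$, and $S_\Gamma(B_R) = \varphi_\Gamma(S_{\S^1}(\tilde B))\subset \varphi_\Gamma(\tilde J) = J$, which completes the proof.
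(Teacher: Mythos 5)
Your argument is correct and follows essentially the same route as the paper's proof: both pull back to $\C\setminus\overline{\D}$ via $\varphi_\Gamma$, use Corollary~\ref{points} to match the radial length $d$ with the diameter of the pulled-back shadow, place a round disk of radius comparable to $d$ in the exterior of the unit disk, and push it forward with Lemma~\ref{whitney preserving}. The only difference is cosmetic: the paper anchors the disk tangentially to the radial segment over an endpoint of $S_\Gamma(R)$ and sizes it by a minimum, whereas you center it over a bisecting midpoint of the shadow to secure a two-sided angular margin.
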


\begin{proof}
Let $z_1,\,z_2$ be the end points of $S_\Gamma(R)$.
As $R$ is balanced, we have
$$\ell(\Gamma^\perp(z_1,\,R))\sim \diam(S_\Gamma(R)) \sim  \ell(\Gamma^\perp(z_2,\,R)).$$
Thus by Corollary~\ref{points} we have
\begin{equation}\label{exterior of disk}
\ell(\varphi_{\Gamma}^{-1}(\Gamma^\perp(z_1,\,R)))\sim \diam(\varphi_{\Gamma}^{-1}(S_\Gamma(R))) \sim  \ell(\varphi_{\Gamma}^{-1}(\Gamma^\perp(z_2,\,R))).
\end{equation}

Let $B$ be the ball which is tangent to the radial segment $\varphi_{\Gamma}^{-1}(\Gamma^\perp(z_1))$ at its end point in  $\mathbb C\setminus \overline{\mathbb D},$ and whose radius is 
$$\min\left\{ \frac 1 2 \ell(\varphi_{\Gamma}^{-1}(\Gamma^\perp(z_1))), \diam(\varphi_{\Gamma}^{-1}(S_\Gamma(R)))  \right\}.$$
Then since $\varphi_{\Gamma}^{-1}(\Gamma^\perp(z_i)), i=1,\,2$ are radial segments, by the geometry of the exterior of the unit disk and \eqref{exterior of disk}, we have
$$\diam(B)\sim \dist(B,\,\mathbb D), \quad B\cap \varphi_{\Gamma}^{-1}(R)\neq \emptyset.$$
Moreover, the shadow of $B$ in $\mathbb C\setminus \overline{\mathbb D}$ is contained in $\varphi_{\Gamma}^{-1}(S_\Gamma(R)).$
Then by Lemma~\ref{whitney preserving}, $\varphi_{\Gamma}(B)$ is a $\lambda^{\refi}$-Whitney-type set meeting $R,$ and the shadow of $\varphi_{\Gamma}(B)$ is contained in $S_\Gamma(R)$, where $\lambda^{\refi}=\lambda^{\refi}(p,\,C_0)$. 
\end{proof}

\begin{proof}[Proof of Proposition \ref{prop:back-to-life}]
Let $k\in \Z$, $Q\in \sQ_\Gamma$, and fix $\tilde Q\in \tilde \sQ_\Gamma$ for which $h_\Gamma(\tilde Q) = Q$. 

For each $R\in \sC_k(Q)$, let $B_R \subset \Omega_\Gamma$ be a $\lambda^\refi$-Whitney-type set intersecting $R$ given by Lemma~\ref{lemma:R_and_ball}. By Lemma \ref{lemma:Qi1}, $\diam(S_\Gamma(R)) \sim \dist(R,\Gamma)$. Since $\diam(B_R) \sim \dist(R,\Gamma)$, there exists $\lambda'=\lambda'(p,C_0)\ge 1$ for which $\diam(B_R) \ge 2^k \dist(\tilde Q, \Gamma)/\lambda'$.

Let $\tilde x\in \tilde Q$. We show that there exists $\mu^\refi=\mu^\refi(p,C_0)$ for which each $R\in \sC_k(Q)$ satisfies
\[
B_R \subset B(\tilde x, 2^k \mu^\refi \dist(\tilde Q, \Gamma))
\]
for each $R\in \sC_k(Q)$. Then $\# \sC_k(Q) \le \mu^\refi$, since the Whitney-type sets $B_R$ are mutually disjoint; notice that the shadows of distinct $R,R'$ can only
intersect at end points and recall that the shadow of $B_R$ is contained in the shadow of $R.$

It suffices to show that there exists $\mu^\refi=\mu^\refi(p,C_0)$ for which $R \subset B(\tilde x, 2^k \mu^\refi \dist(\tilde Q, \Gamma))$ for each $R\in \sC_k(Q)$. Let $y\in R$ and $z = S_\Gamma(y)$. Then  by Corollary~\ref{R upper bound}, \eqref{comparable length} and the facts that $\tilde Q$ is of Whitney-type and that $R$ is balanced, 
\begin{align*}
|y-\tilde x| &\le |y-z| + |z - \tilde x| 
\le \ell(\Gamma^\perp(y)) + \ell(\tilde \Gamma^\perp(z),\tilde Q) + \diam(\tilde Q) \\
&\ls \ell(\Gamma^\perp(z)\cap R) + \ell(\Gamma^\perp(z),Q) + \ell(\Gamma^\perp(z), Q) + \dist(\tilde Q,\Gamma) \\
&\ls \dist(R,\Gamma) + \ell(\Gamma^\perp(z),Q) + \ell(\tilde \Gamma^\perp(z),\tilde Q) \\
&\ls \ell(\Gamma^\perp(z, Q)) = \ell(\Gamma^\perp(z,R)) \sim \diam(S_\Gamma(R)).
\end{align*}
The claim follows.
\end{proof}


\subsection{Neighboring rectangles in $\sQ^\refi_\Gamma$}

We now record, as corollaries, some properties of pairs of intersecting 
rectangles in $\sQ^\refi_\Gamma$. We call such rectangles either \emph{adjacent} or \emph{neighbors}.

As the first property we record an almost immediate lemma according to which shadows of adjacent rectangles of $\sQ_\Gamma^\refi$ have comparable diameters. Note that the same holds also for diameters of the adjacent rectangles themselves.

\begin{lem}
\label{lemma:good partition}
Let $\Gamma\in \Jordan(p,C_0)$ and let $R$ and $R'$ be neighbors in $\sQ_\Gamma^\refi$, i.e.\;$R\cap R'\ne \emptyset$. Then there exists a constant $C=C(p,C_0)>0$ for which 
\[
\frac{1}{C} \le \frac{\diam(S_\Gamma(R))}{\diam(S_\Gamma(R'))} \le C. 
\] 
\end{lem}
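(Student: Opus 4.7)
The plan is to reduce the comparability of shadow diameters to a comparability of distances to $\Gamma$, using the two estimates $\diam(S_\Gamma(R))\sim \dist(R,\Gamma)$ from Lemma~\ref{lemma:Qi1} and $\diam(R)\sim \dist(R,\Gamma)$ from Lemma~\ref{lemma:Qi2} (and likewise for $R'$). Once we know that $\dist(R,\Gamma)\sim \dist(R',\Gamma)$, chaining the two Lemma~\ref{lemma:Qi1} estimates finishes the proof in one line.

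First I would pick a point $x\in \overline{R}\cap \overline{R'}$, which exists by the neighboring hypothesis $R\cap R'\ne\emptyset$ (if one interprets the partition so that adjacent rectangles meet only on their common boundary, take $x$ in the shared boundary). Since $x$ lies in $\overline{R}$, the triangle inequality gives
\[
\dist(R,\Gamma)\le \dist(x,\Gamma)\le \dist(R,\Gamma)+\diam(R).
\]
By Lemma~\ref{lemma:Qi2}, $\diam(R)\sim \dist(R,\Gamma)$, so $\dist(x,\Gamma)\sim \dist(R,\Gamma)$ with constants depending only on $p$ and $C_0$. Running the identical argument with $R'$ in place of $R$ yields $\dist(x,\Gamma)\sim \dist(R',\Gamma)$. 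Combining the two comparabilities gives
\[
\dist(R,\Gamma)\sim \dist(R',\Gamma).
\]

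Finally, applying Lemma~\ref{lemma:Qi1} to each rectangle,
\[
\diam(S_\Gamma(R))\sim \dist(R,\Gamma)\sim \dist(R',\Gamma)\sim \diam(S_\Gamma(R')),
\]
which is the desired two-sided bound with constant $C=C(p,C_0)$. I do not anticipate any real obstacle: the statement is a direct structural consequence of the fact that rectangles in $\sQ_\Gamma^\refi$ are, up to constants, ``Whitney-like'' along the boundary even though they need not be Whitney-type sets themselves. The only mildly delicate point is to confirm that adjacency supplies a legitimate common point at which both the $\diam(R)$ and $\diam(R')$ upper bounds on $\dist(x,\Gamma)$ can be invoked; this is automatic from how the shadow refinement was constructed, since neighboring rectangles share either a piece of a hyperbolic geodesic or a piece of $\partial Q$ for some parent $Q\in \sQ_\Gamma$.
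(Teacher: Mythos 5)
Your proof is correct and non-circular: Lemmas \ref{lemma:Qi1} and \ref{lemma:Qi2} are established before this lemma and do not depend on it, the triangle-inequality step $\dist(R,\Gamma)\le\dist(x,\Gamma)\le\dist(R,\Gamma)+\diam(R)$ at a common point $x$ of the closures is valid, and chaining the comparabilities gives the claim with a constant depending only on $p$ and $C_0$. The route is genuinely different from the paper's. The paper instead picks a common shadow point $z\in S_\Gamma(R)\cap S_\Gamma(R')$ (which exists because $S_\Gamma$ is defined on all of $R\cap R'$) and invokes the balanced property of $\sQ_\Gamma^\refi$ directly: $\diam(S_\Gamma(R))\sim\ell(\Gamma^\perp(z,R))$ and likewise for $R'$, and then compares the two lengths along the same hyperbolic ray. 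That argument is more ``intrinsic'' to the construction of the shadow refinement and does not pass through the Euclidean diameter of the rectangles; yours leverages the already-proved Whitney-like estimates $\diam(S_\Gamma(R))\sim\dist(R,\Gamma)\sim\diam(R)$ and is arguably cleaner, since the paper's one-line identity $\ell(\Gamma^\perp(z,R))=\ell(\Gamma^\perp(z,R'))$ is only literally an equality when $R$ and $R'$ meet along a common vertical edge (for vertically adjacent rectangles one must add the intersection length $\ell(\Gamma^\perp(z)\cap R)$, controlled by Corollary \ref{R upper bound}, so the paper's conclusion still holds but needs that extra word). Your version sidesteps this case analysis entirely, at the mild cost of invoking Lemma \ref{lemma:Qi2} in addition to Lemma \ref{lemma:Qi1}.
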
 

\begin{rem}
\label{rmk:good partition}
Notice that, by Lemmas \ref{lemma:good partition}, \ref{lemma:Qi1} and \ref{lemma:Qi2}, the diameters of intersecting rectangles are comparable, quantitatively.
\end{rem}
 
\begin{proof}[Proof of Lemma \ref{lemma:good partition}]
Since rectangles $R$ and $R'$ have non-empty intersection, we may fix $z\in S_\Gamma(R)\cap S_\Gamma(R')$. Since the partition $\sQ_\Gamma^\refi$ is balanced, we have that $\diam(S_\Gamma(R)) \sim \ell(\Gamma^\perp(z,R))$ and $\diam(S_\Gamma(R')) \sim \ell(\Gamma^\perp(z,R'))$. Thus
\[
\frac{\diam S_\Gamma(R)}{\diam S_\Gamma(R')} \ls \frac{\ell(\Gamma^\perp(z,R))}{\ell(\Gamma^\perp(z,R'))} = \frac{\ell(\Gamma^\perp(z,R))}{\ell(\Gamma^\perp(z,R))} =1.
\]
The claim now follows by symmetry.
\end{proof}

Lemma \ref{lemma:good partition} gives a corollary on the number of neighbors of a rectangle in $\sQ_\Gamma^\refi$.

\begin{cor}
\label{cor:finite_shadow_neighbors}
Let $\Gamma\in \Jordan(p,C_0)$. Then there exists a constant $C_\#=C_\#(p,C_0)$ having the property that, for each $R\in \sQ_\Gamma$, we have
\[
\#\{R' \in \sQ_\Gamma^\refi \colon R'\cap R \neq \emptyset\} \le C_\#.
\]
\end{cor}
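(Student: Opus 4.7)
Plan: Fix $R\in \sQ_\Gamma$ and set $\tilde Q = h_\Gamma^{-1}(R)\in \tilde \sQ_\Gamma$. The refined rectangles $R'\in \sQ_\Gamma^\refi$ meeting $R$ split naturally into two families: (i) children of $R$ in the shadow refinement (so $R'\subset R$), and (ii) refined rectangles whose parent $Q'\in \sQ_\Gamma$ is distinct from $R$; in the latter case necessarily $R'\cap R\subset \partial R\cap \partial Q'$ and $Q'$ is a neighbor of $R$ in $\sQ_\Gamma$. I will bound each family separately.

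For family (ii), I first observe that the number of neighbors of $R$ in $\sQ_\Gamma$ is at most five. This is inherited from the adjacency structure of the dyadic annular partition $\sP$ of $\D\setminus B(0,1/2)$ through the homeomorphism $h_\Gamma\circ \tilde \varphi_\Gamma$: in $\sP$ each tile $P_{k,j}$ has two angular neighbors, one inner radial neighbor and at most two outer radial neighbors. For each such neighbor $Q'$, any refined rectangle $R'\subset Q'$ with $R'\cap R\ne \emptyset$ is adjacent in $\sQ_\Gamma^\refi$ to some child of $R$ on the shared boundary $\partial R\cap \partial Q'$, and Lemma~\ref{lemma:good partition} forces $\diam S_\Gamma(R')$ to be comparable to that child's shadow diameter. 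Applying Corollary~\ref{cor:finite division} to the collection of shadows of such $R'$, lying on a common arc and with mutually comparable diameters, then controls their number in terms of the cardinality of $\sC(R)$.

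The whole problem thus reduces to uniformly bounding $|\sC(R)|$, for which I invoke Proposition~\ref{prop:back-to-life}: the number of $k$-large children of $R$ is at most $C_\#(p,C_0)$ for each $k$. Lemma~\ref{lemma:R-is-large} gives a lower bound on the relevant range of $k$ by an absolute constant; the upper bound comes from $\diam S_\Gamma(R')\le \diam S_\Gamma(R)$, combined with Corollary~\ref{cor:tilde_Q_distance} and the balanced structure of the partition $\sI_\Gamma(R)$ of $S_\Gamma(R)$ to force $\diam S_\Gamma(R)/\dist(\tilde Q,\Gamma)$ to be bounded in terms of $p$ and $C_0$. Summing the $C_\#$ contributions over this bounded range of scales $k$ yields the desired uniform count. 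The step I expect to be the most delicate is precisely this last upper bound on $\diam S_\Gamma(R)/\dist(\tilde Q,\Gamma)$: it requires one to exploit the Whitney-type geometry of the original tile $\tilde Q\in \tilde \sQ_\Gamma$, the expansion property \eqref{zxz} of the stable reflection $h_\Gamma$, and the subhyperbolic hypothesis on $\tilde \Omega_\Gamma$ via Lemma~\ref{GM2} to prevent the shadow $S_\Gamma(\tilde Q)$ from being disproportionately large compared to $\dist(\tilde Q,\Gamma)$.
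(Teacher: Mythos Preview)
Your proposal has a genuine gap, stemming from a typo in the statement: the hypothesis should read $R\in \sQ_\Gamma^\refi$, not $R\in \sQ_\Gamma$ (this is how the paper's own proof reads it, and how the corollary is used later, e.g.\ in Remark~\ref{rmk:C_sharp}). You have taken the statement at face value with $R\in \sQ_\Gamma$, and consequently your argument hinges on bounding $\#\sC(R)$ uniformly. But this is \emph{false}: the paper explicitly warns (just before Proposition~\ref{prop:back-to-life}) that there is no upper bound on the number of children of a rectangle in $\sQ_\Gamma$. The precise step where your argument breaks is the ``delicate'' one you yourself flagged: you want $\diam S_\Gamma(R)/\dist(\tilde Q,\Gamma)$ bounded in terms of $p,C_0$, but since $S_\Gamma(R)=S_\Gamma(\tilde Q)$, this is exactly the ratio that the remark surrounding \eqref{eq:tilde_Q_blow-up} says need not be bounded. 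No combination of Corollary~\ref{cor:tilde_Q_distance}, \eqref{zxz}, or Lemma~\ref{GM2} will rescue this, because the subhyperbolic hypothesis simply does not prevent $S_\Gamma(\tilde Q)$ from being much larger than $\dist(\tilde Q,\Gamma)$.

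Once you read the statement with $R\in \sQ_\Gamma^\refi$, the proof is short and does not need Proposition~\ref{prop:back-to-life} at all. The parent $Q\in \sQ_\Gamma$ of $R$ has at most $8$ neighbors in $\sQ_\Gamma$ (inherited from $\sP$); for each such neighbor $Q'$, any child $R'\subset Q'$ meeting $R$ satisfies $\diam S_\Gamma(R')\sim \diam S_\Gamma(R)$ by Lemma~\ref{lemma:good partition}, and the shadows $S_\Gamma(R')$ with $S_\Gamma(R')\subset S_\Gamma(R)$ are then uniformly bounded in number by the doubling property of $\Gamma$ (Corollary~\ref{cor:finite division}). This is the paper's argument.
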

\begin{proof}
Let $R\in \sQ_\Gamma^\refi$ and let $Q \in \sQ_\Gamma$ be the parent of $R$ in $\sQ_\Gamma$. 

Since $Q$ has at most $8$ neighbors in $\sQ_\Gamma$, it suffices to show that each neighbor $Q'$ of $Q $ in $\sQ_\Gamma$ contains uniformly bounded number of rectangles $R'$ of $\sQ_\Gamma^\refi$, which meet $R$ and satisfy $S_\Gamma(R') \subset S_\Gamma(R)$. 

Let $Q'$ be a neighbor of $Q$ in $\sQ_\Gamma$ and $R'\subset Q'$ an rectangle of $\sQ_\Gamma^\refi$ meeting $R$. Then, by Lemma~\ref{lemma:good partition}, we have that
\[
\diam(S_\Gamma(R))  \sim \diam(S_\Gamma(R')),
\]
where the constants depend only on $p$ and $C_0$. Thus the number rectangles $R'\in\sQ_\Gamma^\refi$, which meet $R$ and satisfy $S_\Gamma(R') \subset S_\Gamma(R)$, is uniformly bounded by the doubling property of $\Gamma$, that is, Corollary \ref{cor:finite division}. The claim follows.
\end{proof}


\section{Step 4: Refined partition $\tilde{\sQ}_\Gamma^\refi$}

In this section we reflect the partition $\sQ_\Gamma^\refi$ into the 
domain $\tilde \Omega_\Gamma$. This reflection is not topological and, in particular, the partition $\tilde \sQ_\Gamma^\refi$ is not the partition $\{ h_\Gamma^{-1}(R) \colon R\in \sQ_\Gamma^\refi\}$ of $\tilde A$. Instead of this, we subdivide elements of $\tilde \sQ_\Gamma$ according to the refinement $\sQ_\Gamma^\refi$ of $\sQ_\Gamma$ and merely reflecting the relative metric properties of $\sQ_\Gamma^\refi$. The immediate outcome will be that rectangles in $\tilde \sQ_\Gamma^\refi$ are in a natural correspondence with rectangles in $\sQ_\Gamma^\refi$. From the metric estimates we also deduce that rectangles in $\sQ_\Gamma^\refi$ are uniformly bilipschitz to Euclidean rectangles.

Technically, the partition $\tilde \sQ_\Gamma^\refi$ is obtained as an interpolation of two partitions, geometric and combinatorial, on horizontal edges of rectangles in $\tilde \sQ_\Gamma$. Since the construction of these partitions spans over the rest of this section, we assume now the existence of these partitions and give the synthetic construction of the partition $\tilde \sQ_\Gamma^\refi$ in terms of the properties of our geometric and combinatorial edge partitions.

The organization of this section is as follows. We first discuss horizontal edges of $\tilde Q\in \tilde \sQ_\Gamma^\refi$ in $\Omega_\Gamma$ and initial partitions on these edges. Then we return to discuss the initial horizontal edges in $\tilde \Omega_\Gamma$ and their geometric partitions. After these preliminaries, we define geometric partitions on horizontal edges in $\tilde \Omega_\Gamma$ (Section \ref{sec:GP-HE}) and combinatorial partitions (Section \ref{sec:CP-HE}). The bilipschitz equivalence of these partitions is proven in Section \ref{sec:GP-CP} and the partition $\tilde \sQ_\Gamma^\refi$ is constructed in Section \ref{sec:sQ_Gamma-refi}

\subsection{Horizontal edges}

We first define  upper and lower horizontal edges of rectangles in $\tilde \sQ_\Gamma$. Heuristically, the upper horizontal edge of $\tilde Q$ is the edge in which the hyperbolic rays from $S_\Gamma(\tilde Q)$ meet $\tilde Q$. The lower horizontal edge is the edge, where the same hyperbolic geodesics leave $\tilde Q$. More formally, we have the following definitions.

Let $\tilde Q\in \tilde \sQ_\Gamma$. We say that the arc 
\[
\horz(\tilde Q) = \{ \tilde x \in \tilde Q \colon \tilde \Gamma^\perp(x) = \tilde\Gamma^\perp(S_\Gamma(\tilde x);\tilde Q)\}
\]
is the \emph{upper horizontal edge of $\tilde Q$}. Similarly, we say that the arc
\[
\horz^\flat(\tilde Q) = \{ \tilde x\in \tilde Q \colon \tilde \Gamma^\perp(\tilde x)\cap \tilde Q = \tilde \Gamma^\perp(S_\Gamma(\tilde x)) \cap \tilde Q\}
\]
is the \emph{lower horizontal edge of $\tilde Q$}. 

The upper and lower horizontal edges $\horz(Q)$ and $\horz^\flat(Q)$ of a rectangle $Q\in \sQ_\Gamma$ are defined analogously. Notice that, we may equivalently define $\horz(Q) = h_\Gamma(\horz(h_\Gamma^{-1}(Q)))$ and $\horz^\flat(Q) = h_\Gamma(\horz^\flat(h_\Gamma^{-1}(Q)))$.

For $\tilde Q\in \tilde \sQ_\Gamma$, the upper horizontal edge $\horz(\tilde Q)$ has the property that there exists a (unique) pair $\tilde Q^+$ and $\tilde Q^-$ of rectangles in $\tilde \sQ_\Gamma$ for which 
\[
\tilde Q\cap \tilde Q^+ \cup \tilde Q^- = \horz(\tilde Q).
\]
We call $\tilde Q^+$ and $\tilde Q^-$ the \emph{upper neighbors of $\tilde Q$}. We further have that 
\[
\horz(\tilde Q) = \horz^\flat(\tilde Q^+) \cup \horz^\flat(\tilde Q^-).
\]

Rectangles in $\sQ_\Gamma$ have an analogous property. Also in this case, we call the unique pair the \emph{upper neighbors}. The same property holds, naturally, also for rectangles in $\sQ_\Gamma$.

\subsubsection{Geometry of horizontal edges $\horz(\tilde Q)$}

Since the rectangles in $\tilde \sQ_\Gamma$ are uniformly bilipschitz, up to the change of scale, to rectangles in $\sP$, and hence to the unit square, we may consider length data of the horizontal edges in place of diameters. We record two lemmas, which will be used to reparametrize the edges.

\begin{lem} 
\label{lemma:horz-tilde-Q-bilip}
Let $\Gamma \in \Jordan(p,C_0)$. Then there exists $L=L(p,C_0)$ for the following. For each $\tilde Q\in \tilde \sQ_\Gamma$, there exists $L$-bilipschitz homeomorphisms $\pi_{\tilde Q} \colon \horz(\tilde Q) \to [0,\diam(\horz(\tilde Q))]$ and $\pi^\flat_{\tilde Q} \colon \horz^\flat(\tilde Q) \to [0,\diam(\horz^\flat(\tilde Q))]$.
\end{lem}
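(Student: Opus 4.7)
The plan is to pull the edge back to the unit disk via the conformal map $\tilde\varphi_\Gamma$, exploit the fact that horizontal edges of the standard dyadic rectangles $P\in\sP$ are concentric circular arcs of bounded angle, and then use the bilipschitz-up-to-scale behavior of $\tilde\varphi_\Gamma|_P$ guaranteed by Lemma~\ref{lemma:biLip}.

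More precisely, let $P=P_{k,j}\in\sP$ be the unique rectangle with $\tilde\varphi_\Gamma(P)=\tilde Q$. The upper horizontal edge $E=\horz(P)$ is a circular arc on the circle of radius $r=1-2^{-(k+1)}$ spanning an angle $\theta=2^{-k}\pi\le\pi$. First I would observe that arclength parametrization $\sigma\colon [0,\ell(E)]\to E$ is $\tfrac{\pi}{2}$-bilipschitz: for $s,t\in[0,\ell(E)]$ one has $|\sigma(s)-\sigma(t)|=2r\sin(|s-t|/(2r))$, and $\sin(x)/x\ge 2/\pi$ on $(0,\pi/2]$. In particular $\ell(E)\sim\diam(E)$ with absolute constant, and the analogous statement holds for the lower edge $\horz^\flat(P)$. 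Moreover, since $\tilde\varphi_\Gamma(E)=\horz(\tilde Q)$ (because $\tilde\varphi_\Gamma$ sends radial segments in $\D$ to hyperbolic rays in $\tilde\Omega_\Gamma$, so the boundary arc of $P$ furthest from the origin corresponds exactly to the points of $\tilde Q$ where the hyperbolic rays from $S_\Gamma(\tilde Q)$ enter $\tilde Q$), this identification is canonical.

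Next, since each $P\in\sP$ is a $4$-Whitney-type set with absolute constant, Lemma~\ref{lemma:biLip} yields an absolute constant $L_0$ and a scaling factor $\lambda_P>0$ for which $\tilde\varphi_\Gamma|_P$ is $L_0$-bilipschitz up to the dilation $\lambda_P$, that is,
\[
\frac{\lambda_P}{L_0}|z-w|\le|\tilde\varphi_\Gamma(z)-\tilde\varphi_\Gamma(w)|\le L_0\lambda_P|z-w|
\qquad(z,w\in P).
\]
Applying this on $E\subset P$ gives $\ell(\horz(\tilde Q))\sim\lambda_P\ell(E)$ and $\diam(\horz(\tilde Q))\sim\lambda_P\diam(E)\sim\lambda_P\ell(E)$, all with constants depending only on $L_0$. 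I would then define
\[
\pi_{\tilde Q}\;=\;\mu\circ\sigma^{-1}\circ(\tilde\varphi_\Gamma|_E)^{-1},
\]
where $\mu\colon[0,\ell(E)]\to[0,\diam(\horz(\tilde Q))]$ is the affine rescaling $s\mapsto s\,\diam(\horz(\tilde Q))/\ell(E)$ whose Lipschitz constant is $\sim\lambda_P$. Composing the three bilipschitz pieces — the bilipschitz-up-to-$\lambda_P^{-1}$ inverse conformal map, the $\tfrac{\pi}{2}$-bilipschitz $\sigma^{-1}$, and the rescaling $\mu$ with factor $\sim\lambda_P$ — the two occurrences of $\lambda_P$ cancel and produce a bilipschitz constant depending only on $L_0$ and the absolute constants from the arc estimate. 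The same argument applied to the inner edge yields $\pi^\flat_{\tilde Q}$.

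The potentially delicate point is not any single estimate but the bookkeeping of the dilation factor $\lambda_P$: one must confirm that the scaling that makes $\tilde\varphi_\Gamma|_P$ bilipschitz is the same scaling that controls the length (and diameter) of $\horz(\tilde Q)$, so that rescaling the target interval absorbs it cleanly. Everything else reduces to elementary planar geometry of a bounded-angle circular arc, which is why the final constant can be taken to depend only on $p$ and $C_0$ (in fact, in this lemma, only on absolute quantities entering Lemma~\ref{lemma:biLip}).
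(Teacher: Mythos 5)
Your proposal is correct and follows essentially the same route as the paper: the paper also pulls back to the dyadic rectangle $P=\tilde\varphi_\Gamma^{-1}(\tilde Q)$, invokes Lemma~\ref{lemma:biLip} to get that $\tilde\varphi_\Gamma|_P$ is bilipschitz up to a dilation, and concludes that $\horz(\tilde Q)$ has length comparable to its diameter, from which the reparametrization follows. Your write-up merely makes explicit the circular-arc estimate and the cancellation of the dilation factor, which the paper leaves implicit.
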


\begin{proof}
By Lemma \ref{lemma:tilde_sQ_Whitney1}, $\tilde Q$ is Whitney-type with a constant depending only on $p$ and $C_0$. Let $P=\tilde \varphi_\Gamma^{-1}(\tilde Q)$. Then, by Lemma \ref{lemma:biLip}, $\tilde \varphi_\Gamma|_P \colon P \to \tilde Q$ is bilipschitz, with a constant depending only on $p$ and $C_0$, up to scaling.  Thus $\horz(\tilde Q)$ is a curve having length comparable to $\diam(\horz(\tilde Q)$, where the constants depend only on $p$ and $C_0$. The other case is analogous. The claim follows.
\end{proof}

The same argument also gives the bilipschitz equivalence of lower and upper horizontal edges. Since the proof is analogous, we omit the details.

\begin{lem}
\label{lemma:upper-lower-edge}
Let $\Gamma \in \Jordan(p,C_0)$ and $\tilde Q\in \tilde \sQ_\Gamma$. Then there exist $L$-bilipschitz homeomorphisms $\tau^\flat_{\tilde Q^\pm} \colon \horz(\tilde Q^\pm) \to \horz_\flat(\tilde Q^\pm)$, where $L=L(p,C_0)\ge 1$. \qed
\end{lem}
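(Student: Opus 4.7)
The plan is to adapt the strategy of Lemma \ref{lemma:horz-tilde-Q-bilip} by pulling the problem back to the preimage $P = \tilde\varphi_\Gamma^{-1}(\tilde Q) \in \sP$, where the upper and lower horizontal edges correspond to the two circular arcs of a dyadic rectangle and are trivially bilipschitz equivalent. Concretely, if $P = P_{k,j}$, then the preimages of $\horz(\tilde Q)$ and $\horz^\flat(\tilde Q)$ are, respectively, the circular arcs at radii $1-2^{-(k+1)}$ and $1-2^{-k}$, since hyperbolic rays in $\tilde \Omega_\Gamma$ are images under $\tilde\varphi_\Gamma$ of radial segments in $\D$, and the endpoint of such a segment inside $P$ closer to $\partial\D$ (and hence to $\Gamma$) lies on the outer arc.

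With this correspondence in place, let $\sigma$ denote the natural angular identification
\[
\sigma\colon (1-2^{-(k+1)})e^{i\theta}\longmapsto (1-2^{-k})e^{i\theta}
\]
between the two circular arcs of $P_{k,j}$, and define
\[
\tau^\flat_{\tilde Q} \;=\; \tilde\varphi_\Gamma \circ \sigma \circ \bigl(\tilde\varphi_\Gamma|_{\horz(\tilde Q)}\bigr)^{-1}.
\]
The map $\sigma$ is bilipschitz with an absolute constant, since for $k\ge 1$ the ratio $(1-2^{-(k+1)})/(1-2^{-k})$ lies in $[1,3/2]$, and the arc lengths on both sides are comparable to $2^{-k}$. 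By Lemma \ref{lemma:tilde_sQ_Whitney1} each $\tilde Q$ is $\lambda$-Whitney-type with absolute $\lambda$, so Lemma \ref{lemma:biLip} yields that $\tilde\varphi_\Gamma|_P$ is $L'$-bilipschitz up to a scaling factor, with $L'=L'(p,C_0)$. The scaling factors from $\tilde\varphi_\Gamma|_P$ and its inverse cancel in the composition, so $\tau^\flat_{\tilde Q}$ is $L$-bilipschitz with $L=L(p,C_0)$.

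The only real bookkeeping is the identification of the horizontal edges of $\tilde Q$ with the outer and inner circular arcs of $P$, which follows directly from the definitions of $\horz$ and $\horz^\flat$ together with the fact that $\tilde\Gamma^\perp(z)$ is the $\tilde\varphi_\Gamma$-image of the radial segment from $0$ to $\tilde\varphi_\Gamma^{-1}(z)\in\bS^1$. Beyond this, no serious obstacle arises and the argument runs strictly parallel to the proof of Lemma \ref{lemma:horz-tilde-Q-bilip}: one replaces the straightening to a Euclidean interval there by the intrinsic bilipschitz identification $\sigma$ of the two arcs of the dyadic rectangle here.
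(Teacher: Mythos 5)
Your proof is correct and is exactly the argument the paper intends (the paper simply says the statement follows by ``the same argument'' as Lemma \ref{lemma:horz-tilde-Q-bilip}): identify $\horz(\tilde Q)$ and $\horz^\flat(\tilde Q)$ with the outer and inner circular arcs of $P=\tilde\varphi_\Gamma^{-1}(\tilde Q)\in\sP$, conjugate the obvious radial identification of the two arcs by $\tilde\varphi_\Gamma|_P$, and use Lemmas \ref{lemma:tilde_sQ_Whitney1} and \ref{lemma:biLip} so that the dilation factors cancel. The identification of the edges with the arcs and the absolute bilipschitz constant of $\sigma$ are both verified correctly, so there is nothing to add.
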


\subsection{Initial partitions on horizontal edges}
\label{sec:initial-edge-partitions}

Let $\tilde Q\in \tilde \sQ_\Gamma$. We call the partition
\[
\tilde \Sigma(\tilde Q) = \{ h_\Gamma^{-1}(R) \cap \horz(\tilde Q) \colon R\in \sC(h_\Gamma(\tilde Q))\}
\]
the \emph{initial partition of $\horz(\tilde Q)$} and similarly, the partition
\[
\tilde \Sigma^\flat(\tilde Q) = \{ h_\Gamma^{-1}(R) \cap \horz^\flat(\tilde Q) \colon R\in \sC(h_\Gamma(\tilde Q))\}
\]
the \emph{initial partition of $\horz^\flat(\tilde Q)$}; recall that $\sC(Q)$ is the collection of children of $Q\in \sQ_\Gamma$.

On the horizontal edge $\horz(\tilde Q)$ we also have a competing partition, 
\[
\tilde \Sigma^\pm(\tilde Q) = \{ h_\Gamma^{-1}(R) \cap \horz(\tilde Q) \colon R \in \sC(\tilde Q^+)\cup \sC(\tilde Q^-)\} = \Sigma^\flat(\tilde Q^+) \cup \Sigma^\flat(\tilde Q^-),
\]
where $\tilde Q^+$ and $\tilde Q^-$ are the upper neighbors of $\tilde Q$.

\begin{rem}
The partitions $\tilde \Sigma_\comb(\tilde Q)$ of $\horz(\tilde Q)$ and
$\tilde \Sigma_\geom(\tilde Q)$ of $\horz^\flat(\tilde Q)$ stem from these partitions. Heuristically, $\tilde \Sigma_\geom(\tilde Q)$ is a reparametrization of elements of $\tilde \Sigma^\flat(\tilde Q)$ and $\tilde \Sigma_\comb(\tilde Q)$ is a double reparametrization of $\tilde \Sigma(\tilde Q)$ taking partitions $\tilde \Sigma_\geom(\tilde Q^+)$ and $\tilde \Sigma_\geom(\tilde Q^-)$ into account. The parameter $r$ in Theorem \ref{weak main thm} plays a role in the construction of these partitions.
\end{rem}

For $Q\in \sQ_\Gamma$, the initial partitions $\Sigma(Q)$, $\Sigma^\flat(Q)$, and $\Sigma^\pm(Q)$ are defined analogously, that is,
\begin{align*}
\Sigma(Q) &= \{ R \cap \horz(Q) \colon R\in \sC(Q)\}, \\
\Sigma^\flat(Q) &= \{ R \cap \horz^\flat(Q) \colon R\in \sC(Q)\},\quad \text{and} \\
\Sigma^\pm(Q) &= \{ R \cap \horz(Q) \colon R\in \sC(Q^+)\cup \sC(Q^-)\}.
\end{align*}
Notice that we may also equivalently define these partitions as images of partitions $\tilde \Sigma(h_\Gamma^{-1}(Q))$, $\tilde \Sigma^\flat(h_\Gamma^{-1}(Q))$, and $\tilde\Sigma^\pm(h_\Gamma^{-1}(Q)$ under $h_\Gamma$.

\subsubsection{Geometry and combinatorics of initial partitions $\Sigma(Q)$}

Before moving the discussion to geometric partitions $\tilde \Sigma_\geom(\tilde Q)$ on horizontal edges of rectangles $\tilde Q\in \tilde \sQ_\Gamma$, we record two properties of the horizontal edges of rectangles in $\sQ_\Gamma^\refi$. The first is that the diameters of horizontal edges of rectangles in $\sQ_\Gamma^\refi$ are comparable to the diameters of their shadows.

\begin{lem}\label{upper boundary diam}
Let $\Gamma \in \Jordan(p,C_0)$. Then, for each child $R\in  \sQ_\Gamma^\refi$ of $Q\in \sQ_\Gamma$, we have 
\[
\diam(R\cap \horz(Q)) \sim \diam(S_\Gamma(R)) \sim \diam(R\cap \horz^\flat(Q))
\]
where the constants depend only on $p$ and $C_0$.
\end{lem}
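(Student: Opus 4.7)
The plan is to split the comparison $\diam(R\cap \horz(Q))\sim \diam(S_\Gamma(R))\sim \diam(R\cap \horz^\flat(Q))$ into upper and lower bounds. The upper bounds $\diam(R\cap \horz(Q)),\,\diam(R\cap \horz^\flat(Q))\ls \diam(S_\Gamma(R))$ come immediately from the inclusion of both arcs in $R$, combined with Lemma~\ref{lemma:Qi2} and Lemma~\ref{lemma:Qi1}, which give $\diam(R)\sim \dist(R,\Gamma)\sim \diam(S_\Gamma(R))$.

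For the matching lower bounds, I plan to apply Corollary~\ref{cor:JJ-corollary} to each of the compact connected edge arcs $F\in\{E,E^\flat\}$, where $E=R\cap \horz(Q)$ and $E^\flat=R\cap \horz^\flat(Q)$; both satisfy $S_\Gamma(F)=S_\Gamma(R)$ since every hyperbolic ray through $R$ enters $R$ on $E$ and exits on $E^\flat$. Once I verify that the ratio $\ell(\Gamma^\perp(z,F))/\diam(S_\Gamma(F))$ is bounded uniformly in $z\in S_\Gamma(F)$ by a constant depending only on $p$ and $C_0$, the corollary yields $\dist(F,\Gamma)\ls \diam(F)$. Combined with the trivial bound $\dist(F,\Gamma)\ge \dist(R,\Gamma)\sim \diam(S_\Gamma(R))$ coming from Lemma~\ref{lemma:Qi1}, this closes the lower estimate $\diam(F)\gs \diam(S_\Gamma(R))$.

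The principal step, and the main obstacle, is verifying the length hypothesis of Corollary~\ref{cor:JJ-corollary} in the two cases. For the upper edge $E$, the hyperbolic ray $\Gamma^\perp(z)$ from $z\in S_\Gamma(R)$ enters $R$ precisely at $\horz(Q)\cap R=E$, so $\Gamma^\perp(z,E)=\Gamma^\perp(z,R)$, and the balancedness of $\sQ_\Gamma^\refi$ forces $\ell(\Gamma^\perp(z,E))\sim \diam(S_\Gamma(R))$. For the lower edge $E^\flat$, the segment $\Gamma^\perp(z,E^\flat)$ also traverses the vertical chord $\Gamma^\perp(z)\cap R$ before reaching $\horz^\flat(Q)$, so
\[
\ell(\Gamma^\perp(z,E^\flat))=\ell(\Gamma^\perp(z,R))+\ell(\Gamma^\perp(z)\cap R)\ls \diam(S_\Gamma(R))+\dist(R,\Gamma)\sim \diam(S_\Gamma(R)),
\]
where balancedness controls the first summand and Corollary~\ref{R upper bound} controls the second. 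In both cases the hypothesis of Corollary~\ref{cor:JJ-corollary} is met with constants depending only on $p$ and $C_0$. The one subtle point to notice is that Corollary~\ref{cor:JJ-corollary} can be applied to the edge arcs themselves, not merely to rectangles of $\sQ_\Gamma^\refi$; after that, balancedness and Corollary~\ref{R upper bound} supply the necessary length control essentially for free.
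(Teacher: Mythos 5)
Your proof is correct, and all the ingredients you invoke (Lemmas~\ref{lemma:Qi1} and \ref{lemma:Qi2}, Corollary~\ref{R upper bound}, the balancedness of $\sQ_\Gamma^\refi$, Corollary~\ref{cor:JJ-corollary}) are established before Lemma~\ref{upper boundary diam} in the paper, so there is no circularity. The key hypotheses you verify are the same ones the paper verifies: $\ell(\Gamma^\perp(z,R))\sim\diam(S_\Gamma(R))$ from balancedness for the upper edge, and the additive decomposition $\ell(\Gamma^\perp(z,E^\flat))=\ell(\Gamma^\perp(z,R))+\ell(\Gamma^\perp(z)\cap R)\ls\diam(S_\Gamma(R))$ via Corollary~\ref{R upper bound} for the lower edge. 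Where you diverge is the final conversion of this length control into a diameter comparison. The paper passes through the extended conformal map $\varphi_\Gamma$ to the exterior of the disk: Corollary~\ref{points} turns the length control into $\ell(\varphi_\Gamma^{-1}(\Gamma^\perp(z,R)))\sim\diam(\varphi_\Gamma^{-1}(S_\Gamma(R)))$, the radial-segment geometry of the preimage gives the edge-diameter comparison there, and quasisymmetry transfers it back. You instead apply the already-packaged Corollary~\ref{cor:JJ-corollary} to the edge arcs themselves (which is legitimate, since that corollary is stated for arbitrary compact connected sets) to get $\dist(F,\Gamma)\ls\diam(F)$, and then sandwich: $\diam(F)\gs\dist(F,\Gamma)\ge\dist(R,\Gamma)\sim\diam(S_\Gamma(R))$ for the lower bound and $\diam(F)\le\diam(R)\sim\diam(S_\Gamma(R))$ for the upper bound. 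Since Corollary~\ref{cor:JJ-corollary} is itself proved via Corollary~\ref{points}, the two arguments rest on the same underlying conformal-geometric fact; yours buys a shorter write-up by reusing the general corollary and the rectangle estimates of Lemmas~\ref{lemma:Qi1}--\ref{lemma:Qi2}, at the price of an extra detour through $\dist(F,\Gamma)$ rather than comparing diameters directly.
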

\begin{proof}
	Let $\varphi_{\Gamma}\colon \mathbb C\setminus \mathbb D \to \overline{\Omega_\Gamma}$ be an extended conformal map, and let $z\in S_\Gamma(R)\subset \Gamma$. 
As $R$ is balanced, we have 
$$\diam(S_\Gamma(R))\sim \ell(\Gamma^\perp(z,\,R)).$$
Then by Corollary~\ref{points}, 
	$$\ell(\varphi_{\Gamma}^{-1}(\Gamma^\perp(z,\,R)))\sim \diam(\varphi_{\Gamma}^{-1}(S_\Gamma(R)))$$
for any $z\in S_\Gamma(R)\subset \Gamma.$

Then since each $\varphi_{\Gamma}^{-1}(\Gamma^\perp(z,\,R))$ is a radial segment, we have
	$$\diam(\varphi_{\Gamma}^{-1}(R\cap \horz(Q)))\sim \diam(\varphi_{\Gamma}^{-1}(S_\Gamma(R))).$$
	Now by the quasisymmetry $\varphi_{\Gamma}$ again, we obtain
	$$\diam(R\cap \horz(Q)) \sim \diam(S_\Gamma(R)) $$
	 The second part follows from a similar argument; notice that Corollary~\ref{R upper bound} gives
	 $$\diam(S_\Gamma(R))\sim \ell(\Gamma^\perp(z,\,R\cap \horz^\flat(Q))).$$
\end{proof}

The second property we record stems from the observation that, \emph{a priori}, the arcs in $\Sigma(Q)$ and $\Sigma^\pm(Q)$ have no relation: an arc in $\Sigma^\pm(Q)$ may meet one or several arcs in $\Sigma(Q)$ and may be contained or may contain an arc in $\Sigma(Q)$. 

For each $Q\in \sQ_\Gamma$ and $\tau\in \Sigma^\pm(Q)$, we denote $\Sigma(Q;\tau)$ the family of all arcs in $\Sigma(Q)$ which meet $\tau$, that is
\[
\Sigma(Q;\tau) = \{\sigma \in \Sigma(Q) \colon \sigma \cap \mathrm{int}\,\tau \ne \emptyset\};
\]
where the interior of $\tau$ is understood with respect to $\horz(Q)$; recall that the edges we consider are closed.

We call 
\[
\Diam\left( \Sigma (Q;\tau) \right) = \sum_{\sigma \in \Sigma(Q;\tau)} \diam(\sigma \cap \tau)
\]
the \emph{total diameter of the collection $\Sigma(Q;\tau)$}.  

\begin{rem}
\label{rmk:C_sharp}
By Corollary \ref{cor:finite_shadow_neighbors}, each rectangle in $\sQ_\Gamma^\refi$ meets at most $C_\#$ rectangles in $\sQ_\Gamma^\refi$. The same bound holds also for the number of elements in $\Sigma(Q; \tau)$. Thus 
\begin{equation}
\label{eq:Diam-diam}
\diam(\tau) \le \Diam(\Sigma(Q; \tau)) \le C_\# \diam(\tau),
\end{equation}
where the lower bound follows from the triangle inequality. Also notice that, although arcs in $\Sigma(Q;\tau)$ are essentially mutually disjoint, the diameter is not an additive function and hence there is no equality.
\end{rem}

\subsection{Geometric partition of horizontal edges}
\label{sec:GP-HE}

As already mentioned, our staring point for this discussion on partitions on $\horz(\tilde Q)$ for $\tilde Q\in \tilde \sQ_\Gamma$ is the initial partition 
\[
\tilde \Sigma(\tilde Q) = \left\{h_\Gamma^{-1}(R)\cap \horz^\flat(\tilde Q) \colon R \in \sC(h_\Gamma(\tilde Q)) \right\}.
\]
defined in Section \ref{sec:initial-edge-partitions}.

To obtain the so-called geometric partition $\tilde \Sigma_\geom(\tilde Q)$ on $\horz^\flat(\tilde Q)$, we reparametrize the edge $\horz(\tilde Q)$ by a homeomorphism $\horz^\flat(\tilde Q)\to \horz^\flat(\tilde Q)$. From this point onwards, we take advantage of the parameter $1<r<p$ in Theorem \ref{weak main thm} in construction of partitions. The requested homeomorphism is provided by the following lemma.

\begin{lem}\label{chop interval}
Let $\Gamma\in \Jordan(p,C_0)$ and $r<p$. Then there exists $C_\horz=C_\horz(p,C_0,r)>0$ for the following. For each $\tilde Q\in \tilde \sQ_\Gamma$, there exists a homeomorphism $\tilde \kappa_{\tilde Q} \colon \horz^\flat(\tilde Q) \to \horz^\flat(\tilde Q)$ fixing the end points of $\horz^\flat(\tilde Q)$ and having the property that, for each $\tilde \sigma \in \tilde \Sigma^\flat(\tilde Q)$, 
\begin{equation}
\label{eq:geom-part-cond}
\frac{1}{C_\horz} \left( \frac{\dist(\tilde Q, \Gamma)}{\diam(h_\Gamma(\tilde \sigma))}\right)^{\frac{p-r}{r-1}} \le \frac{\diam(\tilde \kappa_{\tilde Q}(\tilde \sigma))}{\dist(\tilde Q,\Gamma)}  \le C_\horz \left( \frac{\dist(\tilde Q, \Gamma)}{\diam(h_\Gamma(\tilde \sigma))}\right)^{\frac{p-r}{r-1}}.
\end{equation}
\end{lem}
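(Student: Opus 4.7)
The plan is to build $\tilde\kappa_{\tilde Q}$ as a reparametrization of the edge $\horz^\flat(\tilde Q)$ that prescribes, on each arc $\tilde\sigma_R = h_\Gamma^{-1}(R) \cap \horz^\flat(\tilde Q)$ of $\tilde\Sigma^\flat(\tilde Q)$ (indexed by children $R\in \sC(Q)$ with $Q=h_\Gamma(\tilde Q)$), the target diameter dictated by \eqref{eq:geom-part-cond}. Using Lemma~\ref{lemma:horz-tilde-Q-bilip}, I identify $\horz^\flat(\tilde Q)$ with the interval $[0,L]$, $L=\diam(\horz^\flat(\tilde Q))$, via an $L_0$-bilipschitz map $\pi^\flat_{\tilde Q}$; by Lemma~\ref{lemma:tilde_sQ_Whitney1}, $L \sim \dist(\tilde Q, \Gamma)$. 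Writing $\beta = (p-r)/(r-1) > 0$ and introducing the prescribed lengths
\[
\ell_R = \dist(\tilde Q, \Gamma) \left( \frac{\dist(\tilde Q, \Gamma)}{\diam(R\cap \horz^\flat(Q))} \right)^\beta,
\]
the construction reduces to showing $S:=\sum_R \ell_R \sim \dist(\tilde Q,\Gamma)$: once this is known, setting $\ell'_R = (L/S)\ell_R$ yields $\sum_R \ell'_R = L$ with $\ell'_R \sim \ell_R$, and any piecewise affine homeomorphism of $[0,L]$ fixing the endpoints and sending the subinterval corresponding to $\tilde\sigma_R$ onto one of length $\ell'_R$ in the same cyclic order defines, after pullback through $\pi^\flat_{\tilde Q}$, the map $\tilde\kappa_{\tilde Q}$ satisfying \eqref{eq:geom-part-cond}.

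For the upper bound $S \ls \dist(\tilde Q,\Gamma)$ I would group the children of $Q$ by $k$-largeness. Lemma~\ref{upper boundary diam} gives $\diam(R\cap \horz^\flat(Q))\sim \diam(S_\Gamma(R))$, so if $R$ is $k$-large then $\diam(R\cap \horz^\flat(Q)) \sim 2^k \dist(\tilde Q, \Gamma)$ and hence $\ell_R \sim 2^{-k\beta}\dist(\tilde Q,\Gamma)$. Lemma~\ref{lemma:R-is-large} forces $k \ge k_0(p,C_0)$, and Proposition~\ref{prop:back-to-life} caps $|\sC_k(Q)|\le C_\#$, so $S \ls C_\# \dist(\tilde Q,\Gamma) \sum_{k\ge k_0} 2^{-k\beta}$, a geometric series converging precisely because $\beta>0$, i.e.\ $r<p$. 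For the lower bound I would exhibit a distinguished small-shadow child: by Corollary~\ref{cor:tilde_Q_distance} there exists $z_{\tilde Q}\in S_\Gamma(\tilde Q)$ with $\ell(\tilde\Gamma^\perp(z_{\tilde Q},\tilde Q))\ls \dist(\tilde Q,\Gamma)$, and letting $R^*\in \sC(Q)$ be the child whose shadow contains $z_{\tilde Q}$, the compatibility \eqref{smallest length} yields
\[
\ell(\Gamma^\perp(z_{\tilde Q},R^*)) = \ell(\Gamma^\perp(z_{\tilde Q},Q)) \sim \ell(\tilde \Gamma^\perp(z_{\tilde Q},\tilde Q)) \ls \dist(\tilde Q, \Gamma).
\]
Since $R^*$ is balanced, $\diam(S_\Gamma(R^*)) \sim \ell(\Gamma^\perp(z_{\tilde Q},R^*))$; combined with Lemma~\ref{lemma:R-is-large}, this forces $\diam(S_\Gamma(R^*)) \sim \dist(\tilde Q,\Gamma)$, so $\ell_{R^*}\sim \dist(\tilde Q,\Gamma)$ and $S\gs \dist(\tilde Q,\Gamma)$.

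The remaining assembly is bookkeeping: since $L/S$ is pinched between two constants depending only on $p$, $C_0$ and $r$, so are the ratios $\ell'_R/\ell_R$, and the bilipschitz distortion $L_0$ of $\pi^\flat_{\tilde Q}$ is absorbed into $C_\horz$. The main obstacle is the two-sided estimate $S\sim \dist(\tilde Q,\Gamma)$: the upper bound hinges on the dyadic cardinality bound of Proposition~\ref{prop:back-to-life} together with $\beta>0$, both genuinely using the gap $r<p$ and the refined partition $\sQ_\Gamma^\refi$; the lower bound requires stability of hyperbolic-length data across $h_\Gamma$ through \eqref{smallest length}, which transports Corollary~\ref{cor:tilde_Q_distance} from $\tilde \Omega_\Gamma$ into the refined partition structure on $\Omega_\Gamma$ and produces the distinguished child $R^*$ anchoring the sum from below.
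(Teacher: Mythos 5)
Your proposal is correct and follows essentially the same route as the paper: reduce to the two-sided bound $\sum_{\tilde\sigma}(\dist(\tilde Q,\Gamma)/\diam(h_\Gamma(\tilde\sigma)))^{(p-r)/(r-1)}\sim 1$, prove the upper bound by grouping children by $k$-largeness via Lemma~\ref{upper boundary diam} and Proposition~\ref{prop:back-to-life} and summing a geometric series, and prove the lower bound by exhibiting a distinguished child anchored at the point $z_{\tilde Q}$ from Corollary~\ref{cor:tilde_Q_distance}. Your explicit invocation of Lemma~\ref{lemma:R-is-large} to truncate the sum at $k\ge k_0$ makes precise a step the paper leaves implicit, but the argument is the same.
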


\begin{rem}
Notice that \eqref{eq:geom-part-cond} is equivalent to 
\[
\diam(\tilde \kappa_{\tilde Q}(\tilde \sigma)) \sim \dist(\tilde Q, \Gamma)^{\frac{p-1}{r-1}} \diam(h_\Gamma(\tilde \sigma))^{-\frac{p-r}{r-1}}.
\]
\end{rem}

We formalize the geometric partition now as follows.

\begin{defn}
For $\tilde Q\in \tilde \sQ_\Gamma$, a partition $\tilde \Sigma_\geom(\tilde Q)$ of $\horz(\tilde Q)$ is an \emph{$r$-geometric partition} if 
\[
\tilde \Sigma_\geom(\tilde Q) = \{ \tilde \kappa_{\tilde Q}(\tilde \sigma) \colon \tilde \sigma \in \tilde \Sigma^\flat(\tilde Q)\},
\]
where $\tilde \kappa_{\tilde Q} \colon \horz(\tilde Q) \to \horz(\tilde Q)$ is a homeomorphism as in Lemma \ref{chop interval}.
\end{defn}

\begin{proof}[Proof of Lemma \ref{chop interval}] 
By Lemma \ref{lemma:horz-tilde-Q-bilip}, the arc $\horz^\flat(\tilde Q)$ is uniformly bilipschitz equivalent to the interval $[0,\dist(\tilde Q, \Gamma)]$. Thus the claim follows from a simple piece-wise affine reparametrization of $\horz^\flat(\tilde Q)$ if we show that
\[
\sum_{\tilde \sigma \in \tilde \Sigma(\tilde Q)} \left( \frac{\dist(\tilde Q, \Gamma)}{\diam(h_\Gamma(\tilde \sigma))}\right)^{\frac{p-r}{r-1}} \sim 1,
\]
where constants depend only on $p$, $C_0$, and $r$.

We observe first, by definition of $\tilde \Sigma^\flat(\tilde Q)$ and Lemma \ref{upper boundary diam}, that 
\begin{align*}
\sum_{\tilde \sigma \in \tilde \Sigma(\tilde Q)} \left( \frac{\dist(\tilde Q, \Gamma)}{\diam(h_\Gamma(\tilde \sigma))}\right)^{\frac{p-r}{r-1}} 
&= \sum_{R \in \sQ_\Gamma^\refi|_Q} \left( \frac{\dist(\tilde Q, \Gamma)}{\diam(R\cap \horz^\flat(h_\Gamma(\tilde Q)))}\right)^{\frac{p-r}{r-1}} \\
&\sim \sum_{R \in \sQ_\Gamma^\refi|_Q} \left( \frac{\dist(\tilde Q, \Gamma)}{\diam(S_\Gamma(R))}\right)^{\frac{p-r}{r-1}}.
\end{align*}
By Proposition \ref{prop:back-to-life}, we have that
\begin{align*}
\sum_{R \in \sQ_\Gamma^\refi|_Q} \left( \frac{\dist(\tilde Q, \Gamma)}{\diam(S_\Gamma(R))}\right)^{\frac{p-r}{r-1}} &= \sum_{k\in \Z} \sum_{R\in \sC_k(Q)} \left( \frac{\dist(\tilde Q, \Gamma)}{\diam(S_\Gamma(R))}\right)^{\frac{p-r}{r-1}} \\
&\le \sum_{k\in \Z} \sum_{R\in \sC_k(Q)} \left( \frac{1}{2^k}\right)^{\frac{p-r}{r-1}} 
\ls 1,
\end{align*}
where constants depend only on $p$, $C_0$, and $r$. 

Towards the other direction, we observe that, by Corollary \ref{cor:tilde_Q_distance}, there exists $z_0\in S_\Gamma(Q)$ for which 
\[
\ell(\tilde \Gamma(z_0),\tilde Q) \ls \dist(\tilde Q, \Gamma).
\]
Let now $\tilde \sigma_0 \in \tilde \Sigma(\tilde Q)$ be such that $\tilde \Gamma(z)\cap \tilde \sigma_0 \ne \emptyset$ and let $R_0\in \sQ_\Gamma^\refi$ be a child of $Q$ for which $h_\Gamma(\tilde \sigma_0) = R_0 \cap \horz^\flat(h_\Gamma(\tilde Q))$. 
We have, by Lemmas \ref{lemma:Qi2} and \ref{lemma:Qi1}, that
\begin{align*}
\sum_{\tilde \sigma \in \tilde \Sigma(\tilde Q)} \left( \frac{\dist(\tilde Q, \Gamma)}{\diam(h_\Gamma(\tilde \sigma))}\right)^{\frac{p-r}{r-1}} 
&\ge \left( \frac{\dist(\tilde Q, \Gamma)}{\diam(R_0 \cap \horz^\flat(h_\Gamma(\tilde Q))}\right)^{\frac{p-r}{r-1}} \\
&\ge \left( \frac{\dist(\tilde Q, \Gamma)}{\diam(R_0)}\right)^{\frac{p-r}{r-1}} 
\sim \left( \frac{\dist(\tilde Q, \Gamma)}{\diam(S_\Gamma(R_0))}\right)^{\frac{p-r}{r-1}}.
\end{align*}
Since $\sQ_\Gamma^\refi$ is balanced, we have, by \eqref{comparable length} and the assumption on $z_0$, that  
\begin{align*}
\left( \frac{\dist(\tilde Q, \Gamma)}{\diam(S_\Gamma(R_0))}\right)^{\frac{p-r}{r-1}}
&\sim \left( \frac{\dist(\tilde Q, \Gamma)}{\ell(\tilde \Gamma^\perp(z_0),Q)}\right)^{\frac{p-r}{r-1}} 
\sim \left( \frac{\dist(\tilde Q, \Gamma)}{\ell(\tilde \Gamma^\perp(z_0),\tilde Q)}\right)^{\frac{p-r}{r-1}} 
\sim 1.
\end{align*}
The claim follows.
\end{proof}

\subsection{Combinatorial partitions of horizontal edges}
\label{sec:CP-HE}

We now discuss the so-called combinatorial partition $\tilde \Sigma_\comb(\tilde Q)$ of $\horz(\tilde Q)$ for $\tilde Q\in \tilde \sQ_\Gamma$. The idea is to redistribute the partition $\tilde \Sigma(\tilde Q)$ starting from partition $\tilde \Sigma^\pm(\tilde Q)$ defined in Section \ref{sec:initial-edge-partitions}.

This is done in two steps as follows. For the first step, we define a partition 
\[
\tilde \Sigma^\pm_\geom(\tilde Q) = \tilde \Sigma_\geom(\tilde Q^+) \cup \tilde \Sigma_\geom(\tilde Q^-).
\]
of $\horz(\tilde Q)$ and let $\tilde \kappa^\pm_{\tilde Q} \colon \horz(\tilde Q) \to \horz(\tilde Q)$ be the concatenation of homeomorphisms $\tilde \kappa_{\tilde Q^+} \colon \horz^\flat(\tilde Q^+) \to \horz^\flat(\tilde Q^+)$ and $\tilde \kappa_{\tilde Q^-} \colon \horz^\flat(\tilde Q^-) \to \horz^\flat(\tilde Q^-)$ associated to the partitions $\tilde \Sigma_\geom(\tilde Q^+)$ and $\tilde \Sigma_\geom(\tilde Q^-)$. Notice that, in practice, $\tilde \kappa^\pm_{\tilde Q}$ moves the end points of arcs in $\tilde \Sigma^\pm(\tilde Q)$ to obtain $\tilde \Sigma^\pm_\geom(\tilde Q)$. 

For notational convenience, we define also an auxiliary geometric partition
\[
\tilde \Sigma_\geom(\tilde Q) = \{ \tilde \kappa^\pm_{\tilde Q}(\tilde \tau) \colon \tau \in \tilde \Sigma(\tilde Q)\}
\]
of $\horz(\tilde Q)$; notice that $\horz(\tilde Q)$ is not a lower horizontal edge of any rectangle.

Now, in the second step, we move the end points of the arcs in $\tilde \Sigma(\tilde Q)$ inside arcs in $\tilde \Sigma^\pm_\geom(\tilde Q)$. We define, for each $\tilde \tau \in \tilde \Sigma^\pm_\geom(\tilde Q)$, 
\[
\tilde \Sigma_\geom(\tilde Q; \tilde \tau) = \{ \tilde \sigma \in \tilde \Sigma_\geom(\tilde Q) \colon \tilde \sigma \cap \mathrm{int}\tilde \tau \ne \emptyset\}.
\]
Let also
\[
h_{\tilde Q}^\geom = h_\Gamma \circ (\tilde \kappa^\pm_{\tilde Q})^{-1} \colon \horz(\tilde Q) \to \horz(h_\Gamma(\tilde Q)).
\]

\begin{lem}
\label{lemma:combinatorial-reparametrization}
Let $\Gamma \in \Jordan(p,C_0)$ and $r<p$. Let $\tilde Q\in \tilde \sQ_\Gamma$ and $Q=h_\Gamma(\tilde Q)$. Then there exists a homeomorphism $\tilde \kappa^\comb_{\tilde Q} \colon \horz(\tilde Q) \to \horz(\tilde Q)$ satisfying
\begin{equation}
\label{subdivision tilde sigma}
\frac{\ell(\tilde\kappa^\comb_{\tilde Q}(\tilde \sigma \cap \tilde \tau))}{\ell(\tilde \tau)} = \frac{\diam(h_{\tilde Q}^\geom(\tilde \sigma\cap \tilde \tau))}{\Diam(\Sigma(h_\Gamma(\tilde Q);h_{\tilde Q}^\geom(\tilde \tau)))}
\end{equation}
for each $\tilde \tau\in \tilde \Sigma^\pm_\geom(\tilde Q)$ and $\tilde \sigma \in \tilde \Sigma_\geom(\tilde Q; \tilde \tau)$, and moreover
\[
\tilde \kappa^\comb_{\tilde Q}|_{\tilde \tau} = \id
\]
if $\# \tilde \Sigma(\tilde Q;\tilde \tau) = 1$.
\end{lem}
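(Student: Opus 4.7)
My plan is to construct $\tilde{\kappa}^{\comb}_{\tilde Q}$ separately on each $\tilde{\tau} \in \tilde{\Sigma}^{\pm}_{\geom}(\tilde Q)$ and then concatenate. On each such $\tilde{\tau}$, the map will be a piecewise-affine (in arclength) reparametrization of $\tilde{\tau}$ fixing both of its endpoints and sending the sub-arcs $\{\tilde{\sigma}\cap \tilde{\tau} : \tilde{\sigma} \in \tilde{\Sigma}_{\geom}(\tilde Q; \tilde{\tau})\}$, in their natural order along $\tilde{\tau}$, onto sub-arcs of $\tilde{\tau}$ of the lengths prescribed by the right-hand side of \eqref{subdivision tilde sigma}. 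The only nontrivial point is to check that, for each fixed $\tilde{\tau}$, these prescribed lengths add up to $\ell(\tilde{\tau})$; once that is done, the piecewise-affine construction is automatic and the gluing across different $\tilde{\tau}$ is compatible since consecutive pieces share endpoints that are fixed.

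To verify this sum-to-one identity, I observe that $h^{\geom}_{\tilde Q} = h_\Gamma \circ (\tilde{\kappa}^{\pm}_{\tilde Q})^{-1}$ is a homeomorphism of $\horz(\tilde Q)$ onto $\horz(Q)$ that, by the very definitions of the geometric partitions, sends $\tilde{\Sigma}_{\geom}(\tilde Q)$ bijectively to $\Sigma(Q)$ and $\tilde{\Sigma}^{\pm}_{\geom}(\tilde Q)$ bijectively to $\Sigma^{\pm}(Q)$. Setting $\tau = h^{\geom}_{\tilde Q}(\tilde{\tau})\in \Sigma^{\pm}(Q)$, I then have a bijection between $\tilde{\Sigma}_{\geom}(\tilde Q; \tilde{\tau})$ and $\Sigma(Q; \tau)$, and moreover $h^{\geom}_{\tilde Q}(\tilde{\sigma} \cap \tilde{\tau}) = h^{\geom}_{\tilde Q}(\tilde{\sigma}) \cap \tau$ for each $\tilde{\sigma}$ in the former. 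Summing diameters gives
\[
\sum_{\tilde{\sigma} \in \tilde{\Sigma}_{\geom}(\tilde Q; \tilde{\tau})} \diam\bigl(h^{\geom}_{\tilde Q}(\tilde{\sigma} \cap \tilde{\tau})\bigr) = \sum_{\sigma \in \Sigma(Q; \tau)} \diam(\sigma \cap \tau) = \Diam\bigl(\Sigma(Q; h^{\geom}_{\tilde Q}(\tilde{\tau}))\bigr),
\]
which is exactly the required identity and ensures that the ratios in \eqref{subdivision tilde sigma} sum to $1$ over $\tilde{\sigma}\in\tilde{\Sigma}_{\geom}(\tilde Q; \tilde{\tau})$.

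The rest is formal. I enumerate the pieces $\tilde{\sigma}\cap \tilde{\tau}$ along $\tilde{\tau}$ in arclength order, send the $j$-th piece affinely in arclength onto the sub-arc of $\tilde{\tau}$ of length $\ell(\tilde{\tau})\cdot\diam(h^{\geom}_{\tilde Q}(\tilde{\sigma}_j\cap \tilde{\tau}))/\Diam(\Sigma(Q;\tau))$, and obtain a homeomorphism of $\tilde{\tau}$ to itself fixing the endpoints; concatenating over all $\tilde{\tau}\in \tilde{\Sigma}^{\pm}_{\geom}(\tilde Q)$ gives $\tilde{\kappa}^{\comb}_{\tilde Q}$. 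For the identity clause, if only one element of $\tilde{\Sigma}(\tilde Q)$ meets $\mathrm{int}\,\tilde{\tau}$ then the analogous summation collapses to a single term equal to $\Diam(\Sigma(Q;\tau))$, so the construction assigns full length $\ell(\tilde{\tau})$ to this one sub-arc, and it is natural (and consistent) to take $\tilde{\kappa}^{\comb}_{\tilde Q}|_{\tilde{\tau}} = \id$ in that case. The main "obstacle" here is purely bookkeeping — keeping straight which of the partitions $\tilde{\Sigma}(\tilde Q)$, $\tilde{\Sigma}_{\geom}(\tilde Q)$, $\tilde{\Sigma}^{\pm}(\tilde Q)$, $\tilde{\Sigma}^{\pm}_{\geom}(\tilde Q)$ lives on which arc and how $\tilde{\kappa}^{\pm}_{\tilde Q}$ and $h^{\geom}_{\tilde Q}$ transport them; no metric estimates from the preceding sections are invoked.
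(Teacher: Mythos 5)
Your proposal is correct and follows essentially the same route as the paper: verify that the prescribed ratios sum to $1$ over each $\tilde\tau$ (which holds because $\Diam(\Sigma(Q;\tau))$ is by definition the sum of the diameters $\diam(\sigma\cap\tau)$, transported via the bijection induced by $h^{\geom}_{\tilde Q}$), and then take the piecewise constant-speed reparametrization of each $\tilde\tau$ fixing its endpoints. The only difference is cosmetic bookkeeping; no new ideas or gaps.
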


\begin{proof}
Let $\tilde \tau \in \tilde\Sigma^\pm_\geom(\tilde Q)$. It suffices to consider the case that $\tilde \Sigma(\tilde Q;\tilde \tau)$ contains more than one member. Let $Q=h_\Gamma(\tilde Q)$ and $\tau = h_\Gamma(\tilde \tau)$. Since 
\[
\sum_{\tilde \sigma \in \tilde \Sigma(\tilde Q;\tilde \tau)} \frac{\diam(h_{\tilde Q}^\geom(\tilde \sigma \cap \tilde \tau))}{\Diam(\Sigma(h_\Gamma(\tilde Q);h_{\tilde Q}^\geom(\tilde \tau))}
= \sum_{\sigma \in \Sigma(Q; \tau)} \frac{\diam(\sigma \cap \tau)}{\Diam(\Sigma(Q;\tau)} = 1,
\]
we observe that the question is merely on the existence of a homeomorphism $\tilde \tau \to \tilde \tau$ which preserves the end points and is a constant speed reparametrization of $\tilde \sigma\cap \tilde \tau$ for each $\tilde \sigma$. The claim follows.
\end{proof}

\begin{defn}
A partition $\tilde \Sigma_\comb(\tilde Q)$ is a \emph{combinatorial partition of $\horz(\tilde Q)$} if 
\[
\tilde \Sigma_\comb(\tilde Q) = \{ (\tilde \kappa^\comb_{\tilde Q}\circ \tilde \kappa^\pm_\geom)(\tilde \sigma) \colon \tilde \sigma \in \tilde \Sigma(\tilde Q)\},
\]
where $\tilde \kappa^\comb_{\tilde Q} \colon \horz(\tilde Q) \to \horz(\tilde Q)$ is a homeomorphism as in Lemma \ref{lemma:combinatorial-reparametrization} and $\tilde \kappa^\pm_\geom\colon \horz(\tilde Q) \to \horz(\tilde Q)$ is a homeomorphism associated to $\tilde \Sigma^\pm_\geom(\tilde Q)$.
\end{defn}

\begin{rem}
We emphasize that, although the partition $\tilde \Sigma_\comb(\tilde Q)$ stems from partitions $\tilde \Sigma^\pm(\tilde Q)$ and $\tilde \Sigma^\pm_\geom(\tilde Q)$, the partition $\tilde \Sigma(\tilde Q)$ is a redistribution of $\tilde \Sigma(\tilde Q)$. In particular, $\tilde \Sigma_\comb(\tilde Q)$ and $\tilde \Sigma_\geom(\tilde Q)$ has the same number of elements, i.e.~the number of children of $h_\Gamma(\tilde Q)$. 
\end{rem}

\subsection{Uniform bilipschitz equivalence of edge partitions}
\label{sec:GP-CP}

In what follows, we denote for each $\tilde Q\in \tilde\sQ_\Gamma$ the reparametrizing homeomorphisms simply by 
\[
\tilde \kappa_\geom = \kappa_{\tilde Q} \colon \horz^\pm(\tilde Q) \to \horz^\pm(\tilde Q)
\]
and 
\[
\tilde \kappa_\comb = \tilde \kappa^\comb_{\tilde Q}\circ \tilde \kappa^\pm_\geom \colon \horz(\tilde Q) \to \horz(\tilde Q).
\]
Notice that, for each $\tilde Q$, these homeomorphisms induce bijections 
\[
\sC(\tilde Q) \to \tilde \Sigma_\geom(\tilde Q),\quad 
\tilde R \mapsto \tilde \kappa_\geom(\tilde R\cap \horz^\flat(\tilde Q)),
\]
and 
\[
\sC(\tilde Q) \to \tilde \Sigma_\comb(\tilde Q),\quad 
\tilde R \mapsto \tilde \kappa_\comb(\tilde R\cap \horz(\tilde Q)).
\]

The partitions $\tilde\Sigma_\geom(\tilde Q)$ and $\tilde\Sigma_\comb(\tilde Q)$ of $\horz^\flat(\tilde Q)$ and $\horz(\tilde Q)$ are bilipschitz equivalent, quantitatively. We state this as follows.

\begin{prop}\label{prop:homotopy}
Let $\Gamma\in \Jordan(p,C_0)$ and $r<p$. Then there exists a constant $C_{\mathrm{GC}}=C_{\mathrm{GC}}(p,C_0,r)$ with the property that, for each $\tilde Q\in \tilde \sQ_\Gamma$ and $\tilde R\in \sC(\tilde Q)$,
\begin{equation}
\label{eq:sigma-comparison}
\frac{1}{C_{\mathrm{GC}}} \ell(\tilde \kappa_\geom(\tilde R\cap \horz^\flat(\tilde Q))) 
\le \ell(\tilde \kappa_\comb(\tilde R \cap \horz(\tilde Q))) 
\le C_{\mathrm{GC}}\ell(\tilde \kappa_\geom(\tilde R\cap \horz^\flat(\tilde Q)))
\end{equation}
and
\begin{equation}
\label{eq:sigma-size}
\ell(\kappa_\geom(\tilde R\cap \horz^\flat(\tilde Q))) \sim \dist(\tilde Q,\Gamma)^{\frac{p-1}{r-1}} \dist(R,\Gamma)^{\frac{p-r}{r-1}}.
\end{equation}
\end{prop}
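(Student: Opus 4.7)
The plan is to prove the explicit formula \eqref{eq:sigma-size} first as a direct consequence of Lemma~\ref{chop interval}, and then derive the bilipschitz comparison \eqref{eq:sigma-comparison} by unpacking the redistribution rule \eqref{subdivision tilde sigma} so that both sides reduce to the same power expression in $\dist(\tilde Q,\Gamma)$ and $\dist(R,\Gamma)$.

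For \eqref{eq:sigma-size}, fix $\tilde R \in \sC(\tilde Q)$, set $R=h_\Gamma(\tilde R)$ and $\tilde\sigma=\tilde R\cap \horz^\flat(\tilde Q)\in \tilde\Sigma^\flat(\tilde Q)$. Lemma~\ref{lemma:horz-tilde-Q-bilip} gives $\ell(\tilde\kappa_\geom(\tilde\sigma))\sim \diam(\tilde\kappa_{\tilde Q}(\tilde\sigma))$, and Lemma~\ref{chop interval} expresses the latter in terms of $\dist(\tilde Q,\Gamma)$ and $\diam(h_\Gamma(\tilde\sigma))$. Simplifying with $1+(p-r)/(r-1)=(p-1)/(r-1)$ yields
\[
\diam(\tilde\kappa_{\tilde Q}(\tilde\sigma)) \sim \dist(\tilde Q,\Gamma)^{(p-1)/(r-1)} \diam(h_\Gamma(\tilde\sigma))^{-(p-r)/(r-1)}.
\]
Since $h_\Gamma(\tilde\sigma)=R\cap \horz^\flat(Q)$ with $Q=h_\Gamma(\tilde Q)$, Lemma~\ref{upper boundary diam} together with Lemma~\ref{lemma:Qi1} identifies $\diam(h_\Gamma(\tilde\sigma))\sim \diam(S_\Gamma(R))\sim \dist(R,\Gamma)$, which produces \eqref{eq:sigma-size}.

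For \eqref{eq:sigma-comparison}, set $\tilde\sigma'=\tilde R\cap \horz(\tilde Q)\in\tilde\Sigma(\tilde Q)$ and $\tilde\sigma_\geom=\tilde\kappa^\pm_\geom(\tilde\sigma')$. The definition of $\tilde\kappa_\comb$ together with \eqref{subdivision tilde sigma} gives
\[
\ell(\tilde\kappa_\comb(\tilde\sigma'))=\sum_{\tilde\tau}\ell(\tilde\tau)\cdot\frac{\diam(h_{\tilde Q}^\geom(\tilde\sigma_\geom\cap\tilde\tau))}{\Diam(\Sigma(Q;\,h_{\tilde Q}^\geom(\tilde\tau)))},
\]
summed over $\tilde\tau\in \tilde\Sigma^\pm_\geom(\tilde Q)$ whose interior meets $\tilde\sigma_\geom$. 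Each such $\tilde\tau$ belongs to $\tilde\Sigma_\geom(\tilde Q^\pm)$ for an upper neighbor $\tilde Q^\pm$ of $\tilde Q$ and corresponds via $h_{\tilde Q}^\geom$ to a child $R'\in\sC(Q^\pm)$ meeting $R$ on $\horz(Q)$. I then apply \eqref{eq:sigma-size} to $\tilde Q^\pm$, using that $\dist(\tilde Q^\pm,\Gamma)\sim\dist(\tilde Q,\Gamma)$ since $\tilde Q^\pm$ and $\tilde Q$ are adjacent Whitney-type sets in $\tilde\sQ_\Gamma$ (Lemma~\ref{lemma:tilde_sQ_Whitney1} plus conformal distortion of $\tilde\varphi_\Gamma$), to get $\ell(\tilde\tau)\sim\dist(\tilde Q,\Gamma)^{(p-1)/(r-1)}\dist(R',\Gamma)^{-(p-r)/(r-1)}$. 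Corollary~\ref{cor:finite_shadow_neighbors} caps the number of children $R''$ of $Q$ meeting $R'$ by $C_\#$, and Remark~\ref{rmk:good partition} combined with Lemma~\ref{upper boundary diam} yields $\diam(R''\cap\horz(Q))\sim\dist(R',\Gamma)$ for each, so the denominator collapses to $\Diam(\Sigma(Q;R'\cap\horz(Q)))\sim\dist(R',\Gamma)$. Since $R\cap R'\ne\emptyset$, Lemma~\ref{lemma:good partition} gives $\dist(R',\Gamma)\sim\dist(R,\Gamma)$. Substituting and factoring,
\[
\ell(\tilde\kappa_\comb(\tilde\sigma'))\sim \dist(\tilde Q,\Gamma)^{(p-1)/(r-1)}\dist(R,\Gamma)^{-(p-r)/(r-1)}\cdot\frac{\sum_{\tilde\tau}\diam(h_{\tilde Q}^\geom(\tilde\sigma_\geom\cap\tilde\tau))}{\dist(R,\Gamma)},
\]
and since $h_{\tilde Q}^\geom(\tilde\sigma_\geom)=R\cap\horz(Q)$, the bounded overlap of Corollary~\ref{cor:finite_shadow_neighbors} gives $\sum_{\tilde\tau}\diam(h_{\tilde Q}^\geom(\tilde\sigma_\geom\cap\tilde\tau))\sim\diam(R\cap\horz(Q))\sim\dist(R,\Gamma)$ via Lemma~\ref{upper boundary diam} and Lemma~\ref{lemma:Qi1}. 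This reproduces the expression from \eqref{eq:sigma-size} and delivers \eqref{eq:sigma-comparison}.

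The principal obstacle is the bookkeeping around the redistribution denominator in~\eqref{subdivision tilde sigma}: one must trace the image of $\tilde\sigma_\geom\cap\tilde\tau$ through $h_{\tilde Q}^\geom=h_\Gamma\circ(\tilde\kappa^\pm_\geom)^{-1}$ to the intersection $(R\cap\horz(Q))\cap(R'\cap\horz(Q))$, and recognize that $\Diam(\Sigma(Q;R'\cap\horz(Q)))\sim\dist(R',\Gamma)\sim\dist(R,\Gamma)$ using the uniformly bounded neighbor count from Corollary~\ref{cor:finite_shadow_neighbors} and the diameter comparability of neighboring rectangles from Lemma~\ref{lemma:good partition}. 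Once this identification is in place, the remainder of the proof is exponent arithmetic and a triangle-type summation, both already used in the proof of \eqref{eq:sigma-size}.
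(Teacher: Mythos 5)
Your proof is correct, and it rests on exactly the same ingredients as the paper's: the explicit length formula from Lemma~\ref{chop interval} combined with Lemmas~\ref{upper boundary diam} and~\ref{lemma:Qi1} for \eqref{eq:sigma-size}, and then the redistribution rule \eqref{subdivision tilde sigma}, the comparability $\Diam(\Sigma(Q;\tau))\sim\diam(\tau)$ from Remark~\ref{rmk:C_sharp}, the neighbor comparability of Lemma~\ref{lemma:good partition}, and the bounded neighbor count of Corollary~\ref{cor:finite_shadow_neighbors}. Where you genuinely diverge is in the organization of \eqref{eq:sigma-comparison}: the paper proves the upper bound by summing $\ell(\tilde\kappa^\pm_{\tilde Q}(\tilde\tau))$ over the at most $C_\#$ arcs meeting $\tilde\sigma$, and then handles the lower bound by a three-case analysis (an arc $\tilde\tau$ contained in $\tilde\sigma$; $\tilde\sigma$ contained in some $\tilde\tau$; neither, in which case one arc covers at least half), each time extracting a single dominant term. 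You instead evaluate the full sum $\sum_{\tilde\tau}\ell(\tilde\tau)\,\diam(h^\geom_{\tilde Q}(\tilde\sigma_\geom\cap\tilde\tau))/\Diam(\Sigma(Q;h^\geom_{\tilde Q}(\tilde\tau)))$ two-sidedly at once, observing that the factor $\ell(\tilde\tau)/\Diam(\cdots)$ is uniformly comparable to a single quantity over all relevant $\tilde\tau$ and that the remaining diameters sum, via the chain/triangle inequality, to $\diam(R\cap\horz(Q))\sim\dist(R,\Gamma)$. This is a cleaner, unified argument that eliminates the case split and simultaneously yields the stronger statement that $\ell(\tilde\kappa_\comb(\tilde R\cap\horz(\tilde Q)))$ itself satisfies the power law of \eqref{eq:sigma-size}. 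One cosmetic point: the exponent you derive for $\dist(R,\Gamma)$ is $-(p-r)/(r-1)=(r-p)/(r-1)$, which matches the Remark after Lemma~\ref{chop interval}, display \eqref{smp}, and the definition of $\Rect_\Gamma(\tilde R)$, rather than the sign printed in \eqref{eq:sigma-size}; that is a typo in the statement, not an error in your argument, but it is worth flagging so the reader is not confused.
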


\begin{proof}
Let $\tilde \sigma = \tilde R\cap \horz(\tilde Q)$ and $\tilde \sigma^\flat = \tilde R \cap \horz^\flat(\tilde Q)$. Let also $R=h_\Gamma(\tilde R)$ and $Q=h_\Gamma(\tilde Q)$.

By Lemmas \ref{upper boundary diam} and \ref{lemma:Qi1}, we have 
\[
\diam(h_\Gamma(\tilde \sigma)) = \diam(R \cap \horz(Q)) \sim \diam(S_\Gamma(R)) \sim \dist(R,\Gamma)
\]
and
\[
\diam(h_\Gamma(\tilde \sigma^\flat)) = \diam(R \cap \horz^\flat(Q)) \sim \diam(S_\Gamma(R)) \sim \dist(R,\Gamma),
\]
where the constants depend only on $p$, $C_0$, and $r$. 

Since $\horz(\tilde Q)$ is uniformly bilipschitz equivalent to an interval by Lemma \ref{lemma:horz-tilde-Q-bilip}, we have, by the definition of $\kappa_\geom$ (see Lemma \ref{chop interval}), that
\begin{equation}
\label{eq:ell-k_geom}
\begin{split}
\ell(\kappa_\geom(\tilde \sigma^\flat)) &\sim \diam(\kappa_\geom(\tilde \sigma^\flat)) \\
&\sim \left( \frac{\dist(\tilde Q, \Gamma)}{\diam(h_\Gamma(\tilde \sigma^\flat))}\right)^{\frac{p-r}{r-1}} \dist(\tilde Q, \Gamma) \\
&\sim \dist(\tilde Q,\Gamma)^{\frac{p-1}{r-1}} \dist(R,\Gamma)^{\frac{p-r}{r-1}}.
\end{split}
\end{equation}
This proves \eqref{eq:sigma-size}. It remains to show the same estimate for $\ell(\kappa_\comb(\tilde \sigma))$.

\medskip
We begin with the proof of the upper bound \eqref{eq:sigma-comparison}. By Corollary, \ref{cor:finite_shadow_neighbors}, $\tilde \sigma$ meets at most $C_\#$ arcs in $\tilde \Sigma^\pm(\tilde Q)$, where $C_\#=C_\#(p,C_0)$. 

For each $\tilde \tau \in \tilde \Sigma^\pm(\tilde Q)$, let $R_{\tilde \tau}\in \sQ_\Gamma^\refi$ be the rectangle for which $\tilde \tau = h_\Gamma^{-1}(R_{\tilde \tau})\cap \horz(\tilde Q)$. By Lemma \ref{lemma:good partition}, $\diam(S_\Gamma(R_{\tilde \tau})) \sim \diam(S_\Gamma(R))$. Thus, by the definition of $\kappa^\pm_{\tilde Q}$ and Lemma \ref{upper boundary diam}, we have that 
\begin{equation}
\label{smp}
\begin{split}
\ell(\tilde \kappa^\pm_{\tilde Q}(\tilde \tau)) &\sim \dist(\tilde Q,\Gamma)^{\frac {p-1}{r-1}}\dist(  R_{\tilde \tau},\Gamma)^{\frac {r-p}{r-1}} \sim \dist(\tilde Q,\Gamma)^{\frac {p-1}{r-1}}\dist(R,\Gamma)^{\frac {r-p}{r-1}}.
\end{split}
\end{equation}

Thus, by \eqref{smp} and \eqref{eq:ell-k_geom}, we have that
\begin{equation}
\label{eq:comb-geom-upper}
\ell(\tilde \kappa_\comb(\tilde \sigma)) \le \sum_{\tilde \tau \cap \tilde \sigma \ne \emptyset} \ell(\tilde \kappa^\pm_{\tilde Q}(\tilde \tau)) \sim \ell(\tilde \kappa_\geom(\tilde \sigma^\flat)).
\end{equation}
This completes the proof of the upper bound in \ref{eq:sigma-comparison}.

For the lower bound in \eqref{eq:sigma-comparison}, we consider three cases.

\noindent
\emph{Case 1:} Suppose there exists $\tilde \tau \in \tilde \Sigma^\pm_\geom(\tilde Q)$ contained in $\tilde \sigma$, that is, $\tilde \Sigma_\geom(\tilde Q; \tilde \tau) = \{ \tilde \sigma\}$. Then $\kappa^\comb_{\tilde Q}$ is the identity on $\tilde \tau$ and $\tilde \kappa^\comb = \tilde \kappa^\pm_{\tilde Q}$ on $\tilde \tau$. Thus 
\[
\ell(\tilde \kappa_\comb(\tilde \sigma)) \ge \ell(\tilde \kappa_\comb(\tilde \tau)) = \ell(\tilde \kappa^\pm_{\tilde Q}(\tilde \tau)) \sim \ell(\tilde \kappa_\geom(\tilde \sigma^\flat))
\]
by \eqref{eq:ell-k_geom} and \eqref{smp}.

\medskip
\noindent
\emph{Case 2:} Suppose that there exists $\tilde \tau \in \tilde \Sigma^\pm_\geom(\tilde Q)$ for which $\tilde \sigma \subset \tilde \tau$. Let $\tilde \sigma_\geom = \tilde \kappa^\pm_{\tilde Q}(\tilde \sigma)$ and $\tilde \tau_\geom = \tilde \kappa^\pm_{\tilde Q}(\tilde \tau)$.

Then, by the construction of $\tilde \kappa^\comb_{\tilde Q}$ in Lemma \ref{lemma:combinatorial-reparametrization} and \eqref{eq:Diam-diam}, we have that
\begin{align*}
\ell(\tilde \kappa_\comb(\tilde \sigma)) &= \ell(\tilde \kappa^\pm_{\tilde Q}(\tilde \sigma_\geom)) = \ell(\tilde \kappa^\pm_{\tilde Q}(\tilde \sigma_\geom \cap \tilde \tau_\geom)) \\
&= \frac{\diam(h_{\tilde Q}^\geom(\tilde \sigma_\geom\cap \tilde \tau_\geom))}{\Diam(\Sigma(h_\Gamma(\tilde Q);h_{\tilde Q}^\geom(\tilde \tau_\geom)))} \ell(\tilde \tau_\geom) \\
&\sim \frac{\diam(h_{\tilde Q}^\geom(\tilde \sigma_\geom\cap \tilde \tau_\geom))}{\diam(h_{\tilde Q}^\geom(\tilde \tau_\geom))}\ell(\tilde \tau_\geom)\\
&= \frac{\diam(h_\Gamma(\tilde \sigma))}{\diam(h_\Gamma(\tilde \tau))}\ell(\tilde \tau_\geom) 
= \frac{\diam(h_\Gamma(\tilde \sigma))}{\diam(h_\Gamma(\tilde \tau))}\ell(\kappa^\pm_{\tilde Q}(\tilde \tau)).
\end{align*}
Since $h_\Gamma(\tilde \sigma)$ and $h_\Gamma(\tilde \tau)$ meet, they are neighbors in $\sQ_\Gamma^\refi$. Thus by applying Lemma \ref{upper boundary diam} twice and Lemma \ref{lemma:good partition}, we obtain that
\[
\diam(h_\Gamma(\tilde \sigma)) \sim \diam(S_\Gamma(h_\Gamma(\tilde \sigma))) \sim \diam(S_\Gamma(h_\Gamma(\tilde \tau))) \sim \diam(h_\Gamma(\tilde \tau)).
\]
Thus, by \eqref{smp} and \eqref{eq:ell-k_geom}, we have that
\begin{equation}
\label{eq:as-in-this}
\ell(\tilde \kappa_\comb(\tilde \sigma))  \sim \ell(\kappa^\pm_{\tilde Q}(\tilde \tau)) \sim  \dist(\tilde Q,\Gamma)^{\frac {p-1}{r-1}}\dist(R,\Gamma)^{\frac {r-p}{r-1}} \sim \ell(\kappa_\geom(\tilde \sigma^\flat)).
\end{equation}
This proves the uniform lower bound in this case.

\medskip
\noindent
\emph{Case 3:} Suppose finally that $\tilde \sigma$ neither contains nor is contained in an arc in $\tilde \Sigma^\pm(\tilde Q)$. Since $\horz(\tilde Q)$ is an arc, we conclude that there exists exactly two rectangles in $\tilde \tau\in \sQ_\Gamma^\refi$ which cover the interior of $\tilde \sigma$ in $\horz(\tilde Q)$. Moreover, one of them, say $\tilde\tau \in \tilde \Sigma^\flat(\tilde Q)$, covers more than half of $\tilde \sigma$ in the sense that $\diam(h_\Gamma(\tilde \sigma\cap \tilde \tau))\ge \diam(h_\Gamma(\tilde \sigma))/2$. 

We may now repeat the argument of Case 2 almost verbatim. Indeed, let again $\tilde \sigma_\geom=\tilde \kappa^\pm_{\tilde Q}(\tilde \sigma)$ and $\tilde \tau_\geom = \tilde \kappa^\pm_{\tilde Q}(\tilde \tau)$. Then
\begin{align*}
\ell(\tilde \kappa_\comb(\tilde \sigma)) 
&\sim \frac{\diam(h_{\tilde Q}^\pm(\tilde \sigma_\geom\cap \tilde \tau_\geom))}{\diam(h_{\tilde Q}^\pm(\tilde \tau_\geom))}\ell(\tilde \tau_\geom)\\
&= \frac{\diam(h_\Gamma(\tilde \sigma\cap \tilde \tau))}{\diam(h_\Gamma(\tilde \tau))}\ell(\kappa_\geom(\tilde \tau)) \\
&\ge \frac{1}{2}\frac{\diam(h_\Gamma(\tilde \sigma))}{\diam(h_\Gamma(\tilde \tau))}\ell(\kappa_\geom(\tilde \tau)) 
\sim \ell(\kappa_\geom(\tilde \tau)).
\end{align*}
Thus the comparability follows as in \eqref{eq:as-in-this}. This proves the last lower bound and completes the proof of the proposition.
\end{proof}

\begin{figure}[ht]
\begin{overpic}[scale=0.35,unit=1mm]{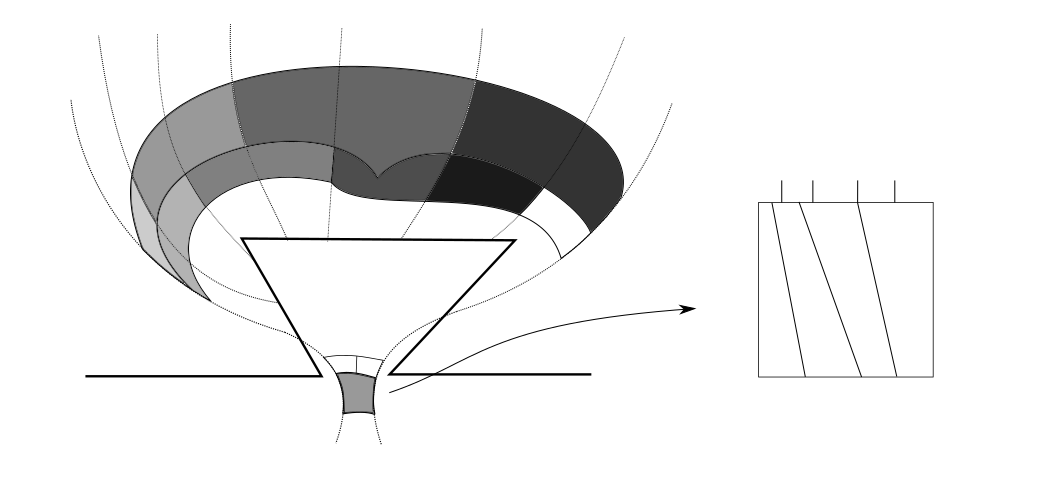} 
\put(38,9){$\tilde Q$}
\put(40,18){\tiny $\tilde Q^+$}
\put(45,18){\tiny $\tilde Q^-$}
\put(75,19){$\mu_{\tilde Q}$}
\end{overpic}
\caption{The decomposition of $\tilde Q$ is based on the geometric partition on its lower horizontal edge and the combinatorial partition of its upper horizontal edge, which is further based on the geometric partition of $\tilde Q^{\pm}$. This partition is obtained by passing to the image rectangle of $\tilde Q$ under 
the bilipschitz map $\mu_{\tilde Q}.$
}
\label{tilde_Q_decompose}
\end{figure}

\subsection{Interpolation refinement $\tilde{\sQ}_\Gamma^\refi$}
\label{sec:sQ_Gamma-refi}

\newcommand{\hull}{\mathrm{hull}}
\newcommand{\pr}{\mathrm{pr}}
\newcommand{\sA}{\mathsf{A}}
\newcommand{\sU}{\mathsf{U}}
\newcommand{\sL}{\mathsf{L}}

The partition $\tilde \sQ_\Gamma^\refi$ is given by interpolation of geometric and combinatorial partitions on horizontal edges of rectangles of $\tilde \sQ_\Gamma$. 

As a model case, consider a Euclidean rectangle $R$ and partitions $\sU=\{u_1,\ldots, u_m\}$ and $\sL=\{l_1,\ldots, l_m\}$ of opposite faces of $R$ into the same number of arcs. Then, by reordering the segments in $\sL$ if necessary, the convex hulls $\{ u_k \star l_k \colon k=1,\ldots, m\}$ form a partition of $R$. Recall that the \emph{convex hull (or the join) $A\star B$ of the subsets $A$ and $B$ in $\R^2$} is the set 
\[
A\star B = \{ tx+(1-t)y \in \R^2 \colon x\in A,\ y \in B,\ t\in [0,1]\}.
\]

Since the rectangles in $\tilde \sQ_\Gamma$ are uniformly bilipschitz to Euclidean rectangles by Lemma \ref{lemma:biLip}, we may use the geometric partitions on upper horizontal edges and combinatorial partitions on lower horizontal edges to obtain a similar partition for each rectangle in $\tilde \sQ_\Gamma$, quantitatively. More precisely, we define $\tilde \sQ_\Gamma^\refi$ as follows.

Let $L=L(p,C_0)\ge 1$ be the constant in Lemma \ref{lemma:biLip}, that is, for each $\tilde Q\in \tilde \sQ_\Gamma$, we may fix $\mu_{\tilde Q} \colon \tilde Q \to [0,\diam(\tilde Q)]^2$ a corner preserving $L$-bilipschitz homeomorphism for which $\mu_{\tilde Q}(\horz(\tilde Q)) = [0,\diam(\tilde Q)]\times \{\diam(\tilde Q)\}$ and $\mu_{\tilde Q}(\horz^\flat(\tilde Q)) = [0,\diam(\tilde Q)]\times \{0\}$. 

For each $\tilde R =h_\Gamma(\tilde R)$, where $R\in \sC(h_\Gamma(\tilde Q))$, let 
\[
H(\tilde R) = \mu_{\tilde Q}^{-1}\left( \mu_{\tilde Q}(\kappa_\comb(\tilde R \cap \horz(\tilde Q)) \star \mu_{\tilde Q}(\kappa_\geom(\tilde R \cap \horz^\flat(\tilde Q)))\right).
\]
Notice that, for each $\tilde Q\in \tilde \sQ_\Gamma$, the collection 
\[
\tilde \sQ_\Gamma^\refi(\tilde Q) = \{ H(\tilde R) \colon h_\Gamma(\tilde R)\in \sC(h_\Gamma(\tilde Q))\}
\]
is a partition of $\tilde Q$.
\begin{defn}
The partition 
\[
\tilde \sQ_\Gamma^\refi = \bigcup_{\tilde Q\in \tilde \sQ_\Gamma} \tilde \sQ_\Gamma^\refi(\tilde Q)
\]
of $\tilde A$ is the \emph{interpolation refinement of $\tilde \sQ_\Gamma$}. We call rectangles in $\tilde \sQ_\Gamma^\refi(\tilde Q)$ the children of $\tilde Q\in \tilde \sQ_\Gamma$ and denote $\sC(\tilde Q) = \tilde \sQ_\Gamma^\refi(\tilde Q)$.
\end{defn}
Notice that, in this definition, we tacitly ignore the dependence of the partition on the choice of bilipschitz maps $\mu_{\tilde Q}$.

\newcommand{\Rect}{\mathrm{Rect}}

By Proposition \ref{prop:homotopy}, the rectangles in $\tilde \sQ_\Gamma^\refi$ are uniformly bilipschitz to the Euclidean rectangles. We record this as the following corollary. For the statement, for each $\tilde Q\in \tilde \sQ_\Gamma$ and $\tilde R\in \sC(\tilde Q)$, let 
\[
\Rect_\Gamma(\tilde R) = [0,\,\dist(\tilde Q, \Gamma)^{\frac{p-1}{r-1}}\dist(R, \Gamma)^{\frac {r-p}{r-1}}]\times [0,\,\dist(\tilde Q, \Gamma)] \subset \R^2,
\]
where $R\in \sC(h_\Gamma(\tilde Q))$ is the unique rectangle for which $R\cap \horz^\flat(h_\Gamma(\tilde Q)) = h_\Gamma(\kappa_\geom^{-1}(\tilde R\cap \horz^\flat(\tilde Q)))$.

\begin{cor}
\label{cor:wz Qi}
Let $\Gamma \in \Jordan(p,C_0)$ and $1 < r < p$. Then there exists $L=L(p,C_0,r)\ge 1$ for the following. For each $\tilde R\in \tilde \sQ_\Gamma^\refi$, there exist an $L$-bilipschitz homeomorphism 
\[
\tilde \Xi_{\tilde R} \colon \tilde R \to \Rect_\Gamma(\tilde R),
\]
which maps the corners of $\tilde R$  to the corners of $\Rect_\Gamma(\tilde R)$.
\end{cor}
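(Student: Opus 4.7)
The plan is to reduce to a Euclidean model via $\mu_{\tilde Q}$ and then construct an explicit bilipschitz change of coordinates between the resulting Euclidean quadrilateral and $\Rect_\Gamma(\tilde R)$. Fix $\tilde Q\in \tilde\sQ_\Gamma$ and $\tilde R\in \tilde\sQ_\Gamma^\refi(\tilde Q)$, and let $R\in \sC(h_\Gamma(\tilde Q))\subset \sQ_\Gamma^\refi$ be the associated child of $h_\Gamma(\tilde Q)$. Set $h:=\diam(\tilde Q)\sim \dist(\tilde Q,\Gamma)$ (Lemma \ref{lemma:tilde_sQ_Whitney1}) and $w:=\dist(\tilde Q,\Gamma)^{(p-1)/(r-1)}\dist(R,\Gamma)^{(r-p)/(r-1)}$. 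By Lemma \ref{lemma:biLip}, the map $\mu_{\tilde Q}\colon \tilde Q\to [0,h]^2$ is $L_0$-bilipschitz with $L_0=L_0(p,C_0)$ and preserves corners. By the very definition of $H(\cdot)$, the image $\mu_{\tilde Q}(\tilde R)$ is the join of two horizontal intervals $I_{\mathrm{bot}}=[a_1,a_1+w_1]\times\{0\}$ and $I_{\mathrm{top}}=[a_2,a_2+w_2]\times\{h\}$, and Proposition \ref{prop:homotopy} together with the bilipschitz bound on $\mu_{\tilde Q}$ gives $w_1\sim w_2\sim w$; clearly $a_1,a_2\in[0,h]$.

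The critical step is the uniform bound $w\ls h$, which prevents the quadrilateral from being too slanted. By the expansion inequality \eqref{zxz} of Lemma \ref{lemma:zxz} and the inclusion $R\subset h_\Gamma(\tilde Q)$,
\[
\dist(\tilde Q,\Gamma)\le L_\Gamma \dist(h_\Gamma(\tilde Q),\Gamma)\le L_\Gamma\dist(R,\Gamma),
\]
so, since $(p-r)/(r-1)>0$,
\[
\frac{w}{h}\sim \left(\frac{\dist(\tilde Q,\Gamma)}{\dist(R,\Gamma)}\right)^{(p-r)/(r-1)}\le L_\Gamma^{(p-r)/(r-1)}
\]
is bounded by a constant depending only on $p$ and $C_0$. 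In particular $|w_2-w_1|\ls w\ls h$, while $|a_2-a_1|\le h$ is automatic.

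Finally, set $a(y)=a_1+(y/h)(a_2-a_1)$ and $w(y)=w_1+(y/h)(w_2-w_1)$, and define $\Psi\colon \mu_{\tilde Q}(\tilde R)\to \Rect_\Gamma(\tilde R)$ by
\[
\Psi(x,y)=\left(\frac{w}{w(y)}\,(x-a(y)),\ \frac{\dist(\tilde Q,\Gamma)}{h}\,y\right).
\]
A direct computation shows that $\Psi$ sends the four corners of $\mu_{\tilde Q}(\tilde R)$ to the four corners of $\Rect_\Gamma(\tilde R)$. The Jacobian $D\Psi$ is lower triangular with diagonal entries $w/w(y)\sim 1$ and $\dist(\tilde Q,\Gamma)/h\sim 1$, and its off-diagonal entry satisfies
\[
|\partial_y\Psi_1|\le \frac{w}{w(y)}\,|a'(y)|+\frac{|x-a(y)|\,w}{w(y)^2}\,|w'(y)|\ls \frac{|a_2-a_1|}{h}+\frac{|w_2-w_1|}{h}\ls 1,
\]
using $|x-a(y)|\le w(y)\sim w$ together with the bounds from the previous paragraph. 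Hence $D\Psi$ and $D\Psi^{-1}$ have uniformly bounded operator norms, so $\Psi$ is bilipschitz with constant depending only on $p$, $C_0$, $r$; setting $\tilde\Xi_{\tilde R}:=\Psi\circ\mu_{\tilde Q}|_{\tilde R}$ produces the required bilipschitz homeomorphism. The only real obstacle is the estimate $w\ls h$: without it the off-diagonal derivative of $\Psi$ would blow up and the trapezoid $\mu_{\tilde Q}(\tilde R)$ would fail to be bilipschitz to a rectangle with uniform constant.
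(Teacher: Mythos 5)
Your proof is correct and fills in exactly the argument the paper leaves implicit: the corollary is meant to follow from Proposition \ref{prop:homotopy} together with the join construction of $H(\tilde R)$ inside the bilipschitz image $\mu_{\tilde Q}(\tilde Q)=[0,\diam(\tilde Q)]^2$, and your explicit trapezoid-to-rectangle map, together with the key observation $w\ls h$ (which the paper itself only records later, in \eqref{bounded length}, via $\dist(R,\Gamma)\gs\dist(\tilde R,\Gamma)$), is precisely the intended route. Two cosmetic remarks: your $D\Psi$ is upper (not lower) triangular, and you have silently corrected the sign typo in the exponent of $\dist(R,\Gamma)$ in \eqref{eq:sigma-size} so that it matches the definition of $\Rect_\Gamma(\tilde R)$ — both harmless.
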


\subsubsection{Structure of $\tilde \sQ^\refi_\Gamma$}

As the definition of the model rectangle $\Rect_\Gamma(\tilde R)$ for $\tilde R\in \tilde \sQ_\Gamma^\refi$ suggests, there is a natural bijection $\iota^\refi_\Gamma\colon \tilde \sQ^\refi_\Gamma \to \sQ^\refi_\Gamma$ from the decomposition $\tilde \sQ^\refi_\Gamma$ of $\tilde A$ to the decomposition $\sQ^\refi_\Gamma$ of $A$ obtained as follows. Let $\iota_\Gamma \colon \tilde \sQ_\Gamma\to \sQ_\Gamma$ be the bijection $\tilde Q \mapsto h_\Gamma(Q)$. Then $\iota_\Gamma$ induces a graph isomorphism from the adjacency graph of $\tilde \sQ_\Gamma$ to the adjacency graph of $\sQ_\Gamma$. Further, $\iota_\Gamma$ refines to a graph isomorphism $\iota_\Gamma^\refi \colon \tilde \sQ_\Gamma^\refi \to \sQ_\Gamma^\refi$, where $\iota_\Gamma^\refi(\tilde R)\in \sQ_\Gamma^\refi$ is uniquely defined by the relation 
\[
h_\Gamma(\kappa_\geom^{-1}(\tilde R\cap \horz(\tilde Q))) = \iota_\Gamma^\refi(\tilde R) \cap \horz(h_\Gamma(\tilde Q)).
\]

Note that this graph isomorphism $\iota_\Gamma^\refi$ has the property that, for rectangles $R,R'\in \tilde \sQ_\Gamma^\refi$, images $\iota^\refi_\Gamma(\tilde R)$ and $\iota^\refi_\Gamma(\tilde R')$ meet in an edge (in a corner) if and only if rectangles $\tilde R$ and $\tilde R'$ meet in an edge (in a corner). Note also, that for each $\tilde Q\in \tilde \sQ^\refi_\Gamma$, $\iota^\refi_\Gamma(\tilde \sQ^\refi_\Gamma|_{\tilde Q}) = \sQ^\refi_\Gamma|_{Q}$, that is, $\iota^\refi_\Gamma$ maps children to children.

\section{Step 5: Lipschitz partition $\sW_{\Gamma}^\refi$ of $\hat A_\Gamma$}

As the final step before the construction of the reflection, we reshape the rectangles of the partition $\sQ_\Gamma^\refi$. The reason is that, whereas the vertical edges of $Q\in \sQ_\Gamma^\refi$ are subarcs of hyperbolic rays, we have no length control on the horizontal edges $\horz(Q)$ and $\horz^\flat(Q)$. The partition $\sW_\Gamma^\refi$ fixes this issue. Due to reshaping we need to pass from $A_\Gamma$ to another collar $\hat A_\Gamma$ of $\Gamma$ in $\Omega_\Gamma$. 

More precisely, we find a new partition $\sW_\Gamma^\refi$ with rectangles which are bilipschitzly Euclidean and close to the corresponding rectangles in $\sQ_\Gamma^\refi$ in the Hausdorff distance. 
Recall that the \emph{Hausdorff distance $\dist_H(A,A')$ of the sets $A$ and $A'$ in $\C$} is 
\[
\dist_{H}(A,A') = \max\{\,\sup_{x \in A} \inf_{y \in A'} d(x,y), \sup_{y \in A'} \inf_{x \in A} d(x,y)\}.
\]

The partition $\sW_\Gamma^\refi$ is given by the following proposition. For the statement, we define also, for $R\in \sQ_\Gamma^\refi$,
\[
\Rect_\Gamma( R)= [0,\,\dist(R,\,\Gamma)]  \times [0,\, \dist(R,\,\Gamma)^{p-1}  \dist(\tilde R,\,\Gamma)^{2-p}] \subset \C. 
\]
\begin{prop}
\label{prop:inside lip}
Let $\Gamma\in \Jordan(p,C_0)$ and $\epsilon\in (0,1/9)$. Then there exist $L_\epsilon=L_\epsilon(p,C_0,\epsilon)\ge 1$ and a domain $\hat A_\Gamma \subset \Omega_\Gamma$, for which $\Gamma$ is a boundary component of $\mathrm{cl}(\hat A_\Gamma)$, and a partition $\sW^{\refi}_\Gamma$ of $\hat A_\Gamma$ into rectangles having the following properties: 
\begin{enumerate}
\item there exists a graph isomorphism $\eta^\refi_\Gamma \colon \sQ_\Gamma^\refi \to \sW_\Gamma^\refi$ of adjacency graphs, 
\item for each $R\in \sQ^\refi_\Gamma$, the rectangle $W_R = \eta^\refi_\Gamma(R)$ satisfies the following properties:
\begin{enumerate}
\item $W_R$ has the same vertical edges as $R$,
\item $W_R \cap R \ne \emptyset$,
\item $\dist_{H}(W_R, R) < \varepsilon \dist(\tilde R,\Gamma)$, where $R=\iota_\Gamma^\refi(\tilde R)$, \label{item:need-to-mention}
\item there exists an $L_\epsilon$-bilipschitz map $W_R \to \Rect_\Gamma(R)$ preserving horizontal edges.
\end{enumerate}
\end{enumerate}
\end{prop}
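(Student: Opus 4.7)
My plan is to reshape only the horizontal edges of each $R \in \sQ^\refi_\Gamma$, leaving the two vertical edges (which are subarcs of hyperbolic rays) fixed. The natural setting is log--polar coordinates: let $\Phi = \log \circ \varphi_\Gamma^{-1}$ where $\varphi_\Gamma \colon \C \setminus \overline{\D} \to \overline{\Omega_\Gamma}$ is the extended conformal map with $\infty \mapsto \infty$, so that hyperbolic rays from $\infty$ become horizontal lines. Then $\Phi(R)$ is bounded by two horizontal segments at $\mathrm{Im} = \theta_1, \theta_2$ and two arbitrary curves (the images of $\horz(R)$ and $\horz^\flat(R)$). I would replace each of the latter by a straight segment joining its endpoints, obtaining a trapezoid $\widehat{\Phi(R)}$, and set $W_R = \Phi^{-1}(\widehat{\Phi(R)})$, with $\eta^\refi_\Gamma(R) = W_R$ and $\hat A_\Gamma = \bigcup_R W_R$.

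Properties (1), (2a), and (2b) are then immediate: only horizontal edges change, the straight-segment endpoints agree with the original corners, and interior overlap is evident. For (2d), the trapezoid $\widehat{\Phi(R)}$ has Euclidean dimensions $|\theta_2 - \theta_1|$ and $|\xi^\sharp - \xi^\flat|$; by Corollary~\ref{points} the first corresponds under $\varphi_\Gamma$ to $\diam(S_\Gamma(R)) \sim \dist(R,\Gamma)$ (via Lemma~\ref{lemma:Qi1}) and by Corollary~\ref{cor:size of Qi} the second to $\ell(\Gamma^\perp \cap R) \sim \dist(R,\Gamma)^{p-1} \dist(\tilde R,\Gamma)^{2-p}$, exactly the dimensions of $\Rect_\Gamma(R)$. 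A Whitney-type cover of $W_R$ on which $\varphi_\Gamma$ is uniformly bilipschitz up to scaling (Lemma~\ref{lemma:biLip}) then yields the bilipschitz equivalence $W_R \to \Rect_\Gamma(R)$. Tiling of $\hat A_\Gamma$ by $\sW^\refi_\Gamma$ follows from the graph isomorphism together with the observation that adjacent rectangles share either an unchanged vertical edge or a consistently reshaped horizontal-edge cut-point.

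The main obstacle will be property (2c): a single straight segment in $\Phi$-coordinates may place the new edge further than $\varepsilon \dist(\tilde R,\Gamma)$ from $\horz(R)$ when $\dist(R,\Gamma) \gg \dist(\tilde R,\Gamma)$, i.e.~when the grandparent blow-up \eqref{eq:Q_blow-up} is large, because the Hausdorff target is then a small fraction of the scale $\dist(R,\Gamma)$ on which the straightening operates. To resolve this I would refine the log-linear replacement into $N = N(\varepsilon, p, C_0)$ polygonal sub-segments with vertices chosen so that each sub-arc of $\horz(R)$ lies in a Whitney-type ball of diameter $\sim \varepsilon \dist(\tilde R,\Gamma)$; the number of such balls required to cover $\horz(R)$ is controlled by Lemma~\ref{upper boundary diam} and the bilipschitz structure transferred by Lemma~\ref{whitney preserving}. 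This refinement inflates the bilipschitz constant from an absolute $L$ to $L_\varepsilon = L \cdot N$, but preserves all other properties and delivers the Hausdorff bound.
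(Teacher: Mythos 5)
Your overall strategy -- pass to conformal/logarithmic coordinates in which hyperbolic rays are straight, view the horizontal edges of $R$ as graphs, and replace them by piecewise-straight graphs -- is the same idea the paper uses (the paper works with a local restriction $\phi_R^1$ of the conformal map to the $\sQ_\Gamma^\refi$-neighborhood of $R$ followed by an anisotropic linear normalization, and then approximates the normalized edge functions by Lipschitz functions). However, there is a genuine gap in how you guarantee that $\sW_\Gamma^\refi$ is a partition. A horizontal edge of $R$ is shared with \emph{several} rectangles of the adjacent generation: $\horz^\flat(R)$ is the union of (parts of) the upper edges of the children of the unique $Q'$ with $Q=(Q')^{\pm}$, and by Corollary~\ref{cor:finite_shadow_neighbors} there may be up to $C_\#$ of them. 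If you replace $\horz^\flat(R)$ by the single chord of its endpoints while each neighbor $R'$ below replaces its own upper edge by the chord of \emph{its} endpoints, the two reshapings do not agree on the common arc (a chord of the whole arc is not the union of the chords of its sub-arcs), so the tiles $W_R$ and $W_{R'}$ overlap or leave gaps. Your later polygonal refinement does not repair this, because its breakpoints are chosen to meet the $\varepsilon$-Hausdorff target, not to respect the adjacency structure. The paper's proof handles exactly this point by forcing the Lipschitz approximants to take the prescribed values at the marked points ${\mathscr E}^\flat(R)$ (the endpoints of the sub-arcs cut out by the neighbors) and by building the two sides of a shared edge from the \emph{same} one-sided approximant; some such anchoring is indispensable in your construction as well.

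The second gap is the claim that $N=N(\varepsilon,p,C_0)$ polygonal pieces suffice. A chord over a sub-arc of $\horz(R)$ of parameter-length $\delta$ lies within the Hausdorff target only if the edge function oscillates little over intervals of length $\delta$, uniformly over all $R$; this is precisely the uniform equicontinuity of the normalized edge functions $u_R$, $u^\flat_R$, which is the real analytic content of the paper's proof (it uses the uniform continuity of $h_\Gamma$ on $\mathrm{cl}(\tilde A_\Gamma)$ together with the uniform lower bound \eqref{bounded length} on the rescaling factors). Lemma~\ref{upper boundary diam} and Lemma~\ref{whitney preserving}, which you cite, control the \emph{diameters} of the edges but say nothing about their modulus of continuity as graphs, so the uniform bound on $N$ does not follow from them. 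Relatedly, without first knowing that the replacement edge stays uniformly close to the original one, the single-chord trapezoid you start from may be degenerate (the two chords can approach or cross each other, since the edges are only separated ray-by-ray and may each oscillate by an amount comparable to that separation), so the bilipschitz constant in (2d) also hinges on the equicontinuity step. Once you (i) prove uniform equicontinuity of the normalized edge functions and (ii) anchor your polygonal vertices at the adjacency breakpoints shared with the neighbors, your argument becomes essentially the paper's.
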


The proof is based on a lemma on normalized families. 

\subsection{Normalized family}

\newcommand{\nbh}{\mathscr N}
\newcommand{\cN}{\mathcal N}

We define first the \emph{$\sQ_\Gamma^\refi$-neighborhood $\nbh^\refi(R)\subset \R^2$ of $R\in \sQ_\Gamma^\refi$} by
\[
\nbh^\refi(R)=\bigcup_{\substack{R'\cap R \neq \emptyset\\ R'\in \sQ^\refi_\Gamma}} R'. 
\]

Recall now that we have tacitly fixed in Section \ref{vitonen} a conformal map $\varphi_\Gamma \colon \D\setminus \{0\} \to \Omega_\Gamma$. By fixing a conformal map $\D \to \bH$, where $\bH=\R\times (0,\infty)$ is the upper half-plane model of the hyperbolic space, we obtain a conformal map $\phi \colon A_\Gamma \to \bH$, which maps hyperbolic rays $\Gamma(z)\cap A_\Gamma$ for $z\in \Gamma$ to vertical segments in $\bH$. We denote, for each $R\in \sQ_\Gamma^\refi$, $\phi_R^1 = \phi|_{\nbh^\refi(R)} \colon \nbh^\refi(R) \to \bH$ the restriction of $\phi$ to the $\sQ_\Gamma^\refi$-neighborhood of $R$. By Lemma \ref{lemma:biLip}, each $\phi_R^1$ is $L$-bilipschitz with an absolute constant $L$. Note that the image of $\phi_R^1$ is a finite union of topological rectangles whose vertical edges are vertical lines in $\bH$. 

For each $R\in \sQ_\Gamma^\refi$ and each $R'\in \sQ_\Gamma^\refi$ contained in $\nbh^\refi(R)$, let $x_{\phi_R(R')} \in \C$ be the Euclidean barycenter of $\phi_R^1(R')$; a point $x\in \C$ is a Euclidean bacycenter of a bounded set $A\subset \C$ if $x$ is the center of the minimal closed disk containing $A$.

To define normalized copies of topological rectangles in $\tilde \sQ_\Gamma^\refi$ in $\bH$, let first $C_\Box=C_\Box(p,C_0)$ be the comparability constant in Corollary \ref{cor:size of Qi}, that is, a constant satisfying
\[
\frac{1}{C_\Box} \le 
\frac{\ell(\Gamma^\perp(z)\cap h_{\Gamma}(\tilde Q))}{\dist(R,\,\Gamma)^{p-1}\dist(\tilde Q,\,\Gamma)^{2-p}} \le C_\Box
\]
for all $R\in \sQ_\Gamma^\refi$ and $\tilde Q\in \tilde \sQ_\Gamma$ satisfying $R\subset h_\Gamma(\tilde Q)$.

\begin{defn}
For each $R\in \sQ_\Gamma^\refi$, the \emph{$C_\Box$-normalized copy of $R$} is the topological rectangle
\[
R^\Box = (\phi_R^2 \circ \phi_R^1)(R) \subset \C,
\]
where $\phi_R^2 \colon \R^2 \to \R^2$ is the linear map
\[
(x,y) \mapsto  C_\Box \left( \frac{x-x_R}{\dist(R, \Gamma)}, \left(\frac{\dist(R,\Gamma)}{\dist(\tilde R, \Gamma)}\right)^{1-p} \frac{y-y_R}{\dist(\tilde R, \Gamma)}\right).
\]
where $R=\iota_\Gamma^\refi(\tilde R)$. The map $\phi_R = \phi_R^2 \circ \phi_R^1 \colon \nbh^\refi(R) \to \C$ is called a \emph{linear normalization map of $R$}.
\end{defn}

We denote by $\cN^\refi_\Gamma$ the collection of all normalized copies of topological rectangles in $\sQ^\refi_\Gamma$. These rectangles are `normalï¿½ in the following sense.

\begin{lem}
Let $\Gamma \in \Jordan(p,C_0)$. Then there exists a constant $c=c(p,C_0)>0$ having the property that, for each $R\in \sQ_\Gamma^\refi$, the $C_\Box$-normalized copy $R^\Box$ of $R$ satisfies
\begin{equation}
\label{eq:normalized-diam}
\frac{1}{c} \le \diam(R^\Box) \le c
\end{equation}
and, for each horizontal line $Z \subset \R^2$ intersecting $R^\Box$,
\begin{equation}
\label{normalized1}
\frac{1}{c} \le \ell(Z\cap R^\Box) \le c.
\end{equation}
\end{lem}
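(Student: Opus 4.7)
PLAN:

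The plan is a direct computation, putting together the metric data of $R \in \sQ_\Gamma^\refi$ from the preceding sections with the prescribed form of the maps $\phi_R^1$ and $\phi_R^2$. First, I would recall the extent of $R$ in its two natural directions: by Lemma~\ref{upper boundary diam} combined with Lemma~\ref{lemma:Qi1}, the horizontal edges $R \cap \horz(Q)$ and $R \cap \horz^\flat(Q)$ (where $Q$ is the parent of $R$ in $\sQ_\Gamma$) have diameter comparable to $\dist(R, \Gamma)$, and by Corollary~\ref{cor:size of Qi}, each vertical edge $\Gamma^\perp(z) \cap R$ for $z \in S_\Gamma(R)$ has length comparable to $\dist(R, \Gamma)^{p-1} \dist(\tilde R, \Gamma)^{2-p}$, upon noting that $\dist(\tilde R, \Gamma) \sim \dist(\tilde Q, \Gamma)$ for the grandparent $\tilde Q$ (since $\tilde R$ spans the full height of $\tilde Q$).

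Next, I would transfer these estimates to $\bH$ via $\phi_R^1$. By Lemma~\ref{lemma:biLip} applied to the $\sQ_\Gamma^\refi$-neighborhood $\nbh^\refi(R)$, the restriction $\phi_R^1$ is $L$-bilipschitz up to a dilation factor $\mu_R$, with $L = L(p, C_0)$ depending only on the Whitney data. Since $\phi$ sends hyperbolic rays to vertical segments in $\bH$, the image $\phi_R^1(R)$ is a topological rectangle with two Euclidean-vertical sides, horizontally separated by $\sim \mu_R \dist(R, \Gamma)$ and each of length $\sim \mu_R \dist(R, \Gamma)^{p-1} \dist(\tilde R, \Gamma)^{2-p}$.

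The linear map $\phi_R^2$ is then the anisotropic scaling that cancels both rates precisely: its $x$-scaling $C_\Box / \dist(R, \Gamma)$ and $y$-scaling $C_\Box \dist(R, \Gamma)^{1-p} \dist(\tilde R, \Gamma)^{p-2}$ turn both extents of $\phi_R^1(R)$ into quantities of order $\mu_R C_\Box$. Because $\phi_R^2$ preserves vertical directions, the two vertical sides of $R^\Box$ remain Euclidean-vertical. This gives $\diam(R^\Box) \sim \mu_R C_\Box \sim 1$ and establishes \eqref{eq:normalized-diam}. For \eqref{normalized1}, since $R^\Box$ is a topological rectangle whose left and right sides are vertical Euclidean segments at a horizontal separation $\sim 1$, any horizontal line meeting $R^\Box$ crosses it from one vertical side to the other; the length of the crossing equals this horizontal separation and is therefore $\sim 1$.

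The main technical point, implicit above, is verifying uniform two-sided bounds on the conformal dilation $\mu_R$ of $\phi_R^1$ and checking that the two non-vertical boundary arcs of $R^\Box$ span a common range of $y$-coordinates across both vertical sides. The first is handled by Koebe distortion on the Whitney-type neighborhood $\nbh^\refi(R)$ together with the normalization of $\phi$ on the collar $A_\Gamma$. The second follows from the balanced property of $\sQ_\Gamma^\refi$ (Proposition~\ref{prop:existence_of_balanced_shadow_refinement}) together with Corollary~\ref{cor:size of Qi}, which together guarantee that the two vertical edges $\Gamma^\perp(z_1) \cap R$ and $\Gamma^\perp(z_2) \cap R$ at the endpoints of $S_\Gamma(R)$ have upper and lower endpoints at uniformly comparable hyperbolic depths, so that a full-width horizontal crossing is available.
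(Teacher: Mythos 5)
Your computation is the same as the paper's: the two principal extents of $R$ --- $\diam(R\cap\horz(Q))\sim\diam(S_\Gamma(R))\sim\dist(R,\Gamma)$ from Lemmas~\ref{upper boundary diam} and~\ref{lemma:Qi1}, and $\ell(\Gamma^\perp(z)\cap R)\sim\dist(R,\Gamma)^{p-1}\dist(\tilde R,\Gamma)^{2-p}$ from Corollary~\ref{cor:size of Qi} together with $\dist(\tilde R,\Gamma)\sim\dist(\tilde Q,\Gamma)$ --- are pushed through $\phi_R^1$ and then exactly cancelled by the two diagonal entries of $\phi_R^2$, and \eqref{eq:normalized-diam} follows from the two coordinate projections exactly as in the paper.

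Two remarks. First, for \eqref{normalized1} the paper identifies $Z\cap R^\Box$ with $\phi_R^2(\phi_R^1(R\cap\Gamma^\perp(z)))$ for a single $z\in S_\Gamma(R)$, i.e.\ the cross-sections are taken along the images of the hyperbolic rays, so the estimate is immediate from Corollary~\ref{cor:size of Qi} and the choice of $C_\Box$, with no further geometry needed. You instead read the cross-sections as transverse to the rays (a crossing between the two ray-image sides), which is a strictly stronger claim: its lower bound requires the two curved edges of $R^\Box$ to stay vertically separated by $\gs 1$ across the whole width, and in any case fails for a line grazing an extremal point of one of those edges. You correctly flag this overlap condition as the technical point, but you only sketch it; under the paper's reading of the cross-sections it is not needed at all. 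Second, your suggestion that the dilation factor $\mu_R$ of $\phi_R^1$ is uniformly comparable to $1$ by Koebe distortion and the normalization of $\phi$ is not right as stated: on a Whitney-type set the derivative of $\phi$ is comparable to $\dist(\phi_R^1(R),\partial\bH)/\dist(R,\Gamma)$, and the numerator carries the (unbounded) boundary distortion of $\varphi_\Gamma^{-1}$. The intended resolution --- implicit in the paper's assertion that each $\phi_R^1$ is $L$-bilipschitz with an absolute constant --- is to rescale each $\phi_R^1$ separately so that its dilation is $1$; this is harmless since $\phi_R^1$ is only used as a local chart on $\nbh^\refi(R)$, and with that convention your argument for \eqref{eq:normalized-diam} is complete.
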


\begin{proof}
Let $\tilde Q\in \tilde \sQ_\Gamma$ be the rectangle satisfying $R \subset h_\Gamma(\tilde Q)$. Since $\tilde Q$ is a Whitney square, we have that $\dist(\tilde Q, \Gamma) \sim \dist(\tilde R, \Gamma)$.

Let $Z$ be a horizontal line intersecting $R^\Box$ and $z \in \Gamma$ for which $R^\Box \cap Z = \phi_R^2(\phi_R^1(R \cap \Gamma^\perp(z)))$. Then, by the choice of $C_\Box$, we have that
\[
\ell(Z\cap R^\Box)\sim \frac{\ell(R\cap \Gamma^\perp(z))}{\dist(R,\Gamma)^{p-1} \dist(\tilde R, \Gamma)^{2-p}} \sim 1.
\]
This proves \eqref{normalized1}.

To estimate the diameter of $R^\Box$, let $\pr_j \colon \R^2 \to \R^2$ be the coordinate projection $(x_1,x_2) \mapsto x_j$. By the previous remark, we have that
\[
\diam \pr_2(R^\Box) \sim 1.
\]
Towards the second estimate, we note that $\diam \pr_1(R^\Box) = \diam (\phi_R^2(\phi_R^1(R\cap \horz(Q))$, where $Q\in \sQ_\Gamma$ is the parent of $R$. Thus, by Lemmas \ref{upper boundary diam} and \ref{lemma:Qi1}, we have that
\begin{align*}
\diam \pr_1(R^\Box)) &\sim \frac{\diam \phi_R^1(R\cap \horz(Q))}{\dist(R,\Gamma)} \\
&\sim \frac{\diam R\cap \horz(Q)}{\dist(R,\Gamma)} 
\sim \frac{\diam(S_\Gamma(R))}{\dist(R,\Gamma)} \sim  1.
\end{align*}
The claim follows by combining these projection estimates.
\end{proof}

Lemma \ref{lemma:good partition} and Corollary \ref{cor:finite_shadow_neighbors} yield the following corollary for normalized rectangles.

\begin{cor}
\label{cor:last_battle}
Let $\Gamma \in \Jordan(p,C_0)$. Then there exist constants $c_0=c_0(p,C_0)$ and $\epsilon_0=\epsilon_0(p,C_0)$ such that, for every $R\in \sQ^\refi_\Gamma$ and $R'\in\sQ^\refi_\Gamma$ satisfying $R'\cap R\neq \emptyset$, we have that
\begin{enumerate}
\item $\dist(x_{\phi_R(R')},\,\partial({\phi_R(R')}) )\ge c_0$, \label{item:blahblah1}
\item for any $0<\epsilon<\epsilon_0$, the $\epsilon$-neighborhood 
\[
N_{\epsilon}(R^\Box):=\left\{ z\in {\mathbb C} \colon \dist(z,\,R^\Box)\le \epsilon \right\}
\]
of $R^\Box$ intersects at most $C=C(p,\,C_{GM})$ sets $\phi_R(R')$, and
\begin{equation}\label{not a lot}
\dist\left(N_{\ez}(R^\Box),\,x_{\phi_R(R')}\right)\ge \frac {9} {10} c_0. 
\end{equation}
\end{enumerate}
\end{cor}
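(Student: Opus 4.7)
The plan is to exploit the normalizing properties of the maps $\phi_R$ together with the uniform bilipschitz behaviour of conformal maps on Whitney-type regions. As a preliminary step, I would observe that for any neighbour $R'\in\sQ_\Gamma^\refi$ of $R$, Lemma~\ref{lemma:good partition} combined with Lemmas~\ref{lemma:Qi1} and~\ref{lemma:Qi2} yields the chain of comparabilities $\diam(R')\sim\dist(R',\Gamma)\sim\dist(R,\Gamma)\sim\diam(R)$. Similarly, the grandparents $\tilde R,\tilde R'\in\tilde\sQ_\Gamma$ are either equal or neighbours, hence $\dist(\tilde R',\Gamma)\sim\dist(\tilde R,\Gamma)$. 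Corollary~\ref{cor:size of Qi} then gives $\ell(\Gamma^\perp(z)\cap R')\sim\dist(R,\Gamma)^{p-1}\dist(\tilde R,\Gamma)^{2-p}$ for every $z\in S_\Gamma(R')$, with implicit constants depending only on $p$ and $C_0$.

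With these comparabilities, I would argue that $\phi_R(R')$ is uniformly bilipschitz to a Euclidean rectangle whose sides are of order $1$. The conformal map $\phi_R^1=\phi|_{\nbh^\refi(R)}$ is, on a Whitney-type enlargement of $R\cup R'$ obtained from Lemma~\ref{lemma:R_and_ball}, bilipschitz up to the scale $\dist(R,\Gamma)$ by Lemma~\ref{lemma:biLip}. The subsequent linear map $\phi_R^2$ is precisely the diagonal rescaling that converts the horizontal extent $\sim\dist(R,\Gamma)$ and the vertical extent $\sim\dist(R,\Gamma)^{p-1}\dist(\tilde R,\Gamma)^{2-p}$ into quantities of order $1$. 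Thus $\phi_R(R')$ is a Jordan topological rectangle uniformly bilipschitz to the unit square. In particular, its Chebyshev centre $x_{\phi_R(R')}$ lies in its interior at uniformly bounded distance from its boundary, which is (1) for some $c_0=c_0(p,C_0)>0$.

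For (2), the multiplicity bound is immediate from Corollary~\ref{cor:finite_shadow_neighbors}: at most $C_\#(p,C_0)$ rectangles $R'\in\sQ_\Gamma^\refi$ meet $R$, so at most $C_\#$ of the images $\phi_R(R')$ can intersect $N_\epsilon(R^\Box)$. For the distance estimate, fix $R'\ne R$ with $R'\cap R\ne\emptyset$. Since $R$ and $R'$ share only a portion of their boundary in $\sQ_\Gamma^\refi$, the set $R^\Box=\phi_R(R)$ is disjoint from the interior of $\phi_R(R')$. By (1), the open ball $B(x_{\phi_R(R')},c_0)$ is contained in that interior, forcing $\dist(x_{\phi_R(R')},R^\Box)\ge\dist(x_{\phi_R(R')},\partial\phi_R(R'))\ge c_0$. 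Choosing $\epsilon_0=c_0/10$, any $\epsilon<\epsilon_0$ then yields $\dist(N_\epsilon(R^\Box),x_{\phi_R(R')})\ge c_0-\epsilon\ge\tfrac{9}{10}c_0$.

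The principal obstacle is the uniform bilipschitz equivalence of $\phi_R(R')$ with a Euclidean rectangle, as the rectangles in $\sQ_\Gamma^\refi$ are themselves not Whitney-type: one needs to extract a Whitney-type superset of $R\cup R'$ inside $A_\Gamma$ using Lemma~\ref{lemma:R_and_ball} and the comparabilities above, apply Lemma~\ref{lemma:biLip} there, and then verify that the two horizontal boundary arcs of $\phi_R(R')$ (images of subarcs of $\horz$ and $\horz^\flat$) are uniformly graphlike with corner angles bounded away from $0$ and $\pi$. This regularity is a consequence of the balanced property of $\sQ_\Gamma^\refi$ together with the smoothness of $\phi$, but it is the step where the otherwise routine-looking estimate genuinely requires attention.
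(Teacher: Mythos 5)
Your argument follows the paper's proof essentially verbatim: both deduce from Lemma \ref{lemma:good partition} (via Remark \ref{rmk:good partition}) that intersecting rectangles have comparable size, so that $\phi_R(R')$ inherits the unit-scale normalization of $R^\Box$, giving (1); and both obtain (2) from the neighbour count in Corollary \ref{cor:finite_shadow_neighbors} together with a choice of $\epsilon_0$ small compared to $c_0$. Your write-up is in fact more explicit than the paper's two-line proof — you spell out why \eqref{not a lot} follows from (1) and the disjointness of $R^\Box$ from $\mathrm{int}(\phi_R(R'))$ — and the one point you assert without argument (that the circumcenter $x_{\phi_R(R')}$ lies in the interior of $\phi_R(R')$, not merely far from $\partial\phi_R(R')$) is glossed over in the paper's proof as well.
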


\begin{proof}
By Remark \ref{rmk:good partition},
we have that intersecting rectangles have comparable diameter. Therefore $\phi_R(R')$ has size comparable to $R^\Box$. Thus $\phi_R(R')$ satisfies \eqref{normalized1} with another constant depending only on $p$ and $C_0$. Thus there exists $c_0=c_0(p,C_0)$ for which \eqref{item:blahblah1} holds.

Moreover, by choosing $\epsilon_0= \epsilon_0(p,\,C_{GM})$ small enough we have, by Corollary~\ref{cor:finite_shadow_neighbors}, that there are at most $C=C(p,\,C_0)$ sets  $\phi_R(R')$, $R'\in \sQ^\refi_\Gamma$ intersecting $N_{\ez}(R^\Box)$ with \eqref{not a lot}. This gives us the choice of $\epsilon_0$. 
\end{proof}

\subsection{Proof of Proposition \ref{prop:inside lip}}

For each $R\in \sQ^\refi_\Gamma$, let $\horz(R^\Box)$ and $\horz^\flat(R^\Box)$ be the upper and lower horizontal edge of $R^\Box$ respectively, with the convention that horizontal lines from the real axis meet the upper edge first. More formally, let 
\[
\horz(R^\Box) = \phi_R(R\cap \horz(Q))
\]
and 
\[
\horz^\flat(R^\Box) = \phi_R(R\cap \horz^\flat(Q)),
\]
where $Q\in \sQ_\Gamma$ is the parent of $R$. Let also $\horz(R) = R\cap \horz(Q)$ and $\horz^\flat(R) = R\cap \horz^\flat(Q))$.  

Let now $R\in \sQ_\Gamma^\refi$, $Q\in \sQ_\Gamma$ be the parent of $R$, and $\tilde Q\in \tilde \sQ_\Gamma$ be the parent of $R$ in $\tilde \Omega_\Gamma$, that is, $R\subset Q = h_\Gamma(\tilde Q)$. Let also $\tilde R\in \tilde \sQ_\Gamma^\refi$ be the rectangle in $\tilde Q$ corresponding to $R$, that is, $\iota_\Gamma^\refi(\tilde R) = R$.

Recall that there exists $L=L(p,C_0)$ and a bilipschitz map $\mu_{\tilde Q}:\tilde Q\to [0,\diam(\tilde Q)]^2$ preserving corners. Thus, by post-composing this map with a similarity, we may fix a bilipschitz map $\phi_{\tilde R} \colon \tilde Q\to [0,1]^2$, with constant $L \diam(\tilde Q)$, which maps corners of $\tilde Q$ to the corners of $[0,1]^2$.
Define $I_R=\phi_{\tilde R}(\horz(\tilde R))$ and $I^\flat_R=\phi_{\tilde R}(\horz^\flat(\tilde R))$ for $\tilde R\subset \tilde Q$. Note that  
\begin{equation}\label{bounded length}
\ell(\phi_{\tilde R}(\horz(\tilde R)))\sim \ell(\phi_{\tilde R}(\horz^\flat(\tilde R)))\sim \dist(\tilde R,\,\Gamma)^{\frac {p-r}{r-1}}\dist(R,\,\Gamma)^{\frac {r-p}{r-1}}\ls 1,
\end{equation}
where constants depend only on $p$, $C_0$, and $r$.

In these terms, the arcs $\horz(R^\Box)$ and $\horz^\flat(R^\Box)$ are graphs of continuous functions $u_R = \phi_R\circ h_\Gamma\circ \phi^{-1}_{\tilde R}|_{I_R} \colon I_R \to \R$ and $u^\flat_R = \phi_R\circ h_\Gamma\circ \phi^{-1}_{\tilde R}|_{I^\flat_R} \colon I^\flat_R \to \R$, respectively.

We claim that the family $\{u^\flat_R\}_R$ is uniformly equicontinuous. Here we say that a collection $\mathcal{F}$ of functions (or maps) is \emph{uniformly equicontinuous} if for each $\varepsilon>0$ there exists $\delta>0$ having the property that, for each $f\in \mathcal{F}$ and each pair of points $x$ and $y$ in the domain of $f$ of distance at most $\delta$, we have $|f(x)-f(y)|<\varepsilon$.

Since $h_\Gamma \colon \mathrm{cl}(\tilde A_\Gamma) \to \mathrm{cl}(A_\Gamma)$ is uniformly continuous and maps the horizontal edges of (topological) rectangles in $\tilde \sQ_\Gamma^\refi$ to those of (topological) rectangles in  $\sQ_\Gamma^\refi,$ the lower horizontal edges $\horz^\flat(R)$ are images of equicontinuous arcs $\horz^\flat(\tilde R) \to \horz^\flat(R)$, where $\tilde R$ and $R$ satisfy $\iota_\Gamma^\refi(\tilde R) = R$. Since the maps $\phi_R^1$ are bilipschitz with a uniform constant, this property holds also for the horizontal edges $\phi_R^1(\horz^\flat(R))$. Recall now that 
\[
u^\flat_R=\phi_R\circ h_\Gamma\circ \phi^{-1}_{\tilde R}|_{I^\flat_R}.
\]
Since both $\phi^{-1}_{\tilde R}$ and $\phi_R^2$ are linear, 
the graph of $u^\flat_R$ is a dilated image of $\phi_R^1(\horz^\flat(R))$ with the dilation factor comparable to 
\[
\dist(\tilde R,\,\Gamma)^{\frac {r-p}{r-1}}\dist(R,\,\Gamma)^{\frac {p-r}{r-1}},
\]
which is uniformly bounded away from zero  by \eqref{bounded length}. This proves the uniform equicontinuity of the family $\{u^\flat_R\}_R$. An analogous argument shows that $\{u_R\}_R$ is uniformly equicontinuous. This completes the proof of the claim.

For each $R\in \sQ_\Gamma^\refi$, we denote by ${\mathscr E}^\flat(R)$ the collection of end points of arcs in the collection
\[
\{ \sigma \in \Sigma^\flat(Q') \colon \sigma \subset \horz^\flat(R)\},
\]
 where $Q'\in \sQ_\Gamma$ is the unique element satisfying $Q=(Q')^{+}$ or $Q=(Q')^{-}$ for $Q\supset R$. 
By Corollary~\ref{cor:finite_shadow_neighbors}, the number of points in ${\mathscr E}(R)$ has an upper bound depending only on $p$ and $C_0$.

Let $\epsilon_0=\epsilon_0(p,C_0)$ as in Corollary \ref{cor:last_battle} and let $0< \epsilon <\epsilon_0$. Since the family $\{u_R\}_R$ is equicontinuous and Lipschitz functions are dense in the space of continuous functions, by the uniform  finiteness of the number of points in ${\mathscr E}^\flat(R)$, there exists a constant $L_\epsilon\ge 1$ and, for each $R\in \sQ_\Gamma^\refi$, $L_\epsilon$-Lipschitz functions $v^\flat_R\colon I^\flat_R \to \R$ satisfying the following properties: 
\begin{enumerate}
\item $\phi_R(a)=u^\flat_R(\phi_{\tilde R}^{-1}\circ h^{-1}_\Gamma(a))=v^\flat_R(\phi_{\tilde R}^{-1}\circ h^{-1}_\Gamma(a))$, \ \text{for any $a\in {\mathscr E}^\flat(R)$},
\item $||u^\flat_R - v^\flat_R||_\infty < \epsilon$; 
\end{enumerate}
recall that $u_R=\phi_R\circ h\circ \phi^{-1}_{\tilde R}|_{I^\flat_R}$. 
Observe  that, functions $u_R$ and $u_{R'}^\flat$ agree on $I_R\cap I_{R'}$ if $R\cap R'\ne \emptyset$.
Then we can apply a similar argument to all $R', R' \cap R\neq \emptyset$ under the same map $ \phi_R\circ h_\Gamma\circ \phi^{-1}_{\tilde R}$ to obtain $v^\flat_{R'\cap R}$.

Now, for each $R\in \sQ_\Gamma^\refi$, the graph of the function $v^\flat_R$ and the graphs of  $v^\flat_{R'\cap R}, R' \cap R\neq \emptyset$ together with vertical edges of $R^\Box$ enclose a topological rectangle $V^\Box_R$. Since $\diam R^\Box \sim 1$, we conclude that $V^\Box_R$ is uniformly bilipschitz to the unit disk $\D$. Let $V_R = \phi_R^{-1}(V^\Box_R)$. Note that, by $\phi_R^1$, $V_R$ is bilipschitz equivalent to $\Rect(R)$, quantitatively.

To obtain the partition $\sW_\Gamma^\refi$, we obtain the collection $\{V_R\}_R$ of bilipschitz rectangles as follows. Let $R\in \sQ_\Gamma^\refi$. If $R$ meets the boundary of $A_\Gamma$, we set $W_R = V_R$. 

Suppose now that $R$ does not meet $\partial A_\Gamma.$ Then its lower horizontal edge $\horz^\flat(R)$ meets rectangles $R'\in \sQ_\Gamma^\refi$ for which we have already defined the rectangle $W_{R'}$ using $V_{R'}$. Let $\mathcal U_R$ be this (finite) family of rectangles $R'\in \sQ_\Gamma^\refi$. 

Recall that the functions $u_R$ and $u_{R'}^\flat$ agree on $I_R\cap I_{R'}$ if $R\cap R'\ne \emptyset$. Now, for $R'\in \mathcal U_R$, the union $V_R \cup V_{R'}$ contains a neighborhood of $\mathrm{int}(R\cap R')$, where the interior is taken with respect to $\horz^\flat(R)$. We may now take $W_R = V_R$ since in the construction of $v_R$ we fixed all the vertices of $R'$ intersecting $\horz(R)$ whenever $R\cap R'\neq \emptyset$. Note that the rectangle $V_R$ is still bilipschitz equivalent to $\Rect(R)$, quantitatively. 

The collection $\sW_\Gamma^\refi = \{ W_R \colon R\in \sQ_\Gamma^\refi\}$ is now a partition of the closure of a domain $\hat A_\Gamma$, which is also a collar of $\Gamma$ in $\Omega_\Gamma$. 
 
The conditions of $\sW_\Gamma^\refi$ apart from \eqref{item:need-to-mention} are clearly satisfied. The last condition \eqref{item:need-to-mention} is satisfied, if in the beginning of the construction we divide $\epsilon$ by a constant depending on $p$ and $C_0$. Indeed, since $\dist_H(V_R^\Box, R^\Box)<\varepsilon$, the estimate follows, up to a multiplicative constant, from the facts that $\phi_R^1$ is Lipschitz with an absolute constant and the inverse of $\phi_R^2$ scales with a factor smaller than $\dist(\tilde R,\Gamma)$ in the second direction; notice that $\dist(R,\Gamma)\gs \dist(\tilde R, \Gamma)$.

This concludes the proof of Proposition \ref{prop:inside lip}. \qed


\section{Proof of Theorem \ref{weak main thm}}
\label{sec:proof-end-game}

The proof of Theorem \ref{weak main thm} is in the spirit of the proofs of theorems \cite[Theorem A]{T1980} and \cite[Theorem 3.2]{TV1982}. In our case, heuristically, the bijection $\eta^\refi_\Gamma \colon \tilde \sQ^\refi_\Gamma \to \sW^\refi_\Gamma$ gives the large scale, or combinatorial, blueprint for the reflection to be constructed. This large scale information is combined with the local bilipschitz information on topological rectangles $\tilde R$
 and $W_{\tilde R} = \eta^\refi_\Gamma(\tilde R)$ for $R\in \sQ_\Gamma^\refi$.

\newcommand{\sN}{\mathsf{N}}

Before heading to the final proof, let us see the heuristic geometric idea behind it. 
Recall that, by Corollary~\ref{cor:wz Qi}, each $\tilde R\in \tilde \sQ_{\Gamma}^\refi$ is bilipschitz equivalent to 
\[
\Rect_\Gamma(\tilde R) =[0,\,\dist(\tilde R,\,\Gamma)^{\frac {p-1}{r-1}}\dist(R,\,\Gamma)^{\frac {r-p}{r-1}}]\times [0,\,\dist(\tilde R,\,\Gamma)].
\]
Moreover, by Proposition \ref{prop:inside lip}, 
the corresponding
 $W_{\tilde R}$ is bilipschitz equivalent to 
\[
\Rect_\Gamma(R)= [0,\,\dist(R,\,\Gamma)]  \times [0,\, \dist(R,\,\Gamma)^{p-1}  \dist(\tilde R,\,\Gamma)^{2-p}],
\]
respectively, where $R =\iota_{\Gamma}^{\refi}(\tilde R) \in \mathscr {\tilde Q_{\Gamma}^{\refi}}$.

Now the linear map $A_{\tilde R}\colon  \Rect_\Gamma(\tilde R) \to \Rect_\Gamma(\iota_\Gamma^\refi(\tilde R))$, which maps the horizontal edges of $\Rect_\Gamma(\tilde R)$ to the horizontal edges of $\Rect_\Gamma(\tilde R)$, satisfies
\[
||A_{\tilde R}||^r\ls J_{f_{\tilde \Omega}}, 
\]
since $\dist(R,\Gamma) \gs \dist(\tilde R, \Gamma)$.

To obtain the map $\tilde \Omega_\Gamma \to \Omega_\Gamma$, we first construct, 
using this information, a $p$-reflection $\tilde A_\Gamma \to \hat A_\Gamma$ and then extend it over the remaining domains.

\subsection{A version of Tukia's lemma}

\newcommand{\norm}[1]{\lVert #1 \lVert}

We use the following simple version of Tukia's lemma \cite[Theorem A]{T1980}. For the statement, we define, for $a\ge 1$ and $1<p<2$, the modified sup-norm $\norm{\cdot}_{a,p}\colon \R^2 \to [0,\infty)$ by setting
\[
\norm{(x_1,x_2)}_{a,p} = \max\{ a |x_1|, a^{p-1}|x_2|\}
\]
for $(x_1,x_2)\in \R^2$.

\begin{lem}\label{Tukia}
Let $S$  be the unit square $[0,\,1]\times [0,\,1]$, and $R$ be the rectangle $[0,\,a]\times [0,\,a^{p-1}]$ for some $a\ge 1$ and $1<p<2$. Given a constant $L\ge 1$, consider an orientation preserving $L$-bilipschitz homeomorphism $f\colon (\partial S, \norm{\cdot}_{a,p}) \to (\partial R, |\cdot|)$, which maps the edges of $S$ to those of $R$. Then there exists a homeomorphism $F\colon S\to R$ extending $f$ so that 
\begin{align}\label{F}
|DF|^p\le C(L,\,p)J_F
\end{align}
almost everywhere. 
\end{lem}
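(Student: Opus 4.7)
The plan is to reduce Lemma~\ref{Tukia} to the standard bilipschitz extension theorem by pulling back via the natural linear isomorphism between $S$ and $R$. Consider the linear map $A\colon \R^2 \to \R^2$ defined by $A(x_1,x_2)=(a x_1,\, a^{p-1}x_2)$, which maps $S$ bijectively onto $R$, sending corners to corners and edges to edges. A direct computation shows that, for all $(x_1,x_2)\in \R^2$,
\[
\|(x_1,x_2)\|_{a,p} \le |A(x_1,x_2)| \le \sqrt 2\, \|(x_1,x_2)\|_{a,p},
\]
so $A\colon (S,\|\cdot\|_{a,p})\to (R,|\cdot|)$ is bilipschitz with an absolute constant.

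First I would consider the boundary map $g := f\circ A^{-1}|_{\partial R}\colon \partial R \to \partial R$. By the above bilipschitz property of $A$ together with the $L$-bilipschitz property of $f$, the map $g$ is $L'$-bilipschitz in the Euclidean metric for some $L'=L'(L)$, is orientation preserving, and maps edges of $R$ to edges of $R$. Second, since $\partial R$ is a quasicircle with absolute three-point constant (independently of the aspect ratio), I would apply the bilipschitz extension theorem of \cite{TV1982} to obtain a bilipschitz homeomorphism $\tilde g\colon R\to R$ extending $g$ with bilipschitz constant $L''=L''(L)$. Note that this step also admits a completely elementary proof for a Euclidean rectangle by triangulating $R$ through its center and interpolating radially from the center inside each corner triangle.

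Finally, I would set $F := \tilde g \circ A\colon S \to R$, which satisfies $F|_{\partial S} = f$ by construction. For the distortion estimate, note that $DA=\mathrm{diag}(a,a^{p-1})$, so $J_A = a^p$ and $|DA|_{\mathrm{op}} = \max\{a,a^{p-1}\} = a$ by $a\ge 1$ and $1<p<2$; in particular $|DA|^p = J_A$. Since $\tilde g$ is $L''$-bilipschitz, $|D\tilde g|\le L''$ and $J_{\tilde g}\ge (L'')^{-2}$ almost everywhere, hence $|D\tilde g|^p \le (L'')^{p+2}\, J_{\tilde g}$ a.e. Combining this with the chain rule and $J_F = J_{\tilde g}(A(\cdot))\cdot J_A$ yields, almost everywhere in $S$,
\[
|DF|^p \le |D\tilde g(A(\cdot))|^p\,|DA|^p \le C(L,p)\, J_{\tilde g}(A(\cdot))\, J_A = C(L,p)\, J_F,
\]
as claimed.

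The main (and essentially only non-trivial) step is the bilipschitz extension of $g$ to $R$: it is crucial that both the orientation-preserving and edge-preserving hypotheses on $f$ transfer to $g$, and that the constant of the extension depends only on $L$ (via $L'$), since otherwise the final constant $C(L,p)$ in \eqref{F} could not be controlled. All remaining pieces then reduce to bookkeeping via the chain rule and the identity $|DA|^p = J_A$, which is the algebraic heart of why the exponent $p$ appears naturally on the boundary norm.
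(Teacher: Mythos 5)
Your reduction via the linear map $A(x_1,x_2)=(ax_1,a^{p-1}x_2)$ is fine as far as it goes: $A$ is uniformly bilipschitz from $\norm{\cdot}_{a,p}$ to the Euclidean metric, $g=f\circ A^{-1}$ is $L'(L)$-bilipschitz, orientation preserving and edge preserving on $\partial R$, and the concluding chain-rule computation (using $|DA|^p=J_A=a^p$) is correct \emph{provided} $\tilde g$ is bilipschitz with a constant depending only on $L$. The gap is exactly there. The rectangle $R=[0,a]\times[0,a^{p-1}]$ has aspect ratio $a^{2-p}$, unbounded as $a\to\infty$, and both of your justifications for the uniform extension fail for long thin rectangles. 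First, $\partial R$ does \emph{not} satisfy the three-point condition with an absolute constant: for $[0,N]\times[0,1]$ take $z_1=(N/2,0)$, $z_2=(N/2,1)$ and $z_3=(0,1/2)$ on one of the two (equal-diameter) boundary arcs between them; then $|z_1-z_3|+|z_2-z_3|\sim N$ while $|z_1-z_2|=1$, so the quasicircle constant is at least of order $N$. Hence the extension constant obtained from \cite{T1980} or \cite{TV1982} depends on the aspect ratio, i.e.\ on $a$, and the final constant in \eqref{F} would not be of the form $C(L,p)$. (Equivalently, $\partial R$ is not a uniformly bilipschitz image of a round circle, so Tukia's planar theorem cannot be invoked with a constant depending only on $L'$.) Second, the ``elementary'' radial interpolation from the center also fails: for $R=[-N,N]\times[-1,1]$ the point $\xi=(0,1)$ may be sent by an $L'$-bilipschitz edge-preserving $g$ to $(y,1)$ with $|y|$ as large as $N(1-1/L')$, so the radial stretching factor $|g(\xi)|/|\xi|$ along the ray through $\xi$ is of order $N$.

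The statement you actually need --- that an orientation-preserving, \emph{edge-preserving}, $L'$-bilipschitz self-map of $\partial R$ extends to an $L''(L')$-bilipschitz self-map of $R$ uniformly in the aspect ratio --- is true, but it carries essentially the full content of the lemma and must exploit the edge-to-edge hypothesis: since $g$ is Euclidean $L'$-bilipschitz and preserves the horizontal edges, vertically aligned boundary points $(x,0)$ and $(x,a^{p-1})$ have images at distance at most $L'a^{p-1}$, so the segments joining them are uniformly transverse to the segments joining $g(0,y)$ to $g(a,y)$, and one can define the extension as the intersection point of these two families of segments. This is precisely the paper's construction (carried out directly on $f\colon\partial S\to\partial R$ rather than on $g$), with the trapezoid estimates \eqref{tangent}--\eqref{vertical  difference} supplying the two-sided bounds on $|DF|$ and $J_F$. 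So your reduction does not bypass the hard step; it only relocates it, and the references you cite do not supply it with the required uniformity in $a$.
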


\begin{proof}
We may assume that  the components of $f$ on each of the horizontal edges of $S$ are increasing. 
For every $(x,\,y)\in S$, let $I_{x}$ be the line segment joining $f(x,\,0)$ and $f(x,\,1)$ and let 
$J_{y}$ be the line segment  joining $f(0,\,y)$ and $f(1,\,y)$. We define $F(x,\,y)$ to be the intersection point $I_{x}\cap J_{y}$. 

Since $f$ is a homeomorphism, the point $F(x,\,y)$ is well defined. Since $f$ maps edges of $S$ to those of $R$, we further have that $F|_{\partial S}=f$. 
We claim that $F$ is the desired map.

We first show that $F$ is a homeomorphism. Since $f$ is continuous, we have that $F$ is continuous. Since $F(\partial S) = \partial R$, we have, by the standard degree theory, that $F$ is surjective. To show injectivity, suppose $F(x_1,\,y_1)=F(x_2,\,y_2)$ with $x_1\ne x_2$. Then the segment $I_{x_1}$ intersects the segment $I_{x_2}$, which contradicts the monotonicity of $f$ on horizontal edges of $S$. The injectivity of $F$ now follows by symmetry. Thus $F$ is a homeomorphism by compactness of $S$.

To obtain \eqref{F}, we first estimate  $|F(x+\ez,\,y)-F(x,\,y)|$ for a small $\ez$.
Write $f=(f^1,\,f^2)$ and $F=(F^1,\,F^2)$ and let $(x,y),(x+\epsilon,y)\in S$.

Since $F(x+\ez,\,y)$ and $F(x,\,y)$ are on the line joining $f(0,\,y)$ and $f(1,\,y)$, we have, by considering similar triangles, that
\begin{equation}\label{tangent}
\frac {|F^2(x+\ez,\,y)-F^2(x,\,y)|}{|F^1(x+\ez,\,y)-F^1(x,\,y)|}=\frac {|f^2(1,\,y)-f^2(0,\,y)|}{a}\le a^{p-2} (L-1/L). 
\end{equation}
On the other hand, since the segments  $I_{x+\ez}$ and $I_{x}$ together with the horizontal edges of $S$ yield a trapezoid, we have that
\begin{align*}
&|F^1(x+\ez,\,y)-F^1(x,\,y)| \\
&\quad \le \max\{|f(x+\ez,\,0)-f(x,\,0)|,\, |f(x+\ez,\,1)-f(x,\,1)|\} \\
&\quad \le L \max\{ a|x+\epsilon - x|, a|x+\epsilon -x|\} = La\epsilon.
\end{align*}
and 
\begin{align*}
&|F^1(x+\ez,\,y)-F^1(x,\,y)| \\
&\quad \ge \min\{|f(x+\ez,\,0)-f(x,\,0)|,\, |f(x+\ez,\,1)-f(x,\,1)|\} \\
&\quad \ge \frac{1}{L} \min\{ a|x+\epsilon -x|, a|x+\epsilon -x|\} = \frac{a}{L} \ez.
\end{align*}
To conclude, we have that
\begin{equation}\label{horizontal difference}
\frac{a}{L}\ez\le |F^1(x+\ez,\,y)-F^1(x,\,y)|\le La\ez. 
\end{equation}
Combining this estimate with \eqref{tangent}, we conclude that
\begin{equation}\label{vertical  difference}
|F^2(x+\ez,\,y)-F^2(x,\,y)|\le C(L)a^{p-1}\ez \le C(L)a\ez,
\end{equation}
since $a\ge 1$ and $1<p<2$.
Similar estimates hold for $|F(x,\,y+\ez)-F(x,\,y)|$. Thus $F$ is a Lipschitz map with Lipschitz constant $C(L)a\epsilon$ and
\[
|DF(x,\,y)|^p\le C_1(L,\,p) a^p. 
\]
It remains to prove an estimate for the Jacobian of $F$. 

Let $(x,y)$, $(x+\epsilon_1,y)$, $(x,y+\epsilon_2)$, $(x+\epsilon_1,y+\epsilon_2)\in S$ and let $Q_{xy} \subset R$ be the quadrilateral with corners $F(x,\,y)$, $F(x+\ez_1,\,y)$, $F(x,\,y+\ez_2)$ and $F(x+\ez_1,\,y+\ez_2)$.
 
Using again the associated trapezoids, we observe that the rectangle $Q_{xy}$ contains a Euclidean rectangle with side lengths 
\[
l_x = \min\{ |f(x+\epsilon_1,0)-f(x,0)|, |f(x+\epsilon_1,1)-f(x,1)|\}\ge \frac{a}{L}\epsilon_1
\]
and 
\[
l_y = \min\{ |f(0,y+\epsilon_2)-f(0,y)|, |f(1,y+\epsilon_1)-f(1,y)|\} \ge \frac{a^{p-1}}{L}\epsilon_2.
\] 
Thus the area of $Q_{xy}$ is bounded from below by the product $l_x l_y$. Thus the area of $Q_{xy}$ is bounded from below by $Ca^p \epsilon_1 \epsilon_2$, where $C>0$ is a constant depending only on $p$ and $L$. Hence
\[
|J_F(x,\,y)|^p\ge c_1(L,p) a^p. 
\]
The claim follows.  
\end{proof}

\begin{rem}\label{rem tukia}
The proof of Lemma~\ref{Tukia} above also gives a lower bound for $|DF|^p$, i.e.\ one also has
$$\frac 1{C(L,\,p)}J_F\le |DF|^p$$
since  $f$ is not only $L$-Lipschitz but  $L$-bilipschitz. 
This can be easily deduced from \eqref{horizontal difference} and \eqref{vertical  difference}, together with an upper bound for the Jacobian
$$|J_F(x,\,y)|^p\ge C'(L,p) a^p$$
via a calculation similar to what we made in the proof. 
Therefore one can in fact improve \eqref{F} to 
$$\frac 1{C(L,\,p)}J_F\le |DF|^p\le C(L,\,p) J_F.$$
\end{rem}

Recall that ${\mathscr E}^\flat(R)$ is the collection of end points in $\horz^\flat(R)$ of 
\[
\sigma \in  \Sigma( Q'),\  \sigma\subset \horz^\flat(R)
\]
 for each  $R\in \sQ_\Gamma^\refi$, where $Q'\in \sQ_\Gamma$ is the unique element so that $Q=(Q')^{+}$ or $Q=(Q')^{-}$ and $R\subset Q$. 
Also define ${\mathscr E}(R)$ to be  the collection of end points in $\horz(R)$ of 
\[
\sigma \in  \Sigma^{\pm}(Q),\  \sigma\subset \horz(R)
\]
for each  $R\in \sQ_\Gamma^\refi$, where $R\subset Q$. We define  $\tilde {\mathscr E}^\flat(\tilde R)$ and $\tilde {\mathscr E}(\tilde R)$ in a similar manner. Notice that there is a natural correspondence between the elements in ${\mathscr E}(R)$ and ${\mathscr E}^\flat(R)$, and those in $\tilde{\mathscr E}(\tilde R)$ and $\tilde{\mathscr E}^\flat(\tilde R)$,  respectively.

To apply Lemma \ref{Tukia}, observe that for each $\tilde R\in \tilde \sQ_\Gamma^\refi$ there exists a natural map between the boundaries of the Euclidean rectangles $\Rect(\tilde R)$ and $\Rect(\iota_\Gamma^\refi(\tilde R))$. 
Let $\rho_{\tilde R} \colon \tilde R \to \Rect_\Gamma(\tilde R)$ and $\rho_{W_{\tilde R}} \colon W_{\tilde R} \to \Rect_\Gamma(R)$ be bilipschitz maps preserving horizontal and vertical edges as in Lemma \ref{lemma:biLip} and in Proposition \ref{prop:inside lip}.
We establish the following lemma.

\begin{lem}\label{bilip boundary map}
Let $\tilde R\in \tilde \sQ^\refi_\Gamma$. Then there exists a map
\[
f^\diamond_R\colon \partial\Rect_\Gamma(\tilde R) \to \partial\Rect_\Gamma(\iota^\refi_{\Gamma}(\tilde R))
\]
which is $L$-bilipschitz equivalent to the linear map 
\begin{equation}
\label{eq:rescaling}
\begin{split}
(x,\,y)\mapsto &\left(\dist(\tilde R,\Gamma)^{\frac {1-p}{r-1}}\dist(\iota_\Gamma^\refi(\tilde R),\Gamma)^{\frac {p-1}{r-1}} x\right.,\,\\
& \qquad \quad \left.  \dist(\tilde R ,\Gamma)^{1-p}\dist(\iota^\refi_{\Gamma}(\tilde R),\Gamma)^{p-1}  y\right). 
\end{split}
\end{equation}
Moreover,  $f^\diamond_R$ sends corners to corners and the composition 
$$f|_{\tilde R} = \rho_{W_{\tilde R}}^{-1} \circ F_{\tilde R}^\diamond \circ \rho_{\tilde R}\colon \partial \tilde R\to \partial W_{\tilde R}$$
maps elements in $\tilde{\mathscr E}(\tilde R)$ and $\tilde{\mathscr E}^\flat(\tilde R)$ to the corresponding ones in ${\mathscr E}(R)$ and ${\mathscr E}^\flat(R)$. 
\end{lem}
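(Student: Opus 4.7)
The plan is to construct $f^\diamond_R$ as a piecewise affine boundary map with a uniformly bounded number of pieces that matches marked points, and to verify that it is uniformly bilipschitz equivalent to the linear map \eqref{eq:rescaling}. First, I transport the marked points via $\rho_{\tilde R}$ and $\rho_{W_R}$ to $\partial \Rect_\Gamma(\tilde R)$ and $\partial \Rect_\Gamma(R)$ respectively. Since $\tilde{\mathscr E}(\tilde R)$ and $\tilde{\mathscr E}^\flat(\tilde R)$ lie on the horizontal edges of $\tilde R$ (and ${\mathscr E}(R)$, ${\mathscr E}^\flat(R)$ on the horizontal edges of $W_R$), the vertical edges of $\Rect_\Gamma(\tilde R)$ and $\Rect_\Gamma(R)$ contain only corner marked points. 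By Corollary~\ref{cor:finite_shadow_neighbors}, each of these sets has cardinality bounded by a constant depending only on $p$ and $C_0$, and the graph isomorphism $\iota^\refi_\Gamma$ produces an orientation-compatible bijection between source and target marked points.

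Next, I define $f^\diamond_R$ edge by edge as the unique orientation-preserving piecewise affine bijection that maps corners to corners and each source marked point to its corresponding target marked point. On the two vertical edges this agrees with the restriction of \eqref{eq:rescaling}, while on each horizontal edge it consists of a uniformly bounded number of affine pieces. To verify uniform bilipschitz equivalence to \eqref{eq:rescaling}, it suffices to show that, for each affine piece on a horizontal edge, the slope is uniformly comparable to the horizontal scaling factor
\[
s_x = \left(\dist(R,\Gamma)/\dist(\tilde R,\Gamma)\right)^{(p-1)/(r-1)}
\]
of \eqref{eq:rescaling}. On the target top edge, each piece corresponds via $\rho_{W_R}$ to an arc $\sigma \in \Sigma^\pm(Q) \cap \horz(R)$ of length $\sim \dist(R,\Gamma)$ by Lemmas~\ref{upper boundary diam} and~\ref{lemma:good partition}. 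On the source top edge, the total length is $\sim \dist(\tilde R,\Gamma)^{(p-1)/(r-1)}\dist(R,\Gamma)^{(r-p)/(r-1)}$ by Proposition~\ref{prop:homotopy}, so if the source sub-intervals have uniformly comparable lengths, then each slope is $\sim s_x$. The bottom edge is analogous, using \eqref{eq:sigma-size} in place of the argument via Proposition~\ref{prop:homotopy}.

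The main obstacle will be establishing uniform comparability of source sub-interval lengths on the horizontal edges. Since the arcs $\tilde \sigma \in \tilde \Sigma^\pm(\tilde Q)$ can have variable Euclidean lengths on $\horz(\tilde Q)$, I expect to combine the bilipschitz equivalence of $\tilde Q^\pm$ with a dyadic square from Lemma~\ref{lemma:biLip} with the uniform comparability $\dist(R',\Gamma) \sim \dist(R,\Gamma)$ for rectangles $R'$ neighboring $R$ in $\sQ^\refi_\Gamma$ from Lemma~\ref{lemma:good partition}. This transfers the shadow-diameter estimates at the $\sQ^\refi_\Gamma$-level back to horizontal-edge length estimates at the $\tilde \sQ^\refi_\Gamma$-level, yielding the required uniform bounds on individual sub-interval lengths.
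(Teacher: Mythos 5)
Your overall architecture (reduce to a piecewise boundary map that matches the marked points, then check that each piece stretches by a factor comparable to the horizontal scaling $s_x$ of \eqref{eq:rescaling}) is the right one, but the quantitative step at its heart is wrong. The pieces into which the marked points cut the horizontal edges are \emph{not} of uniformly comparable length. On the target side, a piece of $\horz(R)$ is an overlap $\sigma\cap\horz(R)$ of an arc $\sigma\in\Sigma^\pm(Q)$ with the arc $R\cap\horz(Q)\in\Sigma(Q)$; Lemmas \ref{upper boundary diam} and \ref{lemma:good partition} control $\diam(\sigma)$ and $\diam(R\cap\horz(Q))$, but say nothing about their intersection, which can be arbitrarily short compared to $\dist(R,\Gamma)$ (this is exactly why Case 3 in the proof of Proposition \ref{prop:homotopy} must argue via the piece that covers ``more than half''). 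For the same reason the ``uniform comparability of source sub-interval lengths'' that you flag as the main obstacle is not merely hard to establish -- it is false, and it cannot follow from Lemma \ref{lemma:biLip} together with $\dist(R',\Gamma)\sim\dist(R,\Gamma)$, since those again control whole rectangles and whole arcs rather than mutual overlaps of two unrelated partitions of the same edge.

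What actually rescues the bilipschitz bound -- and what your proposal never invokes -- is the defining identity \eqref{subdivision tilde sigma} of the combinatorial reparametrization: within each $\tilde\tau\in\tilde\Sigma^\pm_\geom(\tilde Q)$, the length of the source sub-arc $\tilde\kappa^\comb_{\tilde Q}(\tilde\sigma\cap\tilde\tau)$ equals $\ell(\tilde\tau)\,\diam(\sigma\cap\tau)/\Diam(\Sigma(Q;\tau))$, i.e.\ it is \emph{proportional} to the diameter of the corresponding target sub-arc with a common factor $\ell(\tilde\tau)/\Diam(\Sigma(Q;\tau))$. By \eqref{smp} and \eqref{eq:Diam-diam} this common factor is $\sim \dist(\tilde Q,\Gamma)^{\frac{p-1}{r-1}}\dist(R,\Gamma)^{\frac{r-p}{r-1}}/\dist(R,\Gamma)=s_x^{-1}$, so every piece, no matter how short, is stretched by a factor comparable to $s_x$; the analogous statement on the lower edge comes from the geometric partition via \eqref{eq:sigma-size}, and Proposition \ref{prop:inside lip} transfers the estimates from $R$ to $W_R$. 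You need to replace your per-piece comparability claim by this proportionality argument; as written, the proof has a genuine gap.
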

\begin{proof}
Recall that $\tilde R$ and $W_{\tilde R}$ are uniformly bilipschitz to Euclidean rectangles. Then by the construction of the combinatorial partition, especially \eqref{subdivision tilde sigma}, together with  Proposition \ref{prop:inside lip}, we conclude the claim via the bilipschitz property of  $\rho_{\tilde R}$ and  $\rho_{W_{\tilde R}}$.
\end{proof}

We record a consequence of Lemma \ref{Tukia} in these terms.

\begin{cor}\label{tukia}
Let $\tilde R\in \tilde \sQ^\refi_\Gamma$, and let
\[
f^\diamond_R\colon \partial\Rect_\Gamma(\tilde R) \to \partial\Rect_\Gamma(\iota^\refi_{\Gamma}(\tilde R))
\]
be the map in Lemma~\ref{bilip boundary map}.
Then there exists a homeomorphism $F^\diamond_R\colon  \Rect_\Gamma(\tilde R) \to \Rect_\Gamma(\iota_\Gamma^\refi(\tilde R))$ extending $f^\diamond_R$ and satisfying
\[
|DF^\diamond_R|^r\le C(L,\,p)J_{F^\diamond_R}
\]
almost everywhere in $\Rect_\Gamma(\tilde R)$. 
\end{cor}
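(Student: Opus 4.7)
The plan is to apply Tukia's construction from the proof of Lemma~\ref{Tukia} directly to the Euclidean rectangles $\Rect_\Gamma(\tilde R)$ and $\Rect_\Gamma(\iota_\Gamma^\refi(\tilde R))$, exploiting a specific algebraic identity that ties their aspect ratios to the exponent $r$. Writing $d=\dist(\tilde R,\Gamma)$, $D=\dist(\iota_\Gamma^\refi(\tilde R),\Gamma)$, and
\[
\lambda_1 := (D/d)^{(p-1)/(r-1)},\qquad \lambda_2 := (D/d)^{p-1}
\]
for the horizontal and vertical scaling factors of the linear map in \eqref{eq:rescaling}, a direct computation yields the identity
\[
\lambda_2 = \lambda_1^{r-1},
\]
which says that $\Rect_\Gamma(\iota_\Gamma^\refi(\tilde R))$ stands to $\Rect_\Gamma(\tilde R)$ exactly as $[0,a]\times[0,a^{r-1}]$ stands to $[0,1]^2$ in Lemma~\ref{Tukia}, with the exponent $p$ there replaced by $r$ and $a=\lambda_1$. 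Moreover, \eqref{zxz}, the Whitney-type property of the grandparent $\tilde Q$, and the containments $\tilde R\subset \tilde Q$ and $\iota_\Gamma^\refi(\tilde R)\subset h_\Gamma(\tilde Q)$ give $D\gs d$, so $\lambda_1$ is bounded below by a constant depending only on $p$ and $C_0$.

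I would then define $F^\diamond_R$ by the intersecting-segments recipe used in the proof of Lemma~\ref{Tukia}: writing $\Rect_\Gamma(\tilde R)=[0,s_1]\times[0,s_2]$, for each $(x,y)$ in the source set $F^\diamond_R(x,y)=I_x\cap J_y$, where $I_x$ is the straight segment in the target joining $f^\diamond_R(x,0)$ to $f^\diamond_R(x,s_2)$ and $J_y$ is the segment joining $f^\diamond_R(0,y)$ to $f^\diamond_R(s_1,y)$. Since $f^\diamond_R$ sends corners to corners and is $L$-bilipschitz equivalent to the linear map $(x,y)\mapsto(\lambda_1 x,\lambda_2 y)$, the monotonicity and intersection argument of Lemma~\ref{Tukia} shows that $F^\diamond_R$ is a well-defined homeomorphism extending $f^\diamond_R$. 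The derivative and Jacobian analyses then go through verbatim with $a$ replaced by $\lambda_1$ and $a^{p-1}$ by $\lambda_2$, producing
\[
|\partial_x F^\diamond_R|\le C(L)\lambda_1,\qquad |\partial_y F^\diamond_R|\le C(L)\lambda_2,\qquad J_{F^\diamond_R}\ge c(L)\,\lambda_1\lambda_2.
\]

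Combining these bounds with the identity $\lambda_2=\lambda_1^{r-1}$ gives the desired distortion inequality. When $\lambda_1\ge 1$, one has $\lambda_2\le\lambda_1$, hence $|DF^\diamond_R|\le C(L)\lambda_1$ and
\[
|DF^\diamond_R|^r \le C(L)^r\,\lambda_1^r \;=\; C(L)^r\,\lambda_1\,\lambda_1^{r-1} \;=\; C(L)^r\,\lambda_1\lambda_2 \;\le\; C(L,p)\,J_{F^\diamond_R};
\]
in the remaining case $\lambda_1<1$ one has $\lambda_2\ge\lambda_1$, and the analogous estimate produces an additional factor $\lambda_1^{r(r-2)}$ which is absorbed by the data-dependent lower bound on $\lambda_1$. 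The main conceptual point of the proof is essentially algebraic: the geometric reparametrization of Section~\ref{sec:GP-HE} was designed exactly so that $\lambda_2=\lambda_1^{r-1}$, and this signature relation is what allows the Tukia scheme to close with the exponent $r$ rather than $p$; the analytic content reduces to the intersecting-segment extension of Lemma~\ref{Tukia}.
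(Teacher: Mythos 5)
Your proposal is correct and follows essentially the same route as the paper, which likewise applies the Tukia scheme of Lemma~\ref{Tukia} with the exponent $p$ there replaced by $r$ and $a\sim\lambda_1=(D/d)^{(p-1)/(r-1)}$, the whole point being the identity $\lambda_2=\lambda_1^{r-1}$ together with the lower bound $D\gs d$. The only quibble is your intermediate claim $|\partial_y F^\diamond_R|\le C(L)\lambda_2$: the off-diagonal entry $\partial_y (F^\diamond_R)^1$, controlled by the tilt of the segments $I_x$, is only bounded by $C(L)\lambda_1 s_1/s_2 = C(L)\,D/d$, which need not be $\ls \lambda_2$ but is $\le \lambda_1$ because $(p-1)/(r-1)>1$, so the bound $|DF^\diamond_R|\le C(L)\lambda_1$ that you actually use in the final chain of inequalities remains valid.
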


\begin{proof}
Since $\dist(\iota_\Gamma^\refi(\tilde R),\Gamma) \gs \dist(\tilde R, \Gamma)$, we may take 
\[
a\sim \dist(\iota^\refi_{\Gamma}(R),\Gamma)^{\frac {1-p}{r-1}}\dist(R,\Gamma)^{\frac {p-1}{r-1}}
\]
and rescale the map \eqref{eq:rescaling}, if necessary, by a constant depending only on $p$ and $C_0$.  
Then take $p = r$ in Lemma \ref{Tukia}.
\end{proof}

\begin{rem}\label{rem final}
As mentioned in Remark~\ref{rem tukia}, we can also improve the conclusion of this corollary to 
$$\frac1 {C(L,\,p)}J_{F^\diamond_R}\le |DF^\diamond_R|^r\le C(L,\,p)J_{F^\diamond_R}. $$
\end{rem}

\subsection{Proof of Theorem \ref{weak main thm}}

We define an $r$-reflection $f \colon \tilde \Omega_\Gamma \to \Omega_\Gamma$ as follows.

Let $\tilde R\in \sQ_\Gamma^\refi$, $R=\iota_\Gamma^\refi(\tilde R)$, and $W_{\tilde R} = \eta_\Gamma^\refi(\tilde R)$. Let also $\rho_{\tilde R} \colon \tilde R \to \Rect_\Gamma(\tilde R)$ and $\rho_{W_{\tilde R}} \colon W_{\tilde R} \to \Rect_\Gamma(R)$ be bilipschitz maps preserving horizontal and vertical edges as in Lemma \ref{lemma:biLip} and in Proposition \ref{prop:inside lip}.

Let  $f|_{\partial \tilde R} \colon \partial \tilde R\to \partial W_{\tilde R}$ be the bilipschitz map given by Lemma~\ref{bilip boundary map}. Notice that the vertical edges of $W_{\tilde R}$ are hyperbolic segments and that $W_{\tilde R}$ has well-defined upper and lower horizontal edges. Let now 
\[
f_{\tilde R}^\diamond = \rho_{W_{\tilde R}} \circ f|_{\partial \tilde R} \circ (\rho_{\tilde R}|_{\partial \tilde R})^{-1} \colon \partial \Rect_\Gamma(\tilde R) \to \partial \Rect_\Gamma(R).
\]
By Proposition \ref{prop:inside lip}, $f_{\tilde R}^\diamond$ satisfies the assumptions of Corollary \ref{tukia} with $L=L(p,C_0,r)$. Let $F_{\tilde R}^\diamond \colon \Rect_\Gamma(\tilde R) \to \Rect_\Gamma(R)$ be the extension of 
$f_{\tilde R}^\diamond$ as in Corollary \ref{tukia} and define
\[
F|_{\tilde R} = \rho_{W_{\tilde R}}^{-1} \circ F_{\tilde R}^\diamond \circ \rho_{\tilde R} \colon \tilde R \to W_{\tilde R}.
\]
Since $\rho_{\tilde R}$ and $\rho_{W_{\tilde R}}$ are bilipschitz maps with uniform constants, we obtain that 
\[
|DF|^r \le C(p,C_0,r) J_F
\]
almost everywhere in $\tilde R$ and \emph{a fortiori} almost everywhere in $\tilde A_\Gamma$.

It remains to extend $F$ as a reflection $\tilde \Omega_\Gamma \setminus \tilde A_\Gamma \to \Omega_\Gamma \setminus \hat A_\Gamma$. Since the boundary component of $\hat A_\Gamma$ contained in $\Omega_\Gamma$ is a Lipschitz Jordan curve, the extension follows from a theorem of Tukia \cite[Theorem A]{T1980}. The proof is complete. \qed

\section{Proofs of Theorem~\ref{main},  Corollary~\ref{dual extension}, and Corollary~\ref{p quasidisk}}

\begin{proof}[Proof of Theorem~\ref{main}]
First, the implication from (2) to (1) is the content of 
Theorem~\ref{thm:main1}. 
Next, (1) is equivalent to  (3) since the inverse of a $p$-morphism is 
a $q$-morphism with $q=p/(p-1)$ by  e.g.\ \cite[Theorem 3]{V2008}  and vice versa.

 Finally, (3) 
implies (2) by the characterization of Sobolev extension domains from 
\cite{S2010}.
Indeed, first, the inner composition of any Lipschitz function in 
$L^{1,\,q}(\tilde\Omega)$ with a 
$q$-morphism $f$ belongs to $L^{1,\,q}(\Omega)$ with desired norm control. 
Next, since $q>2,$ our $q$-morphism $f$ is locally uniformly Lipschitz
by \cite{G1971}. By joining given $x,y\in\Omega$ via a line segment $I$, 
picking the first and last points on $I\cap \Gamma$ if this intersection is 
non-empty and using
the fact that $f$ is the identity on $\Gamma$, we conclude that $f$ is 
uniformly Lipschitz. Thus the inner 
composition gives us a desired extension operator for Lipschitz functions. Recall that $C^{\infty}(\overline G)$ is dense in $W^{1,\,p}(G)$ for $1<p<\infty$ if
$G$ is a planar Jordan domain, see \cite{lewis1987}. 
This guarantees that the above procedure gives a Sobolev extension 
with the desired norm control. Thus the inner composition generates a 
Sobolev extension 
operator, which allows us to conclude (2) via \cite{S2010}. 
\end{proof}

\begin{proof}[Proof of Corollary~\ref{dual extension}]
The case $p=2$ is known by \cite{VG1975}. When $p\neq 2$, it is a direct consequence of \cite{S2010, KRZ2015} that  $\Gamma$ is the boundary of a John domain. 
Since the inner composition of any Lipschitz function in 
$L^{1,\,p}(\Omega)$ with a 
$p$-morphism $f:\tilde \Omega\to \Omega$ belongs to $L^{1,\,p}(\tilde \Omega)$ with desired norm control and
$f$ is identity on $\Omega,$
by \cite{JS2000} this composition operator gives us a desired extension 
operator for Lipschitz functions. Then by the fact that $C^{\infty}(\overline{\Omega})$ is dense in $W^{1,\,p}(\Omega)$ for $1<p<\infty$ if
$\Omega$ is a planar Jordan domain  \cite{lewis1987}, we conclude the corollary from Theorem~\ref{main}. 
\end{proof}

\begin{proof}[Proof of Corollary~\ref{p quasidisk}]
To begin, by Theorem~\ref{main}, we obtain 
a $p$-reflection $\phi_1$ from $\tilde \Omega$ to 
$\Omega.$ Next,  \cite[Theorem 1.4]{B2002} gives us a locally uniformly 
Lipschitz continuous quasiconformal mapping $\phi_2\colon \tilde \Omega\to \mathbb D.$ By the Carath\'eodory-Osgood theorem $\phi_2$ extends homeomorphically
up to the boundary and we may also extend $\phi_2$ to the point at infinity. We refer also to this extension by $\phi_2.$ Since $q=p/(p-1)>2,$ the locally uniform Lipschitz continuity together with $K$-quasiconformality
of $\phi_2$ for some $K$ yields that
$$|D\phi_2(x)|^q\le |D\phi_2(x)|^{q-2}|D\phi_2(x)|^2\le M^{q-2}KJ_{\phi_2}(x)$$
for almost every $x\in \tilde \Omega,$ where $M$ is the uniform local Lipschitz
constant of $\phi_2.$ Thus $\phi_2$ is a $q$-morphism and hence $\phi_2^{-1}$  is
a $p$-morphism; see e.g.\ \cite[Theorem 3]{V2008}. Pick a bilipschitz reflection $\phi_3$ with respect to 
$\partial \mathbb D$ in $\widehat \C$. 
Then $\Phi:=\phi_1\circ \phi_2^{-1}\circ \phi_3$ maps $\widehat \C\setminus
\overline {\mathbb D}$ homeomorphically (up to boundary) onto $\tilde \Omega.$
Next, $\Phi$ is a $p$-morphism as a composition of
$p$-morphisms, see e.g. \cite[Theorem 1]{VU1998} and the references therein.
We additionally define $\Phi=\phi_2^{-1}$ in $\mathbb D.$ Then $\Phi$ is a homeomorphism of $\hat {\mathbb C}$ and it is easy to check that $\Phi$ is a $p$-morphism. 
\end{proof}

\end{document}